\DeclareMathAlphabet{\mathpzc}{OT1}{pzc}{m}{it}
\definecolor{darkblue}{rgb}{0,0,0.7}
\title{The Bloch--Okounkov theorem for congruence subgroups and Taylor coefficients of quasi-Jacobi forms}
\author{Jan-Willem M. van Ittersum%
\thanks{\emph{Email}: \href{mailto:ittersum@mpim-bonn.mpg.de}{ittersum@mpim-bonn.mpg.de}, \newline
Mathematisch Instituut, Universiteit Utrecht, Postbus 80.010, 3508 TA Utrecht, The Netherlands, \newline
Max-Planck-Institut f\"ur Mathematik, Vivatsgasse 7, 53111 Bonn, Germany. \newline
ORCID: 0000-0003-1541-0232
}}
\newcommand{\sltwoz}{\mathrm{SL}_2(\z)}
\newcommand{\LambdaN}{\Lambda^{\!(N)}}
\renewcommand{\mathbf}{\bm}
\renewcommand{\vec}{\bm}
\renewcommand{\phi}{\varphi}
\renewcommand{\Im}{\mathrm{Im}}
\renewcommand{\Re}{\mathrm{Re}}
\newcommand{\n}{\mathbb{N}}
\newcommand{\z}{\mathbb{Z}}
\newcommand{\q}{\mathbb{Q}}
\renewcommand{\r}{\mathbb{R}}
\renewcommand{\c}{\mathbb{C}}
\newcommand{\pdv}[2]{\frac{\partial #1}{\partial #2}}
\theoremstyle{plain} 
\newtheorem{thm}{Theorem}[section]
\newtheorem{lem}[thm]{Lemma} 
\newtheorem{cor}[thm]{Corollary} 
\newtheorem{prop}[thm]{Proposition}
\newenvironment{qedcor}    
  {\pushQED{\qed}\begin{cor}}
  {\popQED\end{cor}}
\newenvironment{qedprop}    
  {\pushQED{\qed}\begin{prop}}
  {\popQED\end{prop}}
\theoremstyle{definition} 
\newtheorem{defn}[thm]{Definition}
\newtheorem{convt}[thm]{Convention}
\theoremstyle{remark}
\newenvironment{remark}
  {\pushQED{\qed}\remarkx}
  {\popQED\endremarkx}
\newcommand{\modular}{M}
\newcommand{\partitions}{\mathscr{P}}
\newcommand{\Faulhaber}{\mathpzc{F}}
\newcommand{\blok}{\odot}
\newcommand{\Hol}{\mathrm{Hol}}
\newcommand{\Mer}{\mathrm{Mer}}
\newcommand{\ind}{I}
\newcommand{\e}{\boldsymbol{e}}
\newcommand{\F}{\mathcal{F}}
\newcommand{\ii}{\mathrm{i}}
\newcommand{\dtau}{\delta_\tau}
\newcommand{\dd}{\delta}
\newcommand{\MM}{M_{n}(\q)}
\newcommand{\XX}{M_{n,2}(\q)}
\newcommand{\XXZ}{M_{n,2}(\z)}
\newcommand{\XXR}{M_{n,2}(\r)}
\newcommand{\lm}{(\vec{\lambda},\vec{\mu})} 
\newcommand{\lms}{(\lambda,\mu)} 
\newcommand{\lmgamma}{(\vec{\lambda}^\gamma,\vec{\mu}^\gamma)} 
\newcommand{\abcd}{\left(\begin{smallmatrix} a & b \\ c & d \end{smallmatrix}\right)}
\renewcommand{\=}{\: =\: }
\newcommand{\defis}{\: :=\: }
\newcommand{\+}{\,+\,}
\newcommand{\meno}{\,-\,}
\begin{document}
\maketitle

\begin{abstract}
There are many families of functions on partitions, such as the shifted symmetric functions, for which the corresponding~$q$-brackets are quasimodular forms. We extend these families so that the corresponding~$q$-brackets are quasimodular for a congruence subgroup. Moreover, we find subspaces of these families for which the~$q$-bracket is a modular form. These results follow from the properties of Taylor coefficients of strictly meromorphic quasi-Jacobi forms around rational lattice points. 
\end{abstract}

\section{Introduction}
Denote by~$\partitions$ the set of all partitions of integers. For a large class of functions $f\colon\partitions\to \c$ the~\emph{$q$-bracket}, which is defined as the formal power series
\begin{equation}\label{eq:qbrac}\langle f \rangle_q \,:=\, \frac{\sum_{\lambda \in \partitions} f(\lambda)\, q^{|\lambda|}}{\sum_{\lambda\in\partitions} q^{|\lambda|}} \in \c[\![q]\!],\end{equation}
is a quasimodular form for~$\sltwoz$ (here $\lambda$ is a partition of size~$|\lambda|$). It is, therefore, natural to raise the following two questions:
\begin{enumerate}[\upshape (I)]\itemsep2pt
\item\label{q:1} Given a congruence subgroup~$\Gamma\leq \sltwoz$, is there an (even larger) class of functions for which the~$q$-bracket is a \emph{quasimodular form for~$\Gamma$}?
\item\label{q:2} What is the class of functions for which the~$q$-bracket is a \emph{modular form} (and not just a quasimodular form)?
\end{enumerate}
In this paper we explain how to answer these questions by studying a different question of independent interest:
\begin{enumerate}[\upshape (I)]\itemsep0pt
\item[\upshape (III)] What is the modular or quasimodular behaviour of the Taylor coefficients of meromorphic quasi-Jacobi forms?
\end{enumerate}

To illustrate how these questions are related, consider the \emph{shifted symmetric functions} $Q_k\colon{\partitions\to\q}$, for~$k\geq 1$ given by
\begin{equation}\label{def:qk} Q_k(\lambda) \defis \beta_k \+ \frac{1}{(k-1)!}\sum_{i=1}^\infty \bigl((\lambda_i-i+\tfrac{1}{2})^{k-1} - (-i+\tfrac{1}{2})^{k-1}\bigr),\end{equation}
where $\sum_{k=0}^\infty \beta_{k}\, z^{k-1}:= \frac{e^{z/2}}{e^z-1}$.
By the celebrated Bloch--Okounkov theorem, for every homogeneous polynomial in these functions, the~$q$-bracket is a quasimodular form for~$\sltwoz$. 

More precisely, we have the following. Here, and throughout the paper, we let $\tau\in \mathfrak{h}$, the complex upper half plane, $z\in \c$ and write $\e(x):=e^{2\pi\ii x}$, $q=\e(\tau)$. Given $\lambda \in \partitions$, write
\begin{equation}\label{eq:BOgen} W_\lambda(z) \defis \sum_{i=1}^\infty \e((\lambda_i-i+\tfrac{1}{2})z) 
\end{equation}
for the generating series of the functions~$Q_k$ \cite{BO00, Zag16}, which converges for $\Im(z)<0$. Write $W(z_1)\cdots W(z_n)$ for the function $\lambda \mapsto W_\lambda(z_1)\cdots W_\lambda(z_n)$.
Then, by the Bloch--Okounkov theorem, $\langle W(z_1)\cdots W(z_n)\rangle_q$ is a meromorphic quasi-Jacobi form~$F_n\mspace{1mu}$, defined as follows.
\begin{defn}\label{defn:BOnpoint} For all $n\geq 1$, let~$\mathfrak{S}_n$ be the symmetric group on~$n$ letters and 
\[\label{eq:theta} \theta(\tau,z) \defis \sum_{\nu\in \mathbb{F}} (-1)^{\lfloor \nu \rfloor} \, \e(\nu z)\, q^{\nu^2/2} 
\qquad (\mathbb{F}=\mathbb{Z}+\tfrac{1}{2})\]
the \emph{Jacobi theta series}.
Define the \emph{Bloch--Okounkov~$n$-point functions}~$F_n$ by
\[F_n(\tau,z_1,\ldots,z_n) \defis \sum_{\sigma \in \mathfrak{S}_n} V_n(\tau,z_{\sigma(1)},\ldots,z_{\sigma(n)}),\]
where the functions~$V_n$ are defined recursively by~$V_0(\tau)=1$ and
\begin{equation}\label{eq:B-Ocor}\sum_{m=0}^n \frac{(-1)^{n-m}}{(n-m)!}\;\theta^{(n-m)}(\tau,z_1+\ldots+z_m)\;V_m(\tau,z_1,\ldots,z_m)\=0.\end{equation}
Here, $\theta^{(r)}(\tau,z)=\bigl(\frac{1}{2\pi\ii}\pdv{}{z}\bigr)^{\! r}\theta(\tau,z)$ for $r\geq 0$. 
\end{defn}

The study of the Taylor coefficients of~$F_n$ around rational values of~$z_1,\ldots,z_n$ yields an answer to Question~\ref{q:1}, whereas a detailed description of the (quasi)modular transformation of the Taylor coefficients of~$F_n$ around $z_i=0$ answers Question~\ref{q:2}. The Fourier coefficients of the functions~$F_n$ were studied in \cite{BM15}, and the Taylor coefficients of certain functions closely related to holomorphic quasi-Jacobi forms were studied in \cite{Bri18}. However, the \emph{Taylor coefficients} of~$F_n$ or of a more general meromorphic quasi-Jacobi form have not been studied before. 

We now give a short overview of the results in the literature as well as in the present paper. It should be noted that the answer to the third question can be read independently from the applications to functions on partitions.

\subsection{The Bloch--Okounkov theorem for congruence subgroups}
Given~$a\in\q$, define  $Q_k(\cdot,a)\colon\partitions\to\overline{\q}$ by $Q_0(\lambda,a)\defis \beta_{0}(a)$ and for $k\geq 1$ by
\begin{equation}\label{def:qka} Q_{k}(\lambda,a)\defis\beta_{k}(a)\+\frac{1}{(k-1)!}\sum_{i=1}^\infty \bigl( \e(a)^{\lambda_i-i}\,(\lambda_i-i+\tfrac{1}{2})^{k-1} -  \e(a)^{-i}\,(-i+\tfrac{1}{2})^{k-1}\bigr),\end{equation}
where~$\sum_{k\in \z} \beta_k(a)\,(2\pi\ii z)^{k-1} := \frac{\e(z/2)}{\e(z+a)-1}$. We recall $\e(x)=e^{2\pi\ii x}$. The main properties satisfied by these functions are a consequence of the fact that
\[\sum_{k\geq 0} Q_k(\lambda,a) \, z^{k-1} \= \e(-\tfrac{1}{2}a)\,W_\lambda(z+a),\]
and that $F_n(z_1,\ldots,z_n) = \langle W(z_1)\cdots W(z_n)\rangle_q$ is a quasi-Jacobi form. Observe that $Q_k(\lambda,0)=Q_k(\lambda)$. Up to a constant, these functions~$Q_k(\cdot,a)$ have been considered before in \cite{EO06} for $a=\tfrac{1}{2}$ and in \cite{Eng17} for all $a\in \q$. It was shown that a suitably adapted~$q$-bracket of any polynomial in these functions, excluding the function~$Q_1(\cdot,a)$, is quasimodular for~$\Gamma_1(N)$ for some~$N$.

In this work, we will not change the~$q$-bracket, nor exclude any of the functions~(\ref{def:qka}), and nevertheless prove the following result for the graded algebra~$\Lambda^*(N)$, contained in $\c^\partitions$, given by
\begin{align}\label{eq:LambdaN}
\Lambda^*(N) \defis \q\!\left[Q_k(\cdot,a) \mid k\geq 1, a\in \{0,\tfrac{1}{N},\ldots,\tfrac{N-1}{N}\}\right],
\end{align}
with the grading given by assigning weight~$k$ to~$Q_k(\cdot,a)$. For example, a basis for the elements of $\Lambda^*(2)$ of (homogeneous) weight~$3$ is given by 
\begin{align}
\{Q_3\mspace{1mu},\mspace{1mu} Q_3(\cdot,\tfrac{1}{2}),\mspace{1mu} Q_2\mspace{1mu}Q_1\mspace{1mu},\mspace{1mu} Q_2\mspace{1mu}Q_1(\cdot,\tfrac{1}{2}),\mspace{1mu} Q_2(\cdot,\tfrac{1}{2})\mspace{1mu}Q_1 \mspace{1mu},\mspace{1mu}Q_2(\cdot,\tfrac{1}{2})\mspace{1mu}Q_1(\cdot,\tfrac{1}{2}),&\\
\mspace{1mu}Q_1^3,\mspace{1mu}Q_1^2\mspace{1mu}Q_1(\cdot,\tfrac{1}{2}),\mspace{1mu}Q_1\mspace{1mu}Q_1(\cdot,\tfrac{1}{2})^2,\mspace{1mu}Q_1(\cdot,\tfrac{1}{2})^3&\}.
\end{align}
Observe that there are no elements of negative weight in $\Lambda^*(N)$. 

Given $N\geq 1$, write $(2,N)$ for $\gcd(2,N)$. For $\widehat{N}\in \z$, denote $m_{\widehat{N}}=\left(\begin{smallmatrix} \widehat{N} & 0 \\ 0 & 1 \end{smallmatrix}\right)$.
\begin{thm}\label{thm:higherlevel}
Let~$k\in \z, N\geq 1$ and $\widehat{N}=(2,N)N$. For $f\in \Lambda^*(N)$ of weight~$k$, the $q$-bracket~$\langle f \rangle_q$ is a quasimodular form of weight~$k$ for~$m_{\widehat{N}}^{-1}\Gamma(\widehat{N}) m_{\widehat{N}}\mspace{1mu}$. 
\end{thm}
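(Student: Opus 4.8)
The plan is to reduce the whole statement to a single input, namely that $F_n$ is a (strictly) meromorphic quasi-Jacobi form, and to control its Taylor expansion around rational points. Since the $q$-bracket is $\q$-linear in $f$, it suffices to treat a monomial $f=\prod_{j=1}^n Q_{k_j}(\cdot,a_j)$ of weight $k=\sum_j k_j$, with each $a_j\in\{0,\tfrac1N,\ldots,\tfrac{N-1}N\}$. Multiplying the $n$ generating series $\sum_{k_j\geq0}Q_{k_j}(\cdot,a_j)\,z_j^{k_j-1}=\e(-\tfrac12 a_j)\,W(z_j+a_j)$ and applying the bracket, the identity $\langle W(w_1)\cdots W(w_n)\rangle_q=F_n(\tau,w_1,\ldots,w_n)$ yields
\[
\Big\langle \prod_{j=1}^n \sum_{k_j\geq 0}Q_{k_j}(\cdot,a_j)\,z_j^{k_j-1}\Big\rangle_q \= \Big(\prod_{j=1}^n \e(-\tfrac12 a_j)\Big)\,F_n(\tau,z_1+a_1,\ldots,z_n+a_n).
\]
Extracting the coefficient of $\prod_j z_j^{k_j-1}$ shows that $\langle f\rangle_q$ equals, up to the root-of-unity constant $\prod_j\e(-\tfrac12 a_j)$, a nonnegative-power Taylor coefficient of $F_n$ around the rational point $(a_1,\ldots,a_n)$ whose coordinates all lie in $\tfrac1N\z$. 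The entire problem is thereby transported to the third question: the quasimodular behaviour of the Taylor coefficients of a strictly meromorphic quasi-Jacobi form at a rational lattice point.

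The central step, which I expect to be the genuine work, is to establish the transformation theory of these Taylor coefficients for a general strictly meromorphic quasi-Jacobi form. Writing $z_j=a_j+w_j$ and expanding $F_n(\tau,a_1+w_1,\ldots,a_n+w_n)=\sum_{\mathbf r}g_{\mathbf r}(\tau)\,\mathbf w^{\mathbf r}$, the coefficients $g_{\mathbf r}$ are exactly the functions produced above. Using the quasi-Jacobi law of $F_n$ under $\sltwoz$ — the modular part $z\mapsto z/(c\tau+d)$ together with the anomalous quasi-periodicity corrections — I would track how $g_{\mathbf r}$ transforms. The obstruction is that a substitution $\gamma=\abcd$ sends the expansion point $a_j$ to $a_j/(c\tau+d)$, which is neither $a_j$ nor a lattice translate of it unless $\gamma$ is suitably restricted. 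Combining the modular law with the elliptic quasi-periodicity $z\mapsto z+\lambda\tau+\mu$ ($\lambda,\mu\in\z$) and demanding that the lattice translate needed to return to the expansion point have integral coefficients forces the congruence conditions defining $\Gamma(\widehat N)$; because all anomalous terms are polynomial in $1/(c\tau+d)$ with quasimodular coefficients, the $g_{\mathbf r}$ come out quasimodular of weight $k$ rather than honestly modular.

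Pinning down the precise group is the main obstacle specific to this theorem, and two features must be reconciled. First, the half-integral characteristic of $\theta$ (the sum over $\nu\in\z+\tfrac12$, under which $\theta(\tau,z+1)=-\theta(\tau,z)$) means the natural period lattice in the $z$-variable is governed by $2$-torsion, so the effective denominator of the expansion point is not $N$ but $(2,N)N=\widehat N$; this is the origin of the factor $(2,N)$. Second, a Taylor coefficient at a point of exact denominator $\widehat N$ naturally carries a $q^{1/\widehat N}$-expansion, so it becomes modular only after the rescaling $\tau\mapsto\widehat N\tau$ encoded by $m_{\widehat N}$. Concretely I would show that $H\defis g_{\mathbf r}\slashh m_{\widehat N}^{-1}$ (so that $H(\tau)$ is $g_{\mathbf r}(\tau/\widehat N)$ up to the weight factor) is quasimodular for $\Gamma(\widehat N)$; by the slash identity $H\slashh\gamma=\big(g_{\mathbf r}\slashh (m_{\widehat N}^{-1}\gamma m_{\widehat N})\big)\slashh m_{\widehat N}^{-1}$ this is equivalent to $g_{\mathbf r}$ being quasimodular for the conjugate $m_{\widehat N}^{-1}\Gamma(\widehat N)m_{\widehat N}$, as asserted. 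The delicate bookkeeping is to verify that after rescaling both the fractional $q$-powers and the characteristic-induced multiplier trivialise exactly on $\Gamma(\widehat N)$ (and not on a larger group), and that the quasimodular correction terms transform compatibly. Finally, since $\widehat d\mid\widehat N$ whenever $d\mid N$, every monomial (whatever the denominators of its $a_j$ and whatever $n$) is quasimodular for the single group $m_{\widehat N}^{-1}\Gamma(\widehat N)m_{\widehat N}$, so the $\q$-linear combination $\langle f\rangle_q$ is as well; specialising the general Taylor-coefficient theory to $F_n$ and the points $a_j\in\tfrac1N\z$ then gives the theorem.
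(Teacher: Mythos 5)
Your proposal follows the paper's route: reduce to monomials, identify $\langle f\rangle_q$ with a Taylor coefficient of $F_n$ at the rational point $X=(0,\vec{a})$, invoke the general theorem that Taylor coefficients of a strictly meromorphic quasi-Jacobi form at $X$ are quasimodular for the group $\Gamma_X$, and then verify $m_{\widehat{N}}^{-1}\Gamma(\widehat{N})m_{\widehat{N}}\subseteq\Gamma_X$. One of your heuristics for pinning down the group is off, though. Since the expansion point is purely real ($\vec{\lambda}=\vec{0}$), the coefficient $g_{\vec{\ell}}^X(F_n)$ is exactly the $q$-bracket and has an \emph{integral} $q$-expansion; no fractional powers of $q$ ever arise, so there is nothing for the rescaling to ``trivialise'' on that front. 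The condition $c\equiv 0\bmod \widehat{N}^2$ that characterises the conjugated group $m_{\widehat{N}}^{-1}\Gamma(\widehat{N})m_{\widehat{N}}=\{\gamma: c\equiv 0\,(\widehat{N}^2),\ a\equiv d\equiv 1\,(\widehat{N})\}$ comes instead from the multiplier condition $\rho(X-X\gamma)=\zeta_{X,X\gamma-X}$ in the definition of $\Gamma_X$, which for $X=(0,\vec{a})$ amounts to $(c^2-c(d-1)+(d-1)^2-c)\,B_M(\vec{a},\vec{a})\in\z$ with $B_M(\vec{a},\vec{a})=-\tfrac12|\vec{a}|^2\in\tfrac{1}{2N^2}\z$; this single congruence is where both the $N^2$ and the factor $(2,N)$ enter (you did correctly trace the $2$ to the half-integral index inherited from $\theta$, and the conditions $c\equiv 0$, $d\equiv 1\bmod N$ to the requirement $X\gamma-X\in\XXZ$). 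So the skeleton is right, but the ``delicate bookkeeping'' you defer is precisely this root-of-unity computation, not a fractional-$q$-power one.
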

\begin{remark}
The occurrence of the group~$m_{\widehat{N}}^{-1} \Gamma(\widehat{N}) m_{\widehat{N}}$ in the theorem can be interpreted in at least two ways. First of all, 
an equivalent formulation of the theorem is that for $f\in \Lambda^*(N)$ of weight~$k$, the series~$\langle f \rangle_{q_{\widehat{N}}}$ is a quasimodular form of weight~$k$ and level~${\widehat{N}}$, where $q_{\widehat{N}}:=q^{1/{\widehat{N}}}$.
Secondly, as $\Gamma_1({\widehat{N}}^2)\leq m_{{\widehat{N}}}^{-1} \Gamma({\widehat{N}}) m_{{\widehat{N}}}$, it follows that~$\langle f \rangle_q$ is a quasimodular form for~$\Gamma_1({\widehat{N}}^2)$. 

Moreover, the definition of~${\widehat{N}}$ indicates that the behaviour of~$Q_k(a)$ is different when the numerator of~$a$ is divisible by~$2$. We will see that this can also be explained by the $n$-point functions~$F_n\mspace{1mu}$: they are quasi-Jacobi forms for which the index is an element of~$M_n(\tfrac{1}{2}\z)$.
\end{remark}

The following theorem is a refinement of \cref{thm:higherlevel}, giving us~$q$-brackets (or quotients of~$q$-brackets) that are quasimodular forms on~$\Gamma_1(N)$ rather than only on the much smaller group~$m_{{\widehat{N}}}^{-1} \Gamma({\widehat{N}}) m_{{\widehat{N}}}\mspace{1mu}$.

\begin{thm}\label{thm:bohigherlevel}
Let $N\geq 1$. 
Given $k_i\in \z_{>0}\mspace{1mu}, a_i\in \frac{1}{N}\z$ for $i=1,\ldots,n$, denote $a=a_1+\ldots+a_n$ and $Q_{\vec{k}}(\cdot,\vec{a})=Q_{k_1}(\cdot,a_1)\cdots Q_{k_n}(\cdot,a_n)$.
Then,
\begin{itemize}
\item If $a\in \z$, then~$\langle Q_{\vec{k}}(\cdot,\vec{a})\rangle_q$ is a  quasimodular form for~$\Gamma_1(N);$
\item If $a\not\in \z$, then $\displaystyle\frac{\langle Q_{\vec{k}}(\cdot,\vec{a}) \rangle_q}{\langle Q_1(\cdot,a)\rangle_q}$ is a quasimodular form for~$\Gamma_1(N)$.
\end{itemize}
\end{thm}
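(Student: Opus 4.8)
The plan is to reduce the statement to the transformation behaviour of the Bloch--Okounkov $n$-point function $F_n$ under $\Gamma_1(N)$, viewed as a meromorphic quasi-Jacobi form, and then to read off the claim from its Taylor coefficients around the rational point $\vec a=(a_1,\ldots,a_n)$. The bridge is the generating-series identity $\sum_{k\geq 0}Q_k(\lambda,a_i)\,z_i^{k-1}=\e(-\tfrac12 a_i)\,W_\lambda(z_i+a_i)$: multiplying over $i$ and applying the Bloch--Okounkov theorem $\langle W(z_1)\cdots W(z_n)\rangle_q=F_n(\tau,z_1,\ldots,z_n)$ together with linearity of the $q$-bracket (which commutes with the coefficientwise Taylor extraction) gives
\[
\langle Q_{\vec{k}}(\cdot,\vec{a})\rangle_q \= \e(-\tfrac12 a)\,[z_1^{k_1-1}\cdots z_n^{k_n-1}]\,F_n(\tau,z_1+a_1,\ldots,z_n+a_n).
\]
Thus, up to the constant $\e(-\tfrac12 a)$, the $q$-bracket is the $(\vec k-\vec 1)$-th Taylor coefficient of $F_n$ at $\vec a$, and it suffices to understand how such coefficients transform.

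First I would fix $\gamma=\abcd\in\Gamma_1(N)$ and apply the quasi-Jacobi transformation law of $F_n$ with elliptic argument $(c\tau+d)(\vec z+\vec a)$. The decisive arithmetic input is that, since $N\mid c$ and $d\equiv 1\pmod N$ while each $a_i\in\tfrac1N\z$, one has $(c\tau+d)\,a_i=(c a_i)\tau+d a_i\equiv a_i\pmod{\z\tau+\z}$; hence $(c\tau+d)\vec a$ differs from $\vec a$ by a lattice vector $\vec\ell\tau+\vec m$ with $\vec\ell=c\vec a\in\z^n$ and $\vec m=d\vec a-\vec a\in\z^n$. The automorphy factor of $F_n$ and the quasimodular (polynomial in $\tfrac{c}{c\tau+d}$) corrections then recombine with this lattice shift, the elliptic quasi-periodicity of $F_n$ returning the expansion point from $(c\tau+d)\vec a$ to $\vec a$ at the cost of an explicit exponential multiplier. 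After extracting $[z_1^{k_1-1}\cdots z_n^{k_n-1}]$, the power of $c\tau+d$ produced by the scaling $\vec z\mapsto(c\tau+d)\vec z$ combines with the weight of $F_n$ to yield exactly $k_1+\ldots+k_n$, matching the asserted weight.

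It then remains to analyse the multiplier. Because the index of $F_n$ lies in $M_n(\tfrac12\z)$ and couples the variables only through the total diagonal direction, the lattice-shift multiplier depends on $\vec a$ only through $a=\sum_i a_i$; in particular it is independent of the exponents $k_i$ and of the individual $a_i$. Running the identical computation in the case $n=1$, $k_1=1$ produces $\langle Q_1(\cdot,a)\rangle_q=\e(-\tfrac12 a)\,F_1(\tau,a)$ and exhibits precisely this same multiplier. Consequently, if $a\in\z$ the multiplier is a root of unity that is trivial on $\Gamma_1(N)$ (using $c\equiv0$, $d\equiv1\pmod N$), so $\langle Q_{\vec k}(\cdot,\vec a)\rangle_q$ is genuinely quasimodular of weight $k$ for $\Gamma_1(N)$; if $a\notin\z$ the multipliers of numerator and denominator agree, so their quotient is multiplier-free and quasimodular for $\Gamma_1(N)$. (When $a\notin\z$ the point $a$ avoids the poles of $F_1$, so $\langle Q_1(\cdot,a)\rangle_q$ is a nonzero power series and the quotient is well defined.)

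The main obstacle I expect lies in the bookkeeping of the second and third steps: verifying that the corrections generated by the transformation are genuinely polynomial of the correct depth (so that one truly obtains a quasimodular, not merely almost-holomorphic, form), and—most critically—pinning down the multiplier and proving that it depends on $\vec a$ solely through $a=\sum_i a_i$ and matches the $Q_1(\cdot,a)$ case exactly. This hinges on the precise shape of the index matrix of $F_n$ and on carefully tracking how the exponential factor from the elliptic transformation interacts with the several-variable Taylor extraction; once this compatibility is secured, the statement refines \cref{thm:higherlevel} from the small group $m_{\widehat N}^{-1}\Gamma(\widehat N)m_{\widehat N}$ to the stated $\Gamma_1(N)$ result.
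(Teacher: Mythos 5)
Your overall strategy is the same as the paper's: identify $\langle Q_{\vec k}(\cdot,\vec a)\rangle_q$ with a Taylor coefficient of $F_n$ at the rational point $(0,\vec a)$, compute the multiplier produced by the (quasi-)Jacobi transformation, and use that the index $B(\vec z,\vec z)=-\tfrac12(z_1+\cdots+z_n)^2$ makes this multiplier $\e\bigl((c^2-cd+(d-1)^2)B(\vec a,\vec a)\bigr)$ depend on $\vec a$ only through $a=|\vec a|$, so that it cancels against the $Q_1(\cdot,a)$ case. Two points, however, are not covered. First, a minor one: for $a\in\z$ the triviality of the multiplier on $\Gamma_1(N)$ is \emph{not} a consequence of $c\equiv 0$, $d\equiv 1\pmod N$ alone, since $B(\vec a,\vec a)=-\tfrac12 a^2$ is only a half-integer; one also needs that $c^2-cd+(d-1)^2$ is always even (which holds because $ad-bc=1$ forces $c,d$ not both even). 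You flag the multiplier computation as the expected difficulty, but this parity step is the actual content there.

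The more serious gap is in the second bullet. Having matched the multipliers of numerator and denominator only shows that the quotient satisfies the quasimodular \emph{transformation law} for $\Gamma_1(N)$ with trivial multiplier; it does not show the quotient is a quasimodular form, which also requires holomorphy on $\mathfrak{h}$ and at all cusps. Your parenthetical remark only addresses well-definedness of the ratio as a power series. The paper devotes a separate argument to this: it writes $F_n(\vec z+\vec a)$ as $\Theta(z_1+\cdots+z_n+a)^{-1}$ times $\Theta(z_1+\cdots+z_n+a)\,F_n(\vec z+\vec a)$, notes that the Taylor coefficients of the second factor are holomorphic quasimodular forms, and that the Taylor coefficients of $\Theta(a)/\Theta(z_1+\cdots+z_n+a)$ are polynomials in the holomorphic quasimodular forms $\Theta^{(i)}(a)/\Theta(a)$; dividing by $\langle Q_1(\cdot,a)\rangle_q\propto\Theta(a)^{-1}$ then leaves exactly a product of holomorphic quasimodular forms. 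Without some version of this factorisation (or another argument controlling the $q$-expansions at every cusp, where $\Theta(\tau,a)$ and the Taylor coefficients of $F_n$ can individually acquire fractional or negative exponents), the conclusion that the quotient is a holomorphic quasimodular form does not follow.
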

\begin{remark}
Let $a\in \q$. The function~$\langle Q_1(\cdot,a)\rangle_q^{-1}$, equal to~$\Theta(a)$, is a so-called \emph{Klein form}; see, e.g., \cite{KL81}. 
Note  $\langle Q_1(\cdot,a)\rangle_q=0$ for $a\in \z$.
\end{remark}

\Cref{thm:higherlevel} should be compared with the results in \cite{GJT16}, where Griffin, Jameson and Trebat-Leder consider certain functions~$Q_k^{(p)}$ in the context of studying~$p$-adic analogues of the shifted symmetric functions~(\ref{def:qk}). Extending their definition to composite $m$, we let
\begin{align}\label{def:Qkp}
Q_k^{(m)}(\lambda) \defis Q_k(\lambda) \meno \frac{1}{m}\sum_{a=0}^{m-1} \e\Bigl(\frac{a}{m}\Bigr) \, Q_k\Bigl(\lambda,\frac{2a}{m}\Bigr) \qquad (k,m\geq 1).
 \end{align}
 Write~$\LambdaN$ for the graded $\q$-algebra generated by the functions~$Q_k^{(m)}$ for all $m\mid N$. 
 For primes $p$, these authors show that the~$q$-bracket of the functions $Q_k^{(p)}$ is quasimodular for~$\Gamma_0(p^2)$ and they suggest that it is likely that products of these functions also have quasimodular~$q$-brackets for the same group. Slightly more general, one can wonder whether for $f\in\LambdaN$ the $q$-bracket $\langle f\rangle_q$ is a quasimodular form for $\Gamma_0(N^2)$. Now, observe that the functions~$Q_k^{(m)}$ are elements of~$\Lambda^*(m)$. Hence, $\LambdaN\subset \Lambda^*(N)$. Therefore, for all odd~$N$ \cref{thm:higherlevel} implies that~$q$-brackets of elements of $\LambdaN$ are quasimodular for~$\Gamma_1(N^2)$. That these $q$-brackets are indeed quasimodular for the bigger group~$\Gamma_0(N^2)$ is the content of the next theorem. 

\begin{thm}\label{thm:bohigherlevel2}
Let $N\geq 1$ and $k\in \z$. 
For all homogeneous $f\in \LambdaN$ of weight~$k$ the function~$\langle f \rangle_q$ is a quasimodular form of weight~$k$ for~$\Gamma_0(N^2)$. 
\end{thm}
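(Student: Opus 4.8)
The plan is to deduce the statement from the quasi-Jacobi behaviour of the $n$-point functions together with the explicit generating series of the $Q_k^{(m)}$. The starting observation is that, inserting $\sum_{k\geq 0} Q_k(\lambda,a)\,z^{k-1} = \e(-\tfrac{1}{2}a)\,W_\lambda(z+a)$ into \eqref{def:Qkp}, the characters $\e(a/m)$ cancel against the prefactor $\e(-a/m)$ coming from the evaluation at $2a/m$, leaving the clean formula
\[\sum_{k\geq 0} Q_k^{(m)}(\lambda)\,z^{k-1} \= W_\lambda(z) \meno \frac{1}{m}\sum_{a=0}^{m-1} W_\lambda\Bigl(z+\tfrac{2a}{m}\Bigr).\]
Hence, for $k_i\geq 1$ and $m_i\mid N$, the $q$-bracket $\langle Q_{k_1}^{(m_1)}\cdots Q_{k_n}^{(m_n)}\rangle_q$ is the coefficient of $z_1^{k_1-1}\cdots z_n^{k_n-1}$ in
\[G(\tau,z_1,\ldots,z_n)\defis \prod_{i=1}^n\Bigl(1-\tfrac{1}{m_i}\sum_{a_i=0}^{m_i-1} S^{(i)}_{2a_i/m_i}\Bigr)\,F_n(\tau,z_1,\ldots,z_n),\]
where $S^{(i)}_c$ denotes the shift $z_i\mapsto z_i+c$. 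Since the $q$-bracket is linear, every homogeneous $f\in\LambdaN$ of weight~$k$ is a combination of such monomials with $\sum_i k_i=k$, and quasimodular forms of weight~$k$ for $\Gamma_0(N^2)$ form a vector space, it suffices to prove that each $G$ is a meromorphic quasi-Jacobi form for $\Gamma_0(N^2)$ and then to invoke the general principle, established earlier, that the Taylor coefficients at $\vec z=0$ of a quasi-Jacobi form for a group $\Gamma$ are quasimodular forms for $\Gamma$.

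The heart of the argument is the invariance of $G$ under $\Gamma_0(N^2)$. Recall that $F_n$ is a meromorphic quasi-Jacobi form for~$\sltwoz$ whose index lies in~$M_n(\tfrac{1}{2}\z)$. For $\gamma=\abcd\in\Gamma_0(N^2)$ I would expand $F_n(\gamma\tau,\vec z/(c\tau+d))$ via this transformation law and then substitute $z_i\mapsto z_i+\tfrac{2a_i}{m_i}$. The key elliptic--geometric fact is that, since $m_i\mid N$ and $N^2\mid c$, one has $m_i\mid c$, so the real $m_i$-torsion points are permuted amongst themselves by the modular transformation, the induced permutation being $a_i\mapsto d^{-1}a_i$ on $\z/m_i\z$ (a bijection because $ad\equiv 1\pmod{m_i}$). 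Consequently $\gamma$ carries the shifted function at one torsion point to the shifted function at another torsion point with the \emph{same} denominator, and the equal-weight averages $\tfrac{1}{m_i}\sum_{a_i}$ are invariant under this relabelling of $a_i\in\z/m_i\z$. Summing over a \emph{full orbit} of torsion points with equal weight is exactly what upgrades the invariance from $\Gamma_1(N^2)$, where every individual point is fixed, to all of $\Gamma_0(N^2)$: this symmetrisation is the conceptual reason that the diamond cosets $\Gamma_0(N^2)/\Gamma_1(N^2)$ act trivially, in contrast with the single, unsymmetrised points underlying \cref{thm:bohigherlevel}. (For odd $N$ one could instead begin from the $\Gamma_1(N^2)$-quasimodularity supplied by $\LambdaN\subset\Lambda^*(N)$ and \cref{thm:higherlevel}, and only verify invariance under the diamond cosets; the quasi-Jacobi route has the advantage of treating all $N$ uniformly.)

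The step I expect to be the main obstacle is the precise bookkeeping of the automorphy factors produced by the half-integral index. Shifting the elliptic variables by the rational points $\tfrac{2a_i}{m_i}$ while simultaneously applying the modular transformation introduces phases of the shape $\e\bigl(\tfrac{c}{c\tau+d}\,\vec t^{\,T} L_n\vec t\bigr)$ together with linear-in-$z$ phases coming from the index $L_n\in M_n(\tfrac{1}{2}\z)$, and also the additional polynomial correction terms that make $F_n$ only \emph{quasi}-Jacobi rather than Jacobi. I would verify that, because the torsion denominators divide $N$ and $c\equiv 0\pmod{N^2}$, all of these phases are trivial and the quasi-Jacobi corrections transform homogeneously, so that $G\mid\gamma=G$ with the correct weight-$k$ quasimodular factor; it is precisely here that the square in $N^2$ (rather than $N$) enters, the extra factor of~$2$ being supplied by the $\tfrac{2}{m}$ in the generating series interacting with the half-integral index, so that only $N^2\mid c$ makes every phase vanish. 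Once $G$ is known to be a quasi-Jacobi form for $\Gamma_0(N^2)$, extracting the coefficient of $z_1^{k_1-1}\cdots z_n^{k_n-1}$ and applying the general Taylor-coefficient result yields the claimed quasimodularity of $\langle f\rangle_q$ of weight~$k$ for $\Gamma_0(N^2)$, for every homogeneous $f\in\LambdaN$.
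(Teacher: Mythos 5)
Your proposal is correct and follows essentially the same route as the paper: the paper's proof of the general statement (\cref{thm:bohigherlevel2+}) likewise averages over the torsion points $2A$ with $A\in U(D)$, applies the transformation law of \cref{thm:main} to send $X=(0,2A)$ to $X\gamma=(2cA,2dA)$, uses $N^2\mid c$ to make $2cA$ integral so that the elliptic functional equation (with root of unity $\rho(X')\zeta_{X',X}=1$, thanks to the factor $2$ against the half-integral index) reduces everything to $(0,2dA)$, and then reindexes the sum over the permuted torsion points $a\mapsto da$. The phase bookkeeping you defer is exactly this $\rho$--$\zeta$ computation, and it goes through as you predict; the only cosmetic difference is that you phrase the argument at the level of the averaged $n$-point function $G$ rather than its Taylor coefficients.
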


It should be noted that the above theorems are true in greater generality. For example, the so-called hook-length moments introduced in \cite{CMZ16}, and studied in the context of harmonic Maass forms for a congruence subgroup in \cite{BOW20}, also have natural generalisations obtained by studying their corresponding~$n$-point functions. Similarly, the moment functions and their generalisations in \cite{vI20} can equally well be generalised to congruence subgroups. Therefore, we will state and prove the above results in \cref{sec:set-up} in a more general setting that allows application to the hook-length moments and moment functions.

\subsection{When is the~\texorpdfstring{$q$}{q}-bracket modular?}\label{sec:introwhen}
To illustrate the main ideas, consider again the Bloch--Okounkov algebra $\Lambda^*=\Lambda^*(1)$. For all $k\geq 0$, let $h_k\in \Lambda^*$ be given by
\[h_k(\lambda)\defis \sum_{r=0}^{\lfloor \frac{k}{2}\rfloor} \frac{Q_2(\lambda)^r \,Q_{k-2r}(\lambda)}{2^{r}\,(k-r-\frac{3}{2})_r\,r!} \qquad\qquad \left((x)_n:=x(x+1)\cdots(x+n-1)\right).\]
We show that the function~$h_k$ satisfies the following three properties:
\begin{enumerate}[{\upshape (i)}]\itemsep2pt
\item The difference~$h_k-Q_k$ is divisible by~$Q_2\, $;
\item The~$q$-bracket~$\langle h_k\rangle_q$ is a modular form (and not just a quasimodular form);
\item For $f\in \Lambda^*$ with~$\langle f \rangle_q$ modular and~${f-Q_{k}}$ divisible by~$Q_2\, $, we have $\langle f\rangle_q = \langle h_{k}\rangle_q\, .$ 
\end{enumerate}

Hence, one can think of the difference~${h_k-Q_k}$ as a correction term for~$Q_k$ with respect to the property of being a modular form under the~$q$-bracket. By the third property, this correction term is unique up to elements in the kernel of the~$q$-bracket. More generally, one has the following result. Here, $\modular$ denotes the algebra of modular forms for $\sltwoz$ \emph{with rational Fourier coefficients}.
\begin{thm}\label{thm:when}
For any algebra~$\mathcal{F}$ of functions on partitions satisfying the conditions in \cref{sec:whenconstruction}, there exists a computable subspace $\mathcal{M}=\mathcal{M}(\mathcal{F})\subseteq \mathcal{F}$ such that
\begin{enumerate}[{\upshape (i)}]\itemsep2pt
\item \label{mainit:splitting} $\F = \mathcal{M} \oplus Q_2\F;$
\item $\langle \mathcal{M}\rangle_q \subseteq \modular;$
\item $\langle Q_2\F\rangle_q \cap \modular = \{0\}.$
\end{enumerate}
\end{thm}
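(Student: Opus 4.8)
The plan is to reduce everything to a single structural identity for the $q$-bracket together with the standard description of quasimodular forms as a free module over~$\modular$ under a depth-raising derivation. Write $D=q\frac{d}{dq}$. Since $Q_2(\lambda)=|\lambda|-\tfrac1{24}$ and the $q$-bracket obeys the anomaly relation $\langle |\lambda|\,f\rangle_q = D\langle f\rangle_q+\langle|\lambda|\rangle_q\langle f\rangle_q$ (a direct consequence of $\sum_\lambda|\lambda|f(\lambda)q^{|\lambda|}=D(\sum_\lambda f(\lambda)q^{|\lambda|})$), with $\langle|\lambda|\rangle_q=\tfrac1{24}(1-E_2)$ and $E_2$ the weight-$2$ quasimodular Eisenstein series (so $\langle Q_2\rangle_q=-\tfrac1{24}E_2$), one gets the key identity
\[\langle Q_2 f\rangle_q \= \delta\langle f\rangle_q, \qquad \delta:=D-\tfrac1{24}E_2.\]
First I would establish this identity---the only place the special role of~$Q_2$ enters---and then analyse the operator~$\delta$ on the ring~$\quasimodular$.

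The second step is to show that $\delta$ acts like a Serre-type derivative: on a quasimodular form of weight~$k$ and depth~$p$ with top $E_2$-coefficient~$g_p$, the form $\delta g$ has weight $k+2$ and its coefficient of~$E_2^{p+1}$ equals $\tfrac{2k-2p-1}{24}\,g_p$. As $2k-2p-1$ is odd it never vanishes, so $\delta$ strictly raises the depth and is injective; moreover $\delta(1)=-\tfrac1{24}E_2\neq0$, so there is no exceptional weight. A depth induction (or a dimension count against $\dim\quasimodular_k=\sum_j\dim\modular_{k-2j}$) then yields $\quasimodular_k=\modular_k\oplus\delta\,\quasimodular_{k-2}$, equivalently $\quasimodular=\bigoplus_{j\ge0}\delta^{\,j}\modular$. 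Property~(iii) is immediate from this: working weight by weight, $\langle Q_2\mathcal{F}\rangle_q=\delta\langle\mathcal{F}\rangle_q$ consists of forms of depth~$\ge1$, which meet~$\modular$ only in~$0$.

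For the splitting (i)--(ii) I would again work weight by weight (this also gives computability) and set $V:=\langle\mathcal{F}\rangle_q\subseteq\quasimodular$ and $K:=\ker\langle\,\cdot\,\rangle_q$. The conditions of \cref{sec:whenconstruction} are used precisely to guarantee that $V$ is \emph{$\delta$-saturated}, i.e.\ $V=(V\cap\modular)\oplus\delta V$; concretely, that the $\delta$-lower part of any bracket $\langle f\rangle_q$ (in the decomposition $\quasimodular=\bigoplus_j\delta^{\,j}\modular$) again lies in~$V$, so that $V$ is the $\q[\delta]$-module generated by $V\cap\modular$. Granting this, I would choose a subspace $\mathcal{M}^{\mathrm{mod}}\subseteq\mathcal{F}$ mapping isomorphically onto $V\cap\modular$ under the bracket, and a complement $\mathcal{M}^{\ker}$ of $K\cap Q_2\mathcal{F}$ inside~$K$, and put $\mathcal{M}:=\mathcal{M}^{\mathrm{mod}}\oplus\mathcal{M}^{\ker}$. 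The intersection $\mathcal{M}\cap Q_2\mathcal{F}$ is trivial because $\langle\mathcal{M}^{\mathrm{mod}}\rangle_q\cap\delta V=0$ forces the modular part to vanish, leaving an element of $\mathcal{M}^{\ker}\cap(K\cap Q_2\mathcal{F})=0$; and $\mathcal{M}+Q_2\mathcal{F}=\mathcal{F}$ because saturation lets one subtract from any~$f$ a modular lift~$m_1\in\mathcal{M}^{\mathrm{mod}}$ and an element $Q_2 h$, landing in~$K$, which then splits along $K\cap Q_2\mathcal{F}$. Finally $\langle\mathcal{M}\rangle_q=(V\cap\modular)+0\subseteq\modular$, giving (i) and~(ii).

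The main obstacle is the $\delta$-saturation of~$V$: everything else is either the single bracket identity or formal linear algebra over the soft structure theorem $\quasimodular=\bigoplus_j\delta^{\,j}\modular$, whereas showing that the lower $\delta$-part of every $q$-bracket is itself realised by an element of~$\mathcal{F}$ is exactly the content carried by the hypotheses on~$\mathcal{F}$ (closure under the relevant operators and the availability of modular lifts). I expect the bookkeeping with~$K$ to be routine once saturation is in hand; the one point to keep in mind is that $\mathcal{M}$ must be allowed to meet~$K$---already for $\Lambda^*$ one has $\dim\mathcal{M}_k>\dim\modular_k$ in general (e.g.\ in weight~$6$)---so the kernel part $\mathcal{M}^{\ker}$ cannot be omitted.
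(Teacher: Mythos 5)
Your reduction of the theorem to a single statement about the image $V:=\langle\F\rangle_q$ inside $\quasimodular$ is sound, and two of your three ingredients are complete and correct: the bracket identity $\langle Q_2f\rangle_q=\delta\langle f\rangle_q$ with $\delta=D-\tfrac1{24}E_2$, and the computation that $\delta$ raises the depth by exactly one (the coefficient $\tfrac{2k-2p-1}{24}$ being nonzero), which gives $\quasimodular_k=\modular_k\oplus\delta\,\quasimodular_{k-2}$ and hence property~(iii); this is essentially the paper's argument for~(iii), made explicit. The linear algebra deducing (i)--(ii) from $\delta$-saturation, including the observation that $\mathcal{M}$ must be allowed to meet the kernel of the $q$-bracket, is also correct (the paper sidesteps the kernel bookkeeping by taking $\mathcal{M}=\pi\mathcal{M}^0$ for a complement $\mathcal{M}^0$ of $Q_2\F$, which achieves the same thing more directly).

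The genuine gap is the $\delta$-saturation of $V$ itself, which you state, correctly identify as the main obstacle, and then do not prove. It is not a formal consequence of $V$ being a graded $\delta$-stable subspace of $\quasimodular$: a graded subspace can easily contain a quasimodular form $g=m_0+\delta g_1$ whose (unique) lower part $g_1$ does not lie in $V$, and nothing in ``closure under the relevant operators'' as you invoke it rules this out. Saturation is exactly the content of the theorem, and in the paper it is proved constructively (\cref{thm:when+}): one exhibits an explicit operator
$\pi(f)=\sum_{r\ge0}\sum_{s=0}^r(-1)^r\,Q_2^r\,\mathscr{M}^{r-s}\mathscr{D}^s f\big/\bigl((m-r-\tfrac32)_r(r-s)!\,s!\bigr)$
on $\F$ with $\pi(f)\equiv f\bmod Q_2\F$, and shows $\langle\pi(f_{\vec{\ell}})\rangle_q=\xi_{\vec{\ell}}(\phi_n)$, which is a modular form by \cref{thm:main} on Taylor coefficients of strictly meromorphic quasi-Jacobi forms (via \cref{prop:equiv}). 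This is where the hypotheses of \cref{sec:whenconstruction} actually get used: the operator $\mathscr{D}$ realises $(\dtau+\sum_jz_j\dd_{z_j})$ on the $n$-point functions, and the index operator $\mathscr{M}$ of \cref{defn:Q} --- which does not appear in your proposal at all --- realises the twist by powers of the index form $B_M(\vec{z},\vec{z})$ that enters the modular combinations $\xi_{\vec{\ell}}$. Without this construction (or an equivalent one producing, for each $f\in\F$, an element of $Q_2\F$ whose bracket is the non-modular part of $\langle f\rangle_q$), the proof is incomplete.
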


The algebra $\mathcal{F}=\Lambda^*$ is an example to which the above result applies. In particular, by using~(\ref{mainit:splitting}) inductively, each $f\in \Lambda^*$ can uniquely be written as 
a polynomial in~$Q_2$ with coefficients $g_i\in\mathcal{M}$, i.e.,
\[f\= \sum_{i\geq 0} g_i\, Q_2^i\,.\]
As $\langle g \rangle_q=0$ if and only if $\langle Q_2\,g\rangle_q=0$ for $g\in \Lambda^*$, the following are equivalent:
\begin{enumerate}[\upshape (a)]\itemsep2pt
\item $\langle f \rangle_q$ is modular;
\item $\langle f \rangle_q = \langle g_0 \rangle_q\,$;
\item $\langle g_i \rangle_q = 0$ for all $i>0$.
\end{enumerate}

The functions~$h_\lambda\in \Lambda^*$ defined in \cite{vI18} form a basis for the space~$\mathcal{M}(\Lambda^*)$. The method of proof in this work (i.e., using quasi-Jacobi forms) allowed us to state \cref{thm:when} for many algebras~$\mathcal{F}$, whereas the results in \cite{vI18} could not easily be generalised to other algebras than~$\Lambda^*$. In \cref{sec:when}, we prove this theorem and apply this result to the algebra~$\Lambda^*$ of shifted symmetric functions, to its extensions to higher levels~$\Lambda^*(N)$ and to the aforementioned algebra of moment functions.

\subsection{Quasi-Jacobi forms and their Taylor coefficients}
Recall $\tau\in \mathfrak{h}$, the complex upper half plane, $z\in \c$ and $\e(x)=e^{2\pi\ii x}, q=\e(\tau)$. 
The \emph{Kronecker--Eisenstein series}~$E_k$ are given by
\[\label{eq:kronecker-eisenstein} E_k(\tau,z) \defis \sum_{m,n\in\z}\hspace{-6pt}{}^{\raisebox{3pt}{\footnotesize $e$}}\hspace{5pt} \frac{1}{(z+m\tau+n)^k}\qquad (k\geq 1) .\]
Here, the letter `e' indicates we perform the \emph{Eisenstein summation} procedure, given by $\sum_{m,n\in\z}\hspace{-28pt}{}^{\raisebox{1pt}{\footnotesize $e$}}\hspace{23pt}:=\lim_{M\to\infty} \sum_{m=-M}^M\bigl(\lim_{N\to\infty} \sum_{n=-N}^N\bigr)$. Note this summation procedure is only important for $k=1$ and $k=2$, as for $k\geq 3$ the series converges absolutely.

Do these series $E_k\mspace{1mu}$, the Jacobi theta function~$\theta$ and its derivatives, and the Bloch--Okounkov $n$-points functions $F_n$ have a common property? 

To answer this question, observe that the first two functions 
can be interpreted as generating functions of the \emph{Eisenstein series}~$e_k\mspace{1mu}$,  given by\footnote{Here, we follow the notation in \cite{W99}, which goes back to Eisenstein. Often, $e_k$ is denoted by $G_k \mspace{1mu}.$}
\[\label{def:eis}\hspace{40pt} e_k(\tau) \defis \! \sum_{\substack{(m,n)\in \z^2\\ (m,n)\neq (0,0)}}\hspace{-15pt}{}^{'}\hspace{10pt} \frac{1}{(m\tau+n)^k} \qquad (k\geq 2).
\]
Namely,
\begin{align}\label{eq:theta=zexp}
\Theta(\tau,z)\defis &\frac{\theta(\tau,z)}{\theta'(\tau,0)} \=  2\pi\ii z \exp\Bigl(-\sum\nolimits_{m\geq 1} \frac{e_{2m}(\tau)}{2m}z^{2m}\Bigr),\\[5pt]
E_k(\tau,z) \= & \frac{1}{z^k} \+ (-1)^k\sum_{m\geq k/2 }\binom{2m-1}{k-1}\, e_{2m}(\tau)\, z^{2m-k}. \end{align}
In particular, the Taylor (or Laurent) coefficients around $z=0$ of these functions are polynomials in Eisenstein series, hence quasimodular forms---a property which is shared by the stronger notion of a Jacobi form. Just as~$e_2$ transforms as a \emph{quasi}modular form, so do~$\theta^{(r)}$ and~$F_n$ transform as \emph{quasi}-Jacobi forms. Our answer to the question is that all these functions are quasi-Jacobi forms, introduced in \cref{sec:quasiJacforms}.

Quasi-Jacobi forms transform comparable to quasimodular forms, as we explain now. There is a slash action (see \cref{defn:slash}) on all functions ${\phi\colon\mathfrak{h}\times\c^n\to\c}$ for all $\gamma \in \sltwoz$ and for all $X\in \XX$ (so actually for the action of their semidirect product $\sltwoz \ltimes \XX$). In case~$\phi$ is a quasi-Jacobi form of weight~$k$ and index~$M$, there exist quasi-Jacobi forms~$\phi_{i,\vec{j}}\mspace{1mu}$, indexed by a finite subset of $\z_{\geq 0}\times \z_{\geq 0}^n\mspace{1mu}$, such that for all $\gamma=\abcd$ and ${X=\lm\in \XXZ}$ one has
\begin{align}\label{eq:action-quasimod}
(\phi|_{k,M}\mspace{2mu}\gamma)(\tau,z_1,\ldots,z_n) &\= \sum_{\substack{i, \vec{j}}} \phi_{i,\vec{j}}(\tau,z_1,\ldots,z_n) \Bigl(\frac{c}{c\tau+d}\Bigr)^{i+|\vec{j}|} \, \frac{z_1^{j_1}\cdots z_n^{j_n}}{(2\pi\ii)^i},\qquad\phantom{.}\\
\label{eq:action-quasiell} (\phi|_M\mspace{2mu}X)(\tau,z_1,\ldots,z_n) &\= \sum_{\vec{j}}\phi_{0,\vec{j}}(\tau,z_1,\ldots,z_n)\,(-\lambda_1)^{j_1}\cdots(-\lambda_n)^{j_n},
\end{align}
together with similar formulas for each $\phi_{i,\vec{j}}|_{k,M}\mspace{2mu}\gamma$ and~$\phi_{i,\vec{j}}|_M\mspace{2mu} X$.
Jacobi forms are quasi-Jacobi forms for which the right-hand side in both equations above equals~$\phi$. 

The quasi-Jacobi forms~$\phi$ we study are \emph{strictly meromorphic}, i.e., meromorphic such that if $\vec{z}\in \r^n\tau+\r^n$ is a pole of $\phi(\tau,\cdot)$ for some $\tau\in \mathfrak{h}$, it is a pole for almost all $\tau\in \mathfrak{h}$. This is a new notion we introduce in this work; see \cref{sec:sm}. The Weierstrass $\wp$-function is an example of a strictly meromorphic Jacobi form, but its inverse is not. Also, $E_k,\Theta^{-1}$ and $F_n$ are strictly meromorphic. If the number of elliptic variables~$n$ satisfies $n=1$, this condition is equivalent to the statement that all poles of~$\phi$ are torsion points $z\in \q\tau+\q$. This is crucial to obtain mock modular forms as Fourier coefficients of meromorphic Jacobi forms (see \cite{Zwe02,DMZ14}). For $n>1$ the Jacobi transformation properties of~$\phi$ imply a more complicated restriction on the positions of the poles. By studying the orbits of the action of~$\sltwoz$ on~$(\r^2)^n$ we prove the following result. 
\begin{thm}\label{thm:poleshyperplane}
Let~$\phi$ be a strictly meromorphic quasi-Jacobi form and $\tau\in \mathfrak{h}$. Then all poles~$\vec{z}$ of~$\phi(\tau,\cdot)$ lie in finite union of rational hyperplanes
\[ s_1z_1+\ldots +s_nz_n \in u\tau+v\]
with $s_1,\ldots,s_n\in \z$ and $u,v\in \q/\z$. 
\end{thm}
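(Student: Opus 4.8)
The plan is to exploit the transformation laws \eqref{eq:action-quasimod} and \eqref{eq:action-quasiell} together with the strict meromorphy hypothesis, reducing the statement to a combinatorial/orbit analysis of the $\sltwoz\ltimes\XXZ$ action on pole loci. First I would fix some $\tau_0\in\mathfrak{h}$ and let $P\subset\c^n$ denote the pole set of $\phi(\tau_0,\cdot)$; since $\phi$ is meromorphic, $P$ is (locally) a union of analytic hypersurfaces. The key reduction is that strict meromorphy lets me pass from ``pole for some $\tau$'' to ``pole for almost all $\tau$'': by definition, if $\vec z\in\r^n\tau+\r^n$ is a pole for one generic $\tau$ then it persists for almost all $\tau$, so the pole locus moves \emph{rigidly} with $\tau$ in the coordinates $(\vec r,\vec s)$ where $\vec z=\vec r\,\tau+\vec s$ with $\vec r,\vec s\in\r^n$. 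Thus I would reparametrise the poles by their real ``coordinates'' $(\vec r,\vec s)\in(\r^n)^2=( \r^2)^n$ and show the pole set is a $\tau$-independent subset $\Sigma\subset(\r^2)^n$.

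Next I would use the elliptic transformation \eqref{eq:action-quasiell}: applying $|_M X$ for $X=(\vec\lambda,\vec\mu)\in\XXZ$ expresses $\phi|_M X$ as a finite sum $\sum_{\vec j}\phi_{0,\vec j}\,(-\lambda_1)^{j_1}\cdots(-\lambda_n)^{j_n}$, and since the slash action by $X$ amounts to the translation $z_i\mapsto z_i+\lambda_i\tau+\mu_i$, this forces $\Sigma$ to be invariant under the integer-translation lattice: if $(\vec r,\vec s)\in\Sigma$ then $(\vec r+\vec\lambda,\vec s+\vec\mu)\in\Sigma$ for all $(\vec\lambda,\vec\mu)\in\XXZ$. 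Hence $\Sigma$ descends to a subset of the compact torus $(\r/\z)^{2n}$, and it suffices to analyse its $\sltwoz$-orbits there. Applying the modular transformation \eqref{eq:action-quasimod} for $\gamma=\abcd\in\sltwoz$, the substitution $(\tau,\vec z)\mapsto(\tfrac{a\tau+b}{c\tau+d},\tfrac{\vec z}{c\tau+d})$ and comparison with the finitely many quasi-Jacobi forms $\phi_{i,\vec j}$ on the right-hand side shows that $\Sigma$ (as a subset of $(\r^2)^n$, equivalently of $(\r/\z)^{2n}$) is stable under the diagonal $\sltwoz$-action $(\vec r,\vec s)\mapsto(\vec r,\vec s)\gamma^{-1}$ on each pair.

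The heart of the argument, and the step I expect to be the main obstacle, is the orbit analysis: I must show that an $\sltwoz$-invariant, lattice-periodic pole locus that is a proper analytic subvariety (hence of real codimension at least one in each fibre) is necessarily contained in a \emph{finite} union of the rational hyperplanes $s_1z_1+\cdots+s_nz_n\in u\tau+v$ with $s_i\in\z$, $u,v\in\q/\z$. The idea is that the defining linear data of a candidate hyperplane $\sum_i \ell_i z_i=c$ is a covector $\vec\ell$; strict meromorphy together with the rigid $\tau$-dependence forces $\vec\ell$ to have \emph{commensurable}, hence (after scaling) integer, entries, because an irrational slope hyperplane could not move rigidly and remain periodic under $\XXZ$. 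The $\sltwoz$-invariance then acts on the constant $c$ through its coordinates $(u,v)$; I would show these lie in $\q/\z$ by arguing that the $\sltwoz$-orbit of any pole-hyperplane is finite (the periodicity confines orbits to the torus, and a covector with integer entries has only finitely many $\sltwoz$-translates modulo the lattice precisely when its reduction is torsion), which forces rationality of $(u,v)$ and bounds the number of hyperplanes. The delicate point is ensuring that ``pole of $\phi(\tau_0,\cdot)$ lying in $\r^n\tau_0+\r^n$'' genuinely yields a \emph{single} rigid real-linear hyperplane rather than a $\tau$-varying family; this is exactly where the new notion of strict meromorphy from \cref{sec:sm} must be invoked carefully, and I would isolate it as a lemma before running the orbit count.
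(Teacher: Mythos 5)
Your overall architecture coincides with the paper's: strict meromorphy is used to replace the $\tau$-dependent pole set by a $\tau$-independent locus $Q_\phi\subset\XXR$ (the paper's factorisation $P_\phi\simeq\mathfrak{h}\times Q_\phi$), the elliptic and modular transformation laws make $Q_\phi$ invariant under translation by $\XXZ$ and under the right action of $\sltwoz$, and one then argues that a proper, closed, invariant locus must be a finite union of rational hyperplanes; this is exactly \cref{prop:finitepoles} together with \cref{cor:poleshyperplane}. One remark on your set-up: the paper first reduces from quasi-Jacobi to genuine Jacobi forms via the decomposition $\phi=\sum\psi_{i,\vec{j}}\,\mathbbm{e}_2^i\prod_r A(z_r)^{j_r}$ of \cref{prop:qjf}, because for a quasi-form the right-hand sides of \eqref{eq:action-quasimod} and \eqref{eq:action-quasiell} involve the other members $\phi_{i,\vec{j}}$, so the invariance of the pole set under $\gamma$ is not immediate from the transformation law alone; your direct argument would need to address this (minor) point.

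The genuine gap is in the step you yourself flag as the heart of the argument. Your sketch starts from ``the defining linear data of a candidate hyperplane $\sum_i\ell_iz_i=c$'', i.e.\ it \emph{presupposes} that the irreducible components of the polar divisor are linear. For $n\ge 2$ this is precisely what must be proved: a priori the polar set of a meromorphic function is only an analytic hypersurface. The paper supplies two ingredients that are absent from your proposal. First, a Diophantine approximation theorem for $\sltwoz$ (\cref{prop:approx}, with \cref{lem:approx1} for $n=1$): if $X=\lm\in Q_\phi$, its orbit $X\gamma$ cannot be dense (it stays bounded away from any point off the closed, positive-codimension set $Q_\phi$), hence $X$ fails to be \emph{generic} in the sense of \cref{def:generic}, which produces a nonzero integer covector $\vec{s}$ with $\vec{s}\cdot\vec{\lambda}\in\z$. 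This orbit-density dichotomy --- not $\XXZ$-periodicity --- is where the integrality of $\vec{s}$ comes from. Second, a local analytic argument promoting these pointwise relations to a single linear equation on each branch: parametrise the branch by the implicit function theorem, observe that the set of possible relations $(\vec{s},t)$ is countable while the zero set of a nonconstant real-analytic function has measure zero, so one relation holds on the whole branch; the Cauchy--Riemann equations then upgrade the real-linear relation to $\vec{s}\cdot\vec{z}=u\tau+v$, and Weierstrass preparation plus connectivity of the solution set in the torus (using primitivity of $\vec{s}$) extends it to the full hyperplane. Your arguments for rationality of $(u,v)$ (non-density of the pole image) and for finiteness (compactness, together with dividing out each hyperplane by a suitable $\Theta$-factor and inducting) do agree with the paper's, but without the two steps above the theorem is not established.
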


In this work, we determine conditions on the Taylor coefficients (or rather Laurent coefficients in case we are expanding around a pole) of a meromorphic function $\phi\colon\mathfrak{h}\times \c^n\to \c$ for it to be a (quasi-)Jacobi form. This is the technical result we need to answer Question~\ref{q:1} and~\ref{q:2}. In one direction, by the work of Eichler and Zagier \cite{EZ85}, it is known that for a Jacobi form the Taylor coefficients around rational lattice points are quasimodular, or equivalently that certain linear combinations~$\xi_\ell^X$ of derivatives of these Taylor coefficients are modular. For example,~$\Theta$ is a weak Jacobi form, hence it satisfies
\[(\Theta|X|\gamma)(\tau,z) \propto (\Theta|X\gamma)(\tau,z)\]
for all $X=(\lambda,\mu)\in M_{1,2}(\q)$ and~$\gamma \in \sltwoz$, where 
the implicit multiplicative constant is a root of unity depending on~$X$ and~$\gamma$. Hence, it follows that the Taylor coefficients of~$\Theta$ around $z=\lambda\tau+\mu$ (after multiplying with a certain power of~$q$) are quasimodular for some subgroup~$\Gamma_X$ of~$\sltwoz$ consisting of~$\gamma$ for which $X\gamma-X\in M_{1,2}(\z)$ (see~(\ref{eq:gammaX})). In contrast, the weak quasi-Jacobi form~$\Theta'(\tau,z)=\frac{\theta'(\tau,z)}{\theta'(\tau,0)}$ transforms as
\begin{align}\label{eq:transftheta'}(\Theta'|X|\gamma)&(\tau,z) \,\propto \\
&(\Theta'|X\gamma)(\tau,z) \+ \frac{cz}{c\tau+d}(\Theta|X\gamma)(\tau,z)\+\lambda \,(\Theta|X\gamma)(\tau,z) \meno \frac{\lambda}{c\tau+d}(\Theta|X\gamma)(\tau,z).\end{align}
Therefore, the Taylor coefficients of~$\Theta'+\lambda\mspace{1mu} \Theta$ around~$z=\lambda\tau+\mu$, rather than of~$\Theta'$, give rise to quasimodular forms for~$\Gamma_X\mspace{1mu}$. We write $g_\ell^X(\Theta')$ for the $\ell$th Taylor coefficient of~$\Theta'+\lambda\mspace{1mu} \Theta$. In \cref{defn:Taylor} we extend this notation by writing $g_\ell^X(\phi)$ for the `correct' Taylor coefficient of a quasi-Jacobi form~$\phi$.  Moreover, in~\eqref{eq:xi} we define $\xi^{X}_{\vec{m}}(\phi)$ to be a certain combination of the derivatives of~$g_{\vec{\ell}}^X(\phi)$. 

 The main result on Taylor coefficients of quasi-Jacobi forms is given by \cref{thm:main} and summarized in the following result. For simplicity, we assume that $\vec{s}=(s_1,\ldots,s_n)$ in \cref{thm:poleshyperplane} is always a standard basis vector, e.g., we allow $\wp(z_1)\wp(z_2+\tfrac{1}{2})$, but we do not allow $\wp(z_1-z_2)$. 

\begin{thm}\label{thm:1}
Let~$\phi$ be a strictly meromorphic quasi-Jacobi form of weight~$k$ and index~$M$ whose poles~$\vec{z}$ lie on a finite collection of hyperplanes of the form
$ z_j \in u\tau+v$
with $j\in \{1,\ldots,n\}$ and $u,v\in \q/\z$. Then
\begin{enumerate}[\upshape (i)]
\item\label{mainit:mt1} for all~$X\in \XX$ and~$\vec{\ell}\in \z^n$ the `Taylor coefficients'~$g_{\vec{\ell}}^X(\phi)$ 
are quasimodular forms of weight~$k+\ell_1+\ldots+\ell_n$ for the group~$\Gamma_X$ and satisfy the functional equations~\eqref{eq:ellcoeff} and~\eqref{eq:glX}. 
\item\label{mainit:mt2} for all~$X\in \XX$ and~$\vec{m}\in \z^n$ the functions~$\xi^{X}_{\vec{m}}(\phi)$
are modular forms of weight~$k+m_1+\ldots+m_n$ for~$\Gamma_X$ and satisfy the functional equation~\eqref{eq:ellcoeff}.
\end{enumerate}
\end{thm}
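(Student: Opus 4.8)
The plan is to deduce \cref{thm:1} from the general transformation laws \eqref{eq:action-quasimod} and \eqref{eq:action-quasiell} by analyzing how the slash actions of $\sltwoz$ and of $\XX$ interact on the level of Taylor (or Laurent) coefficients. First I would fix a pole-datum $X=\lm\in\XX$ and expand $\phi$ around $\vec{z}=\vec{\lambda}\tau+\vec{\mu}$, defining the coefficients $g_{\vec\ell}^X(\phi)$ as in \cref{defn:Taylor}: these are not the naive Taylor coefficients of $\phi$ but the coefficients of the modified function obtained by ``completing'' $\phi$ with lower-order quasi-Jacobi forms $\phi_{i,\vec j}$, exactly as \eqref{eq:transftheta'} shows is necessary for $\Theta'$. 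The point is that the correction terms in \eqref{eq:action-quasimod} and \eqref{eq:action-quasiell} are themselves built from the $\phi_{i,\vec j}$, so the ``right'' Taylor coefficient absorbs precisely those corrections.

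The core computation is to substitute the Laurent expansion of $\phi$ (and of each $\phi_{i,\vec j}$) into the quasi-elliptic law \eqref{eq:action-quasiell} and the quasi-modular law \eqref{eq:action-quasimod}, and then extract the coefficient of $z_1^{\ell_1}\cdots z_n^{\ell_n}$ on both sides. For the elliptic action this should show that $g_{\vec\ell}^X(\phi)$ is invariant (up to the root-of-unity automorphy factor coming from the index $M$) under those $\gamma$ for which $X\gamma-X\in\XXZ$, i.e.\ under $\Gamma_X$; this is the content of the functional equations \eqref{eq:ellcoeff} and \eqref{eq:glX}. For part \eqref{mainit:mt1}, feeding the expansion into \eqref{eq:action-quasimod} and matching powers of $z$ against powers of $\frac{c}{c\tau+d}$ yields that $g_{\vec\ell}^X(\phi)$ transforms under $\Gamma_X$ with an automorphy factor $(c\tau+d)^{k+\ell_1+\cdots+\ell_n}$ plus a sum of lower-depth correction terms proportional to $\bigl(\tfrac{c}{c\tau+d}\bigr)^{r}$ — precisely the definition of a quasimodular form of weight $k+|\vec\ell|$ and some depth. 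The hypothesis that $\vec s$ is always a standard basis vector, so that poles lie on hyperplanes $z_j\in u\tau+v$, is what keeps the expansion variable-by-variable and makes the index $M$ act diagonally, so that the combinatorics of extracting coefficients stays tractable.

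For part \eqref{mainit:mt2}, I would then show that the $\xi^X_{\vec m}(\phi)$, defined in \eqref{eq:xi} as specific linear combinations of derivatives of the $g_{\vec\ell}^X(\phi)$, are genuinely \emph{modular} and not merely quasimodular. The mechanism is the classical one of Eichler--Zagier \cite{EZ85}: the raising/lowering structure on quasimodular forms means that an appropriate combination of a quasimodular form with derivatives of its lower-weight companions kills all the $\bigl(\tfrac{c}{c\tau+d}\bigr)^{r}$ correction terms. Concretely, I expect $\xi^X_{\vec m}$ to be arranged so that the depth-lowering contributions telescope against the correction terms produced by differentiating $g^X_{\vec\ell}$, leaving a clean automorphy factor $(c\tau+d)^{k+m_1+\cdots+m_n}$; this gives modularity for $\Gamma_X$ together with the same equivariance \eqref{eq:ellcoeff} under the elliptic part.

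The main obstacle I anticipate is \emph{bookkeeping the corrections coherently across both actions simultaneously}. The functions $\phi_{i,\vec j}$ in \eqref{eq:action-quasimod}--\eqref{eq:action-quasiell} satisfy their own transformation laws (``together with similar formulas''), so the completion defining $g^X_{\vec\ell}$ must be checked to be compatible with the group law of $\sltwoz\ltimes\XX$ — that is, one must verify a cocycle-type consistency so that expanding around $X\gamma$ and then applying $\gamma'$ agrees with expanding around $X(\gamma\gamma')$. Establishing that the ``correct'' Taylor coefficient is well-defined and that its completion terms match exactly the correction terms forced by \eqref{eq:action-quasiell} is the delicate part; once that alignment is in place, both \eqref{mainit:mt1} and the modularity in \eqref{mainit:mt2} follow by comparing coefficients of monomials in $\vec z$ and in $\tfrac{c}{c\tau+d}$, with the standard-basis-vector hypothesis ensuring the index contributes only diagonal exponential factors rather than cross terms in the $z_i$.
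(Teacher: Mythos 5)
Your proposal follows essentially the same route as the paper: the ``corrected'' Taylor coefficients are exactly the Laurent coefficients of the double-slash completion $\phi\|X$, the cocycle-type compatibility you flag as the delicate point is precisely what \cref{prop:phi||X} establishes, the coefficient extraction from \eqref{eq:action-quasimod} is \cref{thm:3.2}, and the telescoping that turns the $\xi^X_{\vec{m}}$ into genuine modular forms is \cref{prop:equiv}. The only quibble is attributional: \eqref{eq:glX} records the action of $\dtau$ on the coefficients coming from the \emph{modular} law, while \eqref{eq:ellcoeff} is the elliptic functional equation, but this does not affect the soundness of the plan.
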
 
\Cref{defn:Taylor} and Equation~\eqref{eq:xi} (defining $g_{\vec{\ell}}^X(\phi)$ and $\xi^{X}_{\vec{m}}(\phi)$) use the functions~$\phi_{i,\vec{j}}$ in~\eqref{eq:action-quasimod} and~\eqref{eq:action-quasiell}, which are uniquely determined by the quasi-Jacobi form~$\phi$. The results of \cref{ch:Taylor} also show that the four collections of functions $\{\phi\}$, $\{\phi_{i,\vec{j}}\}$, $\{g_{\vec{\ell}}^X\}$ and $\{\xi^{X}_{\vec{m}}\}$ determine each other in a computable way, and give explicit conditions on the collections $\{\phi_{i,\vec{j}}\}$, $\{g_{\vec{\ell}}^X\}$ and $\{\xi^{X}_{\vec{m}}\}$ that imply that they arise from a quasi-Jacobi form. 

In \cref{sec:qjf}, we introduce strictly meromorphic quasi-Jacobi forms and prove \cref{thm:poleshyperplane} and \cref{thm:1}. The quasimodular transformation of the Taylor coefficients 
 of~$F_n$ will then imply the results on the Bloch--Okounkov theorem for congruence subgroups in \cref{sec:cs}. Moreover, in \cref{sec:when}, we see that ``pulling back'' the functions~$\xi^X_{\vec{m}}$ under the~$q$-bracket leads to the construction of the functions~$h_{\vec{m}}$ for which the~$q$-bracket is modular.

\subsection*{Acknowledgement}
I am very grateful for the encouragement and feedback received from both my supervisors Gunther Cornelissen and Don Zagier, especially taking into consideration the difficulties to meet each other in person. Also, I want to thank Georg Oberdieck for introducing me to quasi-Jacobi forms, and Frits Beukers and Oleg German for some interesting remarks on the Diophantine approximation problem for $\sltwoz$. 


\section{Quasi-Jacobi forms}\label{sec:qjf}
The definition of a Jacobi form in \cite{EZ85} has been generalised in many ways. We provide a generalization that incorporates several elliptic variables, characters, weakly holomorphic and meromorphic functions, and quasi-Jacobi forms \cite{Hat15,Lib11, OP19}. In particular, the normalized Jacobi theta function~$\Theta$, the Kronecker--Eisen\-stein series~$E_k$ and the Bloch--Okounkov~$n$-point functions~$F_n$ from the introduction will be examples throughout this paper. 

For $\tau\in \mathfrak{h}$, the complex upper half plane, we write $L_\tau=\z\tau+\z$. As is customary, we often omit the dependence on the modular variable~$\tau$ in any type of Jacobi form, e.g.\ we write~$\Theta(z)$ for~$\Theta(\tau,z)$. We write $\vec{a}=(a_1,\ldots,a_n)$ for a vector of elements and we write~$|\vec{a}|$ for~$a_1+\ldots+a_n\mspace{1mu}.$  \label{page: notation} For vectors~$\vec{a}$ and~$\vec{b}$, we write~$\vec{a}^{\vec{b}}$ to denote~$\prod_{i} a_i^{b_i}$.  Also, given $X\in \XXR$, we denote the rows of $X$ by $\vec{\lambda}$ and $\vec{\mu}$, i.e., $X=\lm$. Moreover, for $\gamma \in \sltwoz$, we write $\gamma=\abcd$ and we write $\vec{\lambda}^\gamma$ and $\vec{\mu}^\gamma$ for the rows of $X\gamma$, i.e., $X\gamma = \lmgamma.$ 


\subsection{Strictly meromorphic Jacobi forms}\label{sec:sm}
The definition of a strictly meromorphic Jacobi form is subtle, excluding many meromorphic functions transforming as Jacobi forms. For example, the~$j$-invariant, the reciprocal of an Eisenstein series~$e_k\mspace{1mu}$, or $\wp/\Delta$, where $\wp$ is the Weierstrass~$\wp$-function and~$\Delta$ the modular discriminant, are all \emph{non-examples} of strictly meromorphic Jacobi forms. Namely, although a strictly meromorphic Jacobi form is meromorphic we want its Taylor coefficients in the elliptic variables to be holomorphic (rather than weakly holomorphic or meromorphic) quasimodular forms. But with this not everything has been said, the definition is even stricter: we require the poles of~$\phi$ to be ``constant'' in the modular variable~$\tau$. Consider, for example, the Weierstrass~$\wp$-function, which is an example of a strictly meromorphic Jacobi form. 
For every fixed $z\in \c$ (e.g., $z=\ii$) the function~$\wp(\tau,z)$ is a meromorphic function of~$\tau$, as~$\wp(\tau,z)$ has a pole whenever~$\tau$ is such that $z\in L_\tau$ (e.g., $\tau=z=\ii$). However, for $\lambda,\mu\in \r$, the function $\wp(\tau,\lambda\tau+\mu)$ is holomorphic, unless both $\lambda\in \z$ and $\mu\in \z$. That is, all poles of $\wp(\tau,z)$ are given by $z\in L_\tau\mspace{1mu}$. On the contrary, we will see that $\wp^{-1}$ is not a strictly meromorphic Jacobi form, as the poles of~$\wp^{-1}$ are not ``constant'' in~$\tau$. 

Before introducing strictly meromorphic Jacobi forms, we first recall the Jacobi group and its action on (meromorphic) functions.

\begin{defn} 
For all~$n\in \n$, the \emph{(discrete) Jacobi group}~$\Gamma^J_n$ of rank~$n$ is defined as the semi-direct product~${\Gamma^J_n := \sltwoz \ltimes \XXZ}$ with respect to the right action of~$\sltwoz$ on~$\XXZ$. 
\end{defn}
That is, an element of~$\Gamma_n^J$ is a pair~$(\gamma,X)$ with~$\gamma \in \sltwoz, X\in \XXZ$ and satisfies the group law~$(\gamma,X)(\gamma',X')=(\gamma\gamma',X+X\gamma')$.

Let $M\in \MM$. We often make use of the associated 
bilinear form $B_M$, given by\footnote{It is standard to let $Q_M(\vec{z})=\tfrac{1}{2}\vec{z}M\vec{z}^t$ be the associated quadratic form. We refrain from using this notation to avoid a clash with the shifted symmetric functions denoted by $Q_k\mspace{1mu}$.}
\[
B_M(\vec{z},\vec{z}') = \vec{z} M \vec{z}'^t.\]

\begin{defn}\label{defn:slash} Given a meromorphic function~$\phi\colon\mathfrak{h}\times \c^n \to \c$, $k\in \z$ and $M\in \MM$, for all $(\gamma,X)\in \Gamma_n^J$ we let
\begin{enumerate}[\upshape(i)]\itemsep10pt
\item $\displaystyle(\phi|_{k,M}\mspace{2mu} \gamma)(\tau,\vec{z}):= (c\tau+d)^{-k}\,\e\Bigl(\frac{-c\, B_M(\vec{z},\vec{z})}{c\tau+d}\Bigr)\,\phi\Bigl(\frac{a \tau+b}{c \tau +d}, \frac{\vec{z}}{c\tau+d}\Bigr)$;
\item\label{it:transfo2} $\displaystyle (\phi|_{M}\,X)(\tau, \vec{z})  :=\e(B_M(\vec{\lambda}+\vec{\mu},\vec{\lambda}+\vec{\mu}))\,\e(B_M(\vec{\lambda},\vec{\lambda}\tau+2\vec{z}))\, \phi(\tau,\vec{z}+\vec{\lambda}\tau+\vec{\mu})$.
\end{enumerate}
(Recall that we write $\gamma = \abcd$ and $X=\lm$.)
Moreover, we let $\phi|_{k,M}(\gamma,X) := (\phi|_{k,M}\mspace{2mu}\gamma)|_M\mspace{2mu}X$, often omitting~$k$ and~$M$ from the notation.
\end{defn} 
\begin{remark}
Given $k\in \z$ and $M\in \MM$, the slash operator defines an action of~$\Gamma_n^J$ of \emph{weight}~$k$ and \emph{index}~$M$ on the space of all meromorphic functions $\phi\colon{\mathfrak{h}\times \c^n \to \c}$. 
\end{remark}
Given $M\in \MM$, for~$X,X' \in \XX$, we let
 \begin{equation}\label{def:rhoXzetaXX}  \rho(X)\defis \e(B(\vec{\lambda},\vec{\lambda})-B(\vec{\lambda},\vec{\mu})+B(\vec{\mu},\vec{\mu})),  \quad
\zeta_{X,X'} \defis \e(B(\vec{\lambda'},\vec{\mu})-B(\vec{\lambda},\vec{\mu'})),	
 \end{equation}
 where we wrote $B=B_M$ for the bilinear form associated to $M$. Observe that
\[ \rho(-X)=\rho(X)^{-1} \qquad \text{and} \qquad \zeta_{X',X} = \zeta_{X,X'}^{-1} = \zeta_{-X,X'} = \zeta_{X,-X'}\mspace{1mu}.\] 
  By extending the slash action to the real Jacobi group
, generalizing \cite[Theorem~1.4]{EZ85} to several variables and half-integral index, we obtain the following functional equations.
\begin{qedprop}\label{prop:app} Given a meromorphic function~$\phi\colon\mathfrak{h}\times \c^n \to \c$, $k\in \z$ and $M\in \MM$, for all~$X,X' \in \XXR$ and $\gamma\in \sltwoz$ one has
\begin{equation} \rho(-X)\,\phi|X|\gamma = \rho(-X\gamma)\,\phi|\gamma|X\gamma \end{equation}
and
\begin{align}
 \rho(-X)\rho(-X')\,\zeta_{X',X}\,\phi|X|X'&=\rho(-X')\rho(-X)\,\zeta_{X,X'}\,\phi|X'|X \\ &= \rho(-X-X')\, \phi|(X+X').\qedhere \end{align}
\end{qedprop}

Classical modular forms are defined as the invariants for a certain group action of the space $\Hol_0(\mathfrak{h})$ of holomorphic functions in~$\mathfrak{h}$ satisfying a certain growth condition (having at most polynomial growth near the boundary). 
\begin{defn}
Let~$\mathrm{Hol}_0(\mathfrak{h})$ be the ring of holomorphic functions~$\phi$ of moderate growth on~$\mathfrak{h}$, i.e.\ for all~$C>0$, $\gamma\in \sltwoz$ and $x\in \r$ one has $\phi(\gamma(x+\ii  y))=O(e^{Cy})$ as~$y\to \infty$ (where $\gamma$ acts on $\mathfrak{h}$ by M\"obius transformations).
\end{defn}

With the remarks at the beginning of this section in mind, we now define strictly meromorphic Jacobi forms. A final subtlety in the definition below is coming from the fact that a meromorphic function in two or more variables always has points of indeterminacy (think of $x/y$ near the origin, whose limiting value depends on the angle of approach). Points of indeterminacy are not ``generic'', and we exclude these points when we say, for instance, that a certain function $\phi(\tau,\vec{z})$ has its poles precisely on certain hyperplanes for all \emph{generic} $\tau\in \mathfrak{h}$. 
\begin{defn}
Given $n\geq 0$, denote by~$\Mer_n$ the space of meromorphic functions $\phi\colon{\mathfrak{h}\times\c^n\to \c}$ such that for all $\vec{\lambda},\vec{\mu}\in \r^n$ either $\vec{z}=\vec{\lambda}\tau+\vec{\mu}$ is a pole of~$\phi(\tau,\cdot)$ for all generic $\tau\in \mathfrak{h}$ or the function $\tau\mapsto \phi(\tau,\vec{\lambda}\tau+\vec{\mu})$ belongs to~$\Hol_0(\mathfrak{h})$. Moreover, given $M\in \MM$, denote by~$\Mer_n^M$ the subspace of $\phi\in \Mer_n$ for which $\phi|_{M}X\in \Mer_n$ for all $X\in \XX$. 
Let~$\Hol_n$ and $\Hol_n^M$ be the subspace in~$\Mer_n$ and $\Mer_n^M$, respectively, of holomorphic functions. 
\end{defn}
\begin{defn}\label{def:jf} Let~$k\in \z$ and $M\in \MM$. 
A \emph{holomorphic}, \emph{weak}, or a \emph{strictly meromorphic Jacobi form} of weight~$k$, index~$M$ and rank~$n$ for the Jacobi group~$\Gamma^J_n$ is a function~$\phi$ in $\Hol_n^M$, $\Hol_n$ or $\Mer_n^M$, respectively, that is invariant under the action of~$\Gamma^J_n$ of weight~$k$ and index~$M$ (i.e., $\phi|_{k,M}\mspace{2mu} g = \phi$ for all $g\in \Gamma^J_n$). We write $J_{k,M}^{\mathrm{hol}}, J_{k,M}^{\mathrm{weak}}$ and $J_{k,M}^{\mathrm{sm}}$ for the vector spaces of {holomorphic}, {weak}, and {strictly meromorphic Jacobi forms} of weight~$k$ and index~$M$ (often omitting the indices). 
\end{defn}
\begin{remark}
Let $M=(m_{ij})\in \MM$. Any space of Jacobi forms is trivial whenever $2 B_M(\vec{z},\vec{z})$ is a non-integral quadratic form, or, equivalently, when $m_{ij}\not \in \frac{1}{4}\z$ or $m_{ii}\not \in \frac{1}{2}\z$ for some $i\neq j$. Namely, let~$\phi$ be a Jacobi form and let $\tau\in \mathfrak{h}$ be fixed. If~$\phi$ is non-zero of rank~$1$, write $M=(m)$ for its index. It follows from the elliptic transformation law (\cref{defn:slash} (\ref{it:transfo2})) that the number of zeros minus the number of poles of $z\mapsto \phi(\tau,z)$ in any fundamental domain for the action of $L_\tau$ on $\c$ is exactly~$2m$. For Jacobi forms of higher rank the integrability of~$2 B_M(\vec{z},\vec{z})$ follows by noting that for fixed $\mu_2,\ldots,\mu_n\in \c$, functions of the form $z\mapsto \phi(\tau,z,\mu_2,\mu_3,\ldots,\mu_n)$ and $z\mapsto \phi(\tau,z,z,\mu_3,\ldots,\mu_n)$ still satisfy the elliptic transformation law.
\end{remark}

A holomorphic Jacobi form of rank~$0$ is just a modular form. More interestingly, the Kronecker--Eisenstein series~(\ref{eq:kronecker-eisenstein}) are examples of a strictly meromorphic Jacobi form of index~$(0)$, with expansions given by
\[E_k(\tau,z) \= \frac{(-1)^k}{(k-1)!}D_y^{k-2}\biggl(\,\sum_{m\in \z} \frac{yq^m}{(1-yq^m)^2} \biggr) \qquad\qquad \Bigl(y=\e(z), D_y= y\pdv{}{y}\Bigr). \]

Closely related are the Weierstrass~$\wp$-function and its derivative, that is, $\wp=E_2-e_2$ and $\wp'=-2E_3$.
By \cite[Theorem~9.4]{EZ85} it follows that the algebra of weak Jacobi forms is given by \[J^{\mathrm{weak}} = \c[A,B,C,e_4,e_6]/(C^2-4AB^3+60e_4A^3B+140e_6A^4),\]
where~$A, B$ and~$C$ are equal to~$\Theta^2, \wp\,\Theta^2$ and~$\wp'\,\Theta^4$ respectively. Note that the relation $C^2=4AB^3-60e_4A^3B-140e_6A^4$ comes from the differential equation satisfied by the Weierstrass~$\wp$-function. 

The following result, yielding an algebraic proof and extending \cite[Proposition~2.8]{Lib11}\footnote{The author states the result for \emph{Jacobi forms}, which obviously is not meant to be holomorphic Jacobi forms. Although not stated explicitly, we assume he refers to strictly meromorphic Jacobi forms with poles only at lattice points.}, gives all strictly meromorphic Jacobi forms of rank~$1$ with only poles at the lattice points. The corresponding algebra is free, as the relation between~$\wp^3$ and~$(\wp')^2$ can be used to express~$e_6$ in terms of the generators. As usual, we write~$m$ (instead of the matrix $M=(m)$) for the index of a Jacobi form of rank~$1$.
\begin{prop}\label{thm:mer0} Let~$\phi \in J^{\mathrm{sm}}$
be of index $m\in\tfrac{1}{2}\z_{\geq 0}$ such that all poles~$(\tau,z)$ of $\phi$ satisfy~$z\in L_\tau\mspace{1mu}$. Then,
\[\phi\in\c[\wp,\wp',e_4]\,\Theta^{2m}.\]
\end{prop}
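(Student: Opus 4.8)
The plan is to clear the poles of~$\phi$ by multiplying by a power of~$\Theta$, thereby landing in the algebra of \emph{weak} (holomorphic) Jacobi forms, to apply the structure theorem $J^{\mathrm{weak}}=\c[A,B,C,e_4,e_6]/(C^2-4AB^3+60e_4A^3B+140e_6A^4)$ recalled above (\cite[Theorem~9.4]{EZ85}), and then to divide back. Write~$k$ for the weight of~$\phi$. First I would show that the polar divisor of~$\phi$, which by hypothesis is supported on $\{z\in L_\tau\}=\bigcup_{(a,b)\in\z^2}\{z=a\tau+b\}$, has one and the same order~$N$ along every component. This is exactly where strict meromorphy and the elliptic transformation law (\cref{defn:slash}~(\ref{it:transfo2})) enter: since $\phi|_M X=\phi$ for $X=(a,b)\in\XXZ$ and the automorphy factor $\e(B_M(\vec{\lambda},\vec{\lambda}\tau+2\vec{z}))\,\e(B_M(\vec{\lambda}+\vec{\mu},\vec{\lambda}+\vec{\mu}))$ is nowhere vanishing, the order of~$\phi$ along $\{z=a\tau+b\}$ equals its order along $\{z=0\}$, a single finite integer $N\geq 0$. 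As~$\Theta$ has a simple zero along each component of $\{z\in L_\tau\}$ and is holomorphic and nonvanishing off this set (where~$\phi$ is holomorphic too), the product $\Phi:=\phi\,\Theta^{2s}$ is holomorphic on all of $\mathfrak{h}\times\c$ as soon as $2s\geq N$.

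Next I would choose $s\in\tfrac12\z_{\geq 0}$ with $2s\geq N$ and, in addition, $m+s\in\z$; this is possible whether~$m$ is integral or half-integral, and~$\Theta^{2s}$ is a weak Jacobi form of index~$s$ with trivial multiplier even when~$2s$ is odd (as is visible from $\Theta=2\pi\ii z\exp(-\sum_{j\geq 1}\tfrac{e_{2j}}{2j}z^{2j})$). Then~$\Phi$ is a weak Jacobi form of \emph{integral} index $r:=m+s$ and weight $k-2s$, so the structure theorem gives $\Phi\in\c[A,B,C,e_4,e_6]$. Since~$\Phi$ is homogeneous of index~$r$ and the algebra is graded by index, only monomials $A^aB^bC^ce_4^de_6^f$ with $a+b+2c=r$ occur. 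Using $A=\Theta^2$, $B=\wp\,\Theta^2$ and $C=\wp'\,\Theta^4$, each such monomial equals $\wp^b(\wp')^ce_4^de_6^f\,\Theta^{2r}$, whence $\Phi=\Theta^{2r}\,R(\wp,\wp',e_4,e_6)$ for some $R\in\c[\wp,\wp',e_4,e_6]$.

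Dividing the identity $\phi\,\Theta^{2s}=\Theta^{2(m+s)}R$ by~$\Theta^{2s}$ in the field of meromorphic functions yields $\phi=\Theta^{2m}R$. Finally, the differential equation for the Weierstrass function, $(\wp')^2=4\wp^3-60e_4\wp-140e_6$, lets me substitute $e_6=\tfrac{1}{140}\bigl(4\wp^3-60e_4\wp-(\wp')^2\bigr)$ and remove every occurrence of~$e_6$ from~$R$, producing $R'\in\c[\wp,\wp',e_4]$ with $\phi=\Theta^{2m}R'$. This is the assertion, and the same elimination explains why the resulting algebra is free.

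The main obstacle is the very first step: showing that a \emph{single} power of~$\Theta$ clears \emph{all} the poles, i.e.\ that the pole order is finite and uniform along every component of $\{z\in L_\tau\}$. This is precisely the force of the strict-meromorphy hypothesis, and it is what fails for non-examples such as~$\wp^{-1}$ or $\wp/\Delta$, whose poles are not ``constant'' in~$\tau$ or produce weakly holomorphic rather than holomorphic coefficients. The half-integral-index bookkeeping (choosing~$s$ so that $m+s\in\z$ while checking the triviality of the multiplier of~$\Theta$) is a genuine but secondary subtlety, since the quoted structure theorem only covers integral index directly.
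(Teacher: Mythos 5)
Your argument is correct, but it takes a genuinely different route from the paper's. You clear the poles by multiplying by $\Theta^{2s}$, land in the ring of weak Jacobi forms, invoke the quoted structure theorem $J^{\mathrm{weak}}=\c[A,B,C,e_4,e_6]/(C^2-4AB^3+60e_4A^3B+140e_6A^4)$, use homogeneity in the index to peel off $\Theta^{2r}$, divide back, and eliminate $e_6$ via the Weierstrass differential equation. The paper goes the other way around: it first divides by $\Theta^{2m}$ (after checking that $\Theta^{-1}\in J^{\mathrm{sm}}_{1,-1/2}$ has all its poles on $L_\tau$) to reduce to index $0$, i.e.\ to elliptic functions, and then uses the classical factorization: splitting $\phi$ into even and odd parts and writing $\phi_i=(\wp')^i\prod_j(\wp-\wp(u_j(\tau)))^{m_j}$ with $m_j>0$ because there are no poles off the lattice, so that $\phi$ becomes a polynomial in $\wp,\wp'$ whose coefficients are symmetric in the $\wp(u_j(\tau))$ and hence modular forms, i.e.\ lie in $\c[e_4,e_6]$ with $e_6$ eliminated by the same differential equation. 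Your version outsources the elliptic-function theory to the Eichler--Zagier structure theorem and handles the half-integral index more transparently (choosing $s$ with $m+s\in\z$ and observing that $\Theta$ has trivial multiplier in the paper's normalization of the slash action); the paper's version does not rely on that structure theorem but must argue separately that the coefficients $\wp(u_j(\tau))$ assemble into modular forms holomorphic at the cusp. One step you should make explicit: holomorphy of $\Phi=\phi\,\Theta^{2s}$ on $\mathfrak{h}\times\c$ together with the correct transformation law is not yet the definition of a weak Jacobi form --- you also need the cusp condition ($\Phi\in\Hol_1$, equivalently Fourier coefficients supported on $q^n$ with $n\geq 0$). This follows from strict meromorphy of $\phi$ (moderate growth of $\tau\mapsto\phi(\tau,\lambda\tau+\mu)$ away from the poles) combined with $\Theta$ being weak, but it is precisely the second place where the hypothesis $\phi\in\Mer_1^M$, rather than mere meromorphy, is used, so it deserves a sentence.
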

\begin{proof}
First, we show that~$f=\phi\,\Theta^{-2m}\in J^{\mathrm{sm}}$ is a  strictly meromorphic Jacobi form of index~$0$ with all poles~$(\tau,z)$ satisfying~$z\in L_\tau\mspace{1mu}$. This follows from the claim that $\Theta^{-1}\in J^{\mathrm{sm}}_{1,-1/2}$ and that all the poles of $\Theta^{-1}$ are at the lattice points.
To prove this claim, note that by the Jacobi triple product
\[\Theta \= (y^{1/2}-y^{-1/2})\prod_{n\geq 1} \frac{(1-yq^n)(1-y^{-1}q^n)}{(1-q^n)^2}.\]
It follows that~$\Theta$ is a weak Jacobi form with all zeros at the lattice points~$z\in L_\tau\mspace{1mu}$. Moreover, for all $X\in \XX$ the function~$\Theta|X$ does not vanish at infinity, from which the claim follows. 

From now on, assume that~$\phi$ is of index~$0$, i.e., that $\phi$ is an elliptic function. 
Write~$\phi=\phi_0 + \phi_1$ with~$\phi_0$ and~$\phi_1$ the even and odd part of~$\phi$ respectively. For~$u,v\in \r\tau+\r$, write~$u\sim v$ if~$u \equiv v$ or~$u\equiv-v \mod L_\tau\mspace{1mu}$. Then, for~$i=0,1$ one has
\[\phi_i=(\wp')^i\prod_{j} (\wp-\wp(u_j(\tau)))^{m_j}\]
where~$m_j\in \z$ and~$u_j(\tau)$ are representatives with respect to the above equivalence relation for the zeros and poles of~$\phi_i$ outside~$L_\tau\mspace{1mu}$. As both~$\phi_0$ and~$\phi_1$ do not admit poles outside the lattice, it follows that~$m_j>0$. Hence,~$\phi$ is a polynomial in~$\wp$ and~$\wp'$ where the coefficients are polynomials in the functions~$\wp(u_j(\tau))$. By the modular transformation every such coefficient is a modular form for~$\sltwoz$, hence an element of~$\c[\wp,\wp',e_4]$. 
\end{proof}

\begin{remark}
Although the above result and many examples of strictly meromorphic Jacobi forms in the literature have only poles at~$z\in L_\tau\mspace{1mu},$ one easily constructs a strictly meromorphic Jacobi forms with poles at different places. Namely, if $\phi$ is a Jacobi form with all poles at $z\in L_\tau\mspace{1mu}$, then 
\[ \phi(\tau,z+\tfrac{1}{2})\+ \phi(\tau,z+\tfrac{1}{2}\tau)\+ \phi(\tau,z+\tfrac{1}{2}\tau+\tfrac{1}{2})\]
is a Jacobi form for the same group, but now with the poles at~$\frac{1}{2},\frac{1}{2}+\frac{1}{2}\tau$ and~$\frac{1}{2}\tau$ modulo the lattice~$L_\tau\mspace{1mu}$. 
\end{remark}

\subsection{Poles of Jacobi forms}
In contrast to the space of (weakly) holomorphic Jacobi forms, the space of strictly meromorphic Jacobi forms of given weight and index is not finite-dimensional. However, the latter space is not far from being finite-dimensional. First of all, in contrast to the space of all meromorphic functions, the space of strictly meromorphic Jacobi forms is not a field. Moreover, the poles lie in a finite number of hyperplanes and after fixing finitely many such hyperplanes to contain the poles, the vector space of strictly meromorphic Jacobi forms of given weight and index is finite-dimensional, as we will explain in this section.

Given a meromorphic Jacobi form~$\phi$ of rank~$n$, we write 
\[P_\phi=\{(\tau,\vec{z}) \in \mathfrak{h}\times\c^n \mid \phi \text{ is not holomorphic at } (\tau,\vec{z})\}\]
for the set of poles as well as points of indeterminacy of~$\phi$. We identify two points of~$P_\phi$ if they have same image under the projection $\mathfrak{h} \times \c^n \twoheadrightarrow \XXR$ given by $(\tau,\vec{\lambda}\tau+\vec{\mu})\mapsto \lm$. That is, we define an equivalence relation on~$P_\phi$ by saying that $(\tau,\vec{z})\sim (\tau',\vec{z}')$ whenever, after writing $\vec{z}=\vec{\lambda}\tau+\vec{\mu}$ and $\vec{z}'=\vec{\lambda}'\tau+\vec{\mu}'$ with $\vec{\lambda},\vec{\lambda}',\vec{\mu},\vec{\mu}'\in \r^n$, one has $\vec{\lambda}= \vec{\lambda}'$ and $\vec{\mu}=\vec{\mu}'$. 
We identify the quotient set~$Q_\phi$ with a subset of $\XXR$ by identifying a point of~$P_\phi$ with its image under the projection. From the definition of a strictly meromorphic Jacobi form we obtain the factorisation $\mathfrak{h}\times Q_\phi \simeq P_\phi$ of $P_\phi$, given by $(\tau,\lm)\mapsto (\tau,\vec{\lambda}\tau+\vec{\mu})$. Note that $Q_\phi$ is invariant under translation by $\XXZ$.

As an example of how the definition works, we first prove a simple consequence: 
\begin{prop}
Let~$\phi\in J^{\mathrm{sm}}\mspace{1mu}$. 
If~$\frac{1}{\phi}$ is also a strictly meromorphic Jacobi form, then~$\phi$ is constant.
\end{prop}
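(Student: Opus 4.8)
The plan is to reduce the statement to a one–dimensional question about ordinary modular forms by restricting $\phi$ to generic real lines, and then to combine the valence formula with the two–sided moderate–growth condition. Say $\phi\in J^{\mathrm{sm}}_{k,M}$, so that $1/\phi\in J^{\mathrm{sm}}_{-k,-M}$. By \cref{thm:poleshyperplane} the poles of $\phi$, and the poles of $1/\phi$ (that is, the zeros of $\phi$), lie on finitely many hyperplanes. Hence for a generic $(\vec{\lambda},\vec{\mu})\in\r^n\times\r^n$ the line $\vec{z}=\vec{\lambda}\tau+\vec{\mu}$ meets none of them, and since $\phi\in\Mer_n$ and $1/\phi\in\Mer_n$, both $g(\tau):=\phi(\tau,\vec{\lambda}\tau+\vec{\mu})$ and $1/g$ lie in $\Hol_0(\mathfrak{h})$. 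In particular $g$ is holomorphic and nowhere vanishing on $\mathfrak{h}$, and both $g$ and $1/g$ have moderate growth towards every cusp.

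Next I would record how $g$ transforms. Writing $X=\lm$ and $\gamma=\abcd$, invariance $\phi|_{k,M}\gamma=\phi$ together with the identity $\tfrac{\vec{w}}{c\tau+d}=\vec{\lambda}(\gamma\tau)+\vec{\mu}$ for $\vec{w}=\vec{\lambda}^\gamma\tau+\vec{\mu}^\gamma$ (where $X\gamma=\lmgamma$) gives
\[ g(\gamma\tau)=(c\tau+d)^{k}\,\e\Bigl(\tfrac{c\,B_M(\vec{w},\vec{w})}{c\tau+d}\Bigr)\,\phi(\tau,\vec{\lambda}^\gamma\tau+\vec{\mu}^\gamma). \]
For $\gamma$ in the finite–index subgroup $\Gamma_X:=\{\gamma\in\sltwoz : X\gamma-X\in\XXZ\}$ the elliptic transformation law (\cref{defn:slash}\,(ii)) identifies $\phi(\tau,\vec{\lambda}^\gamma\tau+\vec{\mu}^\gamma)$ with $g(\tau)$ up to a nowhere–vanishing factor, so $g$ transforms with weight $k$ for $\Gamma_X$ with a holomorphic, nowhere–vanishing automorphy factor. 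The only non–modular ingredient is the index exponential; I would absorb it by multiplying $g$ with the restriction to the same line of an appropriate power of $\Theta(\tau,\cdot)$ (holomorphic and nowhere vanishing on a generic line), whose scalar index along the line cancels $B_M(\vec{w},\vec{w})$, obtaining a genuine modular form $\tilde g$ of weight $k$ and some multiplier for $\Gamma_X$.

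Finally I would conclude. Since $\tilde g$ is nowhere vanishing on $\mathfrak{h}$, its divisor on the compact modular curve $X(\Gamma_X)$ is supported on the cusps, while the condition $g,1/g\in\Hol_0(\mathfrak{h})$ pins down the cusp orders so that this divisor has degree $0$; by the valence formula this forces $k=0$, and the same two–sided growth bound forces the index contribution $B_M(\vec{\lambda},\vec{\lambda})$ to vanish, whence $\tilde g$, and therefore $g$, is constant. Thus $\tau\mapsto\phi(\tau,\vec{\lambda}\tau+\vec{\mu})$ is constant for every generic $(\vec{\lambda},\vec{\mu})$; differentiating this identity in $\tau$ yields $\partial_\tau\phi+\sum_i\lambda_i\,\partial_{z_i}\phi=0$ at a generic point, and letting $\vec{\lambda}$ range over an open set forces $\partial_\tau\phi=0$ and all $\partial_{z_i}\phi=0$, so $\phi$ is locally, hence globally, constant.

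I expect the main obstacle to be the bookkeeping around the index exponential: verifying that multiplication by the theta factor genuinely produces an honest modular form, and that the moderate–growth condition on both $g$ and $1/g$ translates into cusp orders that simultaneously kill the index contribution (forcing $M=0$) and make the valence formula yield $k=0$. Once $g$ is known to be a holomorphic, nowhere–vanishing modular form with nowhere–vanishing reciprocal, the remaining steps are routine.
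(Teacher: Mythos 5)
Your overall strategy is the same as the paper's: restrict $\phi$ and $1/\phi$ to a generic line $\vec{z}=\vec{\lambda}\tau+\vec{\mu}$, use the two-sided $\Hol_0$-condition built into the definition of $\Mer_n$ to see that the restriction $g$ is holomorphic, nowhere vanishing, and of moderate growth together with its reciprocal at all cusps, deduce that $g$ is constant, and then let the line vary. However, there is a genuine gap in the middle step: you assert that $\Gamma_X=\{\gamma\in\sltwoz \mid X\gamma-X\in\XXZ\}$ is of finite index. That is true for rational $X$, but for the \emph{generic real} $X$ your argument requires it fails badly: if the entries of $\vec{\lambda},\vec{\mu}$ together with $1$ are $\q$-linearly independent (which holds for almost every $X\in\XXR$), then $X\gamma-X\in\XXZ$ forces $\gamma=I$, so $\Gamma_X$ is trivial. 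There is then no compact curve $X(\Gamma_X)$ and no valence formula, so the central step of your proof (divisor of degree zero, $k=0$, constancy of $\tilde g$) does not get off the ground. Note that a nowhere-vanishing holomorphic $g$ on $\mathfrak{h}$ with $g,1/g$ of moderate growth need not be constant without some modularity (e.g.\ $g(\tau)=\tau+\ii$), so the transformation law cannot be dispensed with here.

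The argument is repairable, and then it does essentially coincide with what the paper intends. Run it for \emph{rational} $X$ outside the polar loci of $\phi$ and $1/\phi$ (these $X$ are dense; you do not need \cref{thm:poleshyperplane} for this, only that $\phi\not\equiv 0,\infty$ --- and note that result is only established after this proposition in the paper). For such $X$ the group $\Gamma_X$ contains $\Gamma(N)$ for some $N$, the multiplier is of finite order, the two-sided subexponential bound forces the $q^{1/N}$-expansion of $g$ at every cusp to have order exactly $0$, and the valence formula then gives $k=0$ and $g$ constant; constancy of $\phi$ follows from constancy along a dense set of lines by continuity, or by your differentiation argument. Two further remarks: (a) the index bookkeeping you flag as the main obstacle can be bypassed entirely --- applying the invariance under $S=\left(\begin{smallmatrix}0&-1\\1&0\end{smallmatrix}\right)$ along the line and the $\Hol_0$-condition for both $X$ and $XS$ yields $|g(S(\ii y))|=y^{k}e^{-2\pi B_M(\vec{\mu},\vec{\mu})y+o(y)}$, and the two-sided moderate-growth bound forces $B_M(\vec{\mu},\vec{\mu})=0$ for a dense set of $\vec{\mu}$, so the quadratic form vanishes identically and no $\Theta$-absorption is needed; (b) if one does keep the $\Theta$-factor, the restrictions $\Theta(\tau,\lambda\tau+\mu)$ grow or decay like $e^{c\,\Im\tau}$ with $c$ typically irrational, so the resulting cusp ``orders'' are irrational and the degree-zero claim for the divisor of $\tilde g$ would require an additional computation that your sketch does not supply.
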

\begin{proof}
Let $X=(\vec{\lambda},\vec{\mu})\in M_{1,2}(\r)$ be given with $X\not \in Q_\phi$ and $X\not \in Q_{1/\phi}\mspace{1mu}$. Write $\vec{z}(\tau)=X(\tau,1)^t=\vec{\lambda}\tau+\vec{\mu}$. Then both~$\phi(\tau,\vec{z}(\tau))$ and~$\frac{1}{\phi(\tau,\vec{z}(\tau))}$ are holomorphic as a function of~$\tau\in \mathfrak{h}$. Hence, as a function of~$\tau\in \mathfrak{h}$, both~$\phi(\tau,\vec{z}(\tau))$ and~$\frac{1}{\phi(\tau,\vec{z}(\tau))}$ do not admit any zeros. Similarly, 
it follows that both~$\phi(\tau,\vec{z}(\tau))$ and~$\frac{1}{\phi(\tau,\vec{z}(\tau))}$ are holomorphic (as a function of $\tau\in \mathfrak{h}$) at the cusps. Hence both functions do not admit any zero at the cusps. In other words, $\phi(\tau,\vec{z}(\tau))$ does not admit any zeros and poles on a compact set. So, $\phi(\tau,\vec{z}(\tau))$ is constant as a function of~$\tau$. As this holds for almost all~$X\in \XXR$, we conclude that~$\phi$ is globally constant. 
\end{proof}

The following result, which is crucial for the sequel, tells us that the image of~$Q_\phi$ in the torus~$M_{n,2}(\r/\z)$ consists of finitely many hyperplanes given by linear equations with rational coefficients. In other words, the following result strengthens \cref{thm:poleshyperplane} (the fact that the second conclusion is also true for \emph{quasi}-Jacobi forms, follows immediately after introducing such functions (see \cref{cor:poleshyperplane})).
\begin{thm}\label{prop:finitepoles} Let~$\phi\in J^{\mathrm{sm}}\mspace{1mu}$, and
let~$P_\phi \simeq \mathfrak{h}\times Q_\phi$ be the set of non-holomorphic points as above. Then, we have:
\begin{enumerate}[\upshape (i)]\itemsep2pt 
\item\label{it:finite-ii} If~$X\in Q_\phi$, then~$X\gamma \in Q_\phi$ for all~$\gamma \in \sltwoz$.
\item\label{it:finite-iii} There exist finitely many hyperplanes of the form
\begin{equation}\label{eq:linfunc} \vec{s}\cdot X \in (u,v)\end{equation}
with $\vec{s}\in \z^n$ primitive {\upshape(}i.e., with coprime entries{\upshape)} and $u,v\in \q/\z$, such that $X\in Q_\phi$ precisely if~$X$ lies on such a hyperplane. 
\end{enumerate}
\end{thm}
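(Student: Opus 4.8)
\textbf{Part \ref{it:finite-ii}.} The plan is to read the $\sltwoz$-invariance of $Q_\phi$ directly off the modular transformation law. Since $\phi$ is a Jacobi form, $\phi|_{k,M}\mspace{2mu}\gamma=\phi$ for all $\gamma=\abcd\in\sltwoz$, and the automorphy factor $(c\tau+d)^{-k}\e\!\bigl(-c\,B_M(\vec z,\vec z)/(c\tau+d)\bigr)$ in \cref{defn:slash} is holomorphic and nowhere vanishing on $\mathfrak h\times\c^n$. Hence $\phi(\tau,\vec z)$ and $\phi\bigl(\tfrac{a\tau+b}{c\tau+d},\tfrac{\vec z}{c\tau+d}\bigr)$ are non-holomorphic at exactly the same points. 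Restricting to a line $\vec z=\vec\lambda\tau+\vec\mu$ and using $c\tau+d=(-c\,\gamma\tau+a)^{-1}$, a short computation rewrites $\tfrac{\vec\lambda\tau+\vec\mu}{c\tau+d}$ as $\vec\lambda'\,(\gamma\tau)+\vec\mu'$ for $(\vec\lambda',\vec\mu')$ obtained from $\lm$ by the (right) linear action of $\gamma$ on $\XXR$. Reparametrising $\mathfrak h$ by $\tau'=\gamma\tau$, and noting that $\Hol_0(\mathfrak h)$ is $\sltwoz$-stable by definition, this shows that $\tau\mapsto\phi(\tau,\vec\lambda\tau+\vec\mu)$ lies in $\Hol_0$ iff its $\gamma$-translate does, i.e.\ $Q_\phi$ is stable under the right $\sltwoz$-action. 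As $\gamma$ ranges over all of $\sltwoz$ this is the claim.

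\textbf{Part \ref{it:finite-iii}, global finiteness.} Fix a generic $\tau$ and let $\Psi_\tau\colon\r^{2n}\to\c^n$, $\lm\mapsto\vec\lambda\tau+\vec\mu$, be the associated real-linear isomorphism, so that $\Psi_\tau(Q_\phi)$ is the polar set of $\phi(\tau,\cdot)$. By the elliptic transformation law $\phi(\tau,\cdot)$ is $L_\tau^n$-periodic up to nonvanishing factors, hence descends to a meromorphic function on the \emph{compact} torus $A_\tau=\c^n/L_\tau^n$. The pole divisor of a meromorphic function on a compact complex manifold has only finitely many irreducible components; since parallel affine hyperplanes with a fixed primitive integral normal and constants differing by $L_\tau$ have the same image in $A_\tau$, once I know each component is such a hyperplane (next paragraph) this already yields the \emph{finiteness} of the list of hyperplanes.

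\textbf{Part \ref{it:finite-iii}, local rigidity (the crux).} The heart of the matter is to show each component is a hyperplane with a \emph{real} normal. Here strict meromorphy enters decisively: the set $Q_\phi\subseteq\r^{2n}$ is independent of $\tau$, yet $\Psi_\tau(Q_\phi)$ is complex-analytic for \emph{every} generic $\tau$. Thus at a smooth point the fixed real tangent space $T$ to $Q_\phi$ is invariant under every complex structure $J_\tau:=\Psi_\tau^{-1}\circ(i\,\cdot\,)\circ\Psi_\tau$. Writing $\tau=x+\ii y$ one computes the block form
\[
y\,J_\tau=x\begin{pmatrix}I&0\\0&-I\end{pmatrix}+\begin{pmatrix}0&I\\0&0\end{pmatrix}-(x^2+y^2)\begin{pmatrix}0&0\\I&0\end{pmatrix},
\]
and since $T$ is $J_\tau$-invariant on an open set of $(x,y)$, differentiating in $x$ and $y$ forces $T$ to be invariant under each of the three constant matrices separately. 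Invariance under the first splits $T=(T\cap\r^n_{\vec\lambda})\oplus(T\cap\r^n_{\vec\mu})$, while invariance under the other two forces the two summands to coincide; hence $T=\{\lm:\vec s\cdot\vec\lambda=\vec s\cdot\vec\mu=0\}$ for a single real $\vec s$ (a hyperplane in $\r^n$ since $\operatorname{codim}T=2$). Equivalently the complex tangent $\Psi_\tau(T)=\{\vec w:\vec s\cdot\vec w=0\}$ has a \emph{real} normal, so the holomorphic Gauss map of $\Psi_\tau(Q_\phi)$ takes values in the totally real $\mathbb P^{n-1}(\r)$, hence is locally constant; by connectedness and real-analyticity $\vec s$ is constant along each component, which is therefore the affine hyperplane $\{\vec s\cdot\vec\lambda=u,\ \vec s\cdot\vec\mu=v\}$, i.e.\ $\vec s\cdot X\in(u,v)$. (For $n=1$ the components are isolated points and this step is vacuous.) I expect this rigidity argument to be the main obstacle, both in the linear algebra above and in globalising the constancy of $\vec s$.

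\textbf{Part \ref{it:finite-iii}, rationality.} It remains to see $\vec s$ and $(u,v)$ are rational. Since $Q_\phi$ is closed and $\XXZ$-periodic, each component descends to a closed subset of the torus $M_{n,2}(\r/\z)$; a coset of the subspace $\{\vec s\cdot\vec\lambda=\vec s\cdot\vec\mu=0\}$ is closed there precisely when that subspace is rational, forcing $\vec s$ to be proportional to a primitive integer vector, which I take as the normal. Finally, by part \ref{it:finite-ii} the pair $(u,v)$ and its whole $\sltwoz$-orbit under $(u,v)\mapsto(u,v)\gamma$ occur in the (now finite) list of hyperplanes sharing the normal $\vec s$; a point of $(\r/\z)^2$ with finite orbit under $\left(\begin{smallmatrix}1&1\\0&1\end{smallmatrix}\right)$ and $\left(\begin{smallmatrix}1&0\\1&1\end{smallmatrix}\right)$ must have both coordinates rational, so $u,v\in\q/\z$. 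Assembling the pieces gives the finitely many hyperplanes $\vec s\cdot X\in(u,v)$ characterising $Q_\phi$ exactly.
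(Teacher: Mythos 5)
Your part (\ref{it:finite-ii}) is the same computation as the paper's: transport the pole along the line $\vec z=\vec\lambda\tau+\vec\mu$ through the (nowhere-vanishing) automorphy factor and reparametrise $\tau$. Your part (\ref{it:finite-iii}), however, takes a genuinely different route, and as far as I can tell it is correct. The paper never varies the complex structure: it first proves a Diophantine approximation statement for the $\sltwoz$-action on $\XXR$ (\cref{lem:approx1} for $n=1$, \cref{prop:approx} for $n\geq 2$) to extract an integral relation $\vec s\cdot\vec\lambda=t$ from the fact that some $Y$ is bounded away from the orbit of $X\in Q_\phi$, and then upgrades this pointwise relation to a hyperplane via the implicit function theorem, a countability-of-relations argument, the Cauchy--Riemann equations, Weierstrass preparation, and an induction dividing out theta factors; finiteness comes from compactness at the end. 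You instead exploit the defining feature of strict meromorphy directly: the \emph{same} real set $Q_\phi\subset\r^{2n}$ is complex-analytic for \emph{every} complex structure $J_\tau$, and your linear-algebra computation with $yJ_\tau$ correctly forces the tangent space at a smooth point to be $\{\vec s\cdot\vec\lambda=\vec s\cdot\vec\mu=0\}$ with $\vec s$ real; the totally-real Gauss map then makes $\vec s$ constant on components, closedness plus $\XXZ$-periodicity forces $\vec s$ rational (else $Q_\phi$ would be dense, hence everything), compactness of $\c^n/L_\tau^n$ gives finiteness, and the finite-orbit-under-unipotents trick gives $u,v\in\q/\z$, replacing the paper's \cref{lem:approx1} even in the case $n=1$. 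What each approach buys: the paper's detour produces the approximation results of independent interest (stated separately there as an open problem in one direction), while yours is more self-contained, treats $n=1$ and $n\geq 2$ uniformly, and isolates the geometric content of strict meromorphy. Two points you should make explicit to close the argument: (a) ``smooth point'' must be made uniform in $\tau$ --- take $p$ a manifold point of $\Psi_{\tau_0}(Q_\phi)$ for one $\tau_0$; for every other generic $\tau$ the set $\Psi_\tau(Q_\phi)$ is a pure $(n-1)$-dimensional analytic set that is a $C^1$ manifold of real dimension $2n-2$ near $\Psi_\tau(p)$, so its tangent cone there is both a complex cone and the real tangent space, whence the $J_\tau$-invariance you need; and (b) it is only the polar set, not $\phi(\tau,\cdot)$ itself, that descends to the torus $\c^n/L_\tau^n$ (via the elliptic law up to nonvanishing factors), and the finiteness step should be run \emph{after} the rationality of $\vec s$, since a hyperplane with irrational normal has non-closed image in the torus and so could not be read off as a component of the descended divisor.
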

\begin{proof}
(\ref{it:finite-ii}): Let $X\in Q_\phi$ and $\gamma=\abcd\in \sltwoz$ be given. Write $\vec{x}(\tau)=X(\tau,1)^t=\vec{\lambda}\tau+\vec{\mu}$. Note that $\phi$ has a pole at $\vec{x}(\tau)$ for generic $\tau\in \mathfrak{h}$. By the modular transformation behaviour of $\phi$ under~$\gamma^{-1}$, it follows that \[\phi\Bigl(\frac{d\tau-b}{-c\tau+a},\frac{\vec{z}}{-c\tau+a}\Bigr)\] also has a pole at $\vec{z}=\vec{x}(\tau)$ for generic~$\tau\in \mathfrak{h}$. Now, let $\tau'=\gamma^{-1}\tau$. We find
\[\frac{\vec{x}(\tau)}{-c\tau+d}\=\frac{\vec{x}(\gamma\tau')}{-c\gamma\tau'+a}\=\frac{\vec{\lambda}\gamma\tau'+\vec{\mu}}{-c\gamma\tau'+a} \= \vec{\lambda}(a\tau'+b)+\vec{\mu}(c\tau'+d) \=  \lm \gamma\, (\tau',1)^t.\]
Hence, the function
$\phi\left(\tau,\vec{z}\right)$ has a pole at~$\vec{z}=X\gamma\,(\tau,1)^t$ for generic $\tau\in \mathfrak{h}$. 

(\ref{it:finite-iii}): 
Let $X=\lm\in Q_\phi\mspace{1mu}$. The set~$Q_\phi$ is closed and~$(n-1)$-dimensional, hence there is a $Y\in \XXR$ with $Y\not \in Q_\phi$ and which is bounded away from~$X\gamma$. 

First, we treat the case that the rank~$n$ is~$1$. 
Then, by \cref{lem:approx1}, proven in the next section, the fact that $Y$ is bounded away from $X\gamma$ for all $\gamma\in \sltwoz$ implies that both~$\lambda$ and~$\mu$ are rational. Therefore, we can take $s=1$ and $(u,v)=(\lambda,\mu) \bmod \z^2$.

Next, let $n\geq 2$. 
By an approximation property proven in the next section (see \cref{prop:approx}), the fact that $Y$ is bounded away from $X$ implies that there are non-trivial $\vec{s}\in \z^n, t\in\z$ 
with 
$\vec{s}\cdot \vec{\lambda}=t$. 
(In fact, the result states that if $Y$ is bounded away from $X$ then for all $\alpha,\beta\in \z$, there exist non-trivial $\vec{s}\in \z^n, t\in\z$ with $\vec{s}\cdot (\alpha \vec{\lambda}+\beta\vec{\mu})=t$. We choose $\alpha=1,\beta=0$.)
Note that the value of~$\vec{s}$ and  $t$ is a function of~$X$, i.e., for all elements $X\in Q_\phi$ there exists an element $(\vec{s},t) \in \mathrm{Rel}$ giving the relation, where
\[\mathrm{Rel} \defis  (\z^n\backslash\{\vec{0}\})\times \z.\]

Let $\tau\in \mathfrak{h}$ be given and identify~$\XXR$ with~$\c^n$ via $X\mapsto \vec{x}=X(\tau,1)^t$. For almost all $X\in Q_\phi\mspace{1mu}$, the conditions of the implicit function theorem for the function~$\phi$ are satisfied. In this case, there exists an open set $U\subset \c^{n-1}$ containing~$\vec{0}$, an open neighbourhood $V\subset \c^n$ of~$\mathbf{x}$, and a holomorphic function $g:U\to V$, such that $\vec{z}\in V$ is a pole of~$\phi$ precisely if~$\vec{z}$ lies in the image of~$g$. Consider the isomorphism $(\mathrm{\pi}_\tau,\mathrm{\rho}_\tau):\c\simeq \r\tau+\r$. Let~$g_i$ be the $i$th component function of~$g$, which takes values in $\r\tau+\r$ under this isomorphism.  For example, taking $\tau=\ii$, we find $\mathrm{\pi}_{\ii}(g_i)=\Im(g_i)$ and $\rho_{\ii}(g_i)=\Re(g_i)$. For every $\vec{u}\in U$ we find a relation of the form
\begin{align}\label{eq:rel}\sum_{i=1}^n s_i\, \pi_\tau(g_i(\vec{u}))\=t,\end{align}
where $(\vec{s},t)\in\mathrm{Rel}$, possibly depending on the choice of~$\vec{u}$. We now show that we can choose $(\vec{s},t)\in \mathrm{Rel}$ such that~\eqref{eq:rel} holds for all $\vec{u}\in U$.  Recall that the set of zeros of a non-constant real-analytic function has measure~$0$. As~$\pi_\tau(g_i)$ 
is a real-analytic function, either~(\ref{eq:rel}) holds for all $\vec{u}\in U$, or it holds for a real subspace of~$U$ of measure~$0$. Now, note that~$\mathrm{Rel}$ is a countable set, whereas countably many subspaces of measure~$0$ do not cover~$U$. Hence, we find a relation~(\ref{eq:rel}) which holds for all $\vec{u}\in U$. By the Cauchy--Riemann equations for the holomorphic functions~$g_i$ we can `upgrade' this relation to the statement that $\vec{s}\cdot \vec{z}$ takes a constant value $u\tau+v$ in $\r\tau+\r$ for all $\vec{z}\in g(U)$. 
Without loss of generality we assume that $\gcd(s_1,\ldots,s_n)=1$; if not, we scale~$\vec{s}$,~$u$ and~$v$ appropriately. 

Now, by the Weierstrass preparation theorem, locally around~$\vec{x}$ the set of poles is given by~$k$ branches coming together, where each branch is an~$(n-1)$-dimensional space given by the zeros of a holomorphic function and~$k$ equals the multiplicity of the pole at~$\vec{x}$ (see \cite[Lemma~6.1]{CJ12} for the same argument in a different setting). We know that almost all (hence all) the elements in such a branch satisfy $\vec{s}\cdot \vec{x}=u\tau+v$. Write $T$ for $(\r\tau+\r)/(\z\tau+\z)$. By analytically extending such a local branch, we find that all solutions~$\vec{z}\in T$ of $\vec{s}\cdot\vec{z}= u\tau+v$ are poles of $\phi(\tau,\cdot)$ as long as they are in the same connected component as~$\vec{x}$. Because $\gcd(s_1,\ldots,s_n)=1$ the solution space of $\vec{s}\cdot \vec{z}=u\tau+v$ in $T$ is connected, so that all solutions~$\vec{z}$ of $\vec{s}\cdot\vec{z}\equiv u\tau+v \bmod L_\tau$ are poles of $\phi(\tau,\cdot)$ for generic $\tau\in \mathfrak{h}$. 
Moreover, $(u,v)\in \q^2$, as the image of the poles of $\phi$ in $T$ is not dense, but rather~$(n-1)$-dimensional. 

Note that~$\vec{z}\mapsto\phi(\vec{z})\,\Theta(\vec{s}\cdot \vec{z}-u\tau-v)$ has exactly the same poles as~$\vec{z}\mapsto\phi(\vec{z})$ except for the poles which are zeros of~$\vec{s}\cdot \vec{z}\equiv u\tau+v\bmod L_\tau\mspace{1mu}$. 
Hence, the statement now follows inductively. By compactness of~$T$ it follows that one can restrict to only finitely many linear functions.
\end{proof}
\begin{remark}
Note that if $\vec{s}\cdot X \in (u,v)$ is one of the equations determining $Q_\phi\mspace{1mu}$, then $\vec{s}\cdot X = (u',v')$ is another equation whenever $(u,v)\gamma = (u',v')$ for some $\gamma \in \sltwoz$. 
\end{remark}

\begin{cor}\label{cor:finitepoles}
The vector space of strictly meromorphic Jacobi forms of some weight~$k$, index~$M$ and poles~$(\tau,\vec{z})$ only in finitely many fixed hyperplanes of the form
\[\vec{s} \cdot \vec{z} \in u\tau+v  \qquad\qquad (\vec{s}\in \z^n, u,v\in \q/\z)\]
is finite-dimensional.
\end{cor}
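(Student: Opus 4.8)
The plan is to reduce the statement to the finite-dimensionality of a space of \emph{weak} Jacobi forms by clearing the poles with theta functions; the only genuine difficulty is a \emph{uniform} bound on the pole orders. Fix the weight~$k$, the index~$M\in\MM$, and the finitely many hyperplanes $H_1,\dots,H_r$ with $H_i\colon \vec{s}_i\cdot\vec{z}\in u_i\tau+v_i$ (with $\vec{s}_i\in\z^n$ and $u_i,v_i\in\q/\z$), and let~$V$ denote the corresponding space. For each~$i$ set $\ell_i(\tau,\vec{z})=\vec{s}_i\cdot\vec{z}-u_i\tau-v_i$. By the Jacobi triple product (as in the proof of \cref{thm:mer0}), the function $\Theta(\ell_i)$ is a weak Jacobi form---for the congruence subgroup fixing $(u_i,v_i)\bmod\z$ and with a character---of weight~$-1$ and index $\tfrac12\,\vec{s}_i^{\,t}\vec{s}_i$, vanishing to first order exactly along~$H_i$ and nowhere else modulo~$L_\tau$.

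First I would observe that, writing $\nu_i(\phi)$ for the order of the pole of $\phi\in V$ along~$H_i$, the function $\phi\prod_{i}\Theta(\ell_i)^{N}$ is holomorphic as soon as $N\geq \nu_i(\phi)$ for all~$i$; being $\Gamma^J_n$-invariant up to the product of the characters of the~$\Theta(\ell_i)$, it is then a weak Jacobi form of weight $k-rN$ and index $M+N\sum_i\tfrac12\,\vec{s}_i^{\,t}\vec{s}_i$ for one fixed congruence subgroup and character. Since multiplication by a fixed nonzero meromorphic function is injective, a \emph{uniform} bound~$N_0$ with $\nu_i(\phi)\leq N_0$ for all $\phi\in V$ and all~$i$ would embed~$V$ into a single finite-dimensional space of weak Jacobi forms, which finishes the proof.

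Producing this uniform pole-order bound is the main obstacle, and here I would reduce to the rank-one case. Fix~$i$, choose a torsion point $\vec{z}_0\in H_i$, and translate: $\phi(\vec{z}+\vec{z}_0)$ is again a strictly meromorphic Jacobi form (for a finite-index subgroup, with character) whose pole hyperplane $H_i-\vec{z}_0$ passes through the origin. Restricting to a generic rational line $\vec{z}=\vec{a}w$ through the origin transverse to $H_i-\vec{z}_0$ (so $\vec{s}_i\cdot\vec{a}\neq 0$, chosen with $\vec{a}M\vec{a}^t\geq 0$), the function $w\mapsto \phi(\vec{a}w+\vec{z}_0)$ is a rank-one strictly meromorphic Jacobi form of weight~$k$ and index $m:=\vec{a}M\vec{a}^t$ whose only relevant pole is at the lattice point $w=0$, of order $\nu_i(\phi)$ for generic~$\vec{a}$. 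By \cref{thm:mer0} (which goes through verbatim for finite-index subgroups, as only the elliptic structure at fixed~$\tau$ and the finite-dimensionality of modular forms in each weight are used), such a form equals a polynomial in $\wp,\wp',e_4$ of weight $k+2m$ times $\Theta^{2m}$; since $\wp,\wp',e_4$ all have positive weight, its pole order at the lattice is at most $(k+2m)-2m=k$, independently of~$\phi$. Hence $\nu_i(\phi)\leq k$ for all $\phi\in V$ and all~$i$, giving $N_0=\max(k,0)$.

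With~$N_0$ in hand, the map $\phi\mapsto \phi\prod_i\Theta(\ell_i)^{N_0}$ exhibits~$V$ as a subspace of a space of weak Jacobi forms of fixed weight, index, level and character, which is finite-dimensional by the Fourier-expansion argument of~\cite{EZ85} (the index bounds the support of the expansion). I expect the delicate points to be the transversality and genericity ensuring that the restricted form has its pole of order exactly $\nu_i(\phi)$ at a lattice point, together with the verification that \cref{thm:mer0} carries over to the translated subgroup setting; both should be routine once the reduction is arranged as above.
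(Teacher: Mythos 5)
Your overall skeleton coincides with the paper's: bound the pole order along each hyperplane uniformly by the weight $k$, multiply by a fixed power of the theta factors $\Theta(\vec{s}_i\cdot\vec{z}-u_i\tau-v_i)$ to land in a single space of weak Jacobi forms of fixed weight and index, and invoke finite-dimensionality of that space. The divergence is in how the uniform pole-order bound is obtained, and your route to it has two genuine gaps. First, the slicing argument needs a direction $\vec{a}$ with $\vec{s}_i\cdot\vec{a}\neq 0$ \emph{and} $\vec{a}M\vec{a}^t\geq 0$, since \cref{thm:mer0} is stated only for index $m\in\tfrac{1}{2}\z_{\geq 0}$. But the corollary allows arbitrary $M\in\MM$, and negative (semi-)definite indices genuinely occur among strictly meromorphic Jacobi forms (e.g.\ $\Theta^{-1}$ has index $-\tfrac{1}{2}$, and the Bloch--Okounkov $n$-point functions have $B(\vec{z},\vec{z})=-\tfrac{1}{2}(z_1+\ldots+z_n)^2$); for a negative definite index every nonzero $\vec{a}$ gives $m<0$, so the reduction is unavailable. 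Second, and more seriously, the restricted function $w\mapsto\phi(\vec{a}w+\vec{z}_0)$ is not covered by \cref{thm:mer0}: that proposition requires \emph{all} poles to lie at lattice points $w\in L_\tau$, whereas your slice also acquires poles wherever the line meets the other hyperplanes $H_j$, $j\neq i$ (and, unless $\vec{s}_i\cdot\vec{a}=\pm1$, further torsion-point poles coming from $H_i$ itself). These extra poles sit at torsion points outside $L_\tau$, so the conclusion $\phi\in\c[\wp,\wp',e_4]\,\Theta^{2m}$---and with it the bound ``pole order $\leq k$''---does not follow without first proving a structure theorem for rank-one forms with poles at arbitrary torsion points, which neither the paper supplies nor you prove; your phrase ``only relevant pole'' glosses over exactly this point. (For $n=1$ with several hyperplanes the slicing does nothing at all, and the problem is already visible there.)

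The paper obtains the bound in one line, with no slicing: the Laurent coefficients of a strictly meromorphic Jacobi form at a point of $H_i$ are holomorphic quasimodular forms of weight $k+|\vec{\ell}|$, and a nonzero holomorphic quasimodular form has nonnegative weight, so the most negative Laurent index satisfies $|\vec{\ell}|\geq-k$, i.e.\ the multiplicity of each pole is at most $k$. If you substitute this observation for your slicing argument, the remainder of your proof (clearing poles with the $\Theta$-factors and quoting finite-dimensionality of weak Jacobi forms of fixed weight and index) is exactly the paper's.
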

\begin{proof}
Recall that the $\vec{\ell}$th Taylor coefficient of a Jacobi form is a quasimodular form of weight $k+|\vec{\ell}|$. Therefore, the multiplicity at a pole is bounded by the weight~$k$. Now, writing $\vec{s}_i \cdot \vec{z}\in u_i\tau-v_i+L_\tau$ for the hyperplanes in the statement, indexed by $i\in I$, we find that the function
$\phi(\vec{z})\prod_{i\in I}\Theta(\vec{s}_i\cdot \vec{z}-u_i\tau-v_i)^k$ is a weak Jacobi form with weight and index uniquely determined by~$\phi$ and the~$\vec{s}_i\mspace{1mu}$. The statement now follows directly from the finite-dimensionality of the space of weak Jacobi forms of fixed weight and index. 
\end{proof}

\subsection{An approximation lemma}
In this section, we prove the approximation properties that were used in the proof of \cref{prop:finitepoles}(\ref{it:finite-iii}).  That is, we prove a result indicating when for given $X,Y\in M_{n,2}(\r)$, there exists a $\gamma \in \sltwoz$ such that~$X \gamma$ lies arbitrarily close to~$Y$. For $Z\in M_{n,m}(\r)$, write~$\|Z\|$ for the distance to the closest integer matrix, i.e., 
\[\|Z\|\defis\max_{i,j}\min_{\ell\in \z}|Z_{i,j}-\ell|.\]
\begin{defn}\label{problem}
Given $X,Y\in \XXR$, we say~$Y$ is $\sltwoz$-\emph{approximable} by~$X$ if
\begin{align}\label{eq:approx}\forall\hspace{1pt}\epsilon>0\;  \exists\hspace{1pt}\gamma\in \sltwoz\colon \| X\gamma -Y \|<\epsilon. \end{align}
\end{defn}
We are interested in conditions on~$X$ and $Y$ such that $Y$ is $\sltwoz$-\emph{approximable} by~$X$.
For example, in case $n=1$ a sufficient (but not necessary) condition is given by $X\not \in M_{1,2}(\q)$:
\begin{lem}\label{lem:approx1}
Given $X,Y\in M_{1,2}(\r)$ and $X\not \in M_{1,2}(\q)$, then
\[\forall\hspace{1pt}\epsilon>0\;  \exists\hspace{1pt}\gamma\in \sltwoz\colon \|X\gamma-Y \|<\epsilon. \]
\end{lem}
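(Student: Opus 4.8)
The plan is to exploit the block structure of the right $\sltwoz$-action on the row vector $X=(\lambda,\mu)$. Writing $\gamma=\abcd$, the first coordinate of $X\gamma$ is $\lambda a+\mu c$, which depends only on the first column $(a,c)^t$; and right multiplication by $T^n$, where $T=\left(\begin{smallmatrix}1&1\\0&1\end{smallmatrix}\right)$, replaces the second column by itself plus $n$ times the first column, hence fixes the first coordinate while adding $n(\lambda a+\mu c)$ to the second coordinate. So I would first find a single $\gamma_0$ whose first coordinate lands $\epsilon$-close to the first entry of $Y$ modulo $\z$ and is moreover irrational, and then use the $T$-shift to push the second coordinate $\epsilon$-close to the second entry of $Y$ without disturbing the first.

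For the first step, recall that a vector $(a,c)^t$ occurs as the first column of some element of $\sltwoz$ exactly when $\gcd(a,c)=1$. Since $X\notin M_{1,2}(\q)$, at least one of $\lambda,\mu$ is irrational. If $\lambda\notin\q$, I restrict to the primitive columns $(a,1)^t$; then $\lambda a+\mu$ ranges over a set that is dense modulo $\z$ by Kronecker's theorem, and is irrational for every $a$ with at most one exception (two rational values $\lambda a_1+\mu$, $\lambda a_2+\mu$ would force $\lambda(a_1-a_2)\in\q$). Discarding that exceptional value leaves a dense set of irrational values, so I can choose $a$ with $\lambda a+\mu$ irrational and within $\epsilon$ of the target modulo $\z$. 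If instead $\lambda\in\q$ and $\mu\notin\q$, the symmetric choice of primitive columns $(1,c)^t$ works verbatim. Completing the chosen column to $\gamma_0\in\sltwoz$ by B\'ezout then yields a first coordinate $\alpha$ of $X\gamma_0$ that is irrational and $\epsilon$-close to the first entry of $Y$ in $\r/\z$.

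For the second step, write $\beta$ for the second coordinate of $X\gamma_0$. Since the first column of $\gamma_0T^n$ equals that of $\gamma_0$, the first coordinate of $X\gamma_0T^n$ stays equal to $\alpha$, while its second coordinate equals $\beta+n\alpha$. As $\alpha$ is irrational, the set $\{\beta+n\alpha \bmod 1 : n\in\z\}$ is dense in $\r/\z$, so I can pick $n$ placing $\beta+n\alpha$ within $\epsilon$ of the second entry of $Y$. Setting $\gamma=\gamma_0T^n$ then gives $\|X\gamma-Y\|<\epsilon$, since both coordinates are controlled simultaneously.

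The only real subtlety, and the main obstacle, is the interaction between the primitivity constraint on columns and the need for the approximating first coordinate to be genuinely irrational: density of $\{\lambda a+\mu c \bmod 1\}$ over all integer pairs is immediate, but forcing $\gcd(a,c)=1$ while keeping the value irrational and close to the target is exactly what the restriction to columns with an entry equal to $1$ is designed to achieve. Everything else is Kronecker density applied twice, once to select the first column and once to run the $T$-shift.
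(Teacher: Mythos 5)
Your proof is correct, but it follows a genuinely different route from the paper's. The paper splits into cases according to whether $\lambda/\mu$ is irrational: in that (generic) case it invokes the result of Laurent--Nogueira to get the stronger statement that the orbit of $X$ is dense in $\r^2$ itself, and in the remaining case (both entries irrational with rational ratio) it identifies the orbit explicitly as an irrational dilate of the primitive integer vectors, which is dense modulo $\z^2$. You instead give a uniform, self-contained two-step argument: pick a primitive first column with one entry equal to $1$ so that the first coordinate of $X\gamma_0$ is simultaneously irrational and close to the target (using one-dimensional equidistribution plus the observation that at most one choice yields a rational value), then right-multiply by $T^n$ to sweep the second coordinate densely without disturbing the first. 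Your mechanics check out: the first coordinate of $X\gamma$ depends only on the first column, $X\gamma_0T^n=(\alpha,\beta+n\alpha)$, and deleting the single possibly-rational value from a dense set of distinct points keeps it dense. What the paper's route buys is brevity on the page and, in the generic case, density in $\r^2$ rather than merely in the torus; what yours buys is elementarity (no citation, no case split on $\lambda/\mu$) and uniformity. It is worth noting that your two-step scheme is essentially the $n=1$ specialization of the argument the paper itself deploys for the generic case of \cref{prop:approx}: there too one first fixes a suitable first column $(\alpha,\beta)^t$ and then applies Kronecker twice via matrices of the form $\left(\begin{smallmatrix} bc+1 & b \\ c & 1 \end{smallmatrix}\right)$, so your proof arguably unifies the rank-one lemma with the higher-rank proposition.
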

\begin{proof}
Write $X=\lms$. First of all, if~$\lambda/\mu$ is irrational, then the stronger statement that the orbit of $\lms$ under~$\sltwoz$ lies dense in~$\r^2$ holds (see, e.g.,~\cite{LN12}). Also, if $\mu=0$ the orbit of $\lms$ lies dense in $\r^2/\z^2$ whenever~$\lambda$ is irrational.

From now on, we assume both~$\lambda$ and~$\mu$ are irrational, but with rational ratio. Then,
\[\lms\left(\begin{matrix} a & b \\ c & d \end{matrix}\right)  = \mu \left(\begin{matrix} a \frac{\lambda}{\mu}+ c \\[5pt] b \frac{\lambda}{\mu}+ d \end{matrix}\right).\]
Note that the matrix on the right-hand side parametrizes $N^{-1} \left(\z\wedge \z\right)$, where~$N$ is the denominator of~$\lambda/\mu$ and $\z\wedge \z$ denotes the subset of~$\z^2$ consisting of coprime integers. As~$\mu$ is irrational, we conclude that the orbit of~$\lms$ under~$\sltwoz$ lies dense in~$\r^2/\z^2$.  Hence, if $\lambda$ or~$\mu$ is irrational, then~$\sltwoz$-approximability of~$Y$ by~$X$ holds. 
\end{proof}

If we replace~$\sltwoz$ by~$M_{n,m}(\z)$, 
there is a renowned criterion for when $X$ approximates~$Y$~\cite{Kro1884}.
\begin{thm}[Kronecker's theorem on Diophantine approximation]\ \\ 
For $X,Y\in M_{n,m}(\r)$ it holds that
\[\forall\hspace{1pt}\epsilon>0\;  \exists\hspace{1pt}\gamma\in M_{m}(\z)\colon \| X\gamma -Y \|<\epsilon \]
if and only if
\begin{align}\label{eq:condkronecker} \vec{s}\in \z^n \text{ with } \vec{s}\cdot X \in \z^m \text{ implies } \vec{s}\cdot Y \in \z^m.\end{align}
\end{thm}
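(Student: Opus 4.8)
The plan is to reduce the matrix statement to the classical single-column form of Kronecker's theorem and then prove the latter by a duality argument on the torus. The necessity (``only if'') direction is elementary; the substance is entirely in the sufficiency direction.

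First I would note that the whole problem decouples over the columns of $Y$. Write $\gamma\in M_m(\z)$ through its columns $\gamma_1,\ldots,\gamma_m\in \z^m$ and $Y$ through its columns $Y_1,\ldots,Y_m\in \r^n$. Since the $j$th column of $X\gamma$ is $X\gamma_j$, and since $\|\cdot\|$ is the sup-distance to an integer matrix, one has $\|X\gamma-Y\|=\max_j\|X\gamma_j-Y_j\|$; moreover the columns $\gamma_j$ of an integer matrix may be chosen independently. Hence the approximation is solvable for every $\epsilon$ exactly when, for each $j$, the single-column problem
\[\forall\, \epsilon>0\ \ \exists\, q\in \z^m\colon\ \|Xq-Y_j\|<\epsilon\]
is solvable. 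Likewise $\vec{s}\cdot Y\in \z^m$ means $\vec{s}\cdot Y_j\in \z$ for every $j$, so the integrality hypothesis holds if and only if it holds for each column. It therefore suffices to treat the case where $Y$ is a single column $y\in \r^n$: the claim is that $y$ is approximable modulo $\z^n$ by $\{Xq:q\in \z^m\}$ if and only if every $\vec{s}\in \z^n$ with $\vec{s}\cdot X\in \z^m$ satisfies $\vec{s}\cdot y\in \z$.

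For necessity, fix $\vec{s}\in \z^n$ with $\vec{s}\cdot X\in \z^m$. Given $\epsilon>0$, choose $q\in \z^m$ and $p\in \z^n$ with $\max_i|(Xq-y-p)_i|<\epsilon$. Pairing with $\vec{s}$ gives $\vec{s}\cdot(Xq-y-p)=(\vec{s}\cdot X)\,q-\vec{s}\cdot y-\vec{s}\cdot p$; here $(\vec{s}\cdot X)q\in \z$ and $\vec{s}\cdot p\in \z$, so $\vec{s}\cdot y$ lies within $(\sum_i|s_i|)\,\epsilon$ of an integer. Letting $\epsilon\to 0$ forces $\vec{s}\cdot y\in \z$.

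For sufficiency I would pass to the torus $\r^n/\z^n$ and argue by Pontryagin duality. Let $H\subseteq \r^n/\z^n$ be the closure of the subgroup $\{Xq\bmod \z^n:q\in \z^m\}$; then $y$ is approximable modulo $\z^n$ precisely when $\bar{y}:=y\bmod\z^n$ lies in $H$. The characters of $\r^n/\z^n$ are the maps $t\mapsto \e(\vec{s}\cdot t)$ for $\vec{s}\in \z^n$, and the annihilator of $H$ is
\[H^\perp=\{\vec{s}\in \z^n : \e(\vec{s}\cdot Xq)=1 \text{ for all } q\in \z^m\}=\{\vec{s}\in \z^n : \vec{s}\cdot X\in \z^m\},\]
the last equality following by taking $q$ to run over the standard basis of $\z^m$. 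By hypothesis every $\vec{s}\in H^\perp$ satisfies $\vec{s}\cdot y\in \z$, i.e.\ $\e(\vec{s}\cdot\bar{y})=1$, so $\bar{y}$ is annihilated by all of $H^\perp$. The \emph{main obstacle}, and the only nonelementary input, is the duality fact that a closed subgroup of a compact abelian group is the common kernel of the characters annihilating it, that is $(H^\perp)^\perp=H$; this immediately gives $\bar{y}\in H$ and finishes the proof. Concretely this says that a closed subgroup of $\r^n/\z^n$ is cut out by integral linear equations $\vec{s}\cdot t\in \z$. One may invoke Pontryagin duality directly, or prove $(H^\perp)^\perp=H$ by hand: the compact abelian quotient $(\r^n/\z^n)/H$ has character group exactly (the image of) $H^\perp$, and any nontrivial element is separated from the identity by some character, which yields the containment. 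An alternative to the duality step is a Fourier/Weyl-equidistribution argument, but the torus-duality route is the cleanest.
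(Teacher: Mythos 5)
The paper does not prove this statement at all: it is quoted as a classical result with a citation to Kronecker's 1884 paper and used as a black box (the paper's original contribution in this direction is the adjacent \cref{prop:approx}, which adapts the criterion to the $\sltwoz$-action). So there is no proof in the paper to compare yours against; what can be said is that your argument is correct and self-contained. The column decoupling is valid because $M_m(\z)$ carries no determinant constraint, so the columns of $\gamma$ really are free, and $\|X\gamma-Y\|=\max_j\|X\gamma_j-Y_j\|$ reduces everything to the single-column case. The necessity direction is handled correctly, and the sufficiency direction via the annihilator of the closed subgroup $H=\overline{\{Xq\bmod\z^n\}}$ together with $(H^\perp)^\perp=H$ is the standard modern proof of Kronecker's theorem; you correctly identify the one nonelementary input (biduality for closed subgroups of the torus, equivalently that characters of the compact quotient $(\r^n/\z^n)/H$ separate points) and indicate how to prove it. The only cosmetic remark is that the computation $H^\perp=\{\vec{s}:\vec{s}\cdot X\in\z^m\}$ implicitly uses continuity of characters to pass from the subgroup $\{Xq\}$ to its closure $H$; this is worth one explicit sentence but is not a gap.
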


This is almost the result we are looking for, except that we want to replace~$M_{2}(\z)$ by~$\sltwoz$. If we were instead considering the action of $\mathrm{SL}_{n}$ on $\XXR$, the results of, for example, \cite{Lau16} would suffice. 
The action of $\sltwoz$ on $M_{1,2}(\r)$ is dealt with in, for example, \cite{Gui10,LN12}. 
 The latter work already hints at the fact that the condition~(\ref{eq:condkronecker}) should be altered if one replaces~$M_{2}(\z)$ by~$\sltwoz$. Namely, if~$\lambda,\mu$ are coprime integers, then~$(\lambda,\mu) \gamma$ is a vector of coprime integers for all $\gamma \in \sltwoz$. Hence, if $X=(\frac{1}{2},\frac{1}{3})$, then~$(0,0)$ is not in the orbit of~$X$ for~$\sltwoz$, although it is in the orbit of~$X$ for~$M_{2}(\z)$. Observe that in this case the smallest $s\in \z_{\geq 1}$ for which $s\cdot(0,0)\in \z^2$ does not equal the smallest $s\in \z_{\geq 1}$ for which $s\cdot X\in \z$ (i.e.\ $1\neq 6$), which is formalized in~(\ref{eq:cond}). 

For almost all $X=\lm\in \XXR$ we have that $1,\lambda_1,\ldots,\lambda_n,\mu_1,\ldots,\mu_n$ are linearly independent over~$\q$. In this case, and, more generally, for generic $X$ defined below, we show one has a Diophantine approximation theorem for~$\sltwoz$.
\begin{defn}\label{def:generic}
Given $X\in \XXR$, we say~$X$ is \emph{generic} when there are $\alpha,\beta\in \z$ such that for all $\vec{s}\in \z^n$ with $\vec{s}\neq \vec{0}$ one has
\[ \vec{s}\cdot (\alpha\vec{\lambda} + \beta \vec{\mu}) \not\in \z.\]
\end{defn}

For $\vec{s}\in \z^n$ and $X\in \XXR$ with $\vec{s}\neq \vec{0}$ and $\vec{s}\cdot X^t\in \z^2$, we write 
\[ (\vec{s},\vec{s}\cdot X) \defis  \max \Bigl\{N\in \z \colon \frac{\vec{s}}{N}\in \z^n,\, \frac{\vec{s}\cdot X}{N}\in \z^2\Bigr\} \]
for the greatest common divisor of the entries of $\vec{s}$ and $\vec{s}\cdot X$.

\begin{prop}[Partial result on Diophantine approximation for~$\sltwoz$]\label{prop:approx}\mbox{}\\
Let~$X,Y\in \XXR$. Then, if~$X$ is generic one has
\begin{align}\label{eq:sl2approx}\forall\hspace{1pt}\epsilon>0\;  \exists\hspace{1pt}\gamma\in \sltwoz\colon \| X\gamma -Y \|<\epsilon.\end{align}
Conversely, if {\upshape(\ref{eq:sl2approx})} holds, then for all non-trivial $\vec{s}\in \z^n$ for which $\vec{s}\cdot X\in \z^2$ one has that $\vec{s}\cdot Y \in \z^2$ and
\begin{align}\label{eq:cond} (\vec{s},\vec{s}\cdot X)  \=  (\vec{s},\vec{s}\cdot Y) .
\end{align}
\end{prop}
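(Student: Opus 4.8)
\emph{The converse.} I would prove this half first, as it is elementary. Assume \eqref{eq:sl2approx} and fix a nontrivial $\vec{s}\in\z^n$ with $\vec{s}\cdot X\in\z^2$. The guiding observation is that $\vec{s}\cdot(X\gamma)=(\vec{s}\cdot X)\gamma\in\z^2$ for every $\gamma\in\sltwoz$, since $\gamma$ is an integral matrix. Writing $X\gamma=Y+N+E$ with $N\in M_{n,2}(\z)$ and every entry of $E$ of absolute value $<\epsilon$ (which approximability provides), I would first let $\epsilon\to 0$ to conclude $\vec{s}\cdot Y\in\z^2$, since $\vec{s}\cdot Y$ then lies within $(\sum_i|s_i|)\epsilon$ of $\z^2$ for every $\epsilon$. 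Next, for $\epsilon<(\sum_i|s_i|)^{-1}$ the vector $\vec{s}\cdot E=\vec{s}\cdot X\gamma-\vec{s}\cdot Y-\vec{s}\cdot N$ is simultaneously integral and of sup-norm $<1$, hence $\vec{s}\cdot E=0$; this upgrades the approximation to the exact identity $\vec{s}\cdot X\gamma=\vec{s}\cdot Y+\vec{s}\cdot N$. Writing $g=\gcd(s_1,\ldots,s_n)$, the equality $(\vec{s},\vec{s}\cdot X)=(\vec{s},\vec{s}\cdot Y)$ then follows from a short divisibility chase: since $g$ divides every entry of $\vec{s}$ it divides $\vec{s}\cdot N$, while both $(\vec{s},\vec{s}\cdot X)$ and $(\vec{s},\vec{s}\cdot Y)$ divide $g$; using the exact identity together with the integrality of $\gamma$ and $\gamma^{-1}$ to pass between $\vec{s}\cdot X$ and $\vec{s}\cdot X\gamma$, each of the two gcd's divides the other.

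\emph{The forward direction.} This is the harder half. Using genericity I would pick $\alpha,\beta\in\z$ with $\vec{s}\cdot(\alpha\vec{\lambda}+\beta\vec{\mu})\notin\z$ for all $\vec{s}\neq\vec{0}$; setting $\vec{w}:=\alpha\vec{\lambda}+\beta\vec{\mu}$, this is equivalent to the statement that $1,w_1,\ldots,w_n$ are $\q$-linearly independent (in particular $(\alpha,\beta)\neq(0,0)$). A preliminary reduction shows one may take $\gcd(\alpha,\beta)=1$, because dividing $(\alpha,\beta)$ by its gcd preserves this $\q$-linear independence; then $(\alpha,\beta)^t$ is the first column of some $\gamma_0\in\sltwoz$. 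Replacing $X$ by $X\gamma_0$ does not change the orbit $\{X\gamma:\gamma\in\sltwoz\}$, so I may assume the first column of $X$ equals $\vec{w}$, with $1,w_1,\ldots,w_n$ $\q$-linearly independent; write $\vec{v}$ for the second column.

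\emph{Density of the orbit.} It now suffices to show that $\mathcal{O}:=\overline{\{X\gamma\bmod\z^{n\times 2}\}}$ is the whole torus $(\r/\z)^{2n}$, which I would build up in three steps, using that $\mathcal{O}$ is closed and invariant under right multiplication by the generators $T=\left(\begin{smallmatrix}1&1\\0&1\end{smallmatrix}\right)$ and $S=\left(\begin{smallmatrix}0&-1\\1&0\end{smallmatrix}\right)$. First, $XT^b=(\vec{w},b\vec{w}+\vec{v})$, and by Weyl's equidistribution theorem (the single-generator case of Kronecker's theorem, using $\q$-linear independence of $1,w_1,\ldots,w_n$) the second column is dense, so $\{\vec{w}\}\times(\r/\z)^n\subseteq\mathcal{O}$. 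Applying $S$ gives $(\r/\z)^n\times\{-\vec{w}\}\subseteq\mathcal{O}$, and then applying $T^{b'}$ gives $\{(\vec{p},b'\vec{p}-\vec{w}):\vec{p}\in(\r/\z)^n,\ b'\in\z\}\subseteq\mathcal{O}$. Finally, to reach an arbitrary target $(\vec{y}_1,\vec{y}_2)$ within $\epsilon$, I would fix $b'$ with $2b'\epsilon\geq1$ and set $\vec{p}=\vec{y}_1+\vec{\eta}$ with $|\eta_i|<\epsilon$: the large factor $b'$ lets the small perturbation $b'\vec{\eta}$ realize any residue modulo $\z^n$, so that $b'\vec{p}-\vec{w}\equiv\vec{y}_2$ while $\vec{p}\approx\vec{y}_1$, and closedness of $\mathcal{O}$ gives $(\vec{y}_1,\vec{y}_2)\in\mathcal{O}$. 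The main obstacle is exactly this density step: moving both columns at once fails, since after approximating one column the residual step size for the other may be rational and give a non-dense one-parameter subgroup. The three-step argument circumvents this by making one column dense, transporting that density to the other slot via $S$, and only then combining a large shear $T^{b'}$ with a small perturbation to pin down both coordinates simultaneously.
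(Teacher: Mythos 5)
Your proof is correct, but it departs from the paper's argument in both halves, most substantially in the forward direction. For the converse, both you and the paper start from the invariance $\vec{s}\cdot(X\gamma)\in\z^2$; the paper then establishes the gcd identity by a quantitative contradiction (dividing $\vec{s}$ by $N=(\vec{s},\vec{s}\cdot Y)$ and showing that $\vec{s}/N\cdot(X\gamma)\notin\z^2$ for all $\gamma$ would force $\|X\gamma-Y\|\geq\tfrac{1}{Nn}$), whereas you upgrade the approximation to the exact identity $\vec{s}\cdot X\gamma=\vec{s}\cdot Y+\vec{s}\cdot N$ and run a direct divisibility chase; your version is arguably more transparent and avoids the quantitative estimate. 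For the forward direction the two proofs share only the opening move (normalising by $\gamma_0$ so that the first column becomes $\vec{w}=\alpha\vec{\lambda}+\beta\vec{\mu}$ with $1,w_1,\ldots,w_n$ $\q$-linearly independent, and applying Kronecker/Weyl to the column $b\vec{w}+\vec{v}$). After that the paper must show that for some admissible $b$ the vector $b\vec{\tilde\lambda}+\vec{\tilde\mu}$ is itself free of integer relations --- this is the combinatorial step with $\sum_b a_b\vec{s}(b)=\vec{0}$, $\sum_b ba_b\vec{s}(b)\neq\vec{0}$ --- so that Kronecker can be applied a second time to approximate the remaining column. You avoid this second genericity argument entirely: you prove that the orbit closure $\mathcal{O}\subseteq(\r/\z)^{2n}$ is everything, by making one column dense, transporting that density to the other slot via $S$, and then using a large shear $T^{b'}$ to amplify a small perturbation $\vec{\eta}$ of the dense column so that $b'\vec{\eta}$ sweeps out all residues mod $\z^n$. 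This is a genuinely different mechanism and yields the slightly stronger statement that the orbit is dense in the full torus. One pedantic point: with $2b'\epsilon\geq 1$ the open cube $(-b'\epsilon,b'\epsilon)^n$ can miss the residue class of $\tfrac{1}{2}$ in a coordinate, so you should take $2b'\epsilon>1$ (or work with closed boxes); this is cosmetic and does not affect the argument.
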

\begin{proof}
Suppose that $\vec{s}\in \z^n$ is such that $\vec{s}\cdot X\in \z^2$ (as in the second part of the statement). Observe that $\vec{s}\cdot (X\gamma)\in \z^2$ for all $\gamma\in \sltwoz$. Hence, 
for any $\gamma\in \sltwoz$ one has 
\[ \|X\gamma-Y\|\geq \frac{1}{n} \|\vec{s}\cdot (X\gamma)-\vec{s}\cdot Y\| = \frac{1}{n}\|\vec{s}\cdot Y\|.\]
Therefore, if {\upshape(\ref{eq:sl2approx})} holds, then for all $\vec{s}\in \z^n$ with $\vec{s}\cdot X\in \z^2$ one has $\vec{s}\cdot Y \in \z^2$. 

Next, suppose $\vec{s}\in \z^n$ such that $\vec{s}\cdot X, \vec{s}\cdot Y\in \z^2$ and $\vec{s}\neq \vec{0}$. 
Write $N=(\vec{s},\vec{s}\cdot Y)$  
and $\vec{s'}=N^{-1}\vec{s}$. 
Suppose $\vec{s'}\cdot (X\gamma)\not \in \z^2$ for all $\gamma\in \sltwoz$, i.e., $\vec{s'}\cdot (X\gamma)\in (N^{-1}\z^2)\backslash\z^2$. Therefore, if~\eqref{eq:sl2approx} holds, for any $\gamma\in \sltwoz$ one has 
\[ \|X\gamma-Y\|\geq \frac{1}{n} \|\vec{s'}\cdot (X\gamma)-\vec{s'}\cdot Y\| = \frac{1}{n}\|\vec{s'}\cdot (X\gamma)\| \geq \frac{1}{Nn},\]
which is a contradiction. Therefore, we have that $\vec{s'}\cdot (X\gamma) \in \z^2$ for some $\gamma \in \sltwoz$. This implies that $\vec{s'}\cdot  X \in \z^2$. Hence, $(\vec{s},\vec{s}\cdot Y)\leq (\vec{s},\vec{s}\cdot X)$. Analogously, the other inequality $(\vec{s},\vec{s}\cdot X)\leq (\vec{s},\vec{s}\cdot Y)$ holds. We conclude that $(\vec{s},\vec{s}\cdot X)  \=  (\vec{s},\vec{s}\cdot Y) .$

Next, let~$X=\lm$ be generic, i.e., let $\alpha,\beta\in \z$ be such that for all non-trivial $\vec{s}\in \z^n$ we have $\vec{s}\cdot(\alpha\vec{\lambda}+\beta\vec{\mu})\not \in \z$. Assume without loss of generality that $\alpha$ and $\beta$ are coprime. Choose $\gamma_0\in \sltwoz$ such that its first column is given by $(\alpha,\beta)^t$ and write $X\gamma_0= (\vec{\tilde{\lambda}},\vec{\tilde{\mu}})$. By construction, $\vec{\tilde{\lambda}}=\alpha\vec{\lambda}+\beta\vec{\mu}$. Hence, by Kronecker's theorem we find infinitely many $b\in \z$ such that
\begin{align}\label{eq:b}\|b\vec{\tilde{\lambda}}+\vec{\tilde{\mu}}-\vec{\xi}\|<\epsilon,\end{align}
where we denoted $Y=(\vec{\nu},\vec{\xi})$. 
Suppose for all such $b$ there exists a non-trivial $\vec{s}(b)\in \z^n$ such that $\vec{s}(b)\cdot(b\vec{\tilde{\lambda}}+\vec{\tilde{\mu}})\in \z$. Note that there exist finitely many integers $a_b$ with $\sum_{b} a_b\, \vec{s}(b)=\vec{0}$ and $\sum_{b} b\, a_b\, \vec{s}(b)\neq \vec{0}$. Hence, we find
\[ \z \,\ni\, \sum_{b} a_b\, s(\vec{b})\cdot (b\vec{\tilde{\lambda}}+\vec{\tilde{\mu}}) \= \sum_{b} b\,a_b\,\vec{s}(b)\cdot \vec{\tilde{\lambda}},\]
contradicting our assumption that $X$ is generic. 
 Hence, for a suitably chosen $b\in\z$ both~\eqref{eq:b} holds and there exists a $c\in \z$ such that
\[ \|c(b\vec{\tilde{\lambda}}+\vec{\tilde{\mu}})+\vec{\tilde{\lambda}}-\vec{\nu}\|<\epsilon.\]
In other words, for $\gamma=\left(\begin{smallmatrix} bc+1 & b \\ c & 1 \end{smallmatrix}\right)\in \sltwoz$, we have
\[\|X\gamma_0\gamma -Y\|<\epsilon,\]
as desired.
\end{proof}
\begin{remark}
Condition~(\ref{eq:cond}) is necessary for~$\sltwoz$-approximability of~$Y$ by~$X$. Whether this condition also suffices, or, if not, how it should be strengthened to a necessary and sufficient condition remains an open problem. 
\end{remark}

\subsection{Quasi-Jacobi forms}\label{sec:quasiJacforms}
Consider the real-analytic functions $\nu:\mathfrak{h}\to \c$ and $\xi:\mathfrak{h}\times \c\to \c$, given by
\[\nu(\tau) \defis \frac{1}{2\ii\,\Im(\tau)}, \qquad \xi(\tau,z) \defis  \frac{\Im(z)}{\Im(\tau)}. \]
These functions almost transform as a Jacobi form: 
\begin{align}
(\nu|_{2,0} \gamma)(\tau) &\= \nu(\tau) - \,\frac{c}{c\tau+d} & \qquad  (\xi|_{1,0} \gamma)(\tau,z) &\= \xi(\tau,z)-\frac{cz}{c\tau+d} \\
(\nu|_0 X)(\tau) &\= \nu(\tau) & \qquad (\xi|_0 X)(z) &\= \xi(z) + \lambda.
\end{align}

\begin{defn} Let~$k\in \z$,~$M\in \MM$. Denote by~$C_n$ a subspace of all strictly meromorphic functions~$\mathfrak{h}\times\c^n\to \c$. \emph{An almost Jacobi form}~$\Phi$ of rank~$n$, weight~$k$, index~$M$ and analytic type~$C_n$ satisfies:
\begin{enumerate}[\upshape(i)]\itemsep2pt
\item $\Phi\in C_n[\nu(\tau),\xi(\tau,z_1),\ldots,\xi(\tau,z_n)]$;
\item $\Phi|_{k,M}\,g=\Phi$ for all $g\in \Gamma^J_n \mspace{1mu}$.
\end{enumerate}
\end{defn}
\begin{defn} A \emph{quasi-Jacobi form}~$\phi$ is the constant term with respect to~$\nu$ and~$\xi$ of an almost Jacobi form. If $C_n$ equals $\Hol_n^M$, $\Hol_n$ or $\Mer_n^M$, $\phi$ is a \emph{holomorphic}, \emph{weak} or \emph{strictly meromorphic quasi-Jacobi form}, respectively. We write $\widetilde{J}^{\mathrm{hol}}, \widetilde{J}^{\mathrm{weak}}$ and $\widetilde{J}^{\mathrm{sm}}$ for the algebras of {holomorphic}, {weak}, and {strictly meromorphic quasi-Jacobi forms}. 
\end{defn}

As a first example, quasimodular forms are quasi-Jacobi forms of rank~$n=0$. 

More interestingly, the functions
\[ E_2(\tau,z) - 2\pi\ii\,\nu(\tau), \qquad E_1(\tau,z)+2\pi\ii\,\xi(\tau,z),\]
are almost strictly meromorphic Jacobi forms of index~$0$ and weight~$2$ and~$1$ respectively. Hence, $E_2$ and~$E_1$ are strictly meromorphic quasi-Jacobi forms. Observe that
\[
 E_2(\tau,z)=-D_zE_1(\tau,z) \qquad E_1(\tau,z) = (2\pi\ii) \,D_z \log \Theta(\tau,z) \qquad\biggl(D_z=\frac{1}{2\pi\ii}\pdv{}{z}\biggr),\]
which reminds one of 
\[e_2 = 8\pi^2 D_\tau \log \eta,  \qquad\biggl(\eta(\tau) = q^{1/24}\prod_{n}(1-q^n), \quad D_\tau = \frac{1}{2\pi\ii}\pdv{}{\tau}=q\pdv{}{q}\biggr).\]

\begin{remark}
Let $\Phi$ be the meromorphic almost Jacobi form corresponding to~$\phi$ and write
\[\Phi(\tau,\vec{z}) \= \sum_{i,\vec{j}}\phi_{i,\vec{j}}(\tau,\mathbf{z})\,\nu(\tau)^i\,\xi(\tau,z_1)^{j_1}\cdots\xi(\tau,z_n)^{j_n}.\]
Making use of the algebraic independence of~$\nu$ and~$\xi$ over the field of meromorphic functions, 
we find $\phi\in \widetilde{J}^{\mathrm{sm}}$ precisely if
$\phi\in\Mer_n^M$ and there exist a finite number of $\phi_{i,\vec{j}}\in \Mer_n^M$, indexed by a subset of $\z_{\geq 0}\times \z_{\geq 0}^n\mspace{1mu}$, satisfying
 \begin{align}
(\phi|\gamma)(\tau,\vec{z}) &\= \sum_{i,\vec{j}}\phi_{i,\vec{j}}(\tau,\mathbf{z})\Bigl(\frac{c}{c\tau+d}\Bigr)^{\! i+|\vec{j}|}\frac{\vec{z}^{\vec{j}}}{(2\pi\ii)^{i}}\, ; \\
(\phi|X)(\vec{z}) &\= \sum_{\vec{j}}\phi_{0,\vec{j}}(\vec{z})\,(-\vec{\lambda})^{\vec{j}}.
\end{align}
These are equations \eqref{eq:action-quasimod} and \eqref{eq:action-quasiell} in the introduction.
(Recall that for vectors~$\vec{a}\in \c^n,\vec{b}\in \z^n$ we write $\vec{a}^{\vec{b}} = \prod_{r} a_r^{b_r}$.)
\end{remark}

The strictly meromorphic quasi-Jacobi forms~$e_2$ and~$E_1$ play a central role as building blocks of quasi-Jacobi forms out of Jacobi forms. For convenience, we introduce the following alternative normalizations
\begin{align}\label{eq:e2strange}\mathbbm{e}_2:=\frac{1}{4\pi^2}e_2=\frac{1}{12}-2\sum_{m,r\geq 1}m \,q^{mr}, \qquad A = \frac{1}{2\pi\ii} E_1.\end{align}
Quasi-Jacobi forms are not invariant under the action of the Jacobi group. However, the fact that an almost Jacobi form is a polynomial in~$\nu$ and~$\xi$ implies that a quasi-Jacobi form transforms ``up to a polynomial correction'' as a Jacobi form, or, equivalently, that it is a polynomial in the strictly meromorphic quasi-Jacobi forms~$\mathbbm{e}_2(\tau)$ and~$A(\tau,z)$ with Jacobi forms as coefficients.

\begin{prop}\label{prop:qjf} An equivalent definition for a strictly meromorphic quasi-Jacobi form is as follows: $\phi\in \widetilde{J}^{\mathrm{sm}}_{k,M}$ if 
$\phi\in\Mer_n^M$ and there exist a finite number of 
$\psi_{i,\vec{j}}\in J_{k-2i-|\vec{j}|,M}^{\mathrm{sm}}\mspace{1mu}$, indexed by a subset of $\z_{\geq 0}\times \z_{\geq 0}^n\mspace{1mu}$, such that
\begin{align}\label{eq:phiet} \phi(\vec{z}) \= \sum_{i,\vec{j}} \psi_{i,\vec{j}}(\vec{z}) \, \mathbbm{e}_2^i \, A(z_{1})^{j_1}\cdots A(z_{n})^{j_n}. \end{align}
\end{prop}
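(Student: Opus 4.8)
The plan is to prove the two implications by passing between a quasi-Jacobi form and its almost Jacobi form through the two non-holomorphic completions
\[ \mathbbm{e}_2^{\ast} \defis \mathbbm{e}_2+\tfrac{1}{2\pi\ii}\,\nu(\tau), \qquad A^{\ast}(z_r) \defis A(z_r)+\xi(\tau,z_r). \]
By the examples preceding the statement, $E_2-2\pi\ii\,\nu$ and $E_1+2\pi\ii\,\xi$ are almost Jacobi forms (hence invariant under the slash action); subtracting the genuine Jacobi form $\tfrac{1}{4\pi^2}\wp$ in the first case and rescaling shows that $\mathbbm{e}_2^{\ast}$ and $A^{\ast}(z_r)$ are likewise invariant, of weights $2$, $1$ and index $0$. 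Since $\mathbbm{e}_2\in\Hol_n^0$ and $A(z_r)\in\Mer_n^0$, both completions are affine-linear in $\nu,\xi$ with coefficients in $\Mer_n^0$, and the inverse change of variables is $\nu=2\pi\ii(\mathbbm{e}_2^{\ast}-\mathbbm{e}_2)$, $\xi_r=A^{\ast}(z_r)-A(z_r)$.

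For the implication ``$\Leftarrow$'', suppose $\phi=\sum_{i,\vec j}\psi_{i,\vec j}\,\mathbbm{e}_2^i\,A(z_1)^{j_1}\cdots A(z_n)^{j_n}$ with $\psi_{i,\vec j}\in J^{\mathrm{sm}}_{k-2i-|\vec{j}|,M}$. First I would form the corresponding completion
\[ \Phi \defis \sum_{i,\vec j}\psi_{i,\vec j}\,(\mathbbm{e}_2^{\ast})^{i}\,A^{\ast}(z_1)^{j_1}\cdots A^{\ast}(z_n)^{j_n}\in\Mer_n^M[\nu,\xi_1,\ldots,\xi_n]. \]
As the slash action is multiplicative with weights and indices adding (the factor $(c\tau+d)^{-k}$ is multiplicative and $B_{M+M'}=B_M+B_{M'}$), each summand is invariant of weight $(k-2i-|\vec{j}|)+2i+|\vec{j}|=k$ and index $M$, so $\Phi|_{k,M}g=\Phi$ for all $g\in\Gamma^J_n$; hence $\Phi$ is an almost Jacobi form. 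Setting $\nu=0$ and $\xi_r=0$ sends $\mathbbm{e}_2^{\ast}\mapsto\mathbbm{e}_2$, $A^{\ast}(z_r)\mapsto A(z_r)$ and extracts the constant term of $\Phi$, which is $\phi$; thus $\phi\in\widetilde{J}^{\mathrm{sm}}_{k,M}$.

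For the implication ``$\Rightarrow$'', write the almost Jacobi form of $\phi$ as $\Phi=\sum_{i,\vec j}\phi_{i,\vec j}\,\nu^i\xi_1^{j_1}\cdots\xi_n^{j_n}$ with $\phi_{i,\vec j}\in\Mer_n^M$ and constant term $\phi=\phi_{0,\vec{0}}$. Using the invertible affine change of variables above, I would re-expand $\Phi$ as the finite polynomial $\Phi=\sum_{i,\vec j}\Psi_{i,\vec j}\,(\mathbbm{e}_2^{\ast})^i A^{\ast}(z_1)^{j_1}\cdots A^{\ast}(z_n)^{j_n}$; since this only uses ring operations among the $\phi_{i,\vec j}\in\Mer_n^M$ and $\mathbbm{e}_2,A(z_r)\in\Mer_n^0$, and a product of an index-$M$ with an index-$0$ function lies in $\Mer_n^M$, each $\Psi_{i,\vec j}\in\Mer_n^M$. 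Applying $|_{k,M}g$, using $\Phi|g=\Phi$ together with the invariance of $\mathbbm{e}_2^{\ast},A^{\ast}(z_r)$ and multiplicativity, yields
\[ \sum_{i,\vec j}\bigl(\Psi_{i,\vec j}|_{k-2i-|\vec{j}|,M}g-\Psi_{i,\vec j}\bigr)(\mathbbm{e}_2^{\ast})^i A^{\ast}(z_1)^{j_1}\cdots A^{\ast}(z_n)^{j_n}=0. \]
The functions $\mathbbm{e}_2^{\ast},A^{\ast}(z_1),\ldots,A^{\ast}(z_n)$ are algebraically independent over the field of meromorphic functions (being affine in the algebraically independent $\nu,\xi_1,\ldots,\xi_n$), so every coefficient vanishes and $\Psi_{i,\vec j}\in J^{\mathrm{sm}}_{k-2i-|\vec{j}|,M}$. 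Specialising $\nu=\xi_r=0$ finally gives $\phi=\sum_{i,\vec j}\Psi_{i,\vec j}\,\mathbbm{e}_2^i\,A(z_1)^{j_1}\cdots A(z_n)^{j_n}$, which is the asserted form with $\psi_{i,\vec j}=\Psi_{i,\vec j}$.

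The conceptual engine is the triangular change of basis from $\{\nu,\xi_r\}$ to the genuinely transforming $\{\mathbbm{e}_2^{\ast},A^{\ast}(z_r)\}$, combined with equating coefficients; the algebraic independence of $\nu,\xi_1,\ldots,\xi_n$ over the meromorphic functions is what makes both steps legal, and I expect this to be the only real subtlety. The remaining work is bookkeeping: checking multiplicativity of the slash action so that the weight on $\Psi_{i,\vec j}$ is exactly $k-2i-|\vec{j}|$ and its index is exactly $M$, and that each coefficient stays inside $\Mer_n^M$. Finiteness of all sums is automatic, since an almost Jacobi form is by definition polynomial in $\nu,\xi$.
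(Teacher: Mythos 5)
Your proposal is correct and follows essentially the same route as the paper: the paper likewise re-expands the almost Jacobi form $\Phi$ in the completed generators $\mathbbm{e}_2+\frac{\nu}{2\pi\ii}$ and $A(z_r)+\xi(z_r)$, invokes their invariance and their algebraic independence over the meromorphic functions to conclude the coefficients are strictly meromorphic Jacobi forms, and extracts $\phi$ as the constant term. You merely spell out both implications and the weight/index bookkeeping in more detail than the paper does.
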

\begin{proof}
We define the coefficients~$\psi_{i,\vec{j}}$ by the expansion of the almost Jacobi form~$\Phi$ corresponding to~$\phi$, i.e.,
\begin{align}\label{eq:phitopsi}\Phi(\tau,\vec{z})\,=:\,\sum_{i,\vec{j}} \psi_{i,\vec{j}}(\tau,\vec{z}) \, \Bigl(\mathbbm{e}_2(\tau)+\frac{\nu(\tau)}{2\pi\ii}\Bigr)^{\!i} \, \prod_{r}\left(A(\tau,z_r)+\xi(\tau,z_r)\right)^{j_r}.
\end{align}
Note that~$\mathbbm{e}_2(\tau)+\frac{\nu(\tau)}{2\pi\ii}$ and~$A(\tau,z_{r})+\xi(\tau,z_r)$ transform as Jacobi forms. Moreover, they are algebraically independent over the space of all meromorphic functions. Hence, it follows that the coefficients~$\psi_{i,\vec{j}}$ are strictly meromorphic Jacobi forms. The constant term with respect to~$i$ and~$\vec{j}$, by definition equal to~$\phi$, is now easily seen to equal to the right-hand side of~(\ref{eq:phiet}).
\end{proof}
\begin{remark}
From~\eqref{eq:phitopsi} it follows that the~$\phi_{i,\vec{j}}$ are quasi-Jacobi forms related to the~$\psi_{i,\vec{j}}$ by 
\begin{align}\label{eq:phipsi}
\phi_{i,\vec{j}}(\vec{z}) \= \sum_{i',\vec{j}'}  \binom{i+i'}{i}\binom{\vec{j}+\vec{j'}}{\vec{j}}\,\psi_{i+i',\vec{j}+\vec{j'}}(\vec{z})\,\mathbbm{e}_2^{i'}\,\prod_rA(z_r)^{j_r'},\end{align}
where
\[\binom{\vec{j}+\vec{j'}}{\vec{j}} \defis \prod_{r} \binom{j_r+j_r'}{j_r}.\]
Also, note that given a representation for~$\phi$ as in~(\ref{eq:phiet}) one has 
\begin{align}&\phi_{1,\vec{0}} \=  \pdv{}{\mathbbm{e}_2}\phi \quad \text{ and } \quad \phi_{0,e_i}(\vec{z})\=\pdv{}{A(z_i)} \phi(\vec{z}).\qedhere
\end{align}
\end{remark}

By~(\ref{eq:phiet}) quasi-Jacobi forms share the properties of Jacobi forms with respect to the location of the poles:
\begin{cor}\label{cor:poleshyperplane} The statement of \cref{prop:finitepoles} also holds when~$\phi$ is a strictly meromorphic \emph{quasi}-Jacobi form.
\end{cor}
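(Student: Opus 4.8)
The plan is to reduce the statement to \cref{prop:finitepoles} by means of the structural description in \cref{prop:qjf}. Write $\phi$ as in \eqref{eq:phiet}, i.e.\ as a \emph{finite} sum
\[\phi \= \sum_{i,\vec{j}} \psi_{i,\vec{j}}\, \mathbbm{e}_2^{\,i}\, A(z_1)^{j_1}\cdots A(z_n)^{j_n}, \qquad \psi_{i,\vec{j}}\in J^{\mathrm{sm}}.\]
On the right-hand side the only factors that are not holomorphic are the Jacobi forms $\psi_{i,\vec{j}}$ and the functions $A(z_r)=\tfrac{1}{2\pi\ii}E_1(\tau,z_r)$: indeed $\mathbbm{e}_2$ is holomorphic on $\mathfrak{h}$ and independent of $\vec{z}$, whereas $A(z_r)$ has only simple poles, situated exactly on the hyperplane $\{z_r\in L_\tau\}$. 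Consequently
\[P_\phi \ \subseteq\ \bigcup_{i,\vec{j}} P_{\psi_{i,\vec{j}}}\ \cup\ \bigcup_{r=1}^n \{(\tau,\vec{z})\colon z_r\in L_\tau\}.\]

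For part~(ii): each $\psi_{i,\vec{j}}$ is a strictly meromorphic Jacobi form, so by \cref{prop:finitepoles}(ii) its polar locus is a finite union of rational hyperplanes of the form \eqref{eq:linfunc}; the lattice hyperplane $\{z_r\in L_\tau\}$ is of the same form, with $\vec{s}=e_r$ the $r$th standard basis vector and $(u,v)=(0,0)$. As the sum is finite, the right-hand side above is a finite union of such hyperplanes. Now $P_\phi$ is the polar divisor of a meromorphic function, hence a codimension-one analytic subset, and it is contained in a finite union of \emph{distinct irreducible} rational hyperplanes; therefore each of its codimension-one components coincides with one of these hyperplanes, the remaining (indeterminacy) points lying in their pairwise intersections. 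Hence $Q_\phi$ is itself a finite union of rational hyperplanes of the form \eqref{eq:linfunc}, which is (ii).

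For the $\sltwoz$-invariance (i) I would re-run the proof of \cref{prop:finitepoles}(i) unchanged; the only step there that invoked modularity is the assertion that $\phi(\tau,\vec{z})$ and $\phi\bigl(\gamma^{-1}\tau,\tfrac{\vec{z}}{-c\tau+a}\bigr)$ have the same poles. This persists in the quasi-Jacobi setting. By \eqref{eq:action-quasimod} (applied to $\gamma^{-1}$), the slash $\phi|\gamma^{-1}$ equals $\phi$ plus a sum of the correction terms $\phi_{i,\vec{j}}$ with $(i,\vec{j})\neq(0,\vec{0})$, each multiplied by a factor that is holomorphic in $\tau$ and polynomial in $\vec{z}$; moreover $\phi\bigl(\gamma^{-1}\tau,\tfrac{\vec{z}}{-c\tau+a}\bigr)$ differs from $\phi|\gamma^{-1}$ only by the nowhere-vanishing holomorphic automorphy factor of \cref{defn:slash}(i). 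Hence it suffices to know that $P_{\phi_{i,\vec{j}}}\subseteq P_\phi$ for every $(i,\vec{j})$. This is the crux: by the remark following \cref{prop:qjf} the $\phi_{i,\vec{j}}$ are, up to constants, the iterated partial derivatives of $\phi$ with respect to the generators $\mathbbm{e}_2, A(z_1),\ldots,A(z_n)$, and such a formal differentiation only lowers the exponents of the holomorphic $\mathbbm{e}_2$ and of the $A(z_r)$ while leaving the coefficients $\psi_{i,\vec{j}}$ untouched, so it cannot enlarge the polar locus. The main obstacle is precisely making this containment rigorous: one must use that $\mathbbm{e}_2$ and $A(z_1),\ldots,A(z_n)$ are algebraically independent over the field of meromorphic functions (as in the proof of \cref{prop:qjf}), which both renders the decomposition \eqref{eq:phiet} unique and shows that a pole of $\phi$ along a hyperplane is detected by a nonzero principal part of one of the $\psi_{i,\vec{j}}$ there. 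Granting this, $P_{\phi|\gamma^{-1}}=P_\phi$ follows, and the argument of \cref{prop:finitepoles}(i) then yields $X\gamma\in Q_\phi$ for all $X\in Q_\phi$ and $\gamma\in\sltwoz$.
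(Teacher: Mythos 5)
Your reduction of part~(\ref{it:finite-iii}) to \cref{prop:finitepoles} via the decomposition~\eqref{eq:phiet} is exactly the paper's argument: the paper's entire proof is the single sentence that, by~\eqref{eq:phiet}, quasi-Jacobi forms share the pole-location properties of Jacobi forms, i.e.\ $P_\phi$ is contained in the union of the polar loci of the finitely many $\psi_{i,\vec{j}}\in J^{\mathrm{sm}}$ together with the lattice hyperplanes $\{z_r\in L_\tau\}$ carrying the poles of $A$, and each of these is a finite union of rational hyperplanes of the form~\eqref{eq:linfunc}. This containment is the content that \cref{thm:poleshyperplane} and the later sections actually use, and your write-up of it is correct.

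The step you yourself flag as the crux of part~(\ref{it:finite-ii}), namely $P_{\phi_{i,\vec{j}}}\subseteq P_\phi$, is however \emph{false}, and with it the verbatim transfer of the exact statements of \cref{prop:finitepoles}. Take $\phi=E_1^2-E_2$, the constant term of the almost Jacobi form $(E_1+2\pi\ii\,\xi)^2-(E_2-2\pi\ii\,\nu)$, hence a strictly meromorphic quasi-Jacobi form of weight~$2$ and index~$0$. Since $E_1(z)=z^{-1}+O(z)$ and $E_2(z)=z^{-2}+O(1)$, this $\phi$ is holomorphic along $z\in\z$, while $\phi_{0,1}=\dd_z\phi=4\pi\ii\,E_1$ has simple poles there. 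Correspondingly, $E_1(z+\tau)=E_1(z)-2\pi\ii$ gives $\phi(z+\tau)=\phi(z)-4\pi\ii\,E_1(z)-4\pi^2$, so $\phi$ \emph{does} have a pole at $z=\tau$; thus $Q_\phi=\{(n,m)\in\z^2: n\neq 0\}$, which is neither invariant under translation by $\XXZ$ nor under $\sltwoz$ (e.g.\ $(1,0)\in Q_\phi$ but $(1,0)\left(\begin{smallmatrix}0&-1\\1&0\end{smallmatrix}\right)=(0,-1)\notin Q_\phi$), and is a proper subset of the hyperplane $X\in(0,0)$. So statement~(\ref{it:finite-ii}) and the ``precisely if'' in~(\ref{it:finite-iii}) genuinely fail for quasi-Jacobi forms; what survives is that $P_\phi$ lies in the $\sltwoz$-stable finite union $\bigcup_{i,\vec{j}} P_{\psi_{i,\vec{j}}}\cup\bigcup_r\{z_r\in L_\tau\}$ of rational hyperplanes, which is what the corollary is used for. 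Note also that your proposed justification cannot be repaired as stated: $\mathbbm{e}_2$ and the $A(z_r)$ are themselves meromorphic, hence trivially algebraically dependent over the field of meromorphic functions; the independence invoked in \cref{prop:qjf} concerns the non-holomorphic completions $\mathbbm{e}_2+\nu/2\pi\ii$ and $A(z_r)+\xi(z_r)$, and it guarantees uniqueness of the $\psi_{i,\vec{j}}$ but not the absence of cancellation among the principal parts of the different terms of~\eqref{eq:phiet} along a given hyperplane.
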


As a corollary of \cref{thm:mer0}, we have the following representations for the algebras~$J^0$ and~$\widetilde{J}^0$ of all rank~one strictly meromorphic Jacobi forms and strictly meromorphic quasi-Jacobi forms with all poles at the lattice points~$L_\tau\mspace{1mu}$:
\begin{align}\label{eq:mer0}
J^0&=\c[E_2-e_2,E_3,e_4,\Theta], & \widetilde{J}^0&=\c[E_1,E_2,E_3,e_2,e_4,\Theta].
\end{align}
Observe the vector subspaces for fixed weight and index are finite-dimensional. This holds more generally.
\begin{cor}\label{cor:polesquasiJ} The space of all meromorphic quasi-Jacobi forms of some weight~$k$, index~$M$, and with all poles in a finite union of fixed rational hyperplanes as in \cref{thm:poleshyperplane} is finite-dimensional.
\end{cor}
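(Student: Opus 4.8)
The plan is to deduce the statement from the finite-dimensionality of strictly meromorphic \emph{Jacobi} forms (\Cref{cor:finitepoles}) via the structural description in \Cref{prop:qjf}. Fix the weight $k$, the index $M\in\MM$ and a finite collection $\mathcal{H}$ of rational hyperplanes as in \Cref{thm:poleshyperplane}, and let $\mathcal{V}$ be the space of strictly meromorphic quasi-Jacobi forms of weight $k$ and index $M$ with all poles in $\mathcal{H}$. Enlarging $\mathcal{H}$ if necessary, I may assume it is invariant under $\sltwoz$; this keeps $\mathcal{H}$ finite, since a hyperplane $\vec{s}\cdot\vec{z}\in u\tau+v$ is sent by $\gamma$ to $\vec{s}\cdot\vec{z}\in(u,v)\gamma$, and the orbit of $(u,v)\in(\q/\z)^2$ under $\sltwoz$ is finite. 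By \Cref{prop:qjf} every $\phi\in\mathcal{V}$ can be written uniquely as $\phi=\sum_{i,\vec{j}}\psi_{i,\vec{j}}\,\mathbbm{e}_2^{\,i}\,A(z_1)^{j_1}\cdots A(z_n)^{j_n}$ with $\psi_{i,\vec{j}}\in J^{\mathrm{sm}}_{k-2i-|\vec{j}|,M}$, the sum being finite. Since the $\psi_{i,\vec{j}}$ determine $\phi$, the linear map $\phi\mapsto(\psi_{i,\vec{j}})$ is injective, so it suffices to show that these coefficients range over a \emph{fixed finite} list of finite-dimensional spaces.

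First I would bound the poles of the $\psi_{i,\vec{j}}$. Let $\mathcal{L}$ be the finite set of lattice hyperplanes $z_r\in L_\tau$ ($1\le r\le n$), which is exactly the polar locus of the building blocks $A(z_r)=\frac{1}{2\pi\ii}E_1(\tau,z_r)$; the other building block $\mathbbm{e}_2$ has no poles in the elliptic variables. Passing from $\phi$ to its almost Jacobi completion $\Phi=\sum_{i,\vec{j}}\phi_{i,\vec{j}}\,\nu^{i}\,\xi(\tau,z_1)^{j_1}\cdots\xi(\tau,z_n)^{j_n}$ replaces $\mathbbm{e}_2$ and $A(z_r)$ by $\mathbbm{e}_2+\frac{\nu}{2\pi\ii}$ and $A(z_r)+\xi(\tau,z_r)$, i.e.\ it only adds the pole-free real-analytic functions $\nu,\xi$; hence $\Phi$ has the same $\vec{z}$-polar locus as $\phi$, namely a subset of $\mathcal{H}$. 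Extracting the coefficients $\phi_{i,\vec{j}}$ of $\Phi$ by iterated antiholomorphic derivatives $\partial/\partial\bar{z}_r$ and $\partial/\partial\bar{\tau}$ (which act only on the $\nu,\xi$ factors and create no new poles in $\vec{z}$) shows that each $\phi_{i,\vec{j}}$ also has poles only in $\mathcal{H}$. Inverting the unipotent triangular relation \eqref{eq:phipsi} then expresses every $\psi_{i,\vec{j}}$ as a polynomial in the $\phi_{i',\vec{j}'}$, $\mathbbm{e}_2$ and the $A(z_r)$, so the poles of $\psi_{i,\vec{j}}$ lie in the fixed finite union $\mathcal{H}\cup\mathcal{L}$.

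Next I would bound the index set $S=\{(i,\vec{j}):\psi_{i,\vec{j}}\neq0\}$ uniformly in $\phi$. The weight $k-2i-|\vec{j}|$ of $\psi_{i,\vec{j}}$ is automatically at most $k$, so it remains to bound it below, equivalently to bound $2i+|\vec{j}|$ above. For this I use that the Laurent coefficients of a strictly meromorphic Jacobi form at its poles are holomorphic quasimodular forms, whose weight equals that of the form shifted by the order of vanishing; since quasimodular forms of negative weight vanish, the pole order is at most the weight, and in particular a strictly meromorphic Jacobi form of negative weight is a weak Jacobi form. As weak Jacobi forms of fixed index $M$ have weights bounded below (structure theory, \cite{EZ85}), the space $J^{\mathrm{sm}}_{w,M}$ with poles in $\mathcal{H}\cup\mathcal{L}$ vanishes for $w$ sufficiently negative, say $w<w_0(M)$. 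Hence $\psi_{i,\vec{j}}=0$ unless $2i+|\vec{j}|\le k-w_0(M)$, and as $n$ is fixed this confines $S$ to a fixed finite set. Applying \Cref{cor:finitepoles} to each of the finitely many spaces $J^{\mathrm{sm}}_{k-2i-|\vec{j}|,M}$ with poles in $\mathcal{H}\cup\mathcal{L}$, and using injectivity of $\phi\mapsto(\psi_{i,\vec{j}})_{(i,\vec{j})\in S}$, I conclude that $\mathcal{V}$ embeds into a finite direct sum of finite-dimensional spaces and is therefore finite-dimensional.

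The routine part is the final embedding; the hard part is the uniformity in the middle two steps. Finite-dimensionality for a single $\phi$ is immediate from \Cref{cor:finitepoles}, but the content is that, as $\phi$ varies over all of $\mathcal{V}$, the coefficients $\psi_{i,\vec{j}}$ never acquire poles outside $\mathcal{H}\cup\mathcal{L}$ and never require arbitrarily high powers of $\mathbbm{e}_2$ or of the $A(z_r)$. The pole bound hinges on the claim that completing $\phi$ to the real-analytic almost Jacobi form $\Phi$ leaves the polar locus in $\vec{z}$ unchanged, and the degree bound hinges on the weight-lower-bound for weak Jacobi forms of fixed index; I expect verifying the former carefully (controlling cancellations in the $\nu,\xi$-expansion) to be the main obstacle.
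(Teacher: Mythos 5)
Your argument follows the paper's own route exactly: decompose $\phi$ via \cref{prop:qjf} into strictly meromorphic Jacobi coefficients $\psi_{i,\vec{j}}$ and apply \cref{cor:finitepoles} to each of the finitely many resulting spaces. The two uniformity points you elaborate --- that the poles of the $\psi_{i,\vec{j}}$ stay inside the fixed union $\mathcal{H}\cup\mathcal{L}$, and that the index set $(i,\vec{j})$ is bounded independently of $\phi$ because $J^{\mathrm{sm}}_{w,M}$ with prescribed poles vanishes for $w$ sufficiently negative --- are precisely the details the paper's one-line proof leaves implicit, and your treatment of them is correct.
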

\begin{proof}
This follows directly from the previous proposition as by \cref{cor:finitepoles} the number of linearly independent~$\psi_{i,\vec{j}}$ is finite.
\end{proof}

The operators $D_\tau=\frac{1}{2\pi\ii}\pdv{}{\tau}$ and $D_{z_i}=\frac{1}{2\pi\ii}\pdv{}{z_i}$ preserve the space of quasi-Jacobi forms ($1\leq i\leq n$). This leads to yet another equivalent definition of quasi-Jacobi forms, as derivatives of Jacobi forms.  Note that no power of the quasi-Jacobi form~$e_2$ (which is in fact a quasimodular form and hence of trivial index) can be written in terms of derivatives of Jacobi forms. However, in case the index is positive definite, by \cite[Proposition~1(i)]{OP19} we have the following.
\begin{prop} Let~$\phi$ be a quasi-Jacobi form of weight~$k$ and positive definite index~$M$. Then, there exist unique Jacobi forms~$\psi_{\vec{d}}$ with~$\vec{d}\in \z_{\geq 0}^{n+1}$ of weight~$k-2d_0-d_1-\ldots-d_n$ and index~$M$ such that
\[\phi \= \sum_{\vec{d}} D_\tau^{d_0} D_{z_1}^{d_1}\cdots D_{z_n}^{d_n} \psi_{\vec{d}}\, .\]
\end{prop}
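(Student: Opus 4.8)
The plan is to combine the explicit description of quasi-Jacobi forms in \cref{prop:qjf} with a filtration by ``total depth'', and to identify the leading anomaly (in the sense of \eqref{eq:action-quasimod}--\eqref{eq:action-quasiell}) of each operator $D_\tau,D_{z_1},\ldots,D_{z_n}$ acting on genuine Jacobi forms. By \cref{prop:qjf} one may write $\phi=\sum_{i,\vec j}\psi_{i,\vec j}\,\mathbbm{e}_2^i\,A(z_1)^{j_1}\cdots A(z_n)^{j_n}$ with each $\psi_{i,\vec j}\in J^{\mathrm{sm}}_{k-2i-|\vec j|,M}$, and this representation is unique, since $\mathbbm{e}_2+\tfrac{\nu}{2\pi\ii}$ and the $A(z_r)+\xi(\tau,z_r)$ are algebraically independent over the meromorphic functions. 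Hence it suffices to realise each monomial $\mathbbm{e}_2^i\prod_r A(z_r)^{j_r}$ (with a Jacobi coefficient) as a combination $\sum_{\vec d}D_\tau^{d_0}\prod_r D_{z_r}^{d_r}\psi_{\vec d}$ agreeing with it modulo terms of strictly smaller depth, and then to induct.

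First I would compute the leading symbols. Assigning total depth $2i+|\vec j|$ to $\mathbbm{e}_2^i\prod_r A(z_r)^{j_r}$ makes the depth equal to the drop in weight, so the depth-$p$ part of a weight-$k$ form has Jacobi coefficients of weight $k-p$. Differentiating the elliptic transformation law (\cref{defn:slash}(\ref{it:transfo2})) of a Jacobi form $\psi$ of index $M$ shows that $D_{z_r}\psi$ raises total depth by one, with leading part the multiplication $\psi\mapsto\psi\cdot L_r$, where $L_r:=\sum_k 2M_{kr}\,A(z_k)$; concretely $\partial(D_{z_r}\psi)/\partial A(z_k)=2M_{kr}\psi$. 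Differentiating the modular transformation law (\cref{defn:slash}(i)) shows that $D_\tau\psi$ raises total depth by two, with leading part $\psi\mapsto\psi\cdot\bigl(w\,\mathbbm{e}_2+c\,B_M(A,A)\bigr)$ up to a fixed nonzero constant, where $w$ is the weight of $\psi$, the constant $c\neq 0$, and $B_M(A,A):=\sum_{k,l}M_{kl}A(z_k)A(z_l)$; here the $\mathbbm{e}_2$-term is the usual weight anomaly, while the quadratic term is the heat-operator (index) anomaly coming from the factor $\e\bigl(-c\,B_M(\vec z,\vec z)/(c\tau+d)\bigr)$.

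The positive-definiteness of $M$ would enter precisely here. Since $M$ is then invertible, the linear forms $L_1,\ldots,L_n$ are an invertible linear substitution of $A(z_1),\ldots,A(z_n)$, so the products $\prod_r L_r^{d_r}$ span the same space as the $A$-monomials of each fixed degree, and $B_M(A,A)$ is a combination of the $L_rL_s$. Consequently, on the associated graded (for total depth, over the ring of Jacobi forms) the substitution $\mathbbm{e}_2\mapsto w\,\mathbbm{e}_2+c\,B_M(A,A)$, $A(z_r)\mapsto L_r$ is the composite of an invertible linear change in the $A$-variables with a unipotent shear of $\mathbbm{e}_2$, hence invertible, provided the accompanying scalar weight-factors do not vanish (addressed below). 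Therefore the depth-$p$ symbols $\sigma(D_\tau)^{d_0}\prod_r\sigma(D_{z_r})^{d_r}$ with $2d_0+(d_1+\ldots+d_n)=p$ would form a Jacobi-form basis of the depth-$p$ graded piece. This gives surjectivity of the associated-graded map (peel off the top-depth part of $\phi$ by solving a linear system with invertible symbol, subtract, and induct down to depth $0$, where $\phi$ is itself a Jacobi form) and, simultaneously, injectivity (any relation $\sum_{\vec d}D^{\vec d}\psi_{\vec d}=0$ forces, through the invertible symbol, a relation among the $\mathbbm{e}_2^i\prod_r A(z_r)^{j_r}$ over the Jacobi forms, which by the uniqueness in \cref{prop:qjf} kills the top coefficients, whence one induces downward). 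This would yield both existence and uniqueness of the $\psi_{\vec d}$.

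The main obstacle is the coupling introduced by the heat-operator anomaly of $D_\tau$: because $D_\tau$ produces a quadratic-in-$A$ term of the \emph{same} total depth as its $\mathbbm{e}_2$-term, the $\tau$- and $z$-directions cannot be separated na\"ively, and the scalars $w(w+2)\cdots$ arising from iterating $D_\tau$ on low-weight Jacobi forms could in principle vanish. Both points are resolved by the positive-definiteness hypothesis: invertibility of $M$ makes the combined symbol nondegenerate (so that $M=0$, e.g.\ the pure quasimodular form $e_2$, is correctly excluded, as it cannot be a derivative of a modular form), and in the relevant analytic category positive-definiteness keeps the weights away from the degenerate values, as in \cite[Proposition~1(i)]{OP19}, to which I would defer for the bookkeeping of these scalar factors.
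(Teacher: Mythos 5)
Your argument is essentially the paper's: both proofs peel off the top component of $\phi$ in the decomposition $\phi=\sum_{i,\vec j}\psi_{i,\vec j}\,\mathbbm{e}_2^{i}\prod_r A(z_r)^{j_r}$ of \cref{prop:qjf} by subtracting an appropriate combination of $D_\tau^{d_0}D_{z_1}^{d_1}\cdots D_{z_n}^{d_n}$ applied to Jacobi forms and then induct downward, with positive-definiteness of $M$ invoked at exactly the same point — to make the leading coefficient (your ``symbol'') invertible. The paper's version is a one-line induction on a maximal index $(i,\vec j)$ whose key non-vanishing is asserted rather than computed, so your more explicit identification of the symbols of $D_\tau$ and $D_{z_r}$, together with your deferral of the remaining scalar bookkeeping to \cite{OP19}, matches the paper's own level of detail.
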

\begin{proof}
Choose an ordering on~$\z^{n+1}$ respecting the ordering on~$\z$. Given a Jacobi form~$\phi$, let~$(i,\vec{j})$ be maximal (with respect to this ordering) for which~$\phi_{i,\vec{j}}$ in \cref{prop:qjf} exists and is non-zero. A direct check using the same proposition shows that~$\phi$ minus a multiple of ${D}_\tau^{j_0}{D}_{z_{j_1}}^{j_1}\cdots D_{z_{j_n}}^{j_n}\phi_{i,\vec{j}}$ is a quasi-Jacobi form for which this maximal index is smaller. Here, as $M$ is positive definite, this multiple is non-zero.
\end{proof}

\subsection{Action of the Jacobi Lie algebra by derivations}\label{sec:j-alg}
The last proposition depended on the derivations $D_\tau$ and $D_{z_i}\mspace{1mu}$.  Here, we study natural derivations on the space of quasi-Jacobi forms. Given a quasi-Jacobi form $\phi$, recall the quasi-Jacobi forms $\phi_{i,\vec{j}}$ (see Equations~\eqref{eq:action-quasimod} and~\eqref{eq:action-quasiell} or the previous section) are defined for $(i,\vec{j})$ in a finite index set~$I$ with $I\subset\z\times \z^n$. By convention, we let $\phi_{i,\vec{j}}:=0$ if $(i,\vec{j})\not \in I$. 
\begin{defn}
Let~$\dtau$ and~$\dd_{z_i}$ be the derivations on the space of quasi-Jacobi forms given by $\phi\mapsto \phi_{1,\vec{0}}$ and $\phi \mapsto \phi_{0,e_i}$ respectively (with $e_i$ the standard $i$th basis vector of~$\r^n$). 
\end{defn}
Observe that the functions $\phi_{i,\vec{j}}$ are given by 
\begin{align}\phi_{i,\vec{j}} = \frac{\dtau^i}{i!}\frac{\dd_{\vec{z}}^{\vec{j}}}{\vec{j}!} \phi &&  (\dd_{\vec{z}}^{\vec{j}} = \dd_{z_1}^{j_1}\cdots \dd_{z_n}^{j_n},\ \vec{j}! = j_1!\cdots j_n!).\end{align}
Now, a \emph{key observation} for the rest of this work is that the transformation behaviour of~$\phi$ is uniquely determined by the action of the operators 
 ${\dtau^i}{\dd_{\vec{z}}^{\vec{j}}}$ on~$\phi$. 
In particular, the transformation behaviour of the Taylor coefficients of~$\phi$---which are quasimodular forms---is determined by the action of ${\dtau^i}$ on these coefficients.  
 In the next section we investigate how the transformation behaviour of a quasi-Jacobi form determines the transformation of its Taylor coefficients, and vice versa, by studying the action of these derivations. 

\begin{remark}
Writing~$\phi$ as in~(\ref{eq:phiet}) yields 
\begin{align}\dtau\phi\=\pdv{}{\mathbbm{e}_2}\phi, \qquad (\dd_{z_i}\phi)(\vec{z})\=\pdv{}{A(z_i)}\phi(\vec{z}).&\qedhere\end{align}
\end{remark}

The operators $D_\tau,D_{z_i},\dtau$ and $\dd_{z_i}$ are part of a Lie algebra of operators acting on quasi-Jacobi forms by derivations, as we explain now. 
Following a suggestion by Zagier, we consider the notion of a~$\mathfrak{g}$-algebra for any Lie algebra~$\mathfrak{g}$, defined as follows.
\begin{defn}\label{def:g-alg}
Given a Lie algebra~$\mathfrak{g}$, a~$\mathfrak{g}$-algebra is an algebra~$A$ together with a Lie homomorphism~$\mathfrak{g}\to \mathrm{Der}(A),$ where $\mathrm{Der}(A)$ denotes the Lie algebra of all derivations on~$A$. 
\end{defn}

Denote by~$W$ and~$\ind_{ij}$ the weight and index operators acting diagonally by multiplying with the weight~$k$ and~$B_M(e_i,e_j)$ respectively, where~$B_M$ is the bilinear form corresponding to the index~$M$. 
Let~$\mathfrak{j}$ be the Lie algebra of the Jacobi group. By \cite[Eqn.~(12)]{OP19} the Lie algebra of the Jacobi group acts by the aforementioned derivations on the space of quasi-Jacobi forms. 
\begin{prop}\label{prop:j-algebra} The algebra of quasi-Jacobi forms is a~$\mathfrak{j}$-algebra, i.e., the algebra of derivations~$D_\tau,D_{z_i}, \dtau,\dd_{z_i}, W$ and~$\ind_{ij}$ is isomorphic to~$\mathfrak{j}$ and acts on the space of quasi-Jacobi forms.
\end{prop}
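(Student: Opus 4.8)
The plan is to verify two things: that the six operators $D_\tau, D_{z_i}, \dtau, \dd_{z_i}, W, \ind_{ij}$ really do act as derivations on the algebra $\widetilde{J}^{\mathrm{sm}}$, and that as an abstract Lie algebra they are isomorphic to $\mathfrak{j}$, the Lie algebra of the Jacobi group, with the bracket relations matching. First I would recall that $\widetilde{J}^{\mathrm{sm}}$ is closed under each of these operators: closure under $D_\tau$ and $D_{z_i}$ was already noted at the start of \cref{sec:quasiJacforms}, and closure under $\dtau$ and $\dd_{z_i}$ follows from their definition $\phi\mapsto \phi_{1,\vec{0}}$ and $\phi\mapsto \phi_{0,e_i}$ together with the remark identifying these with $\partial/\partial\mathbbm{e}_2$ and $\partial/\partial A(z_i)$ acting on the polynomial presentation~\eqref{eq:phiet}; the operators $W$ and $\ind_{ij}$ act by scalars on each homogeneous component and are manifestly derivations of the graded algebra. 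That each is a derivation (i.e.\ satisfies the Leibniz rule) is routine for the differential operators and for the diagonal operators $W,\ind_{ij}$ follows because weight and index are additive under multiplication.

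The substantive content is the computation of the Lie brackets among these six operators and the identification with $\mathfrak{j}$. Here I would lean directly on \cite[Eqn.~(12)]{OP19}, which the proposition cites: that reference already records that the Lie algebra of the Jacobi group acts by exactly these derivations. So the natural structure of the proof is to produce the bracket table and check it against the known presentation of $\mathfrak{j}=\sltwo\ltimes\mathfrak{heis}$. Concretely, $D_\tau,\dtau$ and the weight operator $W$ should furnish a copy of $\sltwo$ (with $[D_\tau,\dtau]$ proportional to $W$ and $[W,D_\tau],[W,\dtau]$ reproducing the $\mathfrak{sl}_2$ weights), while $D_{z_i},\dd_{z_i}$ together with the index operators $\ind_{ij}$ furnish the Heisenberg part, and the mixed brackets (e.g.\ $[D_\tau,D_{z_i}]$, $[\dtau,\dd_{z_i}]$) encode the semidirect product structure. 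I would verify the nontrivial brackets by applying the commutators to a general $\phi$ written as in~\eqref{eq:phiet} and using the already-established formulas $\dtau\phi = \partial\phi/\partial\mathbbm{e}_2$ and $\dd_{z_i}\phi = \partial\phi/\partial A(z_i)$, which reduce the bracket computation to differentiation identities among the generators $\mathbbm{e}_2$, $A(z_i)$, and the genuine Jacobi-form coefficients.

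The main obstacle I anticipate is bookkeeping rather than conceptual: confirming that the commutator of two derivations coming from these operators is again the derivation attached to the correct Lie-bracket element, and in particular pinning down the exact scalar normalizations so that the map to $\mathrm{Der}(\widetilde{J}^{\mathrm{sm}})$ is a genuine Lie \emph{homomorphism} and is injective (hence an isomorphism onto its image, yielding the asserted isomorphism with $\mathfrak{j}$). Injectivity amounts to showing the six operators are linearly independent as derivations, which can be seen by evaluating them on a small explicit family of test forms (e.g.\ $\mathbbm{e}_2$, $A(z_i)$, and a weight-and-index-detecting monomial) and checking the resulting values are independent. Once the bracket relations and the linear independence are in hand, the proposition follows by appealing to \cite[Eqn.~(12)]{OP19} for the identification with $\mathfrak{j}$, so in practice I would keep the written proof short, citing that reference for the abstract isomorphism and merely indicating that the closure and derivation properties have been checked above.
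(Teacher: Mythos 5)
Your proposal is correct and follows essentially the same route as the paper, which gives no separate proof but derives the statement from \cite[Eqn.~(12)]{OP19} and records the commutator table (the $\mathfrak{sl}_2$-triple $[\dtau,D_\tau]=W$, $[W,D_\tau]=2D_\tau$, $[W,\dtau]=-2\dtau$; the Heisenberg relations $[\dd_{z_i},D_{z_j}]=2\ind_{ij}$; and the mixed brackets $[\dd_{z_i},D_\tau]=D_{z_i}$, $[\dtau,D_{z_i}]=\dd_{z_i}$, $[W,D_{z_i}]=D_{z_i}$) in the accompanying remark. Your additional checks of closure, the Leibniz rule, and linear independence are sound elaborations of what the paper leaves implicit.
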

\begin{remark}
More concretely, the commutation relations of (i) the modular operators, (ii) the elliptic operators and (iii) their interactions are given by
\begin{flalign} 
\text{\upshape (i)}&& [\dtau,D_\tau]&=W\mspace{1mu} ,  &[W,D_\tau]&=2D_\tau\mspace{1mu} , &[W,\dtau]&=-2\dtau\mspace{1mu} , & \hspace{30pt}  \\
\text{\upshape (ii)}&& [\dd_{z_i},D_{z_j}] &= 2\ind_{i,j}\mspace{1mu} ,  & [\ind_{ij},D_{z_i}]&=0\mspace{1mu} , & [\ind_{ij},\dd_{z_i}]&=0\mspace{1mu} ,	 \\
\text{\upshape (iii)}&& [\dd_{z_i},D_\tau] &= D_{z_i}\mspace{1mu} , &[\dtau,D_{z_i}]&=\dd_{z_i}\mspace{1mu} ,  & [W,D_{z_i}] &= D_{z_i}\mspace{1mu} .
\end{flalign}
The other commutators vanish. As the spaces of almost Jacobi forms and quasi-Jacobi forms are isomorphic, the same result holds for almost Jacobi forms when one replaces~$\dtau$ by~$2\pi\ii \pdv{}{\nu}$ and~$\dd_z$ by~$\pdv{}{\xi(z)}$. 
\end{remark}

\subsection{The double slash operator}
A holomorphic Jacobi form has two important representations: the \emph{theta expansion} and the \emph{Taylor expansion}. We generalize the Taylor expansion to strictly meromorphic quasi-Jacobi forms in such a way that the Taylor coefficients are quasimodular forms. Moreover, we give criteria based on the coefficients in these representations for a meromorphic function to be a quasi-Jacobi form. 

Given a Jacobi form~$\phi$ and~$X\in \XX$, the Taylor coefficients of~$(\phi|X)(\vec{z})$ around $\vec{z}=\vec{0}$ are quasimodular forms for the group
\begin{align}\label{eq:gammaX}\Gamma_X = \{\gamma\in \sltwoz \mid X\gamma- X \in \XXZ \text{, } \rho(X-X\gamma)=\zeta_{X,X\gamma-X}\},\end{align}
where
~$\rho$ and~$\zeta_{X,X'}$ are defined by~(\ref{def:rhoXzetaXX}). In contrast to Jacobi forms, it is not true that the Taylor coefficients of quasi-Jacobi forms are quasimodular. Namely, as stated in the introduction, for $X=(\lambda,\mu)\in \XX$ one has that~$(\Theta'|X|\gamma)(\tau,z)$ equals---up to the  multiplicative constant~$\rho(X)\rho(-X\gamma)$---
\begin{align}\label{eq:theta'} (\Theta'|X\gamma)(\tau,z) + \frac{cz}{c\tau+d}(\Theta|X\gamma)(\tau,z)+\lambda \,(\Theta|X\gamma)(\tau,z) - \frac{\lambda}{c\tau+d}(\Theta|X\gamma)(\tau,z),\end{align}
for all~$\gamma \in \Gamma_X\mspace{1mu}$. 
All but the last term~$- \frac{\lambda}{c\tau+d}(\Theta|X)(\tau,z)$ of~(\ref{eq:theta'}) depend polynomially on~$\frac{c}{c\tau+d}$, so that the Taylor coefficients of~$(\Theta'|X)(\tau,\vec{z})$ at $\vec{z}=\vec{0}$ are not transforming in accordance with the quasimodular transformation formula.  Note that this last term can be written as~$-\lambda\,(\Theta|X|_0\gamma)(\tau,z)$. Here, it should be noted that the weight~$0$ in the slash operator is unusual. Namely,~$\Theta$ is of weight~$-1$, whereas~$\Theta$ is of weight~$0$. In conclusion, the function
\begin{align}\Theta'\|X:=\rho(-X) \left(\Theta'|X + \lambda \,\Theta|X\right)\end{align}
rather than~$\Theta'|X$ transforms as a quasi-Jacobi form of weight~$0$, i.e., for~$\gamma\in \Gamma_X$ one has
\[(\Theta'\|X|_0\gamma)(\tau,z)  \= (\Theta'\|X)(\tau,z)  \+ \frac{cz}{c\tau+d}(\Theta|X)(\tau,z).\]
We now introduce the double slash action~$(\phi\|X)$ for any quasi-Jacobi form $\phi$ and all $X\in \XXR$. In the next section, we will use this notation to define the Taylor coefficients of~$\phi$ at $X$. 
\begin{defn}\label{defn:doubleslash}
Given $M\in \MM$ and a family of functions~$\phi_{0,\vec{j}}\colon\mathfrak{h}\times \c^n\to \c$ indexed by a finite subset of $\z_{\geq 0}^n$ (with $\phi:=\phi_{0,\vec{0}}$), define the \emph{double slash operator} by
\[\phi\|_M\mspace{2mu}X \defis \rho(-X)\sum_{\vec{j}}(\phi_{0,\vec{j}}|_M\mspace{2mu} X) \,\vec{\lambda}^{\vec{j}},\]
where~$\rho$ is given by~(\ref{def:rhoXzetaXX}). 
\end{defn}
\begin{convt}\label{convt}
In case~$\phi$ is a quasi-Jacobi form, in this definition, we always take the family~$\phi_{0,\vec{j}}$ determined by the elliptic transformation~(\ref{eq:phiet}).
\end{convt}
\begin{prop}\label{prop:phi||X}
Given a family of functions $\phi_{i,\vec{j}}\colon\mathfrak{h}\times \c^n\to \c$ indexed by a finite subset of $\z_{\geq 0}\times \z_{\geq 0}^n$ (with $\phi=\phi_{0,\vec{0}}$) and $X\in \XXR$, one has
\begin{enumerate}[{\upshape (i)}]
\item\label{it:qjf} If~$\phi$ satisfies the quasimodular transformation {\upshape(\ref{eq:action-quasimod})} for~$\Gamma$, then
\[(\phi\|X|\gamma)(\tau,\vec{z}) \= \sum_{i,\vec{j}}(\phi_{i,\vec{j}}\|X\gamma)(\tau,\vec{z}) \, \Bigl(\frac{c}{c\tau+d}\Bigr)^{\! i+|\vec{j}|} \frac{\vec{z}^{\vec{j}}}{(2\pi\ii)^i} \= (\phi|\gamma\|X\gamma)(\tau,\vec{z})\]
for all $\gamma \in \Gamma$. 
\item \label{it:el} If~$\phi$ satisfies the quasi-elliptic transformation {\upshape(\ref{eq:action-quasiell})}, then 
\[\zeta_{X',X}\,\phi\|X\|X'\=\zeta_{X,X'}\,\phi\|X'\|X\=\rho(-X')\,\zeta_{X,X'}\,\,\phi\|X \= \phi\|(X+X')
\]
for all~$X'\in \XXZ$, where the root of unity~$\zeta_{X,X'}$ is defined by {\upshape(\ref{def:rhoXzetaXX})}. 
\item\label{it:cor} If~$\phi$ is a quasi-Jacobi form for~$\sltwoz$, then~$\phi\|X$ is a quasi-Jacobi form for~$\Gamma_X$, and
\begin{align}\phi\|(X+X')\=\rho(-X')\,\zeta_{X,X'}\,\phi\|X\end{align}
for all $X'\in \XXZ$.
\end{enumerate}
\end{prop}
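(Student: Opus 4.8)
The plan is to verify the three statements in order, since each builds on the previous one. All three are really about carefully tracking how the cocycle factors $\rho$ and $\zeta_{X,X'}$ interact with the slash operators $|_M$ and $|_{k,M}$, using the master functional equations in \cref{prop:app}. I would open by recalling that by \cref{convt} the family $\phi_{0,\vec j}$ in \cref{defn:doubleslash} is exactly the one governing the quasi-elliptic transformation~\eqref{eq:action-quasiell}, so that $\phi\|_M X$ is a genuine finite combination of the $\phi_{0,\vec j}|_M X$ weighted by $\vec\lambda^{\vec j}$.

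For part~(\ref{it:qjf}), I would start from the definition $\phi\|X = \rho(-X)\sum_{\vec j}(\phi_{0,\vec j}|X)\,\vec\lambda^{\vec j}$ and apply $|\gamma$ on the right. Since the slash operators for $\gamma$ and for $X$ are the two pieces of the $\Gamma^J_n$-action, I would commute them using the first identity of \cref{prop:app}, namely $\rho(-X)\,\phi|X|\gamma = \rho(-X\gamma)\,\phi|\gamma|X\gamma$, applied to each $\phi_{0,\vec j}$. This converts $(\phi_{0,\vec j}|X)|\gamma$ into a multiple of $(\phi_{0,\vec j}|\gamma)|X\gamma$. Then I would substitute the quasimodular transformation~\eqref{eq:action-quasimod} for $\phi_{0,\vec j}|\gamma$, which expresses it in terms of $(\phi_{0,\vec j})_{i,\vec j'}$ against powers of $c/(c\tau+d)$ and monomials $\vec z^{\vec j'}$. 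The bookkeeping step is to recognize that $(\phi_{i,\vec j})_{0,\vec j'}$ reassembles, after applying $|X\gamma$ and weighting by $(\vec\lambda^\gamma)^{\vec j'}$, into exactly $\phi_{i,\vec j}\|X\gamma$; here I must use $\vec\lambda^\gamma$ (the top row of $X\gamma$) rather than $\vec\lambda$, which is precisely why the answer is $\phi_{i,\vec j}\|X\gamma$ and not $\phi_{i,\vec j}\|X$. The final equality $=(\phi|\gamma\|X\gamma)$ then follows by reading~\eqref{eq:action-quasimod} backwards.

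For part~(\ref{it:el}), I would apply the double slash twice and use the second identity of \cref{prop:app}, $\rho(-X)\rho(-X')\zeta_{X',X}\,\phi|X|X' = \rho(-X-X')\,\phi|(X+X')$, on each $\phi_{0,\vec j}$. The new ingredient is that the weights $\vec\lambda^{\vec j}$ combine: iterating the quasi-elliptic law~\eqref{eq:action-quasiell} produces, via the binomial expansion of $(\vec\lambda+\vec\lambda')^{\vec j}$, the shift that turns the double application into a single $\phi\|(X+X')$. The chain of equalities then records the symmetry $\zeta_{X,X'}=\zeta_{X',X}^{-1}$ and $\rho(-X')^{-1}$ that were noted right after~\eqref{def:rhoXzetaXX}. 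Part~(\ref{it:cor}) is then a direct corollary: when $\phi$ is a genuine quasi-Jacobi form for $\sltwoz$, parts~(\ref{it:qjf}) and~(\ref{it:el}) together show $\phi\|X$ satisfies both a quasimodular transformation law under $\Gamma_X$ and the quasi-elliptic law, and one checks that for $\gamma\in\Gamma_X$ the cocycle $\rho(X-X\gamma)\zeta_{X,X\gamma-X}^{-1}$ is trivial by the very definition~\eqref{eq:gammaX} of $\Gamma_X$, so the two transformations are compatible and $\phi\|X$ is a quasi-Jacobi form for $\Gamma_X$; the displayed identity is the special case $X'\in\XXZ$ of part~(\ref{it:el}).

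I expect the main obstacle to be the index-shuffling in part~(\ref{it:qjf}): keeping straight which family $\phi_{i,\vec j}$ versus $(\phi_{0,\vec j})_{i,\vec j'}$ appears, and verifying that the double sum over $\vec j$ and $\vec j'$ reorganizes cleanly into $\sum_{i,\vec j}\phi_{i,\vec j}\|X\gamma$ with the correct powers of $c/(c\tau+d)$ and the correct appearance of $\vec\lambda^\gamma$. The cocycle manipulations are routine once \cref{prop:app} is in hand, but the combinatorial identification of the reassembled sum with the double slash of $\phi_{i,\vec j}$ — and in particular tracking that it is $X\gamma$, not $X$, that enters — is the step where a sign or an index error is easiest to make.
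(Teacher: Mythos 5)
Your proposal is correct and follows essentially the same route as the paper: commute the slashes via \cref{prop:app}, substitute the transformation laws \eqref{eq:action-quasimod} and \eqref{eq:action-quasiell} for each $\phi_{0,\vec\ell}$, and carry out the binomial reorganization in which $\bigl(c(\vec z+\vec\lambda^\gamma\tau+\vec\mu^\gamma)+\vec\lambda\bigr)/(c\tau+d)$ collapses to $c\vec z/(c\tau+d)+\vec\lambda^\gamma$, so that the sum reassembles into $\sum_{i,\vec j}(\phi_{i,\vec j}\|X\gamma)$. You have correctly identified the one genuinely delicate point (the appearance of $\vec\lambda^\gamma$ rather than $\vec\lambda$, forced by the relation $(\phi_{0,\vec\ell})_{i,\vec j}=\binom{\vec j+\vec\ell}{\vec j}\phi_{i,\vec j+\vec\ell}$), and parts (\ref{it:el}) and (\ref{it:cor}) are handled exactly as in the paper.
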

\begin{proof}
The transformation of~$\phi\|X$ under the Jacobi group follows by direct computations. 
We often make use of
\[
\frac{\dtau^{i'}}{i'!}\frac{\dd_{\vec{z}}^{\vec{j'}}}{\vec{j'}!} \phi_{i,\vec{j}} \= \frac{\dtau^{i'}}{i'!}\frac{\dd_{\vec{z}}^{\vec{j'}}}{\vec{j'}!} \frac{\dtau^i}{i!}\frac{\dd_{\vec{z}}^{\vec{j}}}{\vec{j}!} \phi 
\= \binom{i+i'}{i} \, \binom{\vec{j}+\vec{j'}}{\vec{j}}\,\phi_{i+i',\vec{j}+\vec{j'}}\, ,\]
where (as before)
\[\binom{\vec{j}+\vec{j'}}{\vec{j}} \=\prod_{r} \binom{j_r+j_r'}{j_r}.\]

The first property follows from the following:
\begin{align}
\phi\|X|_k\gamma &\= \rho(-X)\sum_{\vec{\ell}} (\phi_{0,\vec{\ell}}|X|_k\gamma)\,\vec{\lambda}^{\vec{\ell}} \\
&\= \rho(-X\gamma)\sum_{\vec{\ell}} (\phi_{0,\vec{\ell}}|_k\gamma|X\gamma)\,\vec{\lambda}^{\vec{\ell}} \\
&\= \rho(-X\gamma)\sum_{i,\vec{j},\vec{\ell}} (\phi_{i,\vec{j}+\vec{\ell}}|X\gamma)\,\Bigl(\frac{1}{2\pi\ii}\frac{c}{c\tau+d}\Bigr)^{\! i} \,  \binom{\vec{j}+\vec{\ell}}{\vec{j}}\, \Bigl(\frac{c(\vec{z}+\vec{\lambda}^\gamma\tau+\vec{\mu}^\gamma)}{c\tau+d}\Bigr)^{\! \vec{j}}
\,\Bigl(\frac{\vec{\lambda}}{c\tau+d}\Bigr)^{\! \vec{\ell}} \\
&\=\rho(-X\gamma) \sum_{i,\vec{j}} (\phi_{i,\vec{j}}|X\gamma)\,\Bigl(\frac{1}{2\pi\ii}\frac{c}{c\tau+d}\Bigr)^{\! i} \,   \Bigl(\frac{c(\vec{z}+\vec{\lambda}^\gamma\tau+\vec{\mu}^\gamma)+\vec{\lambda}}{c\tau+d}\Bigr)^{\vec{j}} \\
&\= \rho(-X\gamma)\sum_{i,\vec{j}} (\phi_{i,\vec{j}}|X\gamma)\,\Bigl(\frac{1}{2\pi\ii}\frac{c}{c\tau+d}\Bigr)^{\! i} \,   \Bigl(\frac{c\vec{z}}{c\tau+d}+\vec{\lambda}^\gamma\Bigr)^{\! \vec{j}} \\
&\= \rho(-X\gamma)\sum_{i,\vec{j},\vec{\ell}} (\phi_{i,\vec{j}+\vec{\ell}}|X\gamma)
\,\Bigl(\frac{1}{2\pi\ii}\frac{c}{c\tau+d}\Bigr)^{\! i} \, \binom{\vec{j}+\vec{\ell}}{\vec{j}}
\,\Bigl(\frac{c\vec{z}}{c\tau+d}\Bigr)^{\! \vec{j}} 
\,(\vec{\lambda}^\gamma)^{\vec{\ell}} \\
&\= \sum_{i,\vec{j}} (\phi_{i,\vec{j}}\|X\gamma)
\,\Bigl(\frac{1}{2\pi\ii}\frac{c}{c\tau+d}\Bigr)^{\! i} 
\,\Bigl(\frac{c\vec{z}}{c\tau+d}\Bigr)^{\! \vec{j}} .
\end{align}

For the second property, observe that
\begin{align}
\phi\|X\|X' &\= \rho(-X')\,\sum_{\vec{\ell}} (\phi_{0,\vec{\ell}}\|X|X') \, (\vec{\lambda}')^{\vec{\ell}} \\
&\= \rho(-X)\rho(-X')\,\sum_{\vec{j},\vec{\ell}} (\phi_{0,\vec{j}+\vec{\ell}}|X|X') \, \binom{\vec{j}+\vec{\ell}}{\vec{j}}\,\vec{\lambda}^{\vec{j}}\,(\vec{\lambda}')^{\vec{\ell}} ,
\end{align}
from which it is clear that $\zeta_{X',X}\,\phi\|X\|X'=\zeta_{X,X'}\,\phi\|X'\|X$. Moreover, by the elliptic transformation
\begin{align}
 \phi\|X'\|X \= &\rho(-X)\rho(-X')\,\sum_{\vec{j},\vec{\ell},\vec{m}} (\phi_{0,\vec{j}+\vec{\ell}+\vec{m}}|X) \, \binom{\vec{j}+\vec{\ell}+\vec{m}}{\vec{j},\vec{\ell},\vec{m}}\,\vec{\lambda}^{\vec{j}}\,(\vec{\lambda}')^{\vec{\ell}}\,(-\vec{\lambda}')^{\vec{m}} \\
 \= &
 \rho(-X)\rho(-X')\, \sum_{\vec{j}} (\phi_{0,\vec{j}}|X) \, \vec{\lambda}^{\vec{j}} \\
 \= & \rho(-X')\, \phi\|X.
\end{align}	
Next, one has
\begin{align}
(\phi\|X+X') &\= \rho(-X-X')\,\sum_{\vec{\ell}} (\phi_{0,\vec{\ell}}|X+X') \, (\vec{\lambda}+\vec{\lambda}')^{\vec{\ell}} \\
&\= \rho(-X)\rho(-X')\,\zeta_{X,X'}\,\sum_{\vec{\ell}} (\phi_{0,\vec{\ell}}|X'|X) \, (\vec{\lambda}+\vec{\lambda}')^{\vec{\ell}} \\
&\= \rho(-X)\rho(-X')\,\zeta_{X,X'}\,\sum_{\vec{j},\vec{\ell}} (\phi_{0,\vec{j}+\vec{\ell}}|X) \, \binom{\vec{j}+\vec{\ell}}{\vec{j}}\,(-\vec{\lambda'})^{\vec{j}} (\vec{\lambda}+\vec{\lambda}')^{\vec{\ell}}  \\
&\= \rho(-X)\rho(-X')\,\zeta_{X,X'}\,\sum_{\vec{j}} (\phi_{0,\vec{j}}|X) \,\vec{\lambda}^{\vec{j}} \\
&\= \rho(-X')\, \zeta_{X,X'}\,\phi\|X.	
\end{align}

Finally, the fact that~$\phi\|X$ is a quasi-Jacobi form follows directly from the definition of~$\Gamma_X$ and the previous properties. 
\end{proof}

\subsection{Taylor coefficients}\label{ch:Taylor}							 
Let $X=\lm\in \XX, M\in \MM$ and $\phi\in \Mer_n\mspace{1mu}$. We now study the Taylor coefficients of~$\phi\|_M\mspace{2mu} X$ around $\vec{z}=\vec{0}$. In case~$\phi$ is a strictly meromorphic quasi-Jacobi form, recall that all poles~$\vec{z}$ lie on hyperplanes of the form $\vec{s}\cdot \vec{z}\in u\tau+v$ for some~$\vec{s}\in \z^n$ and $u,v\in \q/\z$ by \cref{thm:poleshyperplane}.  From now on, we assume that $\vec{s}=e_i$ for some~$i$ so that a Laurent series of $\phi\|X$ of the form
\[ \sum_{\ell_1\geq L} \cdots \sum_{\ell_n\geq L} a_{\ell_1,\ldots,\ell_n} (z_1-\lambda_1\tau-\mu_1)^{\ell_1} \cdots (z_n-\lambda_n\tau-\mu_n)^{\ell_n}\]
for some $L\in \z$ and $a_{\vec{\ell}}\in \c$ exists. For example, the poles of all the meromorphic quasi-Jacobi forms we encounter in the applications lie on the coordinate axes. 

\begin{defn}\label{defn:orthogonal}
We call the poles of a meromorphic function $\phi\colon\mathfrak{h}\times \c^n\to\c$ \emph{orthogonal} if the set of poles of~$\phi(\tau,\cdot)$ is given by a union of special hyperplanes of the form
\[ z_j \in u\tau+v \]
for some $j\in\{1,\ldots,n\}$ and $u,v\in \q/\z$. 
\end{defn}
Making use of the notions of \emph{orthogonal} poles (defined above) and the double slash action (defined in \cref{defn:doubleslash}), we will now define the ``Taylor coefficients'' of a family of functions in the following way. Recall that in case~$\phi$ is a Jacobi form, there is a canonical choice for the family of functions $\mathbf{\phi}$ which is part of the data of these ``Taylor coefficients'' (see \cref{convt}). 

\begin{defn}\label{defn:Taylor}
Let~$M\in \MM$ and $\mathbf{\phi}=\{\phi_{i,\vec{j}}\}$, where $\phi_{i,\vec{j}}\colon\mathfrak{h}\times \c^n\to \c$ is a family of meromorphic functions indexed by a finite subset of $\z_{\ge 0}\times \z_{\ge 0}^n\mspace{1mu}$, with $\phi:=\phi_{0,\vec{0}}\in \Mer_n^M$ such that all poles of~$\phi$ are orthogonal. Define~${g}_{\vec{\ell}}(\phi)$ as the~$\vec{\ell}$th Laurent coefficient of~$\phi$:
\[\phi(\tau,\vec{z}) \,=:\, \sum_{\vec{\ell}} {g}_{\vec{\ell}}(\phi)(\tau) \,\vec{z}^{\vec{\ell}}.\]
For all $X
\in \XXR$, we define the $\vec{\ell}$th \emph{``Taylor coefficient''}~${g}_{\vec{\ell}}^X(\mathbf{\phi})$ of $\phi$ as ${g}_{\vec{\ell}}(\phi\|_M\mspace{2mu} X)$. 

Also, denote
\[{g}_{\vec{\ell},s}^{X,r}(\mathbf{\phi}) \= {g}_{\vec{\ell}}\Bigl(B_M^r(\vec{z},\vec{z})\,\sum\nolimits_{i+|\vec{j}|=s}(\phi_{i,\vec{j}}\|X)(\vec{z}) \,\frac{\vec{z}^{\vec{j}}}{(2\pi\ii)^i}\Bigr) \qquad (r,s\in \z_{\geq 0}).\]
We often omit $r,s$ or $X$ from the notation if $r=0,s=0$ or $X=(\vec{0},\vec{0})$.
\end{defn}
\begin{remark}\mbox{}\\[-15pt]
\begin{itemize}\itemsep0pt
\item The functions~${g}_{\vec{\ell},s}^{X,r}(\mathbf{\phi})$ naturally appear if one studies the action of~$\dtau$ on the ``Taylor coefficients''~${g}_{\vec{\ell}}^X(\mathbf{\phi})$ of $\phi$.
\item $\displaystyle g_{\vec{\ell},s}^r(\phi) \= \frac{1}{s!}g_{\vec{\ell}}(B_M(\vec{z},\vec{z})^r\,(\dtau+z_1\dd_{z_1}+\ldots+z_n\dd_{z_n})^s\phi).$
\item In case $r=0$, one may be tempted to write
\[ ``\,{g}_{\vec{\ell},s}^{X,0}(\mathbf{\phi}) \= \sum_{i+|\vec{j}|=s}\frac{1}{(2\pi\ii)^i}\,{g}_{\vec{\ell}-\vec{j}}\left(\phi_{i,\vec{j}}\|X\right) ".\]
However, we do not assume that the functions~$\phi_{i,\vec{j}}$ admit orthogonal poles, so the Taylor expansion of~$\phi_{i,\vec{j}}$ may not exist. For example, taking $\phi=F_2$ (defined by \cref{defn:BOnpoint}), we will see later that $\phi_{0,e_1}(z_1,z_2)=\frac{1}{\Theta(z_1+z_2)}$, which has a pole whenever $z_1+z_2=0$. \Cref{thm:3.2} implicitly shows that the notation~${g}_{\vec{\ell},s}^{X,r}(\mathbf{\phi})$ is well-defined for a quasi-Jacobi form~$\phi$ with all poles orthogonal. \qedhere
\end{itemize}
\end{remark}

The data~$\{{g}_{\vec{\ell}}^X(\mathbf{\phi})\}$ uniquely determines~$\phi$ as well as the family $\mathbf{\phi}=\{\phi_{i,\vec{j}}\}$. Hence, it is natural to ask under which conditions on~${g}_{\vec{\ell}}^X(\mathbf{\phi})$ the function~$\phi$ is a meromorphic quasi-Jacobi form. Before we answer this question, we study the modular properties of~${g}_{\vec{\ell}}^X(\mathbf{\phi})$ given~$\phi$ is a quasi-Jacobi form. As a corollary of the previous proposition on~$\phi\|X$, generalizing \cite[Theorem~1.3]{EZ85} to quasi-Jacobi forms, we first show that~${g}_{\vec{0}}^X(\mathbf{\phi})$ is a quasimodular form in case $\phi$ is holomorphic. 

\begin{cor}
Let~$\phi$ be a holomorphic quasi-Jacobi form of weight~$k$ and index~$M$. For all $X\in \XX$, the function~${g}_{\vec{0}}^X(\mathbf{\phi})$ is a holomorphic quasimodular form of weight~$k$ for the group
$\Gamma_X$ \textup{(}defined by~{\upshape(\ref{eq:gammaX})}\textup{)}. Moreover,
\[\dtau{g}_{\vec{0}}^X(\mathbf{\phi}) = {g}_{\vec{0}}^X(\dtau\mathbf{\phi}).\]
\end{cor}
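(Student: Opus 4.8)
The plan is to reduce everything to the transformation behaviour of the double slash $\phi\|_M X$ established in \cref{prop:phi||X}, and then to read off the constant Taylor coefficient at $\vec{z}=\vec{0}$. First I would record that $\phi\|_M X$ is again a \emph{holomorphic} quasi-Jacobi form of weight~$k$ and index~$M$, now for the smaller group~$\Gamma_X$. Holomorphy is immediate: by \cref{defn:doubleslash} one has $\phi\|_M X = \rho(-X)\sum_{\vec{j}}(\phi_{0,\vec{j}}|_M X)\,\vec{\lambda}^{\vec{j}}$, a finite $\c$-linear combination of slashes of the holomorphic functions~$\phi_{0,\vec{j}}$; the elliptic slash is a shift and hence preserves~$\Hol_n$, and moderate growth is inherited in the same way. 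The transformation behaviour for~$\Gamma_X$ is exactly the content of \cref{prop:phi||X} (its parts on the quasimodular transformation and on invariance): for $\gamma=\abcd\in\Gamma_X$ the defining root-of-unity condition in~\eqref{eq:gammaX} forces $\phi_{i,\vec{j}}\|X\gamma=\phi_{i,\vec{j}}\|X$, whence
\[((\phi\|X)|_{k,M}\gamma)(\tau,\vec{z}) \= \sum_{i,\vec{j}}(\phi_{i,\vec{j}}\|X)(\tau,\vec{z})\Bigl(\frac{c}{c\tau+d}\Bigr)^{i+|\vec{j}|}\frac{\vec{z}^{\vec{j}}}{(2\pi\ii)^i}.\]

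Next I would specialise this identity to $\vec{z}=\vec{0}$. On the left the index factor $\e(-c\,B_M(\vec{z},\vec{z})/(c\tau+d))$ becomes~$1$ and $\vec{z}/(c\tau+d)$ becomes~$\vec{0}$, so the left-hand side is $(c\tau+d)^{-k}g_{\vec{0}}(\phi\|X)(\gamma\tau)$. On the right only the terms with $\vec{j}=\vec{0}$ survive. Writing $g_0:=g_{\vec{0}}^X(\mathbf{\phi})=g_{\vec{0}}(\phi\|X)$ and $g_i:=g_{\vec{0}}(\phi_{i,\vec{0}}\|X)$, this reads
\[(c\tau+d)^{-k}g_0\!\left(\frac{a\tau+b}{c\tau+d}\right) \= \sum_{i\ge 0}\frac{g_i(\tau)}{(2\pi\ii)^i}\Bigl(\frac{c}{c\tau+d}\Bigr)^{i},\]
which is precisely the quasimodular transformation law of weight~$k$ with holomorphic companion functions $g_i/(2\pi\ii)^i$. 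Since $\phi$ is a holomorphic quasi-Jacobi form, each $\phi_{i,\vec{0}}$ lies in $\Hol_n$ and the $g_i$ are holomorphic functions on~$\mathfrak{h}$ of moderate growth, hence holomorphic at the cusps; therefore $g_{\vec{0}}^X(\mathbf{\phi})$ is a holomorphic quasimodular form of weight~$k$ for~$\Gamma_X$.

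For the final identity I would observe that the display above exhibits the companion functions of~$g_0$ explicitly as $g_i=g_{\vec{0}}(\phi_{i,\vec{0}}\|X)$. The rank-$0$ derivation~$\dtau$ sends a quasimodular form to its depth-one companion in the normalisation $(2\pi\ii)^{-1}c/(c\tau+d)$ used in~\eqref{eq:action-quasimod} (so that $\phi_i=\tfrac{\dtau^i}{i!}\phi$ in rank~$0$), giving $\dtau g_0=g_1=g_{\vec{0}}(\phi_{1,\vec{0}}\|X)$. Since $\phi_{1,\vec{0}}=\dtau\phi$ as quasi-Jacobi forms and the double slash merely reshuffles the associated family, the right-hand side is exactly $g_{\vec{0}}^X(\dtau\mathbf{\phi})$, so $\dtau g_{\vec{0}}^X(\mathbf{\phi})=g_{\vec{0}}^X(\dtau\mathbf{\phi})$.

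The part requiring the most care is the bookkeeping that converts the quasi-Jacobi transformation into the quasimodular one: one must check that the root-of-unity constraint defining~$\Gamma_X$ really makes the coefficient functions $\gamma$-independent (so that the single displayed identity with companions $\phi_{i,\vec{j}}\|X$ holds), and that the powers of $2\pi\ii$ are tracked consistently so that~$\dtau$ on quasimodular forms matches the quasi-Jacobi derivation~$\dtau$ under $\phi\mapsto g_{\vec{0}}^X(\mathbf{\phi})$. The only genuine analytic input is verifying moderate growth at the cusps of $g_{\vec{0}}^X(\mathbf{\phi})$—needed for it to be a bona fide holomorphic quasimodular form rather than merely a function with the correct transformation—and this follows directly from the holomorphy hypothesis on~$\phi$.
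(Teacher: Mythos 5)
Your argument is correct and follows the same route as the paper: apply \cref{prop:phi||X} to get the quasimodular transformation of $\phi\|X$ under $\Gamma_X$ (using the root-of-unity condition in~\eqref{eq:gammaX} to replace $X\gamma$ by $X$), evaluate at $\vec{z}=\vec{0}$ to identify the companion functions $g_{\vec{0}}^X(\mathbf{\phi}_i)$, deduce holomorphy in $\mathfrak{h}$ and at the cusps from the holomorphy of $\phi$, and read off $\dtau g_{\vec{0}}^X(\mathbf{\phi})=g_{\vec{0}}^X(\dtau\mathbf{\phi})$ from the depth-one companion. The paper's proof is just a terser version of the same bookkeeping.
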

\begin{proof}
Let~$X\in \XX$ and~$\gamma \in \Gamma_X\mspace{1mu}$. Then, by \cref{prop:phi||X}(\ref{it:cor}) one finds
\begin{equation}\label{eq:chiXgamma}({g}_{\vec{0}}^X(\mathbf{\phi})|\gamma)(\tau) \= (\phi\|X|\gamma)(\tau,\vec{0}) 
\=\sum_{i}{g}_{\vec{0}}^X(\mathbf{\phi}_{i}) \, \Bigl(\frac{1}{2\pi\ii}\frac{c}{c\tau+d}\Bigr)^{\! i},
\end{equation}
where $\mathbf{\phi}_i$ denotes the family corresponding to $\phi_{i,\vec{0}}\mspace{1mu}$. 
Holomorphicity in~$\mathfrak{h}$ and at infinity follows directly as~$\phi$ is a holomorphic Jacobi form. 
Hence,~$g_{\vec{0}}^X(\mathbf{\phi})$ is a quasimodular form for this group and $\dtau^r {g}_{\vec{0}}^X(\mathbf{\phi}) = \frac{1}{r!}{g}_{\vec{0}}^X(\mathbf{\phi}_{r}) = {g}_{\vec{0}}^X(\dtau^r\phi).$ 
\end{proof}
In the above corollary, observe that if $\phi$ is a true Jacobi form (rather than a quasi-Jacobi form), then $\dtau\phi=0$. Correspondingly, in that case $\dtau g_{\vec{0}}^X(\phi)=0$ and $g_{\vec{0}}^X(\phi)$ is a true modular form (rather than a quasimodular form). 

The quasimodularity of the other coefficients~${g}_{\vec{\ell}}^X(\mathbf{\phi})$ can be understood in terms of lower coefficients in two ways. First of all, certain linear combinations of derivatives of these coefficients are modular. Secondly the action of~$\dtau^i$ on~${g}_{\vec{\ell}}^X(\mathbf{\phi})$ can be expressed in terms of other coefficients. 

We first show that these two ways are equivalent. Recall that $(x)_n$ denotes the Pochhammer symbol $(x)_n=x(x+1)\cdots (x+n-1)$. 
\begin{prop}\label{prop:equiv}
Let~$g=g_k,g_{k-2}\ldots,g_{k-2p}$ be quasimodular forms of depth at most~$p$ and weight~$k,k-2\ldots,k-2p$ respectively. Then, the following are equivalent:
\begin{enumerate}[{\upshape (i)}]
\item\label{it:equiv1} $\dtau^ig=g_{k-2i}$ for~$i=0,\ldots, p;$
\item\label{it:equiv2} The functions \begin{equation}\label{eq:modular} 
\begin{cases}\displaystyle\sum_{0\leq m\leq p-i} (-1)^m \frac{D_\tau^mg_{k-2i-2m}}{(k-2i-m-1)_{m}\, m!} & \text{ if } k>2p \text{ or } i<p-1\\
\displaystyle g_2-\mathbbm{e}_2g_0 & \text{ if } k=2p \text{ and } i=p-1
\end{cases}
\end{equation}
for~$i=0,\ldots, p-1$ are modular forms of weight~$k-2i;$ 
\item The functions \begin{equation}\label{eq:modular2}
\sum_{0\leq m\leq p-i}  (-1)^m\frac{(D_\tau+\mathbbm{e}_2)^mg_{k-2i-2m}}{(k-2i-m-\frac{3}{2})_{m}\, m!}
\end{equation}
for~$i=0,\ldots, p-1$ are modular forms of weight~$k-2i$. 
\end{enumerate}
\end{prop}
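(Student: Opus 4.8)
The plan is to reduce everything to the single criterion that a quasimodular form is modular if and only if it is annihilated by the lowering derivation $\dtau$, and then to propagate this through the tower using the $\mathfrak{sl}_2$-relations of \cref{prop:j-algebra}. Concretely, from $[\dtau,D_\tau]=W$ (acting on a weight-$w$ form by the scalar $w$) and $\dtau\mathbbm{e}_2=1$, I would first record the two commutation identities
\[\dtau D_\tau^m f = D_\tau^m\dtau f + m(w+m-1)\,D_\tau^{m-1}f,\]
\[\dtau (D_\tau+\mathbbm{e}_2)^m f = (D_\tau+\mathbbm{e}_2)^m\dtau f + m(w+m)\,(D_\tau+\mathbbm{e}_2)^{m-1}f,\]
valid for $f$ of weight $w$, each proved by expanding the commutator $[\dtau,D_\tau^m]$ (resp.\ $[\dtau,(D_\tau+\mathbbm{e}_2)^m]$) as a telescoping sum and evaluating $W$ on the intermediate weights. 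Conceptually $(\dtau,W,D_\tau)$ is an $\mathfrak{sl}_2$-triple whose lowest-weight vectors are the modular forms, and the combinations in~\eqref{eq:modular} are exactly the $\mathfrak{sl}_2$-projections of $g_{k-2i}$ onto the primitive (modular) component; this is the structural reason behind the statement.

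For (i)$\Rightarrow$(ii) I would apply $\dtau$ to the combination~\eqref{eq:modular}, insert the first commutation identity (with $w=k-2i-2m$, so that $w+m-1=k-2i-m-1$), and then use the hypothesis $\dtau g_{k-2j}=g_{k-2j-2}$ supplied by (i). The derivative term of the $m$th summand and the commutator term of the $(m+1)$st then cancel: the normalising denominator $(k-2i-m-1)_m$ is precisely what forces this telescoping, while the top summand vanishes since $\dtau g_{k-2p}=\dtau^{p+1}g=0$ by the depth bound. Hence $\dtau$ annihilates the combination and it is modular. The denominator $k-2p=0$ occurring when $k=2p$ is exactly why the weight-$2$ entry must be replaced by $g_2-\mathbbm{e}_2 g_0$, the direct modular projection in weight~$2$.

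For the converse (ii)$\Rightarrow$(i) I would pass to the difference tower $d_j:=g_{k-2j}-\dtau^{\,j}g_k$, so that $d_0=0$ and, applying the forward implication to the genuine $\dtau$-tower of $g_k$, all combinations~\eqref{eq:modular} built from the $d_j$ are again modular; it then suffices to show such a tower vanishes. Writing $d_j=\sum_r \mathbbm{e}_2^{\,r}\phi_{j,r}$ with $\phi_{j,r}$ modular, I would argue by descending induction on the depth: using $D_\tau\mathbbm{e}_2=\mathbbm{e}_2^{2}-(\text{weight-}4\text{ modular})$, the top-depth component of the relation $\dtau(\cdots)=0$ has the shape $\bigl(1+\tfrac{r(r-1)}{\cdots}\bigr)\phi_{\mathrm{top}}=0$, whose coefficient is nonzero precisely when $k>2p$, forcing the highest-depth part to vanish. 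Iterating lowers the depth until the tower is modular, after which a short cascade through the errors $\eta_j:=\dtau d_j-d_{j+1}$ (with $\eta_p=0$) yields $d_1=d_2=\cdots=0$, i.e.\ (i). I expect this—showing that the higher-depth parts are forced to vanish, with the graded coefficients nonzero—to be the main obstacle, and the step where the hypothesis $k>2p$ versus $k=2p$ is genuinely used.

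Finally, for (ii)$\Leftrightarrow$(iii) I would run the same $\dtau$-telescoping with the second commutation identity, using $D_\tau+\mathbbm{e}_2$ in place of $D_\tau$; equivalently, since under (i) both~\eqref{eq:modular} and~\eqref{eq:modular2} represent the same primitive projection of $g_{k-2i}$ expressed in the two bases $\{D_\tau^m\}$ and $\{(D_\tau+\mathbbm{e}_2)^m\}$ of the operator algebra, the equivalence collapses to a fixed, $g$-independent finite identity between the two normalisations. The delicate point is that the cancellation naively produced by $D_\tau+\mathbbm{e}_2$ involves an \emph{integral} shift, whereas~\eqref{eq:modular2} carries the \emph{half-integral} shift $-\tfrac32$; reconciling these amounts to recognising $D_\tau+\mathbbm{e}_2$ as the Serre-type derivative attached to the weight-$(-1)$ (i.e.\ $\Theta$-normalised) setting from which these coefficients ultimately arise. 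I would pin the exact denominators down by imposing the telescoping cancellation directly and then cross-check the resulting binomial identity on the generators $\mathbbm{e}_2,e_4,e_6$.
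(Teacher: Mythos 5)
Your treatment of (i)$\Leftrightarrow$(ii) is the paper's own argument: the identity $[\dtau,D_\tau^m]f=m(w+m-1)\,D_\tau^{m-1}f$ for $f$ of weight~$w$ (from $[\dtau,D_\tau]=W$ in \cref{prop:j-algebra}), followed by the observation that with the denominators $(k-2i-m-1)_m$ the image of \eqref{eq:modular} under $\dtau$ telescopes to zero, the top summand dying because $\dtau g_{k-2p}=\dtau^{p+1}g=0$. For the converse the paper only says that ``all but two terms in the same sum cancel, hence these terms are equal''; your difference tower $d_j=g_{k-2j}-\dtau^{\,j}g_k$ plus the depth analysis performs the same induction but is more honest about the one step that genuinely needs an argument, namely that the top of the tower is forced to vanish (equivalently that $g_{k-2p}$ is modular --- this is \emph{not} among the hypotheses, since each $g_{k-2j}$ is only assumed to have depth at most $p$, and must be extracted from the depth bound together with modularity of the $i=p-1$ combination). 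That part of your sketch is in the right direction and would be acceptable once the top-depth coefficient computation is actually written out.

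The genuine gap is part (iii), and you have put your finger on it without resolving it. With the paper's normalisation $\mathbbm{e}_2=\tfrac1{12}-2\sum_n\sigma_1(n)q^n$ one has $\dtau\mathbbm{e}_2=1$ (this is exactly what makes $\dtau(g_2-\mathbbm{e}_2 g_0)=g_0-g_0=0$ in the special case of (ii)), hence $[\dtau,(D_\tau+\mathbbm{e}_2)^m]f=m(w+m)(D_\tau+\mathbbm{e}_2)^{m-1}f$ as you computed. Imposing the telescoping then forces the denominator $(k-2i-m)_m$, an \emph{integral} Pochhammer, not the half-integral $(k-2i-m-\tfrac32)_m$ of \eqref{eq:modular2}. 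A direct check with $p=1$, $k=6$, $g_6=D_\tau e_4$, $g_4=4e_4$ shows the printed combination is not annihilated by $\dtau$ (one gets $-\tfrac{12}{7}e_4$), whereas the integral shift gives $\tfrac15(D_\tau e_4-4\mathbbm{e}_2e_4)$, which is modular. The half-integral shift is the one compatible with $\dtau\mathbbm{e}_2=-\tfrac12$, i.e.\ with $\mathbbm{e}_2$ replaced by $-\tfrac12\mathbbm{e}_2$ --- the combination that actually occurs in $\langle Q_2 f\rangle_q=(D_\tau-\tfrac12\mathbbm{e}_2)\langle f\rangle_q$ and in the applications --- so the discrepancy is a normalisation inconsistency between (ii) and (iii), not something your ``Serre-type derivative in weight $-1$'' heuristic can bridge. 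You should either prove (iii) with the corrected denominator $(k-2i-m)_m$, or prove it for $-\tfrac12\mathbbm{e}_2$ with the printed denominator; as it stands, the cross-check on generators that you defer would have revealed that the printed statement fails, and your proposal contains no proof of (iii).
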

\begin{remark}
Observe that if~$m\leq p-i$ and~$i\leq p-1$ one has that
\[k-2i-m-1 \geq k-2p.\]
Let $k=2p, i=p-1$ and take $m=p-i$; i.e., $m=1$. 
Then, the numerator~$(k-2i-m-1)_{m}$ vanishes. Moreover, for these values of $k,i$ and $m$, the function~$g_{k-2i-2m}$ is a modular form of weight~$0$. Hence it is a constant function. Therefore, also the numerator~$D_\tau^mg_{k-2i-2m}$ vanishes in this case. One can think of~$\mathbbm{e}_2$ as being the appropriate regularization of the corresponding ill-defined ratio. 

Observe that in the third equivalence we replaced~$D_\tau$ by~$D_\tau+\mathbbm{e}_2$, in which case the corresponding numerator never vanishes. If one would replace $D_\tau$ by $\mathbbm{e}_2$, one would obtain a generalisation of the functions $\phi_n$ of \cite[Proposition~3.1]{Bri18}. 
\end{remark}
\begin{proof}
Assume that (\ref{it:equiv1}) holds. Define $g_{k-2p-2}:=0$. Then, using $[\dtau,D_\tau]=W$ (as follows from \cref{prop:j-algebra}), it follows that applying~$\dtau$ to a term in the sum in~(\ref{it:equiv2}) yields
\[ (-1)^m \frac{D_\tau^m g_{k-2i-2m-2}}{(k-2i-m-1)_{m}\, m!} -(-1)^{m-1}\frac{D_\tau^{m-1} g_{k-2i-2m}}{(k-2i-(m-1)-1)_{m-1} \,(m-1)!}, \]
where the second term is taken to be zero when~$m=0$. Also the first term vanishes when $m=p-i$ as~$g_{k-2p-2}$ is set to be zero. Hence, after applying~$\dtau$ 
the sum becomes a telescoping sum, equal to zero. Also~$\dtau(g_2-\mathbbm{e}_2g_0) = g_0-g_0=0$. The third statement follows from the first by the same argument, mutatis mutandis. 

Conversely, the first statement follows inductively from the second (or third) by using that all but two terms in the same sum, which is equal to zero, cancel. Hence, these terms are equal. 
\end{proof}

Inspired by the above result, we now also introduce certain linear combinations which later turn out to be modular. Recall $(x)_n=x(x+1)\cdots (x+n-1)$ denotes the Pochhammer symbol.
\begin{defn}\label{defn:xi} Let $k\in \z, M\in \MM$, $\phi\in \Mer_n^M$ and a family $\mathbf{\phi}$ as before. 
For $\vec{\ell}\in \z^n$ and $X\in \XX$ with $k+|\vec{\ell}|\geq 0$, the functions~$\xi^X_{\vec{\ell}}(\phi)(\tau):\mathfrak{h}\to \c$ are defined by
\begin{equation}\label{eq:xi}\xi_{\vec{\ell}}^X(\mathbf{\phi})\defis\begin{cases}
\displaystyle\sum_{r=0}^{\lfloor\frac{1}{2}(k+|\vec{\ell}|)\rfloor}\!(-1)^{r}\,\sum_{s=0}^r \frac{D_\tau^{r}\,({g}^{X,r-s}_{\vec{\ell},s})(\mathbf{\phi})}{(k+|\vec{\ell}|-r-1)_{r}\,(r-s)!} &\quad k+ |\vec{\ell}|\neq 2,\\[17pt]
\displaystyle {g}^X_{\vec{\ell}}(\mathbf{\phi})-\mathbbm{e}_2\,\bigl({g}^{X,1}_{\vec{\ell},0}(\mathbf{\phi})+ {g}^{X,0}_{\vec{\ell},1}(\mathbf{\phi})\bigr) &\quad k+|\vec{\ell}|=2.
\end{cases}\end{equation}
Abbreviate~$\xi^X_{\vec{\ell}}(\mathbf{\phi})$ by~$\xi_{\vec{\ell}}(\phi)$ if~$X$ is the zero matrix.
\end{defn}
\begin{remark}
We formulate all results below for~$\xi_{\vec{\ell}}^X(\mathbf{\phi})$ as defined above, but by \cref{prop:equiv} all results remain valid after replacing~$\xi_{\vec{\ell}}^X(\mathbf{\phi})$ by
\begin{align} \sum_{r=0}^{\lfloor\frac{1}{2}(k+|\vec{\ell}|)\rfloor} \!(-1)^{r}\,\sum_{s=0}^r\frac{(D_\tau+\mathbbm{e}_2)^r\,({g}^{X,r-s}_{\vec{\ell},s})(\mathbf{\phi})}{(k+|\vec{\ell}|-r-\frac{3}{2})_{r}\, (r-s)!}.\end{align}
Note that this equation, as well as Equation~\ref{eq:xi}, can be inverted, expressing $\xi_{\vec{m}}^X(\mathbf{\phi})$ as linear combination of derivatives of certain $g_{\vec{\ell}}^X(\mathbf{\phi})$.
\end{remark}

Now, we have completed the full set-up for the main results on Taylor coefficients of quasi-Jacobi forms. In the first theorem, we characterize when a family of meromorphic functions is invariant under the quasimodular action in terms of its Taylor coefficients, generalizing {\cite[Theorem~3.2]{EZ85}}:
\begin{thm}\label{thm:3.2}
Let~$\Gamma$ be a congruence subgroup, $k\in \z$ and $M\in \MM$. As before, let $\mathbf{\phi}=\{\phi_{i,\vec{j}}\}$ be a family of meromorphic functions $\mathfrak{h}\times \c^n\to \c$ with $\phi:=\phi_{0,\vec{0}}\in \Mer_n^M$ 
 and admitting a Laurent expansion around $\vec{z}=\vec{0}$. Then, the following are equivalent:
\begin{enumerate}[{\upshape (i)}]
\item The function~$\phi$ satisfies the quasimodular transformation~{\upshape(\ref{eq:action-quasimod})}
\[(\phi|_{k,M}\mspace{2mu}\gamma)(\tau,z) \= \sum_{i,\vec{j}}\phi_{i,\vec{j}}(\tau,\mathbf{z})\Bigl(\frac{c}{c\tau+d}\Bigr)^{\! i+|\vec{j}|}\frac{\vec{z}^{\vec{j}}}{(2\pi\ii)^i}\] 
for all~$\gamma\in \Gamma;$
\item\label{eq:3.2ii} The coefficients~${g}_{\vec{\ell}}(\phi)$ are quasimodular forms of weight~$k+{|\vec{\ell}|}$ on $\Gamma$, of which the transformation is uniquely determined by the coefficients of the~$\phi_{i,\vec{j}}\mspace{1mu}$, i.e.,
 \[\dtau^r{g}_{\vec{\ell}}({\phi}) \= \sum_{s=0}^r\frac{r!}{(r-s)!}\,{g}^{r-s}_{\vec{\ell},s}(\phi)\, ;\]
\item The functions~$\xi_{\vec{\ell}}(\phi)$ are  modular forms of weight~$k+|\vec{\ell}|$ on~$\Gamma$. 
 \end{enumerate}
\end{thm}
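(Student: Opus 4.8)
The plan is to establish the chain (i)$\Leftrightarrow$(ii)$\Leftrightarrow$(iii). I would prove (i)$\Leftrightarrow$(ii) by directly comparing the Taylor expansions in~$\vec z$ of the two sides of~\eqref{eq:action-quasimod}, and then deduce (ii)$\Leftrightarrow$(iii) by applying \cref{prop:equiv} to each Taylor coefficient~$g_{\vec\ell}(\phi)$ separately. Throughout I use that $\phi_{i,\vec j}=\tfrac{\dtau^i}{i!}\tfrac{\dd_{\vec z}^{\vec j}}{\vec j!}\phi$, so that the entire family~$\mathbf{\phi}$ is encoded in the action of the derivations $\dtau$ and $\dd_{z_i}$ on~$\phi$.

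For (i)$\Rightarrow$(ii), I would expand both sides of the quasimodular transformation~\eqref{eq:action-quasimod} as power series in~$\vec z$. On the left-hand side there are three sources of the factor~$\tfrac{c}{c\tau+d}$: the exponential $\e\bigl(-c\,B_M(\vec z,\vec z)/(c\tau+d)\bigr)$ from the slash action (\cref{defn:slash}), which produces the powers $B_M(\vec z,\vec z)^r$ and hence the superscript~$r$ in $g^{r}_{\vec\ell,s}$; the rescaling $\vec z\mapsto\vec z/(c\tau+d)$ inside~$\phi$, which multiplies the $\vec\ell'$-th coefficient $g_{\vec\ell'}(\phi)(\gamma\tau)$ by $(c\tau+d)^{-|\vec\ell'|}$ and thus, combined with the weight factor $(c\tau+d)^{-k}$, generates precisely the Euler operator $z_1\dd_{z_1}+\ldots+z_n\dd_{z_n}$ that records the $\vec z$-degree; and the explicit factor $\bigl(\tfrac{c}{c\tau+d}\bigr)^{i+|\vec j|}$ on the right-hand side. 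Comparing the coefficient of~$\vec z^{\vec\ell}$ order by order in~$\tfrac{c}{c\tau+d}$ then shows that $g_{\vec\ell}(\phi)$ transforms as a quasimodular form of weight $k+|\vec\ell|$ on~$\Gamma$, and identifies the coefficient of $\bigl(\tfrac{1}{2\pi\ii}\tfrac{c}{c\tau+d}\bigr)^r$ with $\sum_{s=0}^r\tfrac{1}{(r-s)!}\,g^{r-s}_{\vec\ell,s}(\phi)$. Since, exactly as in~\eqref{eq:chiXgamma}, this coefficient also equals $\tfrac{1}{r!}\dtau^r g_{\vec\ell}(\phi)$, I obtain the asserted identity $\dtau^r g_{\vec\ell}(\phi)=\sum_{s=0}^r\tfrac{r!}{(r-s)!}\,g^{r-s}_{\vec\ell,s}(\phi)$.

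The converse (ii)$\Rightarrow$(i) reverses this computation: writing $\phi=\sum_{\vec\ell}g_{\vec\ell}(\phi)\,\vec z^{\vec\ell}$ and substituting the quasimodular transformation of each $g_{\vec\ell}(\phi)$ --- whose $\tfrac{c}{c\tau+d}$-expansion is prescribed by the $\dtau$-identity in (ii) --- into the left-hand side of~\eqref{eq:action-quasimod}, I would resum the resulting series and check that it reproduces the right-hand side. The key point is that the $\dtau$-structure on the family $\{g_{\vec\ell}(\phi)\}$ determines the full family $\{\phi_{i,\vec j}\}$, again because $\phi_{i,\vec j}=\tfrac{\dtau^i}{i!}\tfrac{\dd_{\vec z}^{\vec j}}{\vec j!}\phi$; the commutation relations of \cref{prop:j-algebra} (notably $[\dtau,D_\tau]=W$ and $[\dtau,D_{z_i}]=\dd_{z_i}$) are what keep the bookkeeping of the rescaling and the Euler operator consistent.

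Finally, the equivalence (ii)$\Leftrightarrow$(iii) follows by applying \cref{prop:equiv} to $g=g_{\vec\ell}(\phi)$ of weight $k+|\vec\ell|$ and depth at most $\lfloor\tfrac12(k+|\vec\ell|)\rfloor$, with the role of the lower-weight forms played by $\sum_{s=0}^i\tfrac{i!}{(i-s)!}\,g^{i-s}_{\vec\ell,s}(\phi)$. Under (ii) these equal $\dtau^i g_{\vec\ell}(\phi)$, so the first condition of \cref{prop:equiv} holds and its modular combination is exactly the function $\xi_{\vec\ell}(\phi)$ of \cref{defn:xi}, with the exceptional case $k+|\vec\ell|=2$ matching the $\mathbbm{e}_2$-regularised term. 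For the direction (iii)$\Rightarrow$(ii) I would invert the relation expressing $\xi_{\vec\ell}(\phi)$ as a combination of $D_\tau$-derivatives of the $g^{r-s}_{\vec\ell,s}(\phi)$ to recover $g_{\vec\ell}(\phi)$ together with its $\dtau$-images, once more via \cref{prop:equiv}. I expect the main obstacle to be the combinatorial identity in (i)$\Rightarrow$(ii): tracking the three independent sources of $\tfrac{c}{c\tau+d}$ simultaneously and verifying that the rescaling factors $(c\tau+d)^{-|\vec\ell'|}$ assemble into the Euler operator $\sum_i z_i\dd_{z_i}$, so that the index term $B_M$ and the total derivation $\dtau+\sum_i z_i\dd_{z_i}$ enter with precisely the binomial weights built into \cref{defn:Taylor}.
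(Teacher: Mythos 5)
Your proposal follows essentially the same route as the paper: expand both sides of \eqref{eq:action-quasimod} as Laurent series in $\vec{z}$, extract the coefficient of $\vec{z}^{\vec{\ell}}$ and of each power of $\tfrac{c}{c\tau+d}$ to obtain $\dtau^r g_{\vec{\ell}}(\phi)=\sum_s\tfrac{r!}{(r-s)!}\,g^{r-s}_{\vec{\ell},s}(\phi)$, observe the computation is reversible, and pass between (ii) and (iii) via \cref{prop:equiv}. Two minor points: since (ii) asserts the $g_{\vec{\ell}}(\phi)$ \emph{are} quasimodular forms, one must also record that they are holomorphic on $\mathfrak{h}$ and at the cusps, which the paper does in one sentence using the analytic properties of the $\phi_{i,\vec{j}}$ (i.e.\ the definition of $\Mer_n^M$). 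Also, the rescaling $\vec{z}\mapsto\vec{z}/(c\tau+d)$ contributes $(c\tau+d)^{-|\vec{\ell}|}$, which shifts the weight to $k+|\vec{\ell}|$ rather than producing powers of $\tfrac{c}{c\tau+d}$; the Euler operator $\sum_i z_i\dd_{z_i}$ actually enters through the explicit $\vec{z}^{\vec{j}}\bigl(\tfrac{c}{c\tau+d}\bigr)^{|\vec{j}|}$ factors on the right-hand side, so only the exponential and those explicit factors are genuine sources of the depth expansion.
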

\begin{proof}
Expanding~(\ref{eq:action-quasimod}) yields
\[ \sum_{\vec{\ell}} \frac{{g}_{\vec{\ell}}(\phi)(\gamma\tau)}{(c\tau+d)^{k+|\vec{\ell}|}}\,\vec{z}^{\vec{\ell}} \= 
\sum_{\vec{\ell}}\sum_{r,s} \frac{{g}_{\vec{\ell},s}(\phi)(\tau)}{r!}\Bigl(\frac{c}{c\tau+d}\Bigr)^{\! r+s}\,(B_M(\mathbf{z},\mathbf{z}))^r \vec{z}^{\vec{\ell}}\]
Extracting on both sides the coefficient of~$\vec{z}^{\vec{\ell}}$ yields
\[\dtau^r{g}_{\vec{\ell}}(\phi) \= \sum_{s}\frac{r!}{(r-s)!}\,{g}^{r-s}_{\vec{\ell},s}(\phi)\, .\]
 Also, the coefficient~${g}_{\vec{\ell}}(\phi)$ is holomorphic in~$\mathfrak{h}$ as well as at the cusps of~$\Gamma$, because of the analytic properties of the functions~$\phi_{i,\vec{j}}\mspace{1mu}$. Hence, the first statement is equivalent to the second. The equivalence of the second to the third follows from \cref{prop:equiv}. 
\end{proof}

Sometimes one is interested in comparing the action of~$\dtau$ on some quasi-Jacobi form~$\phi$ with the action of~$\dtau$ \emph{on the Taylor coefficients} of~$\phi$. By expanding the equality in~(\ref{eq:3.2ii}) above we find that the action of $\dtau$ on the Taylor coefficients of~$\phi$ corresponds to the action of $B_M(\vec{z},\vec{z})+\dtau+\sum_{j} z_j\,\dd_{z_j}$ on~$\phi$. In \cite[Proposition 2(ii)]{OP19} an analogous result for the Fourier coefficients of a weakly holomorphic quasi-Jacobi form is derived. 
\begin{cor} If $\phi=\sum_{\vec{\ell}} {g}_{\vec{\ell}}(\phi)(\tau)\,\vec{z}^{\vec{\ell}}$ is a strictly meromorphic quasi-Jacobi form, then
\[ \sum_{\vec{\ell}} \dtau{g}_{\vec{\ell}}(\phi)\,\vec{z}^{\vec{\ell}} \= \bigl(B_M(\vec{z},\vec{z})+\dtau+\sum_{j} z_j\,\dd_{z_j} \bigr)\, \phi. \] 
\end{cor}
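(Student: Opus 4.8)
The plan is to obtain the identity directly from the transformation law for Taylor coefficients in \cref{thm:3.2}. Since $\phi$ is a strictly meromorphic quasi-Jacobi form for $\sltwoz$, it satisfies the quasimodular transformation~\eqref{eq:action-quasimod} with $\Gamma=\sltwoz$, so \cref{thm:3.2} applies; in particular, the identity in part~(\ref{eq:3.2ii}) of that theorem,
\[\dtau^r{g}_{\vec{\ell}}(\phi)\=\sum_{s=0}^r\frac{r!}{(r-s)!}\,{g}^{r-s}_{\vec{\ell},s}(\phi),\]
holds for every $\vec{\ell}$ and every $r\geq 0$. The corollary is exactly the generating-series form of the case $r=1$.

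First I would specialize this identity to $r=1$, which gives
\[\dtau{g}_{\vec{\ell}}(\phi)\={g}^{1}_{\vec{\ell},0}(\phi)\+{g}^{0}_{\vec{\ell},1}(\phi).\]
Then I would substitute the explicit expressions for these two quantities recorded in the remark following \cref{defn:Taylor}, namely ${g}^{r}_{\vec{\ell},s}(\phi)=\tfrac{1}{s!}\,{g}_{\vec{\ell}}\bigl(B_M(\vec{z},\vec{z})^r(\dtau+\textstyle\sum_j z_j\dd_{z_j})^s\phi\bigr)$. This yields ${g}^{1}_{\vec{\ell},0}(\phi)={g}_{\vec{\ell}}\bigl(B_M(\vec{z},\vec{z})\,\phi\bigr)$ and ${g}^{0}_{\vec{\ell},1}(\phi)={g}_{\vec{\ell}}\bigl((\dtau+\sum_j z_j\dd_{z_j})\phi\bigr)$, so that by the linearity of $\psi\mapsto {g}_{\vec{\ell}}(\psi)$,
\[\dtau{g}_{\vec{\ell}}(\phi)\={g}_{\vec{\ell}}\Bigl(\bigl(B_M(\vec{z},\vec{z})+\dtau+\textstyle\sum_j z_j\dd_{z_j}\bigr)\phi\Bigr).\]
Multiplying by $\vec{z}^{\vec{\ell}}$ and summing over $\vec{\ell}$ then gives the assertion, since $\sum_{\vec{\ell}}{g}_{\vec{\ell}}(\psi)\,\vec{z}^{\vec{\ell}}=\psi$ by the very definition of ${g}_{\vec{\ell}}(\psi)$ as the $\vec{\ell}$th Laurent coefficient.

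The step I would treat most carefully is the implicit claim that $\bigl(B_M(\vec{z},\vec{z})+\dtau+\sum_j z_j\dd_{z_j}\bigr)\phi$ actually possesses a Laurent expansion around $\vec{z}=\vec{0}$, so that its coefficients ${g}_{\vec{\ell}}$ make sense. This is \emph{not} because $\dtau$ and the $\dd_{z_j}$ preserve orthogonality of the poles --- indeed they need not, as the example $\dd_{z_1}F_2=\Theta(z_1+z_2)^{-1}$ in the remark after \cref{defn:Taylor} shows a pole on the non-orthogonal hyperplane $z_1+z_2=0$. Rather, the prefactors $z_j$ conspire to cancel such spurious singularities (for $F_2$, the combination $(z_1+z_2)/\Theta(z_1+z_2)$ is regular at the origin), and the well-definedness of the combination ${g}^{r}_{\vec{\ell},s}(\phi)$ --- hence of the right-hand side above --- is precisely the point flagged as being established by \cref{thm:3.2}. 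With that well-definedness in hand, no convergence issue remains and the termwise reassembly of the Laurent series is legitimate.
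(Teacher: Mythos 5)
Your proposal is correct and matches the paper's own (very brief) justification: the paper derives the corollary by ``expanding the equality'' $\dtau^r g_{\vec\ell}(\phi)=\sum_s \tfrac{r!}{(r-s)!}g^{r-s}_{\vec\ell,s}(\phi)$ from \cref{thm:3.2}(\ref{eq:3.2ii}) at $r=1$ and substituting the expression $g^{r}_{\vec\ell,s}(\phi)=\tfrac{1}{s!}g_{\vec\ell}\bigl(B_M(\vec z,\vec z)^r(\dtau+\sum_j z_j\dd_{z_j})^s\phi\bigr)$, exactly as you do. Your added remark on why the right-hand side admits a Laurent expansion despite $\dd_{z_j}$ not preserving orthogonality of poles is a sound elaboration of the point the paper itself flags after \cref{defn:Taylor}.
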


One can characterize a quasi-Jacobi form~$\phi$ by its Taylor coefficients in three ways: by considering~$g_{\vec{\ell}}^X(\mathbf{\phi})$ as a vector-valued quasimodular form, by the modularity of the functions~$\xi_{\vec{\ell}}^X(\mathbf{\phi})$, and, finally, by the action of~$\dtau$ on the quasimodular form~${g}^{X}_{\vec{\ell}}(\mathbf{\phi})$. Write~$f^X$ for~$g_{\vec{\ell}}^X(\mathbf{\phi})$ or~$\xi_{\vec{\ell}}^X(\mathbf{\phi})$. Then, the `elliptic transformation' of the (quasi)modular form $f^X$ is given by
\begin{align}\label{eq:ellcoeff} \rho(X')\,\zeta_{X',X} \, f^{X+X'} = f^{X} \qquad \text{ for all } X'\in \XXZ.\end{align}
Recall that~$g_{\vec{\ell}}^X(\mathbf{\phi})$ and~$\xi_{\vec{\ell}}^X(\mathbf{\phi})$ are only defined when the zeros of $\phi$ are orthogonal (see \cref{defn:orthogonal}) and depend on a family of functions~${\phi_{i,\vec{j}}\colon\mathfrak{h}\times \c^n\to\c}$. Moreover, recall that in case $\phi$ is a quasi-Jacobi form by Convention~\ref{convt} this family~$\mathbf{\phi}=\{\phi_{i,\vec{j}}\}$ determines the transformation of~$\phi$. 
\Cref{thm:1} follows from the following result. 
\begin{thm}\label{thm:main}
Let~$k\in \z, M\in \MM$ and~$\phi\in \Mer_n^M$ be such that the poles of $\phi$ are orthogonal. Given a family $\mathbf{\phi}=\{\phi_{i,\vec{j}}\}$, indexed by $(i,\vec{j})$ in a finite subset of $\z_{\geq 0}\times \z_{\geq 0}^n$, of meromorphic functions $\phi_{i,\vec{j}}\colon\mathfrak{h}\times \c^n\to \c$ with $\phi=\phi_{0,\vec{0}}\mspace{1mu}$, the following are equivalent:
\begin{enumerate}[\upshape(i)]
\item\label{it:jf1} The function~$\phi$ is a strictly meromorphic quasi-Jacobi form of weight~$k$ and index~$M$ for which the functions~$\phi_{i,\vec{j}}$ determine its transformation behaviour as in~\eqref{eq:action-quasimod} and~\eqref{eq:action-quasiell}.
\item\label{it:jf2} For all~$X=\lm\in \XX$ with~$(\tau,\vec{\lambda}\tau+\vec{\mu})$ not a pole of $\phi$ for some ${\tau\in \mathfrak{h}}$, the function~${g}^X_{\vec{0}}(\mathbf{\phi})$ is a vector-valued quasimodular form satisfying~\eqref{eq:ellcoeff} and transforming as 
\[{g}^X_{\vec{0}}(\mathbf{\phi})|_k\gamma \= 
\sum_{s} {g}^{X\gamma}_{\vec{0},s}(\mathbf{\phi})\,\Bigl(\frac{c}{c\tau+d}\Bigr)^{\! s}.\]
\item[\upshape(ii$'$)]\label{it:jf2'} For all~$X\in \XX$ the function~${g}^X_{\vec{\ell}}(\mathbf{\phi})$ is a vector-valued quasimodular form satisfying~\eqref{eq:ellcoeff} for $\vec{\ell}=\vec{0}$ and transforming as 
\[{g}^X_{\vec{\ell}}(\mathbf{\phi})|_k\gamma \= 
\sum_{r} \sum_{s}\frac{1}{(r-s)!}\,{g}_{\vec{\ell},s}^{X\gamma,r-s}(\mathbf{\phi})\,\Bigl(\frac{c}{c\tau+d}\Bigr)^{\! r}.\]
\item\label{it:jf3} For all~$\vec{\ell}\in \z^n$ the functions~$\xi_{\vec{\ell}}(\phi)$ are modular forms of weight~$k+|\vec{\ell}|$ for~$\sltwoz$ and 
for all $X\in \XX$ the functions ${\xi}^{X}_{\vec{0}}(\mathbf{\phi})$ satisfy~\eqref{eq:ellcoeff}.
\item[\upshape(iii$'$)]\label{it:jf3'} For all~$X\in \XX$ and~$\vec{\ell}\in \z^n$ the functions~$\xi^{X}_{\vec{\ell}}(\mathbf{\phi})$ in~\eqref{eq:xi} are modular forms of weight~$k+|\vec{\ell}|$ for~$\Gamma_X$ and satisfy~\eqref{eq:ellcoeff}. 
\item\label{it:jf5} For all~$X\in \XX$ and~$\vec{\ell}\in \z^n$ the functions~${g}^{X}_{\vec{\ell}}(\mathbf{\phi})$ are quasimodular forms of weight~$k+|\vec{\ell}|$ for~$\Gamma_X$, satisfying~\eqref{eq:ellcoeff} and 
\begin{align}\label{eq:glX}\dtau^r{g}_{\vec{\ell}}^X(\mathbf{\phi}) \= \sum_{s}\frac{r!}{(r-s)!}\,{g}_{\vec{\ell},s}^{X,r-s}(\mathbf{\phi})\, .\end{align}
\end{enumerate}
\end{thm}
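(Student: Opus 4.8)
The plan is to reduce everything to \cref{thm:3.2} applied separately to each translate $\phi\|_M X$, and then to glue the resulting per-$X$ statements together using the density of the rational lattice points $\{\vec{\lambda}\tau+\vec{\mu}: X=\lm\in\XX\}$ in $\c^n$. For fixed $X\in\XX$ the function $\phi\|_M X$ again lies in $\Mer_n^M$ and, because the poles of $\phi$ are orthogonal (so that translating by $\vec{\lambda}\tau+\vec{\mu}$ preserves orthogonality), admits a Laurent expansion around $\vec{z}=\vec{0}$ whose coefficients are the $g^X_{\vec{\ell}}(\mathbf{\phi})$, with the $g^{X,r}_{\vec{\ell},s}(\mathbf{\phi})$ playing the role of the ``$\phi_{i,\vec{j}}$-coefficients'' of $\phi\|X$. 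First I would record the per-$X$ dictionary: by the first part of \cref{prop:phi||X}, if $\phi$ satisfies the quasimodular transformation \eqref{eq:action-quasimod} for $\sltwoz$, then $\phi\|X$ satisfies \eqref{eq:action-quasimod} for the subgroup $\Gamma_X$ with family $\{\phi_{i,\vec{j}}\|X\}$ (for $\gamma\in\Gamma_X$ the defining conditions \eqref{eq:gammaX} are exactly what makes $\phi_{i,\vec{j}}\|X\gamma=\phi_{i,\vec{j}}\|X$). Applying \cref{thm:3.2} to $\phi\|X$ with $\Gamma=\Gamma_X$ then yields, for this fixed $X$, the equivalence of the quasimodular transformation of $\phi\|X$, the quasimodularity of $g^X_{\vec{\ell}}(\mathbf{\phi})$ together with the $\dtau$-action \eqref{eq:glX}, and the modularity of $\xi^X_{\vec{\ell}}(\mathbf{\phi})$; the vector-valued reformulation is a restatement of the $\dtau$-action, and \cref{prop:equiv} identifies the two shapes of $\xi$. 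This is the non-elliptic content of (ii$'$)$\,\Leftrightarrow\,$(iii$'$)$\,\Leftrightarrow\,$(iv).

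The elliptic transformation \eqref{eq:ellcoeff} of the coefficients is, separately, equivalent to the quasi-elliptic transformation \eqref{eq:action-quasiell} of $\phi$: the relation $\phi\|(X+X')=\rho(-X')\,\zeta_{X,X'}\,\phi\|X$ for $X'\in\XXZ$ from the second and third parts of \cref{prop:phi||X} translates termwise into \eqref{eq:ellcoeff} for both the Laurent coefficients $g^X_{\vec{\ell}}$ and the combinations $\xi^X_{\vec{\ell}}$. Combining the two observations, condition (i) is equivalent to the conjunction, over all $X\in\XX$, of the per-$X$ quasimodular transformation of $\phi\|X$ together with \eqref{eq:ellcoeff}; the forward implications (i)$\,\Rightarrow\,$(ii$'$),(iii$'$),(iv) are then immediate, and the weak versions (ii),(iii) follow by restricting to $\vec{\ell}=\vec{0}$.

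The substantial step is the converse, recovering (i) from the weak statements (ii) or (iii), which constrain only the constant coefficient $\vec{\ell}=\vec{0}$. Here I would use that $g^X_{\vec{0}}(\mathbf{\phi})=(\phi\|X)(\tau,\vec{0})$ equals, up to the nowhere-vanishing factor $\rho(-X)$ and the elliptic automorphy factor, the sum $\sum_{\vec{j}}\phi_{0,\vec{j}}(\tau,\vec{\lambda}\tau+\vec{\mu})\,\vec{\lambda}^{\vec{j}}$; taking $\vec{\lambda}=\vec{0}$ recovers $\phi(\tau,\vec{\mu})$ for every rational $\vec{\mu}$. Since $\{\vec{\lambda}\tau+\vec{\mu}:\vec{\lambda},\vec{\mu}\in\q^n\}$ is dense in $\c^n$ for each fixed $\tau\in\mathfrak{h}$, the family $\{g^X_{\vec{0}}(\mathbf{\phi})\}_{X\in\XX}$ determines $\phi$ and, more generally, the whole family $\{\phi_{i,\vec{j}}\}$ (as asserted after \cref{defn:Taylor}) by analytic continuation off the dense set of base points. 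The transformation law postulated for $g^X_{\vec{0}}(\mathbf{\phi})$ in (ii), evaluated at these base points and upgraded to an identity of meromorphic functions of $\vec{z}$, then forces \eqref{eq:action-quasimod}, while \eqref{eq:ellcoeff} forces \eqref{eq:action-quasiell}; thus (ii)$\,\Rightarrow\,$(i), and similarly (iii)$\,\Rightarrow\,$(i) (or deduce (iii)$\,\Rightarrow\,$(ii) directly via \cref{prop:equiv}). Together with the restriction (iv)$\,\Rightarrow\,$(ii), this closes the cycle.

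I expect this density-and-continuation step to be the main obstacle. One must check that an identity of (quasi)modular forms indexed by $X$ and holding for all rational base points $\vec{\lambda}\tau+\vec{\mu}$ genuinely glues into a single transformation identity for $\phi(\tau,\vec{z})$, carefully tracking the factors $\rho(-X)$ and the prefactors $(c/(c\tau+d))^{i+|\vec{j}|}$, and avoiding the loci where $\vec{\lambda}\tau+\vec{\mu}$ meets a pole---here the orthogonality of the poles together with \cref{thm:poleshyperplane} and \cref{cor:poleshyperplane} ensures these loci are too small to obstruct density. The bookkeeping that the constant terms $g^{X}_{\vec{0},s}(\mathbf{\phi})$ correctly repackage the family $\{\phi_{i,\vec{j}}\}$ under this reconstruction is the part most likely to require care.
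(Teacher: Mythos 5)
Your plan follows the paper's proof essentially verbatim: the forward implications come from \cref{prop:phi||X} combined with \cref{thm:3.2} applied to each translate $\phi\|X$ for $\Gamma=\Gamma_X$, the equivalences among the primed statements and \eqref{eq:glX} are \cref{prop:equiv}, the elliptic condition \eqref{eq:ellcoeff} is the second and third parts of \cref{prop:phi||X}, and the converse $(\mathrm{ii})\Rightarrow(\mathrm{i})$ proceeds by unpacking $g^X_{\vec{0}}(\mathbf{\phi})$ and using density of the points $\vec{\lambda}\tau+\vec{\mu}$ with $X=\lm\in\XX$ in $\c^n$. The one device you leave implicit---and which the paper supplies for the step you rightly flag as delicate---is how to isolate $\phi=\phi_{0,\vec{0}}$ from the sum $\sum_{\vec{j}}(\phi_{0,\vec{j}}|X)\,\vec{\lambda}^{\vec{j}}$: the paper equates the two expressions for $g^X_{\vec{0}}(\mathbf{\phi})|_k\gamma$ as polynomials in $\vec{\lambda}$ and extracts their constant terms in $\vec{\lambda}$, using that $\rho(X\gamma)\,g^{X}_{\vec{0}}(\mathbf{\phi})$ is periodic in $\vec{\lambda}$ with finite period, which yields the quasimodular (and, mutatis mutandis, the elliptic) transformation at every non-polar rational point before passing to the limit.
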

\begin{proof}
(\ref{it:jf1}) implies~(\ref{it:jf5}): Let~$X\in \XX$ and~$\gamma \in \Gamma_X$ be given. By \cref{prop:phi||X} the function~$\phi\|X$ satisfies the conditions of \cref{thm:3.2} for~$\Gamma=\Gamma_X\mspace{1mu}$. Moreover, by the same proposition 
\eqref{eq:ellcoeff} is satisfied.

(\ref{it:jf5}) implies~(\ref{it:jf3}$'$): This follows directly from \cref{thm:3.2} for $\Gamma=\Gamma_X\mspace{1mu}$.

(\ref{it:jf3}$'$) implies~(\ref{it:jf3}):
Observe that~$\Gamma_{X}=\sltwoz$ for~$X$ equal to the zero matrix. Hence, 
we simply forget some of the properties of~$\xi_{\vec{\ell}}^X\mspace{1mu}$. 

(\ref{it:jf3}) implies~(\ref{it:jf2}$'$): As~$\xi_{\vec{\ell}}$ is a modular form for~$\sltwoz$ for all~$\vec{\ell}\in \z^n$, it follows by \cref{thm:3.2} that~$\phi$ satisfies the quasimodular transformation for all~$\gamma\in \sltwoz$. As by \cref{prop:phi||X}
\begin{align}
\phi\|X|_k\gamma &\= \sum_{i,\vec{j}} (\phi_{i,\vec{j}}\|X\gamma)
\,\Bigl(\frac{1}{2\pi\ii}\frac{c}{c\tau+d}\Bigr)^{\! i} 
\,\Bigl(\frac{c\vec{z}}{c\tau+d}\Bigr)^{\! \vec{j}},
\end{align}
the result follows by extracting the coefficients of~$\vec{z}$ on both sides. Finally, note that $\xi_{\vec{0}}^X=g_{\vec{0}}^X\mspace{1mu}$. 

(\ref{it:jf2}$'$) implies~(\ref{it:jf2}): This follows directly by restricting to $\vec{\ell}=\vec{0}$.

(\ref{it:jf2}) implies~(\ref{it:jf1}): Suppose~$\vec{z}=\vec{\lambda}\tau+\vec{\mu}$, with~$X=\lm\in \XX$, is not a pole of~$\phi$. 
Let~$\mathbf{\phi}_{i}$ be the family of functions corresponding to $\phi_{i,\vec{0}}\mspace{1mu}$. Using~(\ref{it:jf2}), for~$\gamma \in \sltwoz$ one has that
\begin{align}
({g}^{X}_{\vec{0}}(\mathbf{\phi})|_k\gamma)(\tau) \= 
&\sum_{i}{g}_{\vec{0}}^{X\gamma}(\mathbf{\phi}_{i})(\tau)\,\Bigl(\frac{1}{2\pi\ii}\frac{c}{c\tau+d}\Bigr)^{\! i} \\
\=&\rho(-X\gamma)\sum_{i,\vec{\ell}}(\phi_{i,\vec{\ell}}|X\gamma)(\tau,0)\,\Bigl(\frac{1}{2\pi\ii}\frac{c}{c\tau+d}\Bigr)^{\! i} (\vec{\lambda}^\gamma)^{\vec{\ell}}  \\	
\=&\rho(-X\gamma)\sum_{i,\vec{j},\vec{\ell}}(\phi_{i,\vec{j}+\vec{\ell}}|X\gamma)(\tau,0)\,\Bigl(\frac{1}{2\pi\ii}\frac{c}{c\tau+d}\Bigr)^{\! i} \binom{\vec{j}+\vec{\ell}}{\vec{j}} \Bigl(\frac{c(\vec{\lambda}^\gamma\tau+\vec{\mu}^\gamma)}{c\tau+d}\Bigr)^{\! \vec{j}} \vec{\lambda}^{\vec{\ell}}.
\end{align}
On the other hand,
\begin{align}
({g}^{X}_{\vec{0}}(\mathbf{\phi}))|_k\gamma &\=\rho(-X)\sum_{\vec{\ell}} (\phi_{0,\vec{\ell}}|X|_k\gamma)(\tau,0)\, \vec{\lambda}^{\vec{\ell}} \\
&\=\rho(-X\gamma) \sum_{\vec{\ell}} (\phi_{0,\vec{\ell}}|_k\gamma|X\gamma)(\tau,0)\, \vec{\lambda}^{\vec{\ell}}. 
\end{align}
Combining the identities yields
\begin{equation}
\sum_{\vec{\ell}} (\phi_{0,\vec{l}}|_k\gamma|X\gamma)(\tau,0)\, \vec{\lambda}^{\vec{\ell}}  \= \sum_{i,\vec{j},\vec{\ell}}(\phi_{i,\vec{j}+\vec{\ell}}|X\gamma)(\tau,0)\,\Bigl(\frac{1}{2\pi\ii}\frac{c}{c\tau+d}\Bigr)^{\! i} \binom{\vec{j}+\vec{\ell}}{\vec{j}} \Bigl(\frac{c(\vec{\lambda}^\gamma\tau+\vec{\mu}^\gamma)}{c\tau+d}\Bigr)^{\! \vec{j}} \vec{\lambda}^{\vec{\ell}} .
\end{equation}
As both sides equal~$\rho(X\gamma)\,{g}^{X}_{\vec{0}}(\mathbf{\phi})$, which is periodic with finite period as a function of~$\vec{\lambda}$, the constant terms with respect to~$\vec{\lambda}$ agree. Hence,
\[(\phi|\gamma|X)(\tau,0)\=\sum_{i,\vec{j}}(\phi_{i,\vec{j}}|X)(\tau,0)\,\Bigl(\frac{1}{2\pi\ii}\frac{c}{c\tau+d}\Bigr)^{\! i}  \Bigl(\frac{c(\vec{\lambda}\tau+\vec{\mu})}{c\tau+d}\Bigr)^{\! \vec{j}} \] 
for all~$X=\lm\in \XX$ with~$X$ not corresponding to a pole. Therefore,~$\phi$ satisfies the quasimodular transformation for all~$\vec{z}$ of the given form. As~$(\q^n\tau\times \q)^n\setminus P_\phi$, with $P_\phi$ the set of poles of~$\phi$, lies dense in~$\c^n$ for all $\tau\in \mathfrak{h}$, the function~$\phi$ satisfies the quasimodular transformation equation. 

For the elliptic transformation, we again assume~$\vec{z}=\vec{\lambda}\tau+\vec{\mu}$, with~$X=\lm\in \XX$, is not a pole of $\phi$. Given~$X'=(\vec{\lambda'},\vec{\mu'})\in \XXZ$, by \cref{defn:Taylor}, \cref{defn:doubleslash} and \cref{prop:app} we have
\begin{align}
\sum_{\vec{\ell}}(\phi_{0,\vec{\ell}}|X)(\tau,0)\,\vec{\lambda}^{\vec{\ell}} &\=\rho(X)\,{g}^{X}_{\vec{0}}(\mathbf{\phi}) \\
&\= \rho(X)\,\rho(X')\,\zeta_{X',X}\,{g}^{X+X'}_{\vec{0}}(\mathbf{\phi}) \\
&\= \rho(X)\,\rho(X')\,\rho(-X-X')\,\zeta_{X',X}\,\sum_{\vec{\ell}}(\phi_{0,\vec{\ell}}|(X+X'))(\tau,0)\,(\vec{\lambda}+\vec{\lambda'})^{\vec{\ell}} \\
&\= \sum_{\vec{\ell}}(\phi_{0,\vec{\ell}}|X'|X)(\tau,0)\,(\vec{\lambda}+\vec{\lambda'})^{\vec{\ell}}.
\end{align}
The coefficients of~$\vec{\lambda}$ agree, so
\[(\phi|X')(\tau,\vec{z}) \= \sum_{\vec{j}}\phi_{0,\vec{j}}(\tau,z)\,(-\vec{\lambda'})^{\vec{j}}\]
for all~$\vec{z}$ of the given form. As before by continuity of~$\phi$ the above equation holds for all~$\vec{z}$. 
\end{proof}
\begin{remark}
The proof of the above result also applies to \emph{weak} Jacobi forms, after replacing `strictly meromorphic Jacobi form' and `(quasi)modular' by `weak Jacobi form' and `weakly holomorphic (quasi)modular', respectively.
\end{remark}

Specializing to \emph{holomorphic Jacobi forms} (instead of \emph{meromorphic} \emph{quasi}-Jacobi forms), we obtain the following result, generalizing the main results on Taylor coefficients of Jacobi forms in \cite{EZ85} to multivariable Jacobi forms. 
\begin{qedcor}\label{cor:mainforJacobi}
Let~$k\in \z, M\in \MM$ and~$\phi\in \Hol^M_n\mspace{1mu}$. 
Then, the following are equivalent:
\begin{enumerate}[\upshape(i)]
\item\label{it:jf1J} The function~$\phi$ is a holomorphic Jacobi form of weight~$k$ and index~$M$.
\item\label{it:jf2J} For all~$X\in \XX$ the function~${g}^X_{\vec{0}}(\mathbf{\phi})$ is a vector-valued modular form satisfying~\eqref{eq:ellcoeff} and transforming as 
\[{g}^X_{\vec{0}}(\mathbf{\phi})|_k\gamma \=  {g}^{X\gamma}_{\vec{0}}(\mathbf{\phi}).\]
\item[\upshape(ii$'$)]\label{it:jf2'J} For all~$X\in \XX$ the function~${g}^X_{\vec{\ell}}(\mathbf{\phi})$ is a vector-valued quasimodular form satisfying~\eqref{eq:ellcoeff} for $\vec{\ell}=\vec{0}$ and transforming as 
\[{g}^X_{\vec{\ell}}(\mathbf{\phi})|_k\gamma \= 
\sum_{r}\frac{1}{r!}\,{g}_{\vec{\ell}}^{X\gamma,r}(\mathbf{\phi})\,\Bigl(\frac{c}{c\tau+d}\Bigr)^{\! r}.\]
\item[\upshape(iii$'$)] For all~$X\in \XX$ and~$\vec{\ell}\in \z^n$ the functions~$\xi^{X}_{\vec{\ell}}(\mathbf{\phi})$ given by
\[\xi_{\vec{\ell}}^X(\mathbf{\phi})\=\sum_{r} (-1)^r\frac{D_\tau^r\,{g}^{X,r}_{\vec{\ell}}(\phi)}{(k+|\vec{\ell}|-1)_{r}\,r!} \]
are modular forms of weight~$k+|\vec{\ell}|$ for~$\Gamma_X$ and satisfy~\eqref{eq:ellcoeff}. 
\item[\upshape(iv)] For all~$X\in \XX$ and~$\vec{\ell}\in \z^n$ the functions~${g}^{X}_{\vec{\ell}}(\mathbf{\phi})$ are quasimodular forms of weight~$k+|\vec{\ell}|$ for~$\Gamma_X$, satisfying~\eqref{eq:ellcoeff} and 
\begin{align}\dtau^r{g}_{\vec{\ell}}^X(\mathbf{\phi}) \= {g}_{\vec{\ell}}^{X,r}(\mathbf{\phi}). &\qedhere\end{align}
\end{enumerate}
\end{qedcor}

\section{Quasimodular algebras for congruence subgroups}\label{sec:cs}
The main result of the previous part, \cref{thm:main}, will almost immediately imply the proof of \cref{thm:higherlevel} on the quasimodularity of the elements of~$\Lambda^*(N)$ for some subgroup. We will first introduce a more general set-up, in the context of which we will present this proof. In the rest of this section 
we provide many examples of algebras of functions on partitions to which this general set-up applies, i.e., we will recall the hook-length moments and the (double) moment functions and explain how the corresponding algebras can be extended to several congruence subgroups.

\subsection{General set-up}\label{sec:set-up}
We answer the question of how to extend an algebra of functions on partitions for which the~$q$-bracket is a quasimodular form for~$\sltwoz$ to one for a congruence subgroup~$\Gamma$. More precisely, we consider \emph{quasimodular algebras} for~$\Gamma$.
\begin{defn} A \emph{quasimodular algebra} for a congruence subgroup~$\Gamma\leq \sltwoz$ is a graded algebra of functions~$f\colon\partitions\to \c$ for which~$\langle f \rangle_q$ is a quasimodular form for~$\Gamma$ of the same weight as~$f$.
\end{defn}

Given a quasimodular algebra for~$\sltwoz$, we now present a construction of a quasimodular algebra for a congruence subgroup. To do so, from now on we assume that~$\Phi\colon\partitions\times \c^r\to \c$ and $k\in \z$ are such that for all $n\geq 1$ the function $\phi_n^\Phi\colon\mathfrak{h}\times M_{n,r}(\c)\to \c$ given by
\[\phi_n^\Phi(\tau,Z)\defis\Bigl\langle \prod_{i=1}^n \Phi(\cdot,Z_i)\Bigr\rangle_{\!q}\mspace{1mu},\]
where~$Z_i$ is the~$i$th row of~$Z$, is a meromorphic quasi-Jacobi form of weight~$k n$ which admits a Laurent expansion around all~$Z\in M_{n,r}(\q)$ (after identifying~$M_{n,r}(\c)$ with~$\c^{nr}$). Here,~$\Phi(\lambda,\vec{z})$ can be thought of a generalisation of the generating series~$W_\lambda(z)$ of the Bloch--Okounkov functions~$Q_k\mspace{1mu}$, defined by~(\ref{eq:BOgen}).

\begin{defn}\label{defn:F} Given such a~$\Phi$, for $\vec{a}\in \q^r$ denote by~$f^\Phi_{\vec{\ell}}(\cdot,\vec{a})=f_{\vec{\ell}}(\cdot,\vec{a})\colon\partitions\to\c$ the~$\vec{\ell}$th Taylor coefficient of~$\Phi(\vec{z})$ around $\vec{z}=\vec{a}$, i.e.,
\[ \Phi(\cdot,\vec{z}) \,=:\, \sum_{\vec{\ell}} f_{\vec{\ell}}(\cdot,\vec{a})\,(\vec{z}-\vec{a})^{\vec{\ell}}.\]
 Define the graded~$\q$-algebra~$\mathcal{F}^\Phi(N)=\mathcal{F}(N)$ as the algebra generated by the weight $k+|\vec{\ell}|$ elements~$f_{\vec{\ell}}(\cdot,\vec{a})$ for $\vec{a} \in \frac{1}{N}\z^r,\vec{\ell}\in \z^r$. 
\end{defn}
\begin{remark}
By \cref{thm:main}, up to a sign, $f_{\vec{m}}(\cdot,\vec{a})$ and~$f_{\vec{m}}(\cdot,\vec{b})$ agree whenever $\vec{a}-\vec{b}\in \z^r$. Hence, in the definition one can assume that $\vec{a}\in [0,1)^r$.  
\end{remark}

For example, letting $\Phi$ be the Bloch--Okounkov generating series $W$, we find
~$Q_{\ell+1}(a)=\e(-\tfrac{1}{2}a)\,f_\ell^W(a)$ (see~\eqref{def:qka}) and $\Lambda^*(N)=\mathcal{F}^W(N)$. 

Now we relate the Taylor coefficients of $\Phi$ to the Taylor coefficients of the corresponding meromorphic quasi-Jacobi forms~$\phi_n^\Phi\mspace{1mu}$. 
Let $L\in M_{n,r}(\z)$ and $A\in M_{n,r}(\q)$. An arbitrary monomial~$f_{L}$ in~$\F^\Phi(N)$ is given by 
\begin{align}\label{eq:monelt} f_L(A)\defis f_{L_1}(A_1)\cdots f_{L_n}(A_n)\end{align}
 with~$L_i$ and~$A_i$ the~$i$th row of~$L$ and~$A$, respectively. 
 Recall that $g_{\vec{\ell}}^X(\phi)$ denotes the $\vec{\ell}$th Taylor coefficient of $\phi$ around $\vec{\lambda}\tau+\vec{\mu}$; see \cref{defn:Taylor}. By construction of these Taylor coefficients, 
 we find
\begin{align}\langle f^\Phi_{L}(A) \rangle_q \= g_{L}^{(0,A)}(\phi_n^\Phi),\end{align}
where on the right-hand side we identified $M_{n,r}(\z)$ and $M_{n,r}(\q)$ with $\z^{nr}$ and $\q^{nr}$, respectively
\footnote{Here we hide a small subtlety. Recall that \cref{defn:Taylor}, in which the Taylor coefficients of a strictly meromorphic Jacobi form are defined, depended not only on this Jacobi form but also on a family of meromorphic functions determined by the transformation of this Jacobi form. We omit this family from the notation, as, for $X=(0,A)$ the double slash operator $\|X$ coincides with the slash operator $|X$, so that the ``Taylor coefficients'' do not involve this family.}.

Recall that for $\widehat{N}\in \z$, we write $m_{\widehat{N}}=\left(\begin{smallmatrix} \widehat{N} & 0 \\ 0 & 1\end{smallmatrix}\right)$. The following result is the general statement of \cref{thm:higherlevel}. 

\begin{thm}\label{thm:higherlevel+}
Given~$\Phi$ as above and $N\geq 1$, let $\widehat{N}=(2,N)N$. The algebra~$\mathcal{F}^\Phi(N)$ is a quasimodular algebra for~$m_{\widehat{N}}^{-1} \Gamma(\widehat{N}) m_{\widehat{N}}\mspace{1mu}$.
\end{thm}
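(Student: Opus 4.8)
The plan is to reduce everything to the transformation behaviour of the Taylor coefficients $g^{(0,A)}_{L}(\phi_n^\Phi)$ of the $n$-point functions, which is governed by \cref{thm:main}. In a fixed weight the algebra $\mathcal{F}^\Phi(N)$ is spanned by monomials $f_L(A)=f_{L_1}(A_1)\cdots f_{L_n}(A_n)$ as in~\eqref{eq:monelt}, with $L\in M_{n,r}(\z)$ and $A\in\tfrac1N M_{n,r}(\z)$ (one may take each $A_i\in[0,1)^r$). By the identity $\langle f_L^\Phi(A)\rangle_q = g_L^{(0,A)}(\phi_n^\Phi)$ recorded just before the theorem, and by \cref{thm:main} (a strictly meromorphic quasi-Jacobi form has each $g^X_{\vec\ell}$ quasimodular of weight $k+|\vec\ell|$ for $\Gamma_X$), applied to $\phi_n^\Phi$ (weight $kn$, index some $M\in M_{nr}(\q)$, with the Laurent expansions required by that theorem), each $\langle f_L(A)\rangle_q$ is a quasimodular form of weight $kn+|L|$ for the group $\Gamma_{(0,A)}$ of~\eqref{eq:gammaX}. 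A homogeneous element of weight $w$ is a finite $\q$-linear combination of such monomials, all of weight $w$, and quasimodular forms of a fixed weight for a fixed group form a vector space; hence it suffices to produce one group, namely $m_{\widehat{N}}^{-1}\Gamma(\widehat{N})m_{\widehat{N}}$, contained in $\Gamma_{(0,A)}$ for every admissible $A$ and every $n$.

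The core of the argument is the explicit determination of $\Gamma_{(0,A)}$. Writing $X=(0,A)$, with $A$ identified with a vector in $\q^{nr}$, and $\gamma=\abcd$, one computes $X\gamma=(cA,dA)$, so the lattice condition $X\gamma-X\in M_{nr,2}(\z)$ becomes $cA\in\z^{nr}$ and $(d-1)A\in\z^{nr}$; letting $A$ range over $\tfrac1N\z^{nr}$ this is equivalent to $N\mid c$ and $N\mid(d-1)$. Evaluating the root-of-unity condition $\rho(X-X\gamma)=\zeta_{X,X\gamma-X}$ with $\rho,\zeta$ as in~\eqref{def:rhoXzetaXX} collapses, after a short computation (only $c$ and $d$ survive since the first row of $X$ vanishes), to
\[\bigl(c^2-cd+(d-1)^2\bigr)\,B_M(A,A)\in\z.\]
On the other side, a direct computation of $m_{\widehat{N}}\gamma m_{\widehat{N}}^{-1}$ gives
\[m_{\widehat{N}}^{-1}\Gamma(\widehat{N})m_{\widehat{N}}\=\bigl\{\gamma\in\sltwoz : a\equiv d\equiv 1\!\!\pmod{\widehat{N}},\ \widehat{N}^2\mid c\bigr\},\]
so that membership yields $\widehat{N}^2\mid c$ and $\widehat{N}\mid(d-1)$, and in particular $N\mid c$ and $N\mid(d-1)$, which settles the lattice condition.

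I expect the root-of-unity condition to be the main obstacle, and it is precisely here that the factor $(2,N)$ in $\widehat{N}=(2,N)N$ is forced. Since $\phi_n^\Phi$ is a non-trivial quasi-Jacobi form, its index obeys the integrality constraint from the remark following \cref{def:jf}; hence $2B_M$ is integer-valued on $\z^{nr}$ and $B_M(A,A)\in\tfrac1{2N^2}\z$ for $A\in\tfrac1N\z^{nr}$. Thus the displayed condition is implied by $2N^2\mid\bigl(c^2-cd+(d-1)^2\bigr)$. From $\widehat{N}^2\mid c$ and $\widehat{N}\mid(d-1)$ one obtains $N^2\mid\bigl(c^2-cd+(d-1)^2\bigr)$, and the remaining factor of $2$ is supplied in two different ways according to the parity of $N$: for even $N$ by the extra divisibility $(2,N)^2=4$ built into $\widehat{N}^2=4N^2$, and for odd $N$ by the elementary fact that $c^2-cd+(d-1)^2$ is always even on $\sltwoz$ (the case $c,d$ both even is excluded by $ad-bc=1$, and in the remaining parity cases the expression is manifestly even). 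This is exactly what the parity of $N$ demands, which explains the appearance of $(2,N)$, and it yields $m_{\widehat{N}}^{-1}\Gamma(\widehat{N})m_{\widehat{N}}\subseteq\Gamma_{(0,A)}$ for all admissible $A$; the delicate point is simply to carry the half-integral index correctly through the root-of-unity condition, for which these two mechanisms suffice. The theorem then follows from the vector-space reduction of the first paragraph.
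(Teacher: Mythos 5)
Your proposal is correct and follows essentially the same route as the paper: reduce to monomials $f_L(A)$, identify $\langle f_L(A)\rangle_q$ with $g_L^{(0,A)}(\phi_n^\Phi)$, invoke \cref{thm:main} to get quasimodularity for $\Gamma_{(0,A)}$, and then verify by the same explicit computation of $\rho$ and $\zeta$ that $m_{\widehat{N}}^{-1}\Gamma(\widehat{N})m_{\widehat{N}}\subseteq\Gamma_{(0,A)}$, with the factor $(2,N)$ accounted for exactly as in the paper (the parity argument for odd $N$, which you spell out in more detail than the printed proof does). No gaps.
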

\begin{proof} 
Consider a monomial element $f_L(A)$ of~$\mathcal{F}(N)$ as in~\eqref{eq:monelt}, for some $L\in M_{n,r}(\z)$ and $A\in M_{n,r}(\q)$. 
Write $X=(0,A)$. Then, $\langle f_L(A)\rangle_q = g_{L}^X(\phi_n)$. This Taylor coefficient is quasimodular for~$\Gamma_X$ by \cref{thm:main}. Therefore, it suffices to show that the~$q$-bracket respects the weight grading of~$\mathcal{F}(N)$ and that~$\Gamma_X$ contains~$m_{\widehat{N}}^{-1} \Gamma(\widehat{N}) m_{\widehat{N}} \mspace{1mu}$. 		
												
For the first, observe that the weight of~$f$ is given by $\sum_{i=1}^n (k + |L_i|)$, whereas correspondingly the weight of~$g_{L}^X(\phi)$ equals $kn+|L|$ (here $|L|=\sum_{i,j} L_{ij}$). 

Write $M$ for the index of $\phi_n^\Phi$ and $B=B_M$ for the corresponding bilinear form. Recall
\begin{align}\label{eq:gammaX2}\Gamma_X = \{\gamma\in \sltwoz \mid X\gamma- X \in \XXZ \text{, } \rho(X-X\gamma)=\zeta_{X,X\gamma-X}\}.\end{align}
Writing $\gamma=\abcd$, we have
\begin{align}
X\gamma-X \= (c A, (d-1)A), \quad &\rho(X\gamma-X) \= \e((c^2-c(d-1)+(d-1)^2) B(A,A))	
\end{align}
and
\[
\zeta_{X,X\gamma-X} \= \e(B(0,(d-1)A)-B(cA,A))\=\e(-c\,B(A,A)).
\]
Observe that $2N^2\,B(A,A)$ is integral. Hence, if $\gamma \in \sltwoz$ satisfies
\begin{equation}\label{eq:congcond}c \equiv 0 \bmod N, \quad d\equiv 1 \bmod N\quad \text{and} \quad c^2-c(d-1)+(d-1)^2\equiv c \bmod 2N^2,\end{equation}
then $\gamma \in \Gamma_X\mspace{1mu}$. 

Let $N'\in \z_{>0}\mspace{1mu}$. Then,
\[m_{N'}^{-1} \Gamma(N') m_{N'} = \{ \abcd \in \sltwoz \mid c \equiv 0\,(N'^2),\, a\equiv d\equiv 1\,(N')\}.\]
In case $2\nmid N$, the conditions~(\ref{eq:congcond}) are satisfied for all $\gamma \in m_{N'}^{-1} \Gamma(N') m_{N'}$ when $N'=N$, in case $2\mid N$ for $N'=2N$. Therefore, $m_{\widehat{N}}^{-1} \Gamma(\widehat{N}) m_{\widehat{N}} \leq \Gamma_X\mspace{1mu}$. 
\end{proof}
																													
For a monomial element $f_L(A)$ as in~\eqref{eq:monelt} with $L\in M_{n,r}(\z)$ and $A\in M_{n,r}(\q)$, and~$\gamma 
\in \Gamma_1(N)$ one has that
\begin{align}\label{eq:omega2}\langle f_L(A)\rangle_q \big|\gamma \=  \e((c^2-cd+(d-1)^2) \, B(A,A))\, \langle f_L(A) \rangle_q\, , \end{align}
where $B$ is again the bilinear form corresponding to the index of $\phi_n^\Phi\mspace{1mu}$. 
Hence, restricting to $A\in M_{n,r}(\q)\simeq \q^{nr}$ for which $\e((c^2-cd+(d-1)^2) \, B(A,A))=1$ for all $\gamma\in \Gamma_1(N)$, we find the following result. This result allows us to derive \cref{thm:bohigherlevel}, using some additional properties of the Bloch--Okounkov~$n$-point functions.
\begin{prop}\label{prop:higherlevel+}
Given~$N\geq 1$, for all $L\in M_{n,r}(\z)$ and $A\in \frac{1}{N}M_{n,r}(\z)$ satisfying ${B(A,A)\in \frac{1}{2}\z+\frac{1}{N}\z}\mspace{1mu}$, one has that
$\langle f_L(A)\rangle_q$ is a quasimodular form for~$\Gamma_1(N)$. 
\end{prop}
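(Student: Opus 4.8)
The plan is to read the result off directly from the transformation law~\eqref{eq:omega2}, which is already established for all $\gamma\in\Gamma_1(N)$. Writing $X=(0,A)$ we have $\langle f_L(A)\rangle_q=g_L^X(\phi_n^\Phi)$, and the identity~\eqref{eq:omega2} rests on the observation that $X\gamma-X=(cA,(d-1)A)$ has integral entries: since $A\in\frac1N M_{n,r}(\z)$ and every $\gamma=\abcd\in\Gamma_1(N)$ satisfies $c\equiv 0$ and $d\equiv 1\bmod N$, both $cA$ and $(d-1)A$ are integral. Hence the elliptic relation~\eqref{eq:ellcoeff} relates $g_L^{X\gamma}$ to $g_L^X$ and, combined with the quasimodular transformation of \cref{thm:main}, produces the scalar multiplier in~\eqref{eq:omega2}. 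Consequently $\langle f_L(A)\rangle_q$ transforms as a genuine quasimodular form for $\Gamma_1(N)$ as soon as $\e\bigl((c^2-cd+(d-1)^2)\,B(A,A)\bigr)=1$ for every $\gamma\in\Gamma_1(N)$, and the whole proof reduces to verifying this under the hypothesis $B(A,A)\in\frac12\z+\frac1N\z$.

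For this arithmetic step I would write $B(A,A)=u+v$ with $u\in\frac12\z$ and $v\in\frac1N\z$, so that it suffices to show $(c^2-cd+(d-1)^2)\,u\in\z$ and $(c^2-cd+(d-1)^2)\,v\in\z$ separately. For the second, the congruences $c\equiv 0$ and $d\equiv 1\bmod N$ make each of $c^2$, $cd$ and $(d-1)^2$ divisible by $N$, so $N\mid c^2-cd+(d-1)^2$ and the product with $v\in\frac1N\z$ is integral. For the first, the key point is that $c^2-cd+(d-1)^2$ is \emph{always} even for $\gamma\in\sltwoz$: reducing modulo $2$ and using $x^2\equiv x$ gives $c^2-cd+(d-1)^2\equiv(c+1)(d+1)\bmod 2$, and because $\det\gamma=ad-bc=1$ prevents $c$ and $d$ from being simultaneously even, at least one of $c+1,\,d+1$ is even. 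Hence $2\mid c^2-cd+(d-1)^2$ and the product with $u\in\frac12\z$ is integral. Combining the two divisibilities yields $(c^2-cd+(d-1)^2)\,B(A,A)\in\z$, so the scalar is trivial and the claim follows.

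The only genuinely non-formal ingredient is the parity identity $c^2-cd+(d-1)^2\equiv(c+1)(d+1)\bmod 2$ together with the use of $\det\gamma=1$; this is exactly what licenses the half-integral part $\frac12\z$ of the index condition and explains why no factor $(2,N)$ intervenes here, in contrast to the much smaller group appearing in \cref{thm:higherlevel+}. Everything else is a direct consequence of~\eqref{eq:omega2} and the congruences defining $\Gamma_1(N)$, so no analytic input beyond \cref{thm:main} is needed; I expect the mild bookkeeping of splitting $B(A,A)$ into its $\frac12\z$- and $\frac1N\z$-parts to be the only place demanding care.
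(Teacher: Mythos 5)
Your proposal is correct and takes essentially the same route as the paper: the paper's proof likewise reads the claim off from \eqref{eq:omega2} and rests on exactly the two arithmetic observations you verify, namely that $c^2-cd+(d-1)^2\equiv 0\bmod N$ when $c\equiv 0,\ d\equiv 1\bmod N$, and that $c^2-cd+(d-1)^2$ is even whenever $c$ and $d$ are not both even (which $\det\gamma=1$ guarantees). Your explicit splitting of $B(A,A)$ into its $\tfrac12\z$- and $\tfrac1N\z$-parts and the mod-$2$ identity $c^2-cd+(d-1)^2\equiv(c+1)(d+1)$ merely spell out the details the paper leaves implicit.
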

\begin{proof}
This follows from~(\ref{eq:omega2}) using the following two observations. First of all,  $c^2-cd+(d-1)^2\equiv 0 \bmod N$ when $c\equiv 0, d\equiv 1 \bmod N$. Secondly, for integers~$c,d$ the integer $c^2-cd+(d-1)^2$ is always even whenever not both~$c$ and~$d$ are even. 
\end{proof}

In the introduction, we introduced certain functions~$Q_k^{(p)}$ introduced in \cite{GJT16}. In that work the authors let
\[ Q_k^{(p)}(\lambda) \= \beta_k^{(p)} \, + \sum_{\gcd(2\lambda_i-2i+1,p)=1} \!\bigl((\lambda_i-i+\tfrac{1}{2})^{k-1}-(-i+\tfrac{1}{2})^{k-1}\bigr),\]
where $\beta_k^{(p)} = \beta_k(0)(1-\tfrac{1}{p})$. 
Observe that for primes~$p$ 
 this agrees with the definition in~\eqref{def:Qkp} in the introduction.
Similarly, we define functions~$f_{\vec{\ell}}^{\vec{d}}$ in terms of the Taylor coefficients~$f_{\vec{\ell}}(\vec{a})$.

\begin{defn} Let~$\Phi$ be as above. 
Given $\vec{d}\in \z_{>0}^r\mspace{2mu}$, we let
\[U(\vec{d}) =  \bigl\{0,\tfrac{1}{d_1},\ldots,\tfrac{d_1-1}{d_1}\bigr\} \times \cdots \times  \bigl\{0,\tfrac{1}{d_r},\ldots,\tfrac{d_r-1}{d_r}\bigr\} \]
and for $\vec{\ell}\in \z^r$ define
\[f_{\vec{\ell}}^{\vec{d},\Phi}(\lambda) \= f_{\vec{\ell}}^{\vec{d}}(\lambda) \defis \sum_{\vec{a}\in U(\vec{d})} f_{\vec{\ell}}^\Phi(\lambda,2\vec{a}) \qquad (\lambda \in \partitions).\]
Define the graded algebra~$\mathcal{F}^{(N)}=\mathcal{F}^{(N),\Phi}$ as the $\q$-algebra generated by the functions~$f_{\vec{\ell}}^{\vec{d},\Phi}$ for all $\vec{\ell}\in \z^r$ and $\vec{d}\in \z^r_{>0}$ for which $d_i\mid N$ for all~$i$. 
\end{defn}

Then, \cref{thm:bohigherlevel2} follows directly from the following result.
		   
\begin{thm}\label{thm:bohigherlevel2+} Let $\Phi$ be as above. Given~$N\geq 1$, the algebra~$\mathcal{F}^{(N),\Phi}$ is a quasimodular algebra for the congruence subgroup~$\Gamma_0(N^2)$.  
\end{thm}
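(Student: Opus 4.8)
The plan is to express the $q$-bracket of an arbitrary monomial in the generators directly through the Taylor coefficients $g_{\vec{\ell}}^X$ of the $n$-point function $\phi_n^\Phi$, and then to exploit the symmetrisation over $U(\vec{d})$ to upgrade quasimodularity from a small group to $\Gamma_0(N^2)$. Expanding the definition of $f_{\vec{\ell}}^{\vec{d}}$, a monomial $\prod_{i=1}^n f_{\vec{\ell}_i}^{\vec{d}_i}$ equals $\sum_A f_L(A)$, where $L$ has rows $\vec{\ell}_i$ and $A$ runs over all matrices whose $i$th row is $2\vec{a}_i$ with $\vec{a}_i\in U(\vec{d}_i)$; hence its $q$-bracket is the finite sum $\sum_A g_L^{(0,A)}(\phi_n^\Phi)$. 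Since this monomial lies in $\mathcal{F}^\Phi(N)$, \cref{thm:higherlevel+} already shows it is a quasimodular form of weight $kn+|L|$ for a congruence subgroup; so it remains only to establish the functional equation~\eqref{eq:glX} for every $\gamma=\abcd\in\Gamma_0(N^2)$, holomorphicity at the cusps being inherited and the weight being manifestly preserved.

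To this end I would apply the transformation of \cref{thm:main} (valid for all $\gamma\in\sltwoz$ since $\phi_n^\Phi$ is a quasi-Jacobi form) to each summand. Writing $X=(0,A)$, so that $X\gamma=(cA,dA)$, one has
\[g_L^{X}(\phi_n^\Phi)\big|_{kn+|L|}\gamma \= \sum_{r}\sum_{s}\frac{1}{(r-s)!}\,g_{L,s}^{X\gamma,\,r-s}(\phi_n^\Phi)\Bigl(\frac{c}{c\tau+d}\Bigr)^{r}.\]
The key step is to rewrite each $g_{L,s}^{(cA,dA),r-s}$ in terms of a coefficient at a point of the original set. As $c\equiv 0\bmod N^2$ and $\tfrac12 A\in\tfrac1N M_{n,r}(\z)$, the matrix $cA$ is integral; setting $\vec{a}'=d\vec{a}\bmod\z$ and applying the elliptic transformation~\eqref{eq:ellcoeff} in the two steps $(cA,dA)\mapsto(0,dA)\mapsto(0,2\vec{a}')$ reduces the point to $(0,2\vec{a}')$ at the cost of a root of unity. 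Because the factors $\rho$ and $\zeta$ in~\eqref{eq:ellcoeff} depend only on $X$ and $\gamma$ and not on $\vec{\ell},r,s$, the same root of unity occurs in every term of the expansion; a short computation identifies it as $\e\bigl((cd-c^2)B(A,A)\bigr)$, with $B=B_M$ the bilinear form of the index of $\phi_n^\Phi$.

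It then remains to see that this root of unity is trivial and that the reduction permutes the summation set. Since $N\cdot\tfrac12 A\in M_{n,r}(\z)$ and $2B_M$ is integer-valued, $B(A,A)=4B(\tfrac12 A,\tfrac12 A)\in\tfrac{2}{N^2}\z$; combined with $c\equiv0\bmod N^2$ this gives $(cd-c^2)B(A,A)\in 2\z$, so $\e((cd-c^2)B(A,A))=1$ — and this is exactly the place where the factor $2$ in the Taylor point $2\vec{a}$ is indispensable. Moreover $ad\equiv1\bmod N^2$ forces $\gcd(d,N)=1$, so $\vec{a}\mapsto d\vec{a}$ is a bijection of $U(\vec{d})$; reindexing by $\vec{a}'=d\vec{a}$ therefore turns $\sum_A g_{L,s}^{(0,2\vec{a}'),r-s}$ back into $\sum_A g_{L,s}^{(0,A),r-s}$. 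Substituting into the display yields $\bigl(\sum_A g_L^{(0,A)}\bigr)|\gamma=\sum_r\sum_s\frac{1}{(r-s)!}\bigl(\sum_A g_{L,s}^{(0,A),r-s}\bigr)(\tfrac{c}{c\tau+d})^r$, which is precisely~\eqref{eq:glX} for $\Gamma_0(N^2)$, completing the argument.

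The hard part is the bookkeeping of the roots of unity through~\eqref{eq:ellcoeff}. One must verify both that the reduction factor is independent of the Taylor index $\vec{\ell}$ and of the quasimodular depth parameters $r,s$, so that the correction terms transform in lockstep with the leading modular term — this relies on~\eqref{eq:ellcoeff} holding verbatim for the combinations $g_{\vec{\ell},s}^{X,r}$, which follows from \cref{prop:phi||X}(\ref{it:el}) since these coefficients are built linearly from $\phi\|X$ — and that the half-integrality of the index together with the doubling in $2\vec{a}$ makes the surviving character $\e((cd-c^2)B(A,A))$ trivial exactly on $\Gamma_0(N^2)$.
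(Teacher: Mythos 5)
Your argument is correct and follows essentially the same route as the paper's proof: expand the monomial over $U(\vec d)$, apply \cref{thm:main}(ii$'$) to each summand, use the elliptic functional equation to move the base point from $(2cA,2dA)$ back to $(0,2A')$ while checking that the roots of unity $\rho$, $\zeta$ are trivial thanks to the doubling and $c\equiv 0\bmod N^2$, and reindex via the bijection $\vec a\mapsto d\vec a$ of $U(\vec d)$. The paper packages the reduction in the auxiliary functions $h_r$ rather than writing out the character $\e((cd-c^2)B(A,A))$ explicitly, but this is only a difference of presentation.
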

\begin{proof} 
Consider the monomial elements $f_{L}^{D}:=f_{L_1}^{D_1}\cdots f_{L_n}^{D_n}$ in~$\mathcal{F}^{(N)}$, where $L,D\in M_{nr}(\z)$. Everywhere in this proof we identify $M_{n,r}(\z)$ with $\z^{nr}$. Then,
\[ f_L^{D} \= \sum_{A\in U(D)} f_{L}(\cdot, 2A).\]
Now, by part~(\ref{it:jf2'}$'$) in \cref{thm:main}, for all $\gamma=\abcd \in \sltwoz$ one has that
\[\langle  f_{L}(\cdot, 2A) \rangle_q|\gamma \= \sum_{r\geq 0} h_r(2cA,2dA) \Bigl(\frac{c}{c\tau+d}\Bigr)^{\! r},\]
where $h_r(\vec{\lambda},\vec{\mu}) = \sum_{s=0}^r\frac{1}{(r-s)!}\,{g}_{\vec{\ell},s}^{X,r-s}(\phi_n)$ and $X=\lm$. Note that $h_r$ is zero for all but finitely many $r$. 

If $\gamma \in \Gamma_0(N^2)$, then $2cA \in 2N M_{n,r}(\z)$. Hence, for $X=(0,2dA)$ and $X'=(2bA,0)$ one has $\rho(X')\zeta_{X',X}=1$. Therefore,
\[h_r(2cA,2dA) \= h_r(0,2dA) \qquad \text{and}\qquad h_r(0,2dA+B)\=h_r(0,2dA)\]
for all $B\in 2M_{n,r}(\z)$. As~$2dA$ ranges over the same values modulo~$2$ as~$2A$ does for $A\in U(D)$, one finds
\[\langle  f_{L}^{D} \rangle_q|\gamma \= \sum_{r\geq 0}\sum_{A\in U(D)} h_r(0,2A) \Bigl(\frac{c}{c\tau+d}\Bigr)^{\! r}\]
for all $\gamma \in \Gamma_0(N^2)$. Hence,~$\langle  f_{L}^{D} \rangle_q$ is a quasimodular form for~$\Gamma_0(N^2)$. 
\end{proof}
\begin{remark}
In fact, given $\vec{d}\in \z_{>0}^r$ with $d_i\mid N^2$ for all $i$, and Dirichlet characters $\chi_1,\ldots,\chi_r$ modulo $d_1,\ldots, d_r\mspace{1mu}$, respectively, the series given by
\[\biggl\langle\sum_{\vec{a}\in U(\vec{d})} \overline{\chi_1}(a_1d_1)\cdots\overline{\chi_r}(a_rd_r)\,f_{\vec{\ell}}^\Phi(\cdot,2\vec{a})\biggr\rangle_{\!q}\]
are quasimodular forms for $\Gamma_0(N^2)$ of character $\chi_1\cdots\chi_r\mspace{1mu}.$
\end{remark}
The rest of this part is devoted to providing examples of quasimodular algebras of higher level, using \cref{thm:bohigherlevel2+}.

\subsection{First application: the Bloch--Okounkov theorem of higher level}\label{sec:BON}
The results on the Bloch--Okounkov algebra, as stated in the introduction, are proven in this section. The proofs follow almost immediately from the results in the previous section using the properties of the Bloch--Okounkov~$n$-point functions. 

Recall the Bloch--Okounkov~$n$-point functions $F_n$ are defined (in \cref{defn:BOnpoint}) as follows. For all $n\geq 0$, let~$\mathfrak{S}_n$ be the symmetric group on~$n$ letters and let
\[F_n(\tau,z_1,\ldots,z_n) \= \sum_{\sigma \in \mathfrak{S}_n} V_n(\tau,z_{\sigma(1)},\ldots,z_{\sigma(n)}),\]
where the functions~$V_n$ are defined recursively by~$V_0(\tau)=1$ and
\begin{equation}\sum_{m=0}^n \frac{(-1)^{n-m}}{(n-m)!}\;\theta^{(n-m)}(\tau,z_1+\ldots+z_m)\;V_m(\tau,z_1,\ldots,z_m)\=0.\end{equation}
These $n$-point functions~$F_n$ are quasi-Jacobi forms of which we determine the weight and index (or rather the bilinear form uniquely determining a symmetric matrix $M\in \MM$ which is the index).
\begin{lem}
The~$n$-point functions~$F_n$ are meromorphic quasi-Jacobi forms of weight~$n$ and index $B(\vec{z},\vec{z}) = -\tfrac{1}{2}(z_1+\ldots+z_n)^2$. 
\end{lem}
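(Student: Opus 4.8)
The plan is to read off the weight and index directly from the defining recursion for $V_n$, by induction on $n$, and then pass to $F_n$ by symmetrising. First I would record the transformation behaviour of the building blocks. The theta series $\theta$ is a holomorphic Jacobi form of rank $1$, weight $\tfrac12$ and index $\tfrac12$ (for a theta multiplier $v_\theta$), and a direct computation with \cref{defn:slash} shows that applying $\frac{1}{2\pi\ii}\pdv{}{z}$ raises the weight by $1$ and preserves the index up to a correction polynomial in $\frac{c}{c\tau+d}$; hence each derivative $\theta^{(r)}$ is a strictly meromorphic quasi-Jacobi form of rank $1$, weight $\tfrac12+r$, index $\tfrac12$ and multiplier $v_\theta$. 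On the other hand, writing $\theta=\theta'(\tau,0)\,\Theta$ with $\theta'(\tau,0)$ a weight-$\tfrac32$ form, \cref{thm:mer0} gives $\Theta^{-1}\in J^{\mathrm{sm}}_{1,-1/2}$, so $\theta^{-1}$ is a strictly meromorphic quasi-Jacobi form of weight $-\tfrac12$, index $-\tfrac12$ and multiplier $v_\theta^{-1}$, with all poles on the lattice $L_\tau$.

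Second, I would record a substitution lemma: if $\psi(\tau,w)$ is a rank-$1$ quasi-Jacobi form of weight $k$ and scalar index $m_0$, then $\psi(\tau,z_1+\dots+z_m)$ is a rank-$n$ quasi-Jacobi form of the same weight $k$ and index $B(\vec z,\vec z)=m_0\,(z_1+\dots+z_m)^2$, since the linear substitution $w=z_1+\dots+z_m$ is equivariant for the simultaneous scaling $z_i\mapsto z_i/(c\tau+d)$ and turns the index factor $\e\bigl(\tfrac{-c\,m_0 w^2}{c\tau+d}\bigr)$ into $\e\bigl(\tfrac{-c\,m_0(z_1+\dots+z_m)^2}{c\tau+d}\bigr)$, while also respecting the elliptic and quasi-elliptic corrections. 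With these in hand the induction is immediate. Since $\widetilde J^{\mathrm{sm}}$ is an algebra on which weight and index are additive under multiplication, each summand $\theta(w_n)^{-1}\theta^{(n-m)}(w_m)V_m$ of the rewritten recursion $V_n=-\theta(w_n)^{-1}\sum_{m=0}^{n-1}\frac{(-1)^{n-m}}{(n-m)!}\theta^{(n-m)}(w_m)\,V_m$ (with $w_j:=z_1+\dots+z_j$) is a strictly meromorphic quasi-Jacobi form whose weight is $-\tfrac12+(\tfrac12+n-m)+m=n$ and whose index is $-\tfrac12 w_n^2+\tfrac12 w_m^2-\tfrac12 w_m^2=-\tfrac12 w_n^2$, using the inductive hypothesis that $V_m$ has weight $m$, index $-\tfrac12 w_m^2$ and trivial multiplier. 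The multiplier of each summand is $v_\theta^{-1}\cdot v_\theta\cdot 1=1$, so $V_n$ again has integral weight $n$ and trivial multiplier, and the base case $V_0=1$ is clear. Finally, because the index $-\tfrac12(z_1+\dots+z_n)^2$ is symmetric in $z_1,\dots,z_n$, every permuted term $V_n(\tau,z_{\sigma(1)},\dots,z_{\sigma(n)})$ has the same weight $n$ and index, so $F_n=\sum_{\sigma\in\mathfrak{S}_n}V_n(\tau,z_{\sigma(1)},\dots,z_{\sigma(n)})$ is a quasi-Jacobi form of weight $n$ and index $-\tfrac12(z_1+\dots+z_n)^2$.

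The main obstacle I anticipate is bookkeeping rather than conceptual: one must check that the half-integral weights and the theta multiplier of the $\theta^{(r)}$-factors cancel exactly in each term of the recursion, so that $V_n$ and $F_n$ are honest integral-weight quasi-Jacobi forms for $\sltwoz$ with trivial character, and one must verify that the substitution $w\mapsto z_1+\dots+z_m$ genuinely preserves the full quasi-Jacobi transformation law, including the correction terms $\phi_{i,\vec j}$ of \eqref{eq:action-quasimod}--\eqref{eq:action-quasiell}, and not merely the leading Jacobi behaviour. Checking that strict meromorphy persists is comparatively easy, since the only poles introduced are the lattice poles of the factors $\theta(w_m)^{-1}$, which lie on the rational hyperplanes $w_m\in L_\tau$ allowed by \cref{cor:poleshyperplane}.
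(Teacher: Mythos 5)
Your argument is correct and is at bottom the same induction as the paper's --- solve the recursion \eqref{eq:B-Ocor} for $V_n$ by dividing by the $m=n$ term and read off weight and index additively --- but you choose different building blocks, and the choice matters for how much has to be verified by hand. The paper groups each $\theta^{(n-m)}(z_1+\cdots+z_m)$ with a copy of $\theta(z_1+\cdots+z_m)$ and shows inductively that $\Theta(z_1+\cdots+z_n)\,V_n$ is a polynomial in the ratios $\Theta^{(i)}(z_1+\cdots+z_j)/\Theta(z_1+\cdots+z_j)$, which have integral weight $i$, index $0$ and no multiplier; the only remaining half-integral-index object is then the single factor $\Theta(z_1+\cdots+z_n)^{-1}$ of weight $1$ and index $-\tfrac{1}{2}(z_1+\cdots+z_n)^2$, so the weight/index count is immediate and no multiplier ever appears. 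You instead keep $\theta^{(r)}$ and $\theta^{-1}$ as separate objects of weight $\tfrac{1}{2}+r$ and $-\tfrac{1}{2}$ carrying the theta multiplier; your arithmetic ($-\tfrac{1}{2}+(\tfrac{1}{2}+n-m)+m=n$ and $-\tfrac{1}{2}w_n^2+\tfrac{1}{2}w_m^2-\tfrac{1}{2}w_m^2=-\tfrac{1}{2}w_n^2$) agrees with the paper's, but half-integral weight and multiplier systems lie outside the slash action of \cref{defn:slash} (which takes $k\in\z$ and no character), so you would first have to enlarge the framework and then carry out the cancellation you flag at the end --- precisely the bookkeeping that the normalization by $\Theta$ eliminates. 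The one substantive point your sketch leaves open is strict meromorphy in the sense of $\Mer_n^M$: besides locating the poles on the hyperplanes $z_1+\cdots+z_m\in L_\tau$, one must check that $\tau\mapsto F_n(\tau,\vec{\lambda}\tau+\vec{\mu})$ has moderate growth at the cusps for non-polar $(\vec{\lambda},\vec{\mu})$; the paper obtains this from the explicit limit $\Theta^{(i)}(a\tau+b)/\Theta(a\tau+b)\to -a$ as $\Im\tau\to\infty$, and some such computation is still required in your set-up.
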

\begin{proof}
We start with the observation that for all $n\geq 0$ the function~$\frac{\Theta^{(n)}(z)}{\Theta(z)}$ is a true meromorphic Jacobi form (of weight~$n$ and index $B(z,z)=0$), in contrast to~$\Theta(z)$ itself which is \emph{weakly} holomorphic. Namely, all poles are given by $z\in \z\tau+\z$ and for $z=a\tau+b$ with $a,b\in \q$, one has that
\[\frac{\Theta^{(n)}(a\tau+b)}{\Theta(a\tau+b)} = \frac{\sum_{\nu\in \mathbb{F}} \nu^n\, \e(\nu b)\, q^{\nu^2/2+a\nu}}{\sum_{\nu\in \mathbb{F}} \e(\nu b)\, q^{\nu^2/2+a\nu}} \to -a,\]
whenever $\Im\tau\to \infty$, or equivalently $q\to 0$. 

Next, observe that $\Theta(z_1+\ldots+z_n) \, V_n(z_1,\ldots,z_n)$ can be written as a polynomial of weight~${n-1}$ in $\frac{\Theta^{(i)}(z_1+\ldots+z_j)}{\Theta(z_1+\ldots+z_j)}$ for $i,j=1,\ldots,n$; a fact which can be proven inductively by its recursion~(\ref{eq:B-Ocor}). Hence, $\Theta(z_1+\ldots+z_n) \, V_n(z_1,\ldots,z_n)$ is a meromorphic Jacobi form. As~$\Theta(z)^{-1}$ is a meromorphic Jacobi form of weight~$1$ and index given by the bilinear form $B(z,z)=-\tfrac{1}{2}z^2$, we conclude that~$F_n$ is a meromorphic quasi-Jacobi form of weight~$n$ and index $B(\vec{z},\vec{z}) = -\tfrac{1}{2}|\vec{z}|^2$. 
\end{proof}

Observe that \cref{thm:higherlevel} is a direct corollary of the previous lemma and \cref{thm:higherlevel+}. Also, \cref{thm:bohigherlevel2} follows directly from the previous lemma and \cref{thm:bohigherlevel2+}. So, we are only left with the following proof. 

\begin{proof}[Proof of \cref{thm:bohigherlevel}]
First of all, in case $a\in \z$, one has $B(\vec{a},\vec{a})=-\tfrac{1}{2}|\vec{a}|^2\in \frac{1}{2}\z$. Hence, the result follows directly from \cref{prop:higherlevel+}. 

If $a\not \in \z$, for both $\langle Q_{\vec{k}}(\vec{a})\rangle_q$ and $\langle Q_{1}(a)\rangle_q$ the root of unity in~(\ref{eq:omega2}) is $\e(B(\vec{a},\vec{a}))=\e(B(a,a))$, so that the subgroup of quasimodularity for their ratio is~$\Gamma_1(N)$. 

For the holomorphicity in the second case, observe that~$\langle Q_{1}(a)\rangle_q$ equals~$\Theta(a)^{-1}$ up to a constant. Also, $\langle Q_{\vec{k}}(\vec{a})\rangle_q$ can be written as a product of Taylor coefficients of the function~$\Theta(z_1+\ldots+z_n+a)^{-1}$ and of $\Theta(z_1+\ldots+z_n+a)\, F_n(z_1+a_1,\ldots,z_n+a_n)$, the latter Taylor coefficients being holomorphic quasimodular forms. Observe that the Taylor coefficients around $z_1=\ldots=z_n=0$ of 
\[\frac{\Theta(a)}{\Theta(z_1+\ldots+z_n+a)}\]
 are all polynomials in the holomorphic quasimodular forms~$\frac{\Theta^{(i)}(a)}{\Theta(a)}.$ Therefore, $\frac{\langle Q_{\vec{k}}(\vec{a})\rangle_q}{\langle Q_{1}(a)\rangle_q}$ is a holomorphic quasimodular form. 
\end{proof}

\subsection{Second application: hook-length moments of higher level}
As a second example, consider the hook-length moments
\begin{equation}\label{eq:hlm} H_k(\lambda) \defis -\frac{B_k}{2k} \+ \sum_{\xi \in Y_\lambda} h(\xi)^{k-2} \qquad (k\geq 2), \end{equation}
where~$B_k$ is the $k$th Bernoulli number, $Y_\lambda$ denotes the Young diagram of~$\lambda$,~$\xi$ is a cell in this Young diagram, and~$h(\xi)$ denotes the hook-length of this cell. By \cite[Theorem~13.5]{CMZ16} one has that~$H_k(\lambda)$ is (up to a constant) equal to the~$(k-2)$th order Taylor coefficient of~$W_\lambda(z)\,W_\lambda(-z)$. In particular, any homogeneous polynomial in the~$H_k$ admits a quasimodular~$q$-bracket. 
The results in \cref{sec:set-up} now specialize to the following statements.
\paragraph*{(i).} For~$a\in \q$ and~$k\in \z_{\geq 2}\mspace{1mu}$, let 
\[ H_k(\lambda,a) \defis \tilde{\alpha}_k(a) \+\frac{1}{2} \sum_{\xi \in Y_\lambda} \bigl(\e(a\, h(\xi))+(-1)^k\,\e(-a\, h(\xi))\bigr)\,h(\xi)^{k-2} ,\]
where 
\[\frac{\tilde{\alpha}_{-2}(a)}{z^2}\+\frac{\tilde{\alpha}_{-1}(a)}{z}\+\sum_{k\geq 0} \tilde{\alpha}_k(a)\frac{z^{k-2}}{(k-2)!} \defis \frac{1}{8}\sinh\Bigl(\frac{z+2 \pi\ii a}{2}\Bigr)^{-2}.\] 
Denote by~$\mathcal{H}(N)$ the algebra generated by the~$H_k(\cdot,a)$ with $k\geq 2$ and $a\in \frac{1}{N}\z$. The algebra~$\mathcal{H}(N)$ is graded by assigning to~$H_k(\cdot,a)$ weight~$k$. Let $\widehat{N}=(2,N)N$.
\begin{cor} The algebra~$\mathcal{H}(N)$ is quasimodular of level~$\widehat{N}$, after scaling~$\tau$ by~$\widehat{N}$. \end{cor}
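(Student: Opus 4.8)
The plan is to exhibit $\mathcal{H}(N)$ as an instance $\mathcal{F}^\Phi(N)$ of the construction in \cref{defn:F} and then to quote \cref{thm:higherlevel+}. Take $\Phi(\lambda,z):=W_\lambda(z)\,W_\lambda(-z)$, so that $r=1$ and the framework weight is $k=2$. By \cite[Theorem~13.5]{CMZ16} the order $k-2$ Taylor coefficient of $\Phi(\lambda,\cdot)$ around $z=0$ equals, up to a nonzero scalar, $H_k(\lambda)$; expanding instead around $z=a$ replaces each monomial $\e((b_i-b_j)z)$ (with $b_i=\lambda_i-i+\tfrac12$, so that $b_i-b_j\in\z$) by $\e((b_i-b_j)a)\,\e((b_i-b_j)(z-a))$, and symmetrising $i\leftrightarrow j$ turns the order $k-2$ coefficient into $\tfrac12\bigl(\e(a\,h(\xi))+(-1)^k\e(-a\,h(\xi))\bigr)h(\xi)^{k-2}$ summed over $\xi\in Y_\lambda$, with the regularised diagonal $i=j$ supplying the value $\tilde\alpha_k(a)=f^\Phi_{k-2}(\emptyset,a)$. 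Thus $H_k(\cdot,a)$ coincides, up to a nonzero scalar, with $f^\Phi_{k-2}(\cdot,a)$; the weights match ($2+(k-2)=k=\wt H_k$), and $\mathcal{H}(N)=\mathcal{F}^\Phi(N)$ as graded $\q$-algebras.

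It then remains to verify the one hypothesis of \cref{thm:higherlevel+}: that $\phi^\Phi_n(\tau,Z)=\bigl\langle\prod_{i=1}^n W(Z_i)W(-Z_i)\bigr\rangle_q$ is a meromorphic quasi-Jacobi form of weight $2n$ admitting a Laurent expansion around each rational point. I would obtain it as the (regularised) restriction of the Bloch--Okounkov $2n$-point function along the linear map $\iota(Z)=(Z_1,-Z_1,\ldots,Z_n,-Z_n)$, i.e.\ $\phi^\Phi_n=F_{2n}\circ\iota$. A restriction of a quasi-Jacobi form along an integral linear map is again a quasi-Jacobi form of the same weight $2n$, with index $B_M(\iota(\cdot),\iota(\cdot))$; since $F_{2n}$ has index $B(\vec w,\vec w)=-\tfrac12(\textstyle\sum_j w_j)^2$ and $\sum_j\iota(Z)_j=0$, this index vanishes. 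The case $n=1$ is reassuring: a short computation from the recursion~\eqref{eq:B-Ocor} gives $F_2=\frac{A(z_1)+A(z_2)}{\Theta(z_1+z_2)}$, and as $A$ is odd the numerator vanishes on $z_1+z_2=0$, so $\phi^\Phi_1(z)=F_2(z,-z)=-E_2(z)/(2\pi\ii)^2$ is a genuine strictly meromorphic quasi-Jacobi form of weight $2$ and index $0$ with poles only at $z\in L_\tau$, whose Taylor coefficients around $z=a$ are the $\langle H_k(\cdot,a)\rangle_q$ and whose $q^0$-parts $\tilde\alpha_k(a)$ are generated by $\tfrac18\sinh\bigl(\tfrac{z+2\pi\ii a}{2}\bigr)^{-2}=\tfrac12\sum_n(z+2\pi\ii(a+n))^{-2}$.

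The main obstacle is to make this restriction legitimate for $n\ge 2$. First, $\iota$ lands on the pole hyperplanes $w_{2i-1}+w_{2i}=0$ of $F_{2n}$, so $\phi^\Phi_n$ must be taken as a finite part; generalising the vanishing seen for $n=1$, I would show that on each such hyperplane the numerator of $F_{2n}$ vanishes to the order of the pole, so that these singularities are removable and the restriction is an honest quasi-Jacobi form. Second, the surviving poles of $\phi^\Phi_n$ lie on the cross-hyperplanes $Z_i\pm Z_j\in L_\tau$, which are \emph{not} orthogonal in the sense of \cref{defn:orthogonal} under which \cref{thm:main} (and hence \cref{thm:higherlevel+}) is phrased; here I would either invoke the non-orthogonal pole description of \cref{thm:poleshyperplane}, or note that for one-variable shifts $a\in\tfrac1N\z$ the coefficients $g^{(0,A)}_L(\phi^\Phi_n)$ computing $\langle f^\Phi_L(A)\rangle_q$ only involve the coordinate directions, so that the proof of \cref{thm:higherlevel+} carries over unchanged. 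Granting this, \cref{thm:higherlevel+} shows $\mathcal{H}(N)=\mathcal{F}^\Phi(N)$ is a quasimodular algebra for $m_{\widehat{N}}^{-1}\Gamma(\widehat{N})m_{\widehat{N}}$ with $\widehat{N}=(2,N)N$, which by the remark following \cref{thm:higherlevel} is exactly the assertion that $\mathcal{H}(N)$ is quasimodular of level $\widehat{N}$ after scaling $\tau$ by $\widehat{N}$.
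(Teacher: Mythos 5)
Your proposal is correct and is exactly the route the paper takes: the corollary is stated there as an immediate specialization of \cref{thm:higherlevel+} to $\Phi(\lambda,z)=W_\lambda(z)\,W_\lambda(-z)$, whose $n$-point function is $F_{2n}$ restricted along $\iota$ and hence a meromorphic quasi-Jacobi form of weight $2n$ and index $0$. The paper leaves implicit the details you supply (the identification of $H_k(\cdot,a)$ with $f^\Phi_{k-2}(\cdot,a)$ and the removability of the singularities along the image of $\iota$); the only slip is the harmless normalization $F_2(z,-z)=-E_2(z)/(2\pi\ii)$ rather than $-E_2(z)/(2\pi\ii)^2$.
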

More concretely,  for all homogeneous $f\in \mathcal{H}(N)$ of some weight~$k$, the rescaled~$q$-bracket~$\langle f \rangle_{q_{\widehat{N}}}$, where $q_{\widehat{N}}=q^{1/\widehat{N}}$, is a quasimodular form of weight~$k$ for~$\Gamma(\widehat{N})$.
\paragraph*{(ii).} Given $N\geq 1$, $\vec{k}\in \z^n$ and $\vec{a}\in \frac{1}{N}\z^n$, write $H_{\vec{k}}(\lambda,\vec{a}) = H_{k_1}(\lambda,a_1)\cdots H_{k_n}(\lambda,a_n)$. \begin{cor}
For $\vec{a}\in \frac{1}{N}\z^n$ with $|\vec{a}|\in \z$ and $\vec{k}\in \z_{\geq 2}^n$ the~$q$-bracket~$\langle H_{\vec{k}}(\cdot,\vec{a})\rangle_q$ is a quasimodular form of weight~$|\vec{k}|$ for~$\Gamma_1(N)$.
\end{cor}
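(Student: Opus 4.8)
The plan is to present this as an instance of the general machinery of \cref{sec:set-up}, applied to the single-variable generating series $\Phi(\lambda,z)\defis W_\lambda(z)\,W_\lambda(-z)$ (so $r=1$ and the base weight is $k=2$). By \cite[Theorem~13.5]{CMZ16} in its $a$-shifted form, $H_k(\lambda,a)$ is a fixed nonzero multiple of the $(k-2)$nd Taylor coefficient $f_{k-2}(\lambda,a)$ of $\Phi(\lambda,\cdot)$ about $z=a$; I would make this explicit by expanding $W_\lambda(a+w)\,W_\lambda(-a-w)$ through the identity $\sum_{p\ge0}Q_p(\lambda,a)\,w^{p-1}=\e(-\tfrac12 a)\,W_\lambda(w+a)$, which also records that $f_{k-2}(\lambda,a)$ is a $\q$-linear combination of the products $Q_p(\lambda,a)\,Q_q(\lambda,-a)$ with $p+q=k$. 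Thus $H_{\vec k}(\cdot,\vec a)=\prod_iH_{k_i}(\cdot,a_i)$ is, up to a constant, the monomial $f_L(A)$ of $\mathcal F^\Phi(N)$ with $L=\vec k-\vec 2$ and $A=\vec a\in\tfrac1N M_{n,1}(\z)$, and the associated $n$-point function is $\phi_n^\Phi=\langle\prod_iW(z_i)W(-z_i)\rangle_q=F_{2n}(z_1,-z_1,\dots,z_n,-z_n)$, of weight $2n$.

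Granting that $\phi_n^\Phi$ meets the hypotheses of the set-up, the proof is then a direct application of \cref{prop:higherlevel+}. Its two conditions are $A\in\tfrac1N M_{n,1}(\z)$, which holds, and $B(A,A)\in\tfrac12\z+\tfrac1N\z$ for the bilinear form $B$ attached to the index of $\phi_n^\Phi$. I would obtain $B$ by restricting the index $-\tfrac12(w_1+\dots+w_{2n})^2$ of $F_{2n}$ to the balanced subspace $w_{2i-1}=z_i,\ w_{2i}=-z_i$, and then check that the hypothesis $|\vec a|\in\z$ guarantees $B(A,A)\in\tfrac12\z$. \cref{prop:higherlevel+} then yields that $\langle f_L(A)\rangle_q$, hence $\langle H_{\vec k}(\cdot,\vec a)\rangle_q$, is a quasimodular form for $\Gamma_1(N)$; the weight is $kn+|L|=2n+(|\vec k|-2n)=|\vec k|$. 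Holomorphicity in $\mathfrak h$ and at the cusps I would verify as in the proof of \cref{thm:bohigherlevel}, by expressing the coefficient through Taylor coefficients of genuine meromorphic Jacobi forms together with a reciprocal theta quotient that contributes only holomorphic quasimodular forms.

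The step I expect to be the main obstacle is verifying that $\phi_n^\Phi$ fits the set-up, specifically that its poles are orthogonal in the sense of \cref{defn:orthogonal} so that \cref{thm:main} (through \cref{prop:higherlevel+}) is applicable. For $n=1$ this is clean: the restriction $F_2(z,-z)$ reduces to a constant multiple of the $z$-derivative of $\theta'(z)/\theta(z)$, a strictly meromorphic quasi-Jacobi form whose only poles lie on the coordinate locus $z\in L_\tau$, the spurious pole along $z_1+z_2\in L_\tau$ being removable because the relevant numerator vanishes by the oddness of $\theta$. For $n\ge2$, however, $F_{2n}$ restricted to the balanced subspace a priori carries poles along the non-coordinate hyperplanes $z_i\pm z_j\in L_\tau$, which are not orthogonal; the crux is to show that, in the symmetric combination defining $F_{2n}$, these cross-poles are again removable, or at least do not interfere with the Laurent expansion about $\vec z=\vec a$, so that the orthogonal-pole hypothesis is restored. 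As a cross-check that sidesteps this pole bookkeeping, one can expand $H_{\vec k}(\cdot,\vec a)$ into monomials in the $Q_p(\cdot,\pm a_i)$ and apply \cref{thm:bohigherlevel} termwise, using that the Bloch--Okounkov series $W$ has only orthogonal poles; here one must handle the boundary terms $Q_0(\cdot,\pm a_i)=\beta_0(\pm a_i)$, whose appearance alters the effective total shift, and it is precisely the control afforded by $|\vec a|\in\z$ that keeps each surviving $q$-bracket quasimodular for $\Gamma_1(N)$.
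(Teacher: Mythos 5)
Your main route is the one the paper intends: realize $H_{\vec{k}}(\cdot,\vec{a})$, up to a nonzero constant, as the monomial $f_L(A)$ of Taylor coefficients of $\Phi(\lambda,z)=W_\lambda(z)W_\lambda(-z)$ with $L=\vec{k}-(2,\ldots,2)$ and $A=\vec{a}$, and feed the $n$-point function $\phi_n^\Phi=F_{2n}(z_1,-z_1,\ldots,z_n,-z_n)$ into \cref{prop:higherlevel+}; your weight count is correct. Two points need attention. First, the index computation you defer is degenerate: restricting $-\tfrac{1}{2}(u_1+\ldots+u_{2n})^2$ to $u_{2i-1}=z_i$, $u_{2i}=-z_i$ gives the zero form, so $B(A,A)=0\in\tfrac{1}{2}\z$ for \emph{every} $A$, and the hypothesis $|\vec{a}|\in\z$ does no work in this step; you should not expect it to ``guarantee'' anything here. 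Second, the obstacle you flag is real: \cref{thm:main} (hence \cref{prop:higherlevel+}) is stated for orthogonal poles in the sense of \cref{defn:orthogonal}, and the balanced restriction of $F_{2n}$ a priori carries poles along $z_i\pm z_j\in L_\tau$ for $n\geq 2$. These cross-poles do cancel (the two divisors $u_{2i-1}+u_{2j-1}$ and $u_{2i}+u_{2j}$, say, restrict to the same hyperplane with opposite orientations and equal residues, by the symmetry and parity of the residue functions), but your primary route is incomplete until this is argued; the paper itself leaves this point implicit in its general set-up.

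Your fallback does close the statement, but for a different reason than the one you give. Expanding $H_{k_i}(\cdot,a_i)$ as a combination of $Q_p(\cdot,a_i)\,Q_q(\cdot,-a_i)$ with $p+q=k_i$, every monomial with all indices positive has total shift $\sum_i(a_i-a_i)=0\in\z$, so the first bullet of \cref{thm:bohigherlevel} applies termwise for \emph{any} $\vec{a}\in\tfrac{1}{N}\z^n$. The boundary terms containing a factor $Q_0(\cdot,\pm a_i)=\beta_0(\pm a_i)$ are not rescued by $|\vec{a}|\in\z$: dropping one $\pm a_i$ changes the effective total shift to $\mp a_i$, which $|\vec{a}|\in\z$ does not control. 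They are harmless because $\beta_0(a)=0$ whenever $a\notin\z$ (the series $\e(z/2)/(\e(z+a)-1)$ is then regular at $z=0$), so such terms either vanish identically or occur only when $a_i\in\z$, in which case the residual shift is again integral. With that correction the termwise argument is complete and self-contained, and it sidesteps the pole bookkeeping entirely.
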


\paragraph*{(iii).} Let
\[ H_k^{t}(\lambda) \defis -\frac{B_k}{2k}t^k \+\sum_{\substack{\xi \in Y_\lambda \\ h(\xi)\equiv 0 \bmod t}} h(\xi)^{k-2} \qquad (k,t\in \z_{> 0}),\] 
which (up to a constant) also occurs in \cite{BOW20}.
Denote by~$\mathcal{H}^{(N)}$ the algebra generated by the~$H_k^{t}$ for which~$k$ is even and $t\mid N$. This algebra is graded by assigning weight~$k$ to~$H_k^{t}\mspace{1mu}$. 
\begin{cor}
The algebra~$\mathcal{H}^{(N)}$ is quasimodular for~$\Gamma_0(N^2)$. 
\end{cor}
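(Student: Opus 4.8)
The plan is to exhibit the twisted hook-length moments as a special case of the general set-up of \cref{sec:set-up} and then quote \cref{thm:bohigherlevel2+}. I take the single-variable ($r=1$) generating series $\Phi(\lambda,z)\defis W_\lambda(z)\,W_\lambda(-z)$, so that by \cite[Theorem~13.5]{CMZ16} the moment $H_k(\lambda)$ is a fixed constant multiple of the order $k-2$ Taylor coefficient $f_{k-2}^\Phi(\lambda,0)$ of $\Phi(\lambda,\cdot)$ at $z=0$. The associated $n$-point function is the pullback
\[ \phi_n^\Phi(\tau,\vec z)\=\Bigl\langle\prod_{i=1}^n W(z_i)\,W(-z_i)\Bigr\rangle_{\!q}\= F_{2n}(\tau,z_1,-z_1,\ldots,z_n,-z_n) \]
of the Bloch--Okounkov $2n$-point function along the integral linear map $\iota\colon\vec z\mapsto(z_1,-z_1,\ldots,z_n,-z_n)$. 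Pullback along an integral linear map preserves strict meromorphy and the quasi-Jacobi property, so $\phi_n^\Phi$ is a meromorphic quasi-Jacobi form of weight $2n$ (so that $k=2$ in the language of \cref{sec:set-up}); its index is the pullback of the form $-\tfrac12|\vec w|^2$, which vanishes since $\sum_i(z_i-z_i)=0$, so $\phi_n^\Phi$ has index~$0$.

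The step that requires the most care, and the main obstacle, is checking that $\phi_n^\Phi$ satisfies the standing hypotheses of \cref{sec:set-up}, in particular that it admits the Laurent expansions around rational points on which \cref{defn:F} and the transformation theory rest. By \cref{thm:poleshyperplane} the poles of $F_{2n}$ lie on $\sum_{i\in S}w_i\in L_\tau$; under $\iota$ these become $\sum_j c_jz_j\in L_\tau$ with $c_j\in\{-1,0,1\}$, which a priori include the \emph{non-orthogonal} hyperplanes $z_i\pm z_j\in L_\tau$ for $i\neq j$, so the poles of $\phi_n^\Phi$ are not orthogonal in the sense of \cref{defn:orthogonal}. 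The diagonal ``self-pole'' is harmless, as the case $n=1$ illustrates: on $z_1+z_2=0$ the numerator of $F_2(z_1,z_2)$ vanishes to the order of $\theta(z_1+z_2)$ (by oddness of $\theta$), so $F_2(z,-z)$ is regular there and has only the orthogonal poles $z\in L_\tau$. The cross-poles $z_i\pm z_j$, however, carry non-zero residues, so I expect to handle them not by cancellation but by extracting the diagonal coefficients through iterated single-variable Laurent expansions---which is precisely what the $q$-bracket $\langle H_{k_1}\cdots H_{k_n}\rangle_q$ of a product of one-variable coefficients computes---together with the general pole description of \cref{thm:poleshyperplane}, which, unlike \cref{thm:main}, does not presuppose orthogonality.

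Granting this, the remaining steps are algebraic. First, exactly as in the Bloch--Okounkov relation $Q_{\ell+1}(a)=\e(-\tfrac12 a)\,f_\ell^W(a)$, the twisted moment $H_k(\lambda,a)$ is a fixed constant multiple of the Taylor coefficient $f_{k-2}^\Phi(\lambda,2a)$ of $\Phi$ at $z=2a$, the shift by $2a$ matching the convention $f^{\vec d}_{\vec\ell}=\sum_{\vec a}f_{\vec\ell}(\cdot,2\vec a)$. Second, I extract the divisibility condition by roots of unity: since $\mathbbm{1}[t\mid h]=\tfrac1t\sum_{b=0}^{t-1}\e(bh/t)$ and pairing $b\leftrightarrow t-b$ turns the exponentials into cosines, for even $k$ one finds
\[ \sum_{\substack{\xi\in Y_\lambda\\ t\mid h(\xi)}}\!\! h(\xi)^{k-2}\=\frac1t\sum_{b=0}^{t-1}\bigl(H_k(\lambda,\tfrac{b}{t})-\tilde\alpha_k(\tfrac{b}{t})\bigr). \]
As $b/t$ runs through $U(t)$, the right-hand side is a constant multiple of $f_{k-2}^{t,\Phi}=\sum_{a\in U(t)}f_{k-2}^\Phi(\cdot,2a)$ plus an additive constant; the constant $-\tfrac{B_k}{2k}t^k$ in the definition of $H_k^{t}$ is precisely the one (via a distribution relation for the coefficients $\tilde\alpha_k$) that cancels this extra term, so that $H_k^{t}$ is a constant multiple of the weight $2+(k-2)=k$ generator $f_{k-2}^{t,\Phi}$. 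Hence $\mathcal{H}^{(N)}$, generated by the $H_k^{t}$ with $k$ even and $t\mid N$, is a graded subalgebra of $\mathcal{F}^{(N),\Phi}$.

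Finally, \cref{thm:bohigherlevel2+} shows that $\mathcal{F}^{(N),\Phi}$ is a quasimodular algebra for $\Gamma_0(N^2)$. Since the $q$-bracket is linear and respects the weight grading, its restriction to the graded subalgebra $\mathcal{H}^{(N)}$ again sends homogeneous elements of weight $k$ to quasimodular forms of weight $k$ for $\Gamma_0(N^2)$, which is the assertion.
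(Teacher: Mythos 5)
Your overall strategy is exactly the paper's: the paper's entire argument for the hook-length corollaries is the single remark that $H_k(\lambda)$ is, up to a constant, the $(k-2)$th Taylor coefficient of $\Phi(\lambda,z)=W_\lambda(z)\,W_\lambda(-z)$, followed by ``the results of \cref{sec:set-up} now specialize''; you are filling in precisely that specialization. Your identification of $\phi_n^\Phi$ with the diagonal restriction of $F_{2n}$, the computation that its index is $0$, and your honest flagging of the non-orthogonality of the cross-poles $z_i\pm z_j$ (an issue the paper glosses over even for $F_n$ itself) are all sound.

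One concrete step is off, however. You assert that $H_k(\lambda,a)$ is a constant multiple of $f^\Phi_{k-2}(\lambda,2a)$. It is the Taylor coefficient at $a$, not $2a$: the frequencies occurring in $\Phi(\lambda,\cdot)$ are the integers $h(\xi)$, not the half-integers $\lambda_i-i+\tfrac12$ of the Bloch--Okounkov case (which is what forces the $2a$ in \eqref{def:Qkp} and in the generators $f^{\vec d}_{\vec\ell}$), and the definition of $H_k(\lambda,a)$ contains $\e(a\,h(\xi))$, i.e.\ the shift $z\mapsto z+a$. Consequently your identity $\sum_{t\mid h(\xi)}h(\xi)^{k-2}=\tfrac1t\sum_b\bigl(H_k(\lambda,\tfrac bt)-\tilde{\alpha}_k(\tfrac bt)\bigr)$ expresses $H_k^{t}$ in terms of $\sum_{b}f^\Phi_{k-2}(\cdot,b/t)$, which coincides with the generator $f^{t,\Phi}_{k-2}=\sum_{a\in U(t)}f^\Phi_{k-2}(\cdot,2a)$ only for odd $t$: for even $t$ the multiset $\{2b/t\bmod 1\}$ is $U(t/2)$ counted twice, so $f^{t,\Phi}_{k-2}$ detects $\tfrac t2\mid h(\xi)$ rather than $t\mid h(\xi)$. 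Thus for even $t$ the claimed containment $\mathcal{H}^{(N)}\subseteq\mathcal{F}^{(N),\Phi}$ fails as stated, and rewriting $H_k^{t}$ via $f^{2t,\Phi}_{k-2}$ instead would only invoke \cref{thm:bohigherlevel2+} at level $2N$, yielding $\Gamma_0(4N^2)$, which is weaker than the assertion. The repair is cheap and uses your own index computation: since the index of $\phi_n^\Phi$ is $0$, the roots of unity $\rho$ and $\zeta$ in the proof of \cref{thm:bohigherlevel2+} are identically $1$, and that averaging argument runs verbatim with the translates $(0,A)$, $A\in U(D)$, in place of $(0,2A)$ --- multiplication by $d$, invertible modulo $N$ and hence modulo each $t_i\mid N$, still permutes $U(t_i)$ --- giving quasimodularity already for $\Gamma_0(N)\supseteq\Gamma_0(N^2)$, exactly as in the moment-function application. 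With that adjustment your argument closes.
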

More concretely, for all homogeneous $f\in \mathcal{H}^{(N)}$ of weight~$k$, the~$q$-bracket~$\langle f \rangle_q$ is a quasimodular form of weight~$k$ for~$\Gamma_0(N^2)$.

\subsection{Third application: moment functions of higher level}
Next, we consider the moment functions in \cite{Zag16}
\[ S_k(\lambda) \defis -\frac{B_k}{2k} \+ \sum_{i=1}^\infty \lambda_i^{k-1} \qquad (k\geq 1). \]
The generating series $\mathscr{S}(z)\colon\partitions\to \q$ given by $\mathscr{S}(z) := \frac{1}{2z^2} + \sum_{k\geq 2} S_k \frac{z^{k-2}}{(k-2)!}$ satisfies \cite[Corollary~3.3.2]{vI20}
\begin{align}\label{eq:gs3}
\langle \mathscr{S}(z_1)\cdots \mathscr{S}(z_n)\rangle_q \= \frac{1}{2^{n+1}}\sum_{\alpha \in \Pi(n)}\prod_{A\in \alpha} \sum_{\vec{s}\in \{-1,1\}^{|A|}}\!D_\tau^{|A|-1}E_2(\vec{s}\cdot \vec{z}_A),\end{align}
where~$\Pi(n)$ denotes the set of all set partitions of the set $\{1,\ldots,n\}$,~$|A|$ the cardinality of the set~$A$, and~$\vec{z}_A=(z_{a_1},\ldots,z_{a_r})$ if $A=\{a_1,\ldots,a_r\}$. Hence, the~$n$-point functions~\eqref{eq:gs3} are quasi-Jacobi forms of weight~$2n$ and index zero. Because the index of the quasi-Jacobi forms~\eqref{eq:gs3}  is zero, the root of unity in~(\ref{eq:omega2}) vanishes for all $\gamma \in \sltwoz$. Therefore, the results of \cref{sec:set-up} specialize to the following results for the groups~$\Gamma_1(N)$ and~$\Gamma_0(N)$ (rather than~$\Gamma(N)$ and~$\Gamma_0(N^2)$, respectively).  

\paragraph*{(i) \& (ii).}  Let 
\[ S_k(\lambda,a) \defis \alpha_k(a) \+\frac{1}{2} \sum_{i=1}^\infty \bigl(\e(a\lambda_i)+(-1)^k\e(-a\lambda_i)\bigr)\lambda_i^{k-1} \qquad (a\in \q, k\geq 1),\]
where 
\begin{align}\label{def:alpha}\frac{\alpha_{-1}(a)}{2\pi\ii z}\+\sum_{k\geq 0} \alpha_k(a)\frac{(2\pi\ii z)^{k-1}}{(k-1)!} \defis \frac{1}{2}(\e(z+a)-1)^{-1}\end{align} (for $k\geq 2$, these values agree with the constants~$\tilde{\alpha}_k$ in the previous example). 
Denote by~$\mathcal{S}(N)$ the algebra generated by the~$S_k(\cdot,a)$ with $a\in \frac{1}{N}\z$. Assign to~$S_k(\cdot,a)$ weight~$k$.
\begin{cor} The algebra~$\mathcal{S}(N)$ is quasimodular for~$\Gamma_1(N)$. \end{cor}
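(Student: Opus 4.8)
The plan is to read this corollary as a direct instance of the general construction of \cref{sec:set-up}, applied to $\Phi=\mathscr{S}$ (so the number of elliptic variables per factor is $r=1$ and the base weight is $k=2$). First I would verify that $\mathscr{S}$ meets the standing hypotheses of that section: by \eqref{eq:gs3} the $n$-point bracket $\phi_n^{\mathscr{S}}(\tau,Z)=\langle\prod_{i}\mathscr{S}(Z_i)\rangle_q$ is a meromorphic quasi-Jacobi form of weight $2n$ whose index is the identically zero bilinear form $B=0$, and since its poles lie on rational hyperplanes $\vec{s}\cdot\vec{z}\in L_\tau$, the Laurent expansions around rational points required by the construction exist. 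For $n=1$ this reads $\phi_1^{\mathscr{S}}=\tfrac12 E_2$, which already exhibits the vanishing of the index.

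Next I would match the generators of $\mathcal{S}(N)$ with those of $\mathcal{F}^{\mathscr{S}}(N)$. Comparing the definition of $S_k(\cdot,a)$ — including the normalising constants $\alpha_k(a)$ fixed by \eqref{def:alpha} and the symmetrisation $\tfrac12\big(\e(a\lambda_i)+(-1)^k\e(-a\lambda_i)\big)$ — with the Laurent expansion of $\mathscr{S}$ in the sense of \cref{defn:F}, one checks that, up to a nonzero scalar, $S_k(\cdot,a)$ is a linear combination of the Taylor coefficients $f^{\mathscr{S}}_{k-2}(\cdot,a')$ with $a'\in\{a,-a\}$, the constant $\alpha_k(a)$ being precisely the regularising constant produced by the double-slash normalisation. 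Because $a,-a\in\tfrac1N\z$ whenever $a\in\tfrac1N\z$, every generator $S_k(\cdot,a)$ lies in $\mathcal{F}^{\mathscr{S}}(N)$, so that $\mathcal{S}(N)$ is a graded subalgebra of $\mathcal{F}^{\mathscr{S}}(N)$.

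Finally I would invoke \cref{prop:higherlevel+}. Since $B=0$, every $A\in\tfrac1N M_{n,1}(\z)$ satisfies $B(A,A)=0\in\tfrac12\z+\tfrac1N\z$, so the proposition applies with no restriction on $A$ and shows that each monomial bracket $\langle f_L(A)\rangle_q$ is quasimodular for $\Gamma_1(N)$ of weight $2n+|L|$; equivalently, in \eqref{eq:omega2} the root of unity $\e\big((c^2-cd+(d-1)^2)\,B(A,A)\big)$ is trivial for all $\gamma$. By linearity of the $q$-bracket and the fact that it respects the grading (exactly as in the proof of \cref{thm:higherlevel+}), $\mathcal{F}^{\mathscr{S}}(N)$ is a quasimodular algebra for $\Gamma_1(N)$, and hence so is its graded subalgebra $\mathcal{S}(N)$.

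I expect the only genuine work to be the identification in the second paragraph: one must confirm that the specific constants $\alpha_k(a)$ and the symmetrisation built into $S_k(\cdot,a)$ are exactly those dictated by the Taylor coefficients $g^{(0,a)}_{k-2}(\phi_1^{\mathscr{S}})$, so that no generator of $\mathcal{S}(N)$ escapes $\mathcal{F}^{\mathscr{S}}(N)$ (and that the product bracket $\langle S_{k_1}(\cdot,a_1)\cdots S_{k_n}(\cdot,a_n)\rangle_q$ indeed equals $g^{(0,A)}_{L}(\phi_n^{\mathscr{S}})$). Once this bookkeeping is in place, the quasimodularity is immediate from the index-zero observation, which is what upgrades the group from $\Gamma(N)$ to $\Gamma_1(N)$.
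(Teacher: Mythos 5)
Your proposal is correct and follows the paper's own route: the paper likewise observes that the $n$-point functions \eqref{eq:gs3} are quasi-Jacobi forms of weight $2n$ and index zero, so the root of unity in \eqref{eq:omega2} is trivial and \cref{prop:higherlevel+} (equivalently, the general set-up of \cref{sec:set-up}) applies with no restriction on $A$, yielding quasimodularity for $\Gamma_1(N)$ rather than merely $\Gamma(\widehat{N})$. The generator-matching you flag in your second paragraph is indeed the only bookkeeping the paper leaves implicit (the constants $\alpha_k(a)$ in \eqref{def:alpha} are chosen precisely so that $S_k(\cdot,a)$ arises as a Taylor coefficient of $\mathscr{S}$ at $a$), and your treatment of it is consistent with the paper's definitions.
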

More concretely, for all homogeneous $f\in \mathcal{H}(N)$ of weight~$k$, the~$q$-bracket~$\langle f \rangle_q$  is a quasimodular form of weight~$k$ for~$\Gamma_1(N)$.

\paragraph*{(ii).}  Let
\[ S_k^{t}(\lambda) \defis -\frac{B_k}{2k}t^k \+\sum_{\substack{i\geq 0 \\ \lambda_i \equiv 0 \bmod t}} \lambda_i^{k-1} \qquad (k,t\in \z_{>0}).\] 
Denote by~$\mathcal{S}^{(N)}$ the algebra generated by the~$S_k^{t}$ for which $k$ is even and $t\mid N$. Assign to~$S_k^{t}$ weight~$k$.
\begin{cor} The algebra~$\mathcal{S}^{(N)}$ is quasimodular for~$\Gamma_0(N)$.\end{cor}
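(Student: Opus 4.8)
The plan is to specialize the argument proving \cref{thm:bohigherlevel2+} and to exploit the vanishing of the index of the relevant $n$-point functions, which is precisely what upgrades $\Gamma_0(N^2)$ to $\Gamma_0(N)$. First I would identify the generators $S_k^t$ of $\mathcal{S}^{(N)}$ with the functions $f_{k-2}^t$ of $\mathcal{F}^{(N),\mathscr{S}}$ for the choice $\Phi=\mathscr{S}$ (the rank-$1$ generating series of the moment functions), up to the normalizing constant $-B_kt^k/2k$, so that $\mathcal{S}^{(N)}$ becomes a subalgebra of $\mathcal{F}^{(N),\mathscr{S}}$ whose monomial generators are the functions $f_L^D$; by construction of the ``Taylor coefficients'' one has $\langle f_L(A)\rangle_q = g_L^{(0,A)}(\phi_n^{\mathscr{S}})$. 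By \eqref{eq:gs3} the $n$-point functions $\phi_n^{\mathscr{S}}$ are quasi-Jacobi forms of weight $2n$ and index zero; this is the structural input that drives everything.

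The key observation is that an index-zero form has vanishing bilinear form $B_M$, so the roots of unity $\rho(X)$ and $\zeta_{X,X'}$ of \eqref{def:rhoXzetaXX} are identically $1$. Hence the prefactor $\e((c^2-cd+(d-1)^2)B(A,A))$ in \eqref{eq:omega2} is trivial, and the elliptic relation \eqref{eq:ellcoeff} collapses to honest $\XXZ$-periodicity $f^{X+X'}=f^{X}$. Following the proof of \cref{thm:bohigherlevel2+}, for a monomial $f_L^D=\sum_{A\in U(D)} f_L(\cdot,2A)$ and $\gamma=\abcd$ I would expand, via part (ii$'$) of \cref{thm:main},
\[\langle f_L(\cdot,2A)\rangle_q\big|\gamma \= \sum_{r\geq 0} h_r(2cA,2dA)\Bigl(\frac{c}{c\tau+d}\Bigr)^{\! r},\]
where each $h_r$ is a fixed combination of the $g_{\vec{\ell},s}^{X,r-s}(\phi_n^{\mathscr{S}})$ and therefore inherits $\XXZ$-periodicity in $X=\lm$.

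Now the index-zero collapse pays off. For $\gamma\in\Gamma_0(N)$ one has $c\equiv 0\bmod N$, and since every denominator appearing in $A\in U(D)$ divides $N$ (as $t_i\mid N$), the matrix $2cA$ is already \emph{integral}. Taking $X'=(2cA,\vec{0})\in\XXZ$, periodicity gives $h_r(2cA,2dA)=h_r(\vec{0},2dA)$ at once---no root of unity has to be killed, which is exactly why $\Gamma_0(N)$ now suffices where \cref{thm:bohigherlevel2+} needed $\Gamma_0(N^2)$. Finally, $ad-bc=1$ together with $N\mid c$ forces $\gcd(d,N)=1$, so multiplication by $d$ permutes each residue set $\{0,\tfrac{1}{t_i},\ldots,\tfrac{t_i-1}{t_i}\}$ modulo $\z$; hence, using periodicity with the full period lattice $M_{n,1}(\z)$ to reduce the $\vec{\mu}$-argument modulo $\z$, the multiset $\{2dA\bmod\z : A\in U(D)\}$ equals $\{2A\bmod\z : A\in U(D)\}$. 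Summing over $A\in U(D)$ then turns $\sum_A h_r(\vec{0},2dA)$ into $\sum_A h_r(\vec{0},2A)$, which shows that $\langle f_L^D\rangle_q$ transforms as a quasimodular form for $\Gamma_0(N)$.

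The conceptual step---that index zero trivializes $\rho$ and $\zeta$---is immediate, so the main work is bookkeeping rather than an essential obstacle: confirming the identification $S_k^t=f_{k-2}^t$ up to the constant $-B_kt^k/2k$, checking that the ``even $k$'' restriction defining $\mathcal{S}^{(N)}$ matches the $(-1)^k$-symmetrization in $S_k(\cdot,a)$ (for odd $k$ the sum over $U(t)$ would cancel), and keeping the two periodicity reductions---the $2cA$-shift in the $\vec{\lambda}$-slot and the $\z^n$-reduction in the $\vec{\mu}$-slot---cleanly separated. Once these are in place the argument is a verbatim specialization of the proof of \cref{thm:bohigherlevel2+}.
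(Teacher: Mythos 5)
Your proposal is correct and takes essentially the same route as the paper: the paper's entire justification for this corollary is the observation that the $n$-point functions \eqref{eq:gs3} have index zero, so the roots of unity $\rho$, $\zeta$ (and hence the prefactor in \eqref{eq:omega2}) are trivial, and the proof of \cref{thm:bohigherlevel2+} specializes with $\Gamma_0(N)$ in place of $\Gamma_0(N^2)$. Your write-up simply carries out that specialization explicitly (integrality of $cA$ for $c\equiv 0\bmod N$, full $\XXZ$-periodicity, and the permutation of $U(D)$ by multiplication by $d$), which matches the intended argument.
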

More concretely, for all homogeneous $f\in \mathcal{S}^{(N)}$ of weight~$k$, the~$q$-bracket~$\langle f \rangle_q$ is a quasimodular form of weight~$k$ for~$\Gamma_0(N)$.

\subsection{Fourth application: double moment functions of higher level}\label{sec:double}
As a final example, consider the double moment functions introduced in \cite{vI20} given by
\begin{equation}\label{def:Tkl0} T_{k,l}(\lambda) \defis  C_{k,l}\+\sum_{m=1}^\infty m^k \Faulhaber_l(r_m(\lambda)) \qquad (k\geq 0, l\geq 1).\end{equation}
Here,~$C_{k,l}$ is a constant equal to~$-\frac{B_{k+l}}{2(k+l)}$ if~$k=0$ or~$l=1$ and~$0$ else,~$\Faulhaber_l$ is the \emph{Seki--Bernoulli polynomial} of positive integer degree~$l$, defined by~$\Faulhaber_l(n):=\sum_{i=1}^n i^{l-1}$ for all~$n\in\z_{>0}\mspace{1mu}$, and  the \emph{multiplicity}~$r_m(\lambda)$ of parts of size~$m$ in a partition~$\lambda$ is defined as the number of parts of~$\lambda$ of size~$m$. The generating series $\mathscr{T}(z,w) := -\frac{1}{2z}-\frac{1}{2w}+\sum_{k+l\equiv 0 (2)} T_{k,l} \,\frac{z^{k}w^{\ell-1}}{(k)!(\ell-1)!}$ satisfies \cite[Theorem~4.4.1.]{vI20}				  
\[ G_1(z,w)\defis\langle \mathscr{T}(z,w) \rangle_q \= -\frac{1}{2}\frac{\Theta(z+w)}{\Theta(z)\Theta(w)}.\]
Hence, the~$1$-point function~$G_1$ is a Jacobi form of weight~$1$ and index~$B((z,w),(z,w))=zw$. This example now deviates from the previous ones because $\langle \mathscr{T}(z_1,w_1) \cdots \mathscr{T}(z_n,w_n) \rangle_q$ 
is not a Jacobi form of some \emph{fixed} weight (but rather a linear combination of functions of different weights). It is, therefore, that we have to consider a different product~$\blok$ on the space of functions of partitions, for which
\begin{align}\label{eq:gn} G_n(\vec{z},\vec{w})\defis \langle \mathscr{T}(z_1,w_1) \blok \cdots \blok \mathscr{T}(z_n,w_n) \rangle_q \= (-1)^n \prod_{i=1}^n \frac{1}{2}\frac{\Theta(z_i+w_i)}{\Theta(z_i)\Theta(w_i)}.\end{align}
To define this product consider the isomorphism $\c^\partitions \to \c[\![u_1,u_2,\ldots]\!]$, $f\mapsto \langle f \rangle_{\vec{u}}\mspace{1mu}$, given by 
\begin{align}
\quad \langle f\rangle_{\vec{u}} \defis \frac{\sum_{\lambda\in \partitions} f(\lambda)\, u_\lambda }{\sum_{\lambda \in \partitions} u_\lambda} \qquad (u_\lambda=u_{\lambda_1} u_{\lambda_2}\cdots).\end{align}
Then, the \emph{induced product} is defined by
\begin{align}\label{eq:blok}\langle f\blok g \rangle_{\vec{u}}\defis \langle f\rangle_{\vec{u}}\,\langle g\rangle_{\vec{u}}\,.
\end{align}
The algebra~$\mathcal{T}$ generated by the~$T_{k,l}$ contains exactly the same elements as the algebra consisting of polynomials in the moment functions~$T_{k,l}$ with multiplication given by the induced product~$\blok$ \cite{vI20}.  
 
In \cref{sec:set-up} one can replace the pointwise product on functions of partitions by the product~$\blok$. Therefore, we have the following generalisations of the algebra~$\mathcal{T}$.

\paragraph*{(i).} For $a,b\in \q$, and $k\geq 0, \ell\geq 1$, let
\[ T_{k,\ell}(\lambda,a,b) \defis C_{k,\ell}(a,b) \+ \sum_{m=1}^\infty m^k\bigl(\e(am)\, \Faulhaber_\ell^b(r_m(\lambda))+(-1)^{k+l}\e(-am)\, \Faulhaber_\ell^{-b}(r_m(\lambda))\bigr) ,\]
where 
\[C_{k,\ell}(a,b) \defis \begin{cases} \alpha_k(a) & \ell=1 \\ \alpha_{\ell-1}(b) & k=0 \\ 0 & \text{else,}\end{cases}\]
and the constants~$\alpha_k$  are defined by~\eqref{def:alpha}. Also,~$\Faulhaber_\ell^b$ 
is defined by $\Faulhaber_\ell^b(n) := \sum_{i=1}^n \e(bi) \, i^{l-1}$ for all~$n\in\z_{>0}\mspace{1mu}$. 
Let the~$\q$-algebra~$\mathcal{T}(N)$ be generated by the functions~$T_{k,\ell}(\cdot,a,b)$, where $a,b\in \frac{1}{N}\z$, \emph{under the induced product}. Assign to~$T_{k,\ell}(\cdot,a,b)$ weight~$k+\ell$ and extend to a weight grading under the induced product. Let $\widehat{N}=(2,N)N$.
 
\begin{cor} The algebra~$\mathcal{T}(N)$ is quasimodular of level~$\widehat{N}$, after scaling~$\tau$ by~$\widehat{N}$. \end{cor}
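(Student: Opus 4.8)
The plan is to deduce this corollary from the general \cref{thm:higherlevel+}, applied to $\Phi=\mathscr{T}$ with $r=2$ elliptic parameters $(z,w)$ and with the pointwise product on $\c^\partitions$ replaced throughout \cref{sec:set-up} by the induced product $\blok$. The first step is to verify the standing hypothesis of \cref{sec:set-up}, namely that the associated $n$-point function $\phi_n^{\mathscr{T}}(\tau,Z)$, the $i$th row of $Z$ being $(z_i,w_i)$, is a strictly meromorphic quasi-Jacobi form of weight $kn$ (here the weight of $\Phi=\mathscr{T}$ is $k=1$) admitting a Laurent expansion around every rational point. By \eqref{eq:gn} we have
\[\phi_n^{\mathscr{T}}(\tau,Z) \= G_n(\vec{z},\vec{w}) \= \prod_{i=1}^n\Bigl(-\tfrac{1}{2}\frac{\Theta(z_i+w_i)}{\Theta(z_i)\Theta(w_i)}\Bigr),\]
a product of copies of the weight-$1$, index-$zw$ meromorphic Jacobi form $G_1$ in the disjoint pairs of variables $(z_i,w_i)$. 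Hence $\phi_n^{\mathscr{T}}$ is itself a strictly meromorphic Jacobi form---and \emph{a fortiori} a quasi-Jacobi form---of weight $n$ and index given by the block-diagonal bilinear form $B(\vec{z},\vec{w})=\sum_{i=1}^n z_iw_i$. Its poles lie on the hyperplanes $z_i\in L_\tau$ and $w_i\in L_\tau$, which are orthogonal in the sense of \cref{defn:orthogonal}, so the required Laurent expansion exists.

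The second step is to identify the generated algebra and check that the grading matches. Reading off the coefficients in $\mathscr{T}(z,w)$, the Taylor coefficient $f_{(k,\ell-1)}^{\mathscr{T}}(\cdot,(a,b))$ of \cref{defn:F} is, up to a nonzero rational constant, the double moment function $T_{k,\ell}(\cdot,a,b)$; its weight $1+k+(\ell-1)=k+\ell$ agrees with the weight assigned to $T_{k,\ell}(\cdot,a,b)$, and the weight grading is extended to the induced product exactly as in \cref{sec:set-up}. Consequently $\mathcal{T}(N)=\mathcal{F}^{\mathscr{T}}(N)$ as graded algebras. The point that makes the replacement of the pointwise product by $\blok$ legitimate is that, under $\blok$, the $q$-bracket is multiplicative, so that the identity $\langle f_L^{\mathscr{T}}(A)\rangle_q=g_L^{(0,A)}(\phi_n^{\mathscr{T}})$ continues to hold for $\blok$-monomials $f_L(A)=f_{L_1}(A_1)\blok\cdots\blok f_{L_n}(A_n)$; this is precisely what repairs the failure, noted in \cref{sec:double}, of $\langle\mathscr{T}(z_1,w_1)\cdots\mathscr{T}(z_n,w_n)\rangle_q$ (with the pointwise product) to be homogeneous.

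With these verifications in place, \cref{thm:higherlevel+} applies and shows that $\mathcal{T}(N)=\mathcal{F}^{\mathscr{T}}(N)$ is a quasimodular algebra for $m_{\widehat{N}}^{-1}\Gamma(\widehat{N})m_{\widehat{N}}$. Finally, by the first interpretation in the remark following \cref{thm:higherlevel}, being quasimodular for $m_{\widehat{N}}^{-1}\Gamma(\widehat{N})m_{\widehat{N}}$ is equivalent to the statement that for homogeneous $f\in\mathcal{T}(N)$ of weight $k$ the rescaled series $\langle f\rangle_{q_{\widehat{N}}}$, with $q_{\widehat{N}}=q^{1/\widehat{N}}$, is a quasimodular form of weight $k$ and level $\widehat{N}$ for $\Gamma(\widehat{N})$, which is the asserted conclusion. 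I expect the only genuine subtlety---rather than a routine verification---to be the bookkeeping needed to confirm that the entire apparatus of \cref{sec:set-up} (in particular the equality $\langle f_L(A)\rangle_q=g_L^{(0,A)}(\phi_n)$ and the computation of $\Gamma_X$) survives the passage from the pointwise product to $\blok$; once multiplicativity of the $\blok$-bracket restores this equality, the remaining arguments, including the index computation showing $2N^2B(A,A)\in\z$ that pins down $\widehat{N}$, are identical to those carried out in the proof of \cref{thm:higherlevel+}.
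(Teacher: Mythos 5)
Your proposal is correct and follows exactly the route the paper intends: the paper proves this corollary simply by observing that the whole apparatus of \cref{sec:set-up} goes through with the pointwise product replaced by $\blok$ (since the $q$-bracket is multiplicative for $\blok$, the identity $\langle f_L(A)\rangle_q=g_L^{(0,A)}(\phi_n)$ persists, with $\phi_n^{\mathscr{T}}=G_n$ a weight-$n$, index-$\sum_i z_iw_i$ strictly meromorphic Jacobi form with orthogonal poles), and then invoking \cref{thm:higherlevel+}. Your verifications of the standing hypotheses, the weight bookkeeping, and the reformulation via the remark after \cref{thm:higherlevel} are precisely the details the paper leaves implicit.
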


More concretely,  for all homogeneous $f\in \mathcal{T}(N)$ of some weight~$k$, the rescaled~$q$-bracket~$\langle f \rangle_{q_{\widehat{N}}}$, where $q_{\widehat{N}}=q^{1/\widehat{N}}$, is a quasimodular form of weight~$k$ for~$\Gamma(\widehat{N})$.

\paragraph*{(ii).}
\begin{cor}\label{cor:Tklii} Let $\vec{k},\vec{\ell}\in \z^n$ and $\vec{a},\vec{b}\in \frac{1}{N}\z^n$. Whenever $\vec{a}\cdot \vec{b}\in \frac{1}{2}\z + \frac{1}{N}\z$, one has that
\[\langle T_{k_1,\ell_1}(\cdot,a_1,b_1)\blok\cdots\blok T_{k_n,\ell_n}(\cdot,a_n,b_n)\rangle_q\]
 is a quasimodular form of weight~$|\vec{k}|+|\vec{\ell}|$ for~$\Gamma_1(N)$.
 \end{cor}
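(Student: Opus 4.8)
The plan is to deduce the corollary from \cref{prop:higherlevel+}, recognising the double moment functions as the instance of the general set-up of \cref{sec:set-up} in which the pointwise product on $\c^\partitions$ is replaced by the induced product~$\blok$ (this replacement is permitted for the entire set-up). I would take $\Phi=\mathscr{T}$ with $r=2$ elliptic variables $(z,w)$. Because $\langle f\blok g\rangle_q=\langle f\rangle_q\,\langle g\rangle_q$ by definition of~$\blok$ and of the $q$-bracket, the associated $n$-point function factorises,
\[\phi_n^\Phi(\tau,Z) \= \langle \mathscr{T}(z_1,w_1)\blok\cdots\blok\mathscr{T}(z_n,w_n)\rangle_q \= \prod_{i=1}^n\langle\mathscr{T}(z_i,w_i)\rangle_q \= G_n(\vec{z},\vec{w}),\]
which by~\eqref{eq:gn} equals $(-1)^n\prod_{i=1}^n\tfrac12\frac{\Theta(z_i+w_i)}{\Theta(z_i)\Theta(w_i)}$. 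This is a product of $n$ copies of the weight-one Jacobi form~$G_1$, hence a strictly meromorphic Jacobi form of weight~$n$ whose only poles lie on the coordinate hyperplanes $z_i\in L_\tau$ and $w_i\in L_\tau$; in particular its poles are orthogonal (\cref{defn:orthogonal}) and its index is the bilinear form $B((\vec z,\vec w),(\vec z,\vec w))=\sum_{i=1}^n z_iw_i$.

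Next I would match the monomial in the statement to a Taylor coefficient of~$G_n$. Reading off the generating series~$\mathscr{T}$, the function $T_{k_i,\ell_i}(\cdot,a_i,b_i)$ is, up to the scalar $\tfrac{1}{k_i!(\ell_i-1)!}$, the Taylor coefficient $f_{(k_i,\ell_i-1)}(\cdot,(a_i,b_i))$ of~$\mathscr{T}$ around $(z,w)=(a_i,b_i)$, the shifted constants $C_{k_i,\ell_i}(a_i,b_i)$ being exactly the $\lambda$-independent part of this expansion. Writing $L$ for the matrix with rows $L_i=(k_i,\ell_i-1)$ and $A$ for the matrix with rows $A_i=(a_i,b_i)$, the $\blok$-product in the statement is the monomial $f_L(A)$ of \cref{defn:F}, and by construction of these Taylor coefficients
\[\langle T_{k_1,\ell_1}(\cdot,a_1,b_1)\blok\cdots\blok T_{k_n,\ell_n}(\cdot,a_n,b_n)\rangle_q \= g_L^{(0,A)}(G_n).\]

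It then remains to apply \cref{prop:higherlevel+}. Since $a_i,b_i\in\frac1N\z$ we have $A\in\frac1N M_{n,2}(\z)$, and evaluating the index bilinear form at the shift $A$ gives $B(A,A)=\sum_{i=1}^n a_ib_i=\vec a\cdot\vec b$; thus the hypothesis $\vec a\cdot\vec b\in\frac12\z+\frac1N\z$ is precisely the condition $B(A,A)\in\frac12\z+\frac1N\z$ demanded by the proposition, and we conclude that $g_L^{(0,A)}(G_n)$ is a quasimodular form for~$\Gamma_1(N)$ (holomorphicity being automatic from \cref{thm:main}, as $G_n$ is strictly meromorphic with orthogonal poles). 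For the weight, \cref{thm:main} gives $\wt g_L^{(0,A)}(G_n)=\wt G_n+|L|=n+\sum_{i=1}^n(k_i+\ell_i-1)=|\vec k|+|\vec\ell|$, matching the grading on~$\mathcal{T}(N)$. The only real work is the bookkeeping in the second paragraph: checking that the combinatorial definition of $T_{k,\ell}(\cdot,a,b)$ — with its constants $C_{k,\ell}(a,b)$ and the twisted Seki--Bernoulli sums $\Faulhaber_\ell^{\pm b}$ — agrees termwise with the Laurent coefficient $f_{(k,\ell-1)}(\cdot,(a,b))$ of~$\mathscr{T}$, and that the induced product genuinely reproduces the factorised $n$-point function~$G_n$, so that the general set-up applies with~$\blok$ in place of the pointwise product.
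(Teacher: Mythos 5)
Your proposal is correct and follows essentially the same route as the paper, which obtains this corollary by noting that the general set-up of \cref{sec:set-up} applies with the induced product $\blok$ in place of the pointwise product, so that the $n$-point function is $G_n$ with index $B(A,A)=\vec a\cdot\vec b$ and \cref{prop:higherlevel+} gives quasimodularity for $\Gamma_1(N)$. The weight computation and the identification of the $\blok$-monomial with $g_L^{(0,A)}(G_n)$ are exactly the (implicit) bookkeeping the paper relies on.
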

 
\paragraph*{(iii).} Let
\[ T_{k,\ell}^{s,t}(\lambda) \defis C_{k,\ell} \+\sum_{m=1}^\infty m^k \,\Faulhaber_{\ell}\Bigl(\Bigl\lfloor \frac{r_{ms}(\lambda)}{t}\Bigr\rfloor\Bigr) \qquad (k,\ell,s,t\in \z_{>0}).\] 
Denote by~$\mathcal{T}^{(N)}$ the algebra generated by the~$T_{k,\ell}^{s,t}$, where $k,\ell$ are even and $s,t\mid N$, \emph{under the induced product}. Assign to~$T_{k,\ell}^{s,t}$ weight~$k+\ell$ and extend to a weight grading under the induced product. 
\begin{cor} 
The algebra~$\mathcal{T}^{(N)}$ is quasimodular for~$\Gamma_0(N^2)$. 
\end{cor}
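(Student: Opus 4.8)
The plan is to reduce the statement to the general result \cref{thm:bohigherlevel2+}, exactly as the previous three applications were, the only genuine difference being that the pointwise product on $\c^\partitions$ must be replaced throughout by the induced product $\blok$. First I would record that the relevant $n$-point function is $G_n$ from~\eqref{eq:gn}, and that it meets the hypotheses imposed on $\Phi$ in \cref{sec:set-up}. Taking $\Phi=\mathscr{T}$ (so $r=2$ and the elliptic variables occur in pairs $(z_i,w_i)$) and using $\blok$ in place of the pointwise product, the associated $n$-point function is
\[\phi_n^{\mathscr{T}}(\tau,Z)\=\langle \mathscr{T}(z_1,w_1)\blok\cdots\blok\mathscr{T}(z_n,w_n)\rangle_q\=G_n(\vec z,\vec w).\]
Because each factor $\tfrac12\,\Theta(z_i+w_i)/(\Theta(z_i)\Theta(w_i))$ is a meromorphic Jacobi form of weight~$1$ and index $B((z_i,w_i),(z_i,w_i))=z_iw_i$, the product $G_n$ is a meromorphic Jacobi form (in particular a quasi-Jacobi form) of the single weight~$n$ and index $B(\vec z,\vec w)=\sum_i z_iw_i$. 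Here the passage to $\blok$ is essential: it is precisely what renders $\phi_n^{\mathscr{T}}$ homogeneous of one fixed weight, which the pointwise $n$-point function fails to be. Its poles occur only where $\Theta(z_i)$ or $\Theta(w_i)$ vanishes, i.e.\ on the orthogonal hyperplanes $z_i\in L_\tau$ and $w_i\in L_\tau$, so $G_n$ admits a Laurent expansion around every rational point.

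Next I would identify the generators $T_{k,\ell}^{s,t}$ with the generic generators $f_{\vec\ell}^{\vec d,\mathscr{T}}$ of \cref{defn:F}. Writing $\vec\ell=(k,\ell-1)$ and $\vec d=(s,t)$, the Taylor coefficient $f_{\vec\ell}^{\mathscr{T}}(\cdot,2\vec a)$ of $\mathscr{T}(z,w)$ around $(z,w)=(2a,2b)$ produces, up to the constants $C_{k,\ell}$, the summand $m^k\,\e(2am)\,\Faulhaber_\ell^{2b}(r_m(\cdot))$ together with its $(-1)^{k+\ell}$-symmetrisation. Summing over $\vec a\in U(\vec d)$ and applying orthogonality of additive characters in the first coordinate (which enforces the divisibility selecting $r_{ms}$) together with the corresponding identity turning $\sum_b\Faulhaber_\ell^{2b}$ into the truncated power sum $\Faulhaber_\ell(\lfloor r_{ms}(\cdot)/t\rfloor)$ collapses the double sum to the defining expression for $T_{k,\ell}^{s,t}$. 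Thus $\mathcal{T}^{(N)}=\mathcal{F}^{(N),\mathscr{T}}$ as graded algebras under $\blok$, with $T_{k,\ell}^{s,t}$ of weight $k+\ell$ matching the weight $1+|\vec\ell|=k+\ell$ of $f_{\vec\ell}^{\vec d}$ (the weight parameter of $\Phi=\mathscr{T}$ being $1$, since $\phi_1^{\mathscr{T}}=G_1$ has weight~$1$).

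Finally I would invoke \cref{thm:bohigherlevel2+}. The key identity $\langle f_L^\Phi(A)\rangle_q=g_L^{(0,A)}(\phi_n^\Phi)$ holds verbatim for $\blok$-monomials with $\phi_n^{\mathscr{T}}=G_n$, and the proof of that theorem uses only the transformation law of the Taylor coefficients $g_L^X$ furnished by \cref{thm:main}: the cancellation $\rho(X')\zeta_{X',X}=1$ for $X'=(2bA,0)$ whenever $2cA\in 2N\,M_{n,2}(\z)$, which is valid for the present index because $2N^2B(A,A)=2N^2\sum_i a_ib_i\in\z$ for $A\in\tfrac1N M_{n,2}(\z)$. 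Hence for every homogeneous $f\in\mathcal{T}^{(N)}$ of weight~$k$ the series $\langle f\rangle_q$ is a quasimodular form of weight~$k$ for $\Gamma_0(N^2)$. The main obstacle is the middle step: verifying, through the two character sums, that the floor-function generators $T_{k,\ell}^{s,t}$ genuinely coincide with the generic generators $f_{\vec\ell}^{\vec d}$, together with the bookkeeping (including the parity role of the factor $2\vec a$) needed to legitimise the substitution of $\blok$ for the pointwise product in the identity $\langle f_L^\Phi(A)\rangle_q=g_L^{(0,A)}(\phi_n^\Phi)$.
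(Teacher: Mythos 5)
Your proposal is correct and follows exactly the route the paper intends: the paper states this corollary without a written proof, relying on the remark that the general set-up of \cref{sec:set-up} (in particular \cref{thm:bohigherlevel2+}) goes through verbatim once the pointwise product is replaced by the induced product $\blok$, with $\phi_n^{\mathscr{T}}=G_n$ of weight $n$ and index $\sum_i z_iw_i$ supplied by~\eqref{eq:gn}. Your write-up simply makes explicit the same reduction — including the identification of the generators $T_{k,\ell}^{s,t}$ with the $f_{\vec{\ell}}^{\vec{d}}$ via the character averaging over $U(\vec{d})$ and the verification that $\rho(X')\zeta_{X',X}=1$ for $\gamma\in\Gamma_0(N^2)$ — so there is nothing substantively different to compare.
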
 
More concretely, for all homogeneous $f\in \mathcal{H}^{(N)}$ of weight~$k$, the~$q$-bracket~$\langle f \rangle_q$ is a quasimodular form of weight~$k$ for~$\Gamma_0(N^2)$. 
\begin{remark}
Combining this result with~\cref{cor:Tklii}, by restricting to $t=1$, we find that any polynomial in~$T_{k,\ell}^{s,1}$ with respect to the induced product is quasimodular for~$\Gamma_0(N)$. A similar statement holds after restricting to $s=1$. 
\end{remark}

\section{When is the~\texorpdfstring{$q$}{q}-bracket modular?}\label{sec:when}
We first state and prove our answer to the question in the title of this section in full generality, using the main result on the Taylor coefficients of quasi-Jacobi forms (\cref{thm:main}). Afterwards, we provide many examples, i.e., we prove the results on the `modular subspace' of the Bloch--Okounkov algebra as stated in the introduction, and state similar results for the Bloch--Okounkov algebra for congruence subgroups as well as the algebra of double moment functions.

\subsection{Construction of functions with modular~\texorpdfstring{$q$}{q}-bracket}\label{sec:whenconstruction}
Recall $D_\tau=\frac{1}{2\pi\ii}\pdv{}{\tau},$ $\mathbbm{e}_2=\frac{1}{12}-2\sum_{m,r\geq 1}m \,q^{mr}$ is the quasimodular Eisenstein series of weight $2$ and ${g}_{\vec{\ell},s}^{r-s}(\phi)$ is a ``Taylor coefficient'' of~$\phi$ defined by \cref{defn:Taylor}.
Given a quasi-Jacobi form~$\phi$ of weight~$k$ satisfying the conditions of~\cref{thm:main}, the functions
\[\xi_{\vec{\ell}}({\phi}) \= \displaystyle \sum_{r}(-1)^r\sum_{s\leq r}\frac{(D_\tau+\mathbbm{e}_2)^r\,{g}_{\vec{\ell},s}^{r-s}(\phi)}{(k+|\vec{\ell}|-r-\frac{3}{2})_{r}\,(r-s)!},\]
are modular forms (see also \cref{prop:equiv}). Therefore, as in the previous section, assume that~$\Phi\colon\partitions\times \c^r\to \c$ and $k\in \z$
are such that for all $n\geq 1$ the function $\phi_n^\Phi\colon\mathfrak{h}\times M_{n,r}(\c)\to \c$ given by
\[\phi_n^\Phi(\tau,Z)\defis\Bigl\langle \prod_{i=1}^n \Phi(\cdot,Z_i)\Bigr\rangle_{\!q}\,,\]
where~$Z_i$ is the~$i$th row of~$Z$, is a meromorphic quasi-Jacobi form of weight~$k n$ which admits a Laurent expansion around all~$Z\in M_{n,r}(\q)$ (after identifying~$M_{n,r}(\c)$ with~$\c^{nr}$). Write $\F=\F^\Phi(1)$ for the graded algebra of Taylor coefficients of~$\Phi$ (see \cref{defn:F}). Given $\vec{\ell}\in \z^n$, our aim is to find $h_{\vec{\ell}}\in \F$ such that $\langle h_{\vec{\ell}}\rangle_q = \xi_{\vec{\ell}}(\phi_n)$, i.e., we want to determine a pullback of~$\xi_{\vec{\ell}}(\phi_n)$ under the~$q$-bracket. Observe that these pullbacks $h_{\vec{\ell}}$ for all $\vec{\ell}$ define a unique linear map $\mathcal{F}\to\mathcal{F}$ so that $\pi(f_{\vec{\ell}})=h_{\vec{\ell}}\mspace{1mu}$. Note that in this case $\langle \pi(f)\rangle_q$ is modular for all $f\in \mathcal{F}$.

To define~$\pi$, assume the algebra~$\F$ satisfies the following two properties:
\begin{enumerate}[{\upshape (i)}]\itemsep2pt
\item $Q_2\in \F$,
\item There is a linear operator~$\mathscr{D}$ acting on~$\F$ such that 
\begin{align}\label{eq:D}\Bigl\langle \mathscr{D}\prod_{i=1}^n \Phi(\vec{z}^i)\Bigr\rangle_q \= (\dtau+z_1\dd_{z_1}+\ldots+z_n\dd_{z_n})\,\phi,\end{align}
where $\mathscr{D}$ is extended to a linear operator on $\F[\![z_1,\ldots,z_n]\!]$ by $\mathscr{D}(f\,\vec{z}^{\vec{\ell}})=\mathscr{D}(f)\,\vec{z}^{\vec{\ell}}$ for all $f\in \F$ and $\vec{\ell}\in \z^n$. 
\end{enumerate}
\begin{remark} As observed in \cite{Zag16}, by an easy computation one finds that the function~$Q_2$  makes the~$q$-bracket equivariant with respect to the operator~$D_\tau+\mathbbm{e}_2\mspace{1mu}$, i.e.\
\[\langle Q_2 f\rangle_q = (D+\mathbbm{e}_2)\langle f \rangle_q\]
for all $f\colon\partitions\to \c$. This motivates the first condition. The second condition is motivated by noting that
\begin{equation}\label{eq:mathscrD} g_{\vec{\ell},s}(\phi) \= {g}_{\vec{\ell}}\Bigl(\sum\nolimits_{i+|\vec{j}|=s}\phi_{i,\vec{j}} \,\frac{\vec{z}^{\vec{j}}}{(2\pi\ii)^i}\Bigr) 
\= \frac{1}{s!} g_{\vec{\ell}}\bigl((\dtau+z_1\dd_{z_1}+\ldots+z_n\dd_{z_n})^s \phi \bigr).\qedhere\end{equation}
\end{remark}

Recall that the algebra~$\F=\F^\Phi(1)$ is generated by the Taylor coefficients $f_{\vec{\ell}}=f_{\vec{\ell}}(\vec{0})$ of~$\Phi$ for $\vec{\ell}\in \z^r$. An arbitrary monomial~$f_{L}$ in~$\F$ is given by $f_{L_1}\cdots f_{L_n}$ with $L\in M_{n,r}(\z)$.  We define an operator on~$\F$ corresponding to the index of~$\phi$.
\begin{defn}\label{defn:Q} Let $M=(m_{i,j})\in \MM$ be the index of $\phi_n^\Phi$. Define~$\mathscr{M}$ to be the linear operator on~$\F$ which is given on monomials by
 \[\mathscr{M} f_{L} \defis \sum_{i,j} m_{i,j} f_{L-e_i-e_j} \qquad (L\in M_{n,r}(\z)\simeq \z^{nr}),\]
 where, on the right-hand side, $e_i$ is a unit vector in $\z^{nr}$. 
\end{defn}
 Now, \cref{thm:when} can more explicitly be stated as follows, where the three properties below should be compared with the three properties satisfied by the functions~$h_k$ in the introduction (\cref{sec:introwhen}).
 
By $\modular$ denote the algebra of modular forms for $\sltwoz$ \emph{with rational Fourier coefficients}. 
\begin{thm}\label{thm:when+} Let $\F$ be a quasimodular algebra satisfying the above conditions, and $\mathscr{M},\mathscr{D}:\F\to\F$ the operators defined by \cref{defn:Q} and Equation~\ref{eq:D}, respectively. Then, the linear mapping~$\pi\colon\F\to \F$ given by										   
\[ \pi(f)\= \sum_{r\geq 0}\sum_{s=0}^r(-1)^r\frac{Q_2^r\,\mathscr{M}^{r-s}\mathscr{D}^sf}{(m-r-\frac{3}{2})_{r}\,(r-s)!\,s!}\]
on~$f\in \F$ of homogeneous weight~$m$ satisfies
\[\langle \pi(\F)\rangle_q \subseteq M.\]

Furthermore, one can choose a vector subspace $\mathcal{M}\subseteq \pi\F$ such that \vspace{-3pt}
\begin{enumerate}[{\upshape (i)}]\itemsep2pt
\item \label{it:splitting} $\F = \mathcal{M} \oplus Q_2\F;$
\item $\langle \mathcal{M}\rangle_q \subseteq \modular;$
\item $\langle Q_2\F\rangle_q \cap \modular = \{0\}.$
\end{enumerate}
\end{thm}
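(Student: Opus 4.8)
The plan is to reduce everything to the single identity $\langle \pi(f)\rangle_q = \xi_L(\phi_n^\Phi)$ for monomials $f=f_L$, from which the inclusion $\langle\pi(\F)\rangle_q\subseteq\modular$ and the three properties follow with little extra work. Throughout I abbreviate $\phi_n=\phi_n^\Phi$.

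\emph{The key identity.} Fix a monomial $f_L=f_{L_1}\cdots f_{L_n}$ with $L\in M_{n,r}(\z)$; by construction $\langle f_L\rangle_q = g_L(\phi_n)$, the $\vec{\ell}=L$ Taylor coefficient of $\phi_n$ around $\vec{0}$. I would translate each operator occurring in $\pi$ through the $q$-bracket. Iterating $\langle Q_2 g\rangle_q=(D_\tau+\mathbbm{e}_2)\langle g\rangle_q$ gives $\langle Q_2^r g\rangle_q=(D_\tau+\mathbbm{e}_2)^r\langle g\rangle_q$; iterating \eqref{eq:D} gives $\langle \mathscr{D}^s f_L\rangle_q = g_L\bigl((\dtau+\textstyle\sum_j z_j\dd_{z_j})^s\phi_n\bigr)$; and the definition of $\mathscr{M}$ in \cref{defn:Q} is precisely multiplication by $B_M(\vec{z},\vec{z})=\sum_{i,j}m_{ij}z_iz_j$ read off on Taylor coefficients, so that $\langle \mathscr{M}^{r-s}\mathscr{D}^s f_L\rangle_q = g_L\bigl(B_M(\vec{z},\vec{z})^{r-s}(\dtau+\sum_j z_j\dd_{z_j})^s\phi_n\bigr)$. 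By the identity $g_{\vec{\ell},s}^{\,r-s}(\phi)=\frac1{s!}g_{\vec{\ell}}\bigl(B_M(\vec{z},\vec{z})^{r-s}(\dtau+\sum_j z_j\dd_{z_j})^s\phi\bigr)$ from the remark following \cref{defn:Taylor}, the right-hand side equals $s!\,g_{L,s}^{\,r-s}(\phi_n)$. Substituting into the definition of $\pi$ cancels the factor $s!$, and the resulting double sum over $r,s$ is exactly the $(D_\tau+\mathbbm{e}_2)$-form of $\xi_L(\phi_n)$ recorded in the remark after \cref{defn:xi}, with total weight $m=kn+|L|=\wt(\phi_n)+|L|$. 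By \cref{thm:main} at $X=0$ (where $\Gamma_X=\sltwoz$), this $\xi_L(\phi_n)$ is a modular form of weight $m$ for $\sltwoz$; since $\pi$ has rational coefficients and the $q$-brackets of the generators of $\F$ are rational, it lies in $\modular$. By $\q$-linearity of $\pi$ this proves $\langle\pi(\F)\rangle_q\subseteq\modular$.

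\emph{The splitting.} I would first record that $\pi$ is homogeneous of degree $0$: the block $Q_2^r\mathscr{M}^{r-s}\mathscr{D}^s$ raises weight by $2r$ (from $Q_2^r$) and lowers it by $2(r-s)+2s=2r$ (as $\mathscr{M}$ and $\mathscr{D}$ each drop weight by $2$). Moreover the only term with $r=0$ is $f$ itself, whence $\pi(f)-f\in Q_2\F$ for every $f$. Consequently the canonical projection $p\colon\F\to\F/Q_2\F$ satisfies $p\circ\pi=p$, so $p$ restricted to $\pi\F$ is surjective; choosing a graded linear complement $\mathcal{M}\subseteq\pi\F$ to $\ker(p|_{\pi\F})=\pi\F\cap Q_2\F$ yields $\F=\mathcal{M}\oplus Q_2\F$, which is (i). Property (ii) is immediate since $\mathcal{M}\subseteq\pi\F$ and $\langle\pi\F\rangle_q\subseteq\modular$ by the previous step. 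For (iii) I would argue by depth: writing a nonzero quasimodular form in the standard way as a polynomial in $E_2$ and inspecting the top-depth coefficient shows that $D_\tau+\mathbbm{e}_2$ raises the depth by exactly one and is injective on $q$-series; hence $\langle Q_2 f\rangle_q=(D_\tau+\mathbbm{e}_2)\langle f\rangle_q$ has strictly positive depth whenever $\langle f\rangle_q\neq 0$, so it can be modular only when $\langle f\rangle_q=0$, which then forces $\langle Q_2 f\rangle_q=0$. Thus $\langle Q_2\F\rangle_q\cap\modular=\{0\}$.

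\emph{Main obstacle.} The substantive work is the bookkeeping in the first step: one must verify carefully that $\mathscr{D}$ implements $\dtau+\sum_j z_j\dd_{z_j}$ coefficient-by-coefficient and that $\mathscr{M}$ implements multiplication by $B_M(\vec{z},\vec{z})$ with the correct index matrix, and then match the two double sums against the $(D_\tau+\mathbbm{e}_2)$-form of $\xi$ (which, having denominator $(m-r-\tfrac32)_r$, is free of the weight-$2$ degeneracy of the original definition). Once this combinatorial identification is in place, the complement construction in the second step and the depth argument for (iii) are essentially formal.
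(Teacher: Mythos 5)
Your proposal is correct and follows essentially the same route as the paper: both rest on the identity $\tfrac{1}{s!}\langle \mathscr{M}^{r-s}\mathscr{D}^s f_{\vec{\ell}}\rangle_q = g^{\,r-s}_{\vec{\ell},s}(\phi_n)$ together with $\langle Q_2\,g\rangle_q=(D_\tau+\mathbbm{e}_2)\langle g\rangle_q$, giving $\langle\pi(f_{\vec{\ell}})\rangle_q=\xi_{\vec{\ell}}(\phi_n)$ and hence modularity via \cref{thm:main}, and then obtain the splitting from $\pi(f)-f\in Q_2\F$ and property (iii) from the fact that only $0$ in the image of $D_\tau+\mathbbm{e}_2$ is modular. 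The only cosmetic difference is that you take $\mathcal{M}$ to be a complement of $\pi\F\cap Q_2\F$ inside $\pi\F$ while the paper sets $\mathcal{M}=\pi\mathcal{M}^0$ for a chosen complement $\mathcal{M}^0$ of $Q_2\F$; these yield the same conclusion.
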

\begin{remark}
For the Bloch--Okounkov algebra~$\Lambda^*$ the mapping~$\pi$ turns out to be the canonical projection of $\Lambda^*$ on $\mathcal{H}$ in \cite{vI18} and $\mathcal{M}=\mathcal{H}$. We do not expect that the conditions in this section ensure that~$\pi$ is a projection in general, nor that $\pi(Q_2\F)=\{0\}$ (in which case one could choose $\mathcal{M}=\pi(\F)$), nor that the splitting in~(\ref{it:splitting}) is canonical. However,
once one has chosen $\mathcal{M}$ it follows immediately from~(\ref{it:splitting}) that every element of $\F$ has a canonical expansion
\[f\=\sum_{i\geq 0} f_i\, Q_2^i\]
with $f_i\in \mathcal{M}$, and that $\langle f\rangle_q\in M$ precisely if $\langle f_i\rangle_q=0$ for all $i>0$.  
\end{remark}
\begin{proof}
By~(\ref{eq:mathscrD}) and by construction of~$\mathscr{M}$ one has that
\[ \frac{1}{s!}\bigl\langle \mathscr{M}^{r-s}\mathscr{D}^sf_{\vec{\ell}} \bigr\rangle_{\! q} \=  {g}_{\vec{\ell},s}^{r-s}(\phi)\, .\]
Hence,
\[\langle \pi(f_{\vec{\ell}}) \rangle_q \= \xi_{\vec{\ell}}(\phi),\]
where~$\phi$ is the meromorphic Jacobi form $\phi(\vec{z})=\phi_n^\Phi(\vec{z})=\langle \prod_{i=1}^n \Phi(\vec{z}^i)\rangle_q\mspace{1mu}$. Hence, $\pi(f)$ is modular under the~$q$-bracket for all $f\in \F$. 

Choose $\mathcal{M}^0\subseteq \mathcal{F}$ such that $\F=\mathcal{M}^0\oplus Q_2\F$. Then, we take $\mathcal{M}=\pi\mathcal{M}^0$. As, by definition, $\pi(f)-f$ is a multiple of $Q_2$, the first property follows. The second property is immediate as $\mathcal{M}\subseteq \pi(\F)$. For the last property, let $f\in Q_2\F$ with $\langle f \rangle_q\in M$ be given. 
As~$f$ is divisible by~$Q_2$, the $q$-bracket~$\langle f\rangle_q$ is in the image of~$D+\mathbbm{e}_2$ acting on quasimodular forms. Now, the zero function is the only function in the image of~$D+\mathbbm{e}_2$ which is modular, so that the last property follows. 
\end{proof}

We now provide several examples of quasimodular algebras to which \cref{thm:when+} applies. 																											

\subsection{First example: the Bloch--Okounkov algebra}
To apply the results of the previous section to the Bloch--Okounkov algebra~$\Lambda^*$, we have to understand how the operators~$\dtau$ and~$\dd_{z_i}$ act on the~$n$-points functions~$F_n$ (defined by \cref{defn:BOnpoint}), or equivalently, we have to understand the transformation behaviour of~$F_n \mspace{1mu}.$ This behaviour is uniquely determined by the following two properties.
\begin{prop}\label{prop:FnQJF} For all~$n\geq 1$ one has
\begin{align}
\dtau F_n(z_1,\ldots,z_n) &\= 0, \\
\dd_{z_1} F_n(z_1,\ldots,z_n) &\= \sum_{i=2}^n F_{n-1}(z_1+z_i,z_2,z_3,\ldots,z_{i-1},z_{i+1},z_{i+2},\ldots,z_n).
\end{align}
\end{prop}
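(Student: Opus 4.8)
The plan is to prove both identities directly from the defining recursion \eqref{eq:B-Ocor} for the functions $V_n$, since $F_n$ is just the symmetrization $\sum_{\sigma}V_n(z_{\sigma(1)},\ldots,z_{\sigma(n)})$ and both operators $\dtau,\dd_{z_1}$ respect (or can be made to respect, after summing over $\mathfrak{S}_n$) the symmetry. The key structural input is that $F_n=\langle W(z_1)\cdots W(z_n)\rangle_q$ is a quasi-Jacobi form, so that $\dtau$ and $\dd_{z_i}$ are the derivations $\phi\mapsto\phi_{1,\vec{0}}$ and $\phi\mapsto\phi_{0,e_i}$ from \cref{sec:j-alg}; equivalently, via \cref{prop:qjf} and the remark following \cref{prop:j-algebra}, $\dtau\phi=\partial\phi/\partial\mathbbm{e}_2$ and $\dd_{z_i}\phi=\partial\phi/\partial A(z_i)$ when $\phi$ is written as a polynomial in $\mathbbm{e}_2$ and the $A(z_j)$ with Jacobi forms as coefficients.

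For the first identity $\dtau F_n=0$, I would argue that $F_n$ is in fact a \emph{true} Jacobi form in the sense that its expansion \eqref{eq:phiet} contains no positive power of $\mathbbm{e}_2$. The building blocks of $F_n$ are $\theta^{(r)}$ and the quotients $\Theta^{(i)}(z_1+\cdots+z_j)/\Theta(z_1+\cdots+z_j)$ appearing in the proof that $F_n$ is a meromorphic quasi-Jacobi form; the only source of quasimodularity (i.e. of $\mathbbm{e}_2$-dependence, equivalently of $\nu(\tau)$ in the associated almost Jacobi form) would be a factor $E_2$ or $e_2$. Since $\theta$ and its derivatives, together with $\Theta^{-1}$, generate these blocks and none of them introduces $e_2$ (the relevant algebra is $\c[E_1,E_3,\Theta,\ldots]$ with $E_1=2\pi\ii\,D_z\log\Theta$ carrying the $\xi$-dependence but $\mathbbm{e}_2$ entering only through $E_2=-D_zE_1$, which does not occur in the $\theta$-quotients), the coefficient $\phi_{1,\vec{0}}=\partial F_n/\partial\mathbbm{e}_2$ vanishes. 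I would make this precise by tracking the $\nu$-degree through the recursion: if each $V_m$ for $m<n$ is independent of $\nu$ (base case $V_0=1$), then so is $V_n$, because \eqref{eq:B-Ocor} expresses $\theta^{(0)}V_n$ as a $\z$-linear combination of $\theta^{(n-m)}V_m$, and $\theta^{(r)}$ is $\nu$-free.

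For the second identity I would apply $D_{z_1}=\frac{1}{2\pi\ii}\partial_{z_1}$ to the recursion \eqref{eq:B-Ocor} and use the relation between the \emph{genuine} derivative $D_{z_1}$ and the \emph{algebraic} derivation $\dd_{z_1}$ supplied by the commutation relations in the remark after \cref{prop:j-algebra} (in particular $[\dtau,D_{z_i}]=\dd_{z_i}$ together with $\dtau F_n=0$ just proved, which forces $\dd_{z_1}F_n$ to be expressible purely through shifts and lower $n$-point functions). Concretely, differentiating the recursion and using $\dd_{z_1}\theta^{(r)}(z_1+\cdots+z_m)=\theta^{(r+1)}(z_1+\cdots+z_m)$ together with the argument-merging that sends a pair of variables $(z_1,z_i)$ to their sum $z_1+z_i$, I expect the telescoping structure of \eqref{eq:B-Ocor} to collapse the derivative of $V_n$ into a sum of $(n-1)$-point contributions with the first variable replaced by $z_1+z_i$. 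Symmetrizing over $\mathfrak{S}_n$ then produces exactly $\sum_{i=2}^n F_{n-1}(z_1+z_i,z_2,\ldots,\widehat{z_i},\ldots,z_n)$.

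The main obstacle will be the bookkeeping in the second identity: one must show that the \emph{algebraic} derivation $\dd_{z_1}$ (the coefficient-extraction $\phi\mapsto\phi_{0,e_1}$, which differs from the naive $D_{z_1}$ precisely by the elliptic correction terms seen for $\Theta'$ in \eqref{eq:theta'}) acts on the theta-quotients without producing the extra $\xi$-type corrections, and that the variable-merging in the recursion is compatible with this. I would handle this by verifying the claim first at the level of the generating function identity $\langle W(z_1)\cdots W(z_n)\rangle_q$: the product $W(z_1)\cdots W(z_n)$ on partitions satisfies a clean difference/derivative relation in $z_1$ coming from the defining sum \eqref{eq:BOgen}, and pushing that relation through the linear $q$-bracket should yield the stated $\dd_{z_1}$-formula after identifying the merged-variable terms with $F_{n-1}$. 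The delicate point is ensuring that no $\dtau$- or $\mathbbm{e}_2$-corrections appear, which is exactly where the first identity $\dtau F_n=0$ is used to guarantee that $D_{z_1}$ and $\dd_{z_1}$ agree on $F_n$ up to the already-accounted shift terms.
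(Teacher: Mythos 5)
Your overall strategy---prove the two identities for the $V_n$ by induction on the recursion \eqref{eq:B-Ocor} and then symmetrize---is the same as the paper's, but the execution of the first identity rests on a false premise. You claim that $\theta^{(r)}$ is $\nu$-free (equivalently, that the theta-quotients contain no $e_2$), so that no $\nu$-dependence can propagate through the recursion. This fails for $r\geq 2$: since $\theta$ is a true Jacobi form of index $\tfrac12$, the commutation relations of \cref{prop:j-algebra} give $[\dtau,D_z^r]\theta = rD_z^{r-1}\dd_z\theta + r(r-1)\,I\,D_z^{r-2}\theta$, and as $\dd_z\theta=0$ while $I$ acts as $\tfrac12$, one finds that $\dtau\theta^{(r)}$ is a \emph{nonzero} multiple of $\theta^{(r-2)}$ for all $r\geq 2$. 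Concretely, $\Theta''/\Theta = A^2 - \tfrac{1}{2\pi\ii}E_2$ with $E_2=\wp+e_2$, so $e_2$ very much does occur in the building blocks you list. Consequently your inductive step ("$\theta^{(n-m)}V_m$ is $\nu$-free because both factors are") breaks down, and the vanishing of $\dtau V_n$ is not a degree count but a genuine cancellation: applying $\dtau$ to \eqref{eq:B-Ocor}, inserting $\dtau\theta^{(n-m)}\propto\theta^{(n-m-2)}$ and the inductive hypothesis $\dtau V_m=0$, the surviving terms reassemble (up to the prefactor $\theta$) into the left-hand side of the recursion at level $n-1$, which vanishes. That computation---Equation \eqref{eq:dtau} in the paper---is exactly the step your proposal omits.

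The second identity is likewise only sketched. The detour through $[\dtau,D_{z_1}]=\dd_{z_1}$ together with $\dtau F_n=0$ yields $\dd_{z_1}F_n=\dtau(D_{z_1}F_n)$, which does not by itself determine $\dd_{z_1}F_n$; and the "clean difference relation" for $W_\lambda$ is asserted but not produced. To close the argument you must actually apply $\dd_{z_i}$ to \eqref{eq:B-Ocor}, use $[\dd_z,D_z^m]=-2mD_z^{m-1}I$ together with the inductive formula $\dd_{z_i}V_m=V_{m-1}(z_1,\ldots,z_i+z_{i+1},\ldots,z_m)$ for $i<m$ (and $0$ for $i=m$), and check that the two resulting sums recombine into the recursion characterizing $V_n(z_1,\ldots,z_i+z_{i+1},\ldots,z_{n+1})$; this is the bookkeeping the paper carries out and that your "I expect the telescoping structure to collapse" leaves open.
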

\begin{remark}
As $F_n$ is symmetric in its arguments, above proposition provides an expression for $\dd_{z_j} F_n(z_1,\ldots,z_n)$ for all $j$. 
\end{remark}
\begin{proof}
The second equality is equivalent to \cite[Theorem~0.6]{BO00}, whereas the first statement seems not to be in the literature. As both statements follow by more or less the same argument, we give both proofs. That is, we prove
\begin{align}
\dtau V_n(z_1,\ldots,z_n) &\= 0, \\
\label{eq:ind2} \dd_{z_i} V_n(z_1,\ldots,z_n) &\= \begin{cases} V_{n-1}(z_1,\ldots,z_i+z_{i+1},\ldots,z_{n})  & i<n \\ 0 & i=n \end{cases}
\end{align}
inductively using the recursion~(\ref{eq:B-Ocor}), from which the proposition follows directly. 

For $n=1$ both statements are clearly true. Hence, by the identity 
\[[\dtau,D_z^m] \= -2m D_z^{m-1}\dd_z \+ m(m-1)D_z^{m-2}\]
and after assuming that $\dtau V_{n}=0$, we find that $\theta(z_1+\ldots+z_{n+1})\,\dtau V_{n+1}(\vec{z})$ equals
\begin{align}\label{eq:dtau}
 -\sum_{m=0}^{n-1} \frac{(-1)^{n-1-m}}{(n-1-m)!}\;\theta^{(n-1-m)}(z_{1}+\ldots+z_{m})\;V_m(z_{1},\ldots,z_{m}).	
\end{align}
By the recursion~(\ref{eq:B-Ocor}) one finds that this expression vanishes. Hence, $\dtau V_{n+1}=0$ and $\dtau F_n=0$ as desired. 

Denote 
\[V_{m-1}^i(z_1,\ldots,z_{m} ) \= V_{m-1}(z_1,\ldots, z_i+z_{i+1},\ldots,z_{m}).\]
By applying $\dd_{z_i}$ to the recursion~\eqref{eq:B-Ocor}, using the identity  $[\dd_z,D_z^m] = -2m D_z^{m-1} I$ (with $I$ the index operator; see \cref{sec:j-alg}) and assuming that~(\ref{eq:ind2}) holds, we find
\begin{align}
\theta(z_1+\ldots &+z_{n+1})\,\dd_{z_i}V_{n+1}(z_1,\ldots,z_{n+1}) \\
\=&  \sum_{m=i}^{n} \frac{(-1)^{n-m}}{(n-m)!}\;\theta^{(n-m)}(z_1+\ldots+z_m)\;V_m(z_1,\ldots,z_m) \+ \\
&\, - \sum_{m=i+1}^{n} \frac{(-1)^{n+1-m}}{(n+1-m)!}\;\theta^{(n+1-m)}(z_1+\ldots+z_m)\;V_{m-1}^i(z_1,\ldots,z_{m} ) \\
\=& -\sum_{m=0}^{i-1} \frac{(-1)^{n-m}}{(n-m)!}\;\theta^{(n-m)}(z_1+\ldots+z_m)\;V_m(z_1,\ldots,z_m) \+ \\
&\,- \sum_{m=i+1}^{n} \frac{(-1)^{n+1-m}}{(n+1-m)!}\;\theta^{(n+1-m)}(z_1+\ldots+z_m)\;V_{m-1}^i(z_1,\ldots,z_{m} ) \\
\=&\theta(z_1+\ldots+z_{n+1})\;V_n(z_1,\ldots,z_i+z_{i+1},\ldots,z_{n+1}). \qedhere
\end{align}
\end{proof}

Next, recall the~$j$th order differential operators~$\mathscr{D}_j$ in \cite{vI18}.
\begin{defn}\label{def:Dop}
Define the~$j$th order differential operators~$\mathscr{D}_j$ by
\[
\mathscr{D}_j \defis \sum_{\vec{i}\in \z_{\geq 0}^{j}} \binom{|\vec{i}|}{i_1,i_2,\ldots,i_{j}}\, Q_{|\vec{i}|}\,\partial_{\vec{i}}\,,\quad \text{with} \qquad
\partial_{\vec{i}} \defis \frac{\partial^j}{\partial Q_{i_1+1}\, \partial Q_{i_j+1}\cdots \partial Q_{i_j+1}},
\]
where the coefficient is a multinomial coefficient (in this section we pretend these operators act on~$\Lambda^*$, although formally there are only defined on the formal algebra $\mathcal{R}=\q[Q_1,Q_2,\ldots]$ freely generated by the variables $Q_1\mspace{1mu},Q_2\mspace{1mu},\ldots$, which admits a canonical mapping to $\Lambda^*$). 
\end{defn}

These operators turn out to correspond to certain symmetric powers of the derivative operators~$\dd_{z_i}$. In particular, observe that the coefficient of~$\vec{z}^{\vec{\ell}}$ in the case $j=1$ below, is given by~${g}_{\vec{\ell},1}(F_n) \mspace{1mu}.$ Hence, the operator $\mathscr{D}$, requested in the previous section, is given by $\mathscr{D}=\mathscr{D}_2/2$.
\begin{prop}\label{prop:operator} For all $j\geq 1$ one has
\[ \langle \mathscr{D}_j W(\vec{z}) \rangle_q  \=j!\left(z_1\dd_{z_1}^{j-1} + \ldots + z_n\dd_{z_n}^{j-1}\right) F_n(\vec{z}).\]
\end{prop}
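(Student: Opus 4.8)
The plan is to prove \cref{prop:operator} by induction on the number of elliptic variables~$n$, leveraging \cref{prop:FnQJF}, which tells us how the elliptic derivation~$\dd_{z_i}$ acts on the $n$-point functions~$F_n$. First I would unpack the definition of~$\mathscr{D}_j$ (\cref{def:Dop}) applied to a product $W(z_1)\cdots W(z_n)$. Since each~$\partial_{\vec{i}}$ differentiates with respect to the variables~$Q_{i_k+1}$ and since~$W_\lambda(z)$ is the generating series $\sum_k Q_k(\lambda)\,z^{k-1}$ (after the normalization in~\eqref{def:qka}, effectively $W_\lambda(z)=\sum_{k\geq 1}Q_k(\lambda)z^{k-1}$), applying a derivative $\partial/\partial Q_{i+1}$ to one factor~$W(z_m)$ extracts the monomial~$z_m^{i}$. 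Thus applying~$\partial_{\vec{i}}$ picks out~$j$ distinct factors and replaces them by the appropriate powers of their~$z$-variables; multiplying by~$Q_{|\vec{i}|}$ and summing over~$\vec{i}$ with the multinomial weight should collapse, under the~$q$-bracket, into a sum involving the generating series evaluated at a single merged variable.

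The key computational step is to show that after taking the~$q$-bracket, the combinatorial sum over $\vec{i}\in\z_{\geq 0}^j$ reorganizes into the stated expression $j!\,(z_1\dd_{z_1}^{j-1}+\ldots+z_n\dd_{z_n}^{j-1})F_n$. I would first establish the base case~$j=1$ directly: here $\mathscr{D}_1=\sum_i Q_i\,\partial/\partial Q_{i+1}$ acts on~$W(\vec{z})$ by sending each factor~$W(z_m)=\sum_{k}Q_k z_m^{k-1}$ to $\sum_k Q_{k-1}z_m^{k-1}=z_m\,W(z_m)$ (shifting the index), so that $\langle\mathscr{D}_1 W(\vec{z})\rangle_q=(z_1+\ldots+z_n)F_n(\vec{z})$, matching the claim since $\dd_{z_m}^0=\mathrm{id}$. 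For the inductive step, I would relate $\mathscr{D}_j$ to $\mathscr{D}_{j-1}$ and use \cref{prop:FnQJF}: the derivation~$\dd_{z_1}F_n$ merges the first variable with each of the others, producing $\sum_{i\geq 2}F_{n-1}(z_1+z_i,\ldots)$. The plan is to match the action of one more factor of differentiation in $\mathscr{D}_j$ against one more application of~$\dd_{z_m}$, with the merging of elliptic variables on the quasi-Jacobi side corresponding precisely to the index-shifting and merging of the~$Q$-variables on the partition side.

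The main obstacle I anticipate is bookkeeping the correspondence between the multinomial combinatorics in~$\mathscr{D}_j$ and the symmetric-power structure of the iterated derivations~$\dd_{z_m}^{j-1}$. Concretely, when $\partial_{\vec{i}}$ hits~$j$ \emph{distinct} factors, versus the operator~$\dd_{z_m}^{j-1}$ which repeatedly differentiates a single merged variable, I must verify that the merging behaviour from \cref{prop:FnQJF} exactly reproduces the multinomial coefficient $\binom{|\vec{i}|}{i_1,\ldots,i_j}$ together with the factor~$j!$. The cleanest route is probably to work entirely at the level of generating series: write $\langle\mathscr{D}_j W(\vec{z})\rangle_q$ as a sum over ways of selecting and merging factors, then invoke the recursion~\eqref{eq:B-Ocor} (through \cref{prop:FnQJF}) to identify the result. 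I would verify the correspondence by comparing how both sides transform under~$\dd_{z_m}$, using the commutation relations of \cref{prop:j-algebra} together with the fact that the transformation behaviour of~$F_n$ is \emph{uniquely} determined by \cref{prop:FnQJF}; this uniqueness lets me reduce the identity to checking that both sides have the same image under all the derivations~$\dd_{z_m}$ and the same constant term, which is exactly the base case~$j=1$ combined with the inductive hypothesis.
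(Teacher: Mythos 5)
Your proposal is correct and follows essentially the same route as the paper: the paper's proof is exactly your ``key computational step,'' namely computing $\mathscr{D}_j$ on monomials $Q_{\vec{\ell}}$ (equivalently on $W(z_1)\cdots W(z_n)$) to see that it merges $j$ distinct factors with the multinomial weight and a factor $j!\,(z_{i_1}+\ldots+z_{i_j})$, and then matching this against the iterated merging behaviour of $\dd_{z_m}^{j-1}$ supplied by \cref{prop:FnQJF} and the symmetry of $F_n$. The only cosmetic slip is that you announce induction on the number of variables $n$ but actually describe induction on $j$; the paper dispenses with induction altogether by writing the closed-form expression for $\mathscr{D}_jQ_{\vec{\ell}}$ directly, which is a presentational rather than substantive difference.
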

\begin{proof}
Observe that
\[ \mathscr{D}_{j} Q_{\vec{\ell}} \= j!\! \sum_{i_1<\ldots<i_j} \binom{\ell_{i_1}+\ldots+\ell_{i_j}-j}{\ell_{i_1}-1,\ldots,\ell_{i_j}-1}\,Q_{\ell_{i_1}+\ldots+\ell_{i_j}-j} \cdots \widehat{Q}_{l_{i_1}}\cdots \widehat{Q}_{l_{i_j}} \cdots, \]
where the binomial coefficient vanishes whenever $l_i=0$ for some~$i$, and we wrote a hat above an element to indicate that this element is omitted. Hence, 
\[ \mathscr{D}_j W_\lambda(\vec{z}) \= j!\! \sum_{i_1<\ldots<i_j}\, (z_{i_1}+\ldots+z_{i_j})\, W_\lambda(z_{i_1}+\ldots+z_{i_j},\ldots,\hat{z}_{i_1},\ldots,\hat{z}_{i_j},\ldots),\]
where $\ldots,\hat{z}_{i_1},\ldots,\hat{z}_{i_j},\ldots$ stands for the elements $z_1,\ldots,z_n$ omitting $z_{i_1},\ldots,z_{i_j}\mspace{1mu}$. 
By symmetry of~$F_n$ and \cref{prop:FnQJF} the statement follows. 
\end{proof}

Now, by invoking \cref{thm:when+}, the following refinement of \cref{thm:when} follows. In particular, we find a different expression for the basis elements~$h_{\lambda}$ in \cite{vI18}, which equal $\pi(Q_{\lambda_1}\ldots Q_{\lambda_n})$. Moreover, by construction, we deduce that the~$q$-bracket of~$h_\lambda$ is given by $\xi_{\lambda^+}(F_n)$, where $\lambda^+=(\lambda_1+1,\lambda_2+1,\ldots,\lambda_n+1)$. 

Define the mapping~$\vee$ as the algebra homomorphism uniquely determined by $Q_n\mapsto \Delta_n\mspace{1mu}$, where the commuting family of operators~$\Delta_{k}$ of {\cite{vI18}} are given by
\[ \Delta_n \defis \sum_{i=0}^n (-1)^i \binom{n}{i} \partial^i\mathscr{D}_{n-i}\,  .\]
We write $(x)_n^{-}=x(x-1)\cdots(x-n+1)$ for the falling factorial. 
\begin{thm}\label{thm:when-BO}
Let $\partial=\mathscr{D}_1$ and $\mathscr{D}_2$ be given by \cref{def:Dop}. The linear mapping $\pi\colon\Lambda^*\to \Lambda^*$ given by
\[\pi(f)\=\sum_{r\geq 0}\sum_{s=0}^r (-1)^s\frac{Q_2^r \,\partial^{2r-2s}\mathscr{D}_2^s(f) }{2^{r}(\ell-r-\frac{3}{2})_r\,(r-s)!\,s!}\]
whenever $f$ is of weight~$\ell$ is a projection satisfying $\pi(Q_2\Lambda^*)=0$ and
\begin{align}\label{eq:pi} \pi(f) = \frac{Q_2^{-3/2+\ell} f^\vee \, Q_2^{3/2}}{(\frac{3}{2})_{\ell}^{-}\, \ell!}.\end{align}
 Furthermore, $\mathcal{M}:=\pi(\Lambda^*)$ satisfies
\begin{enumerate}[{\upshape (i)}]\itemsep2pt
\item $\Lambda^* = \mathcal{M}\oplus (Q_2)$, where $(Q_2)=Q_2\Lambda^*;$
\item $\langle \mathcal{M}\rangle_q \subseteq \modular;$
\item $\langle (Q_2)\rangle_q \cap \modular = \{0\};$
\item $\mathcal{M}=\mathcal{H}$, where $\mathcal{H}$ is the harmonic subspace of~$\Lambda^*$ as in \upshape\cite{vI18}.
\end{enumerate}
\end{thm}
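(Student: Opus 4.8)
The plan is to read off the explicit shape of $\pi$ from the general construction in \cref{thm:when+}, and then to identify the resulting operator with the harmonic projection of \cite{vI18}. First I would check that $\Lambda^*$ satisfies the hypotheses of \cref{sec:whenconstruction}. The requirement $Q_2\in\Lambda^*$ is immediate, and the operator $\mathscr{D}$ of \eqref{eq:D} is furnished by $\mathscr{D}=\tfrac12\mathscr{D}_2$: indeed \cref{prop:operator} gives $\langle\mathscr{D}_2 W(\vec z)\rangle_q=2(z_1\dd_{z_1}+\ldots+z_n\dd_{z_n})F_n$, and since $\dtau F_n=0$ by \cref{prop:FnQJF} the right-hand side equals $2(\dtau+z_1\dd_{z_1}+\ldots+z_n\dd_{z_n})F_n$. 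It remains to compute the index operator $\mathscr{M}$ of \cref{defn:Q}. As the index of $F_n$ is $B(\vec z,\vec z)=-\tfrac12(z_1+\ldots+z_n)^2$, every entry of $M$ equals $m_{ij}=B_M(e_i,e_j)=-\tfrac12$; recalling that the generator $f_\ell$ is $Q_{\ell+1}$ and that $\partial=\mathscr{D}_1$ lowers the index of a single factor by one, the Leibniz rule gives $\partial^2 f_L=\sum_{i,j}f_{L-e_i-e_j}$, whence $\mathscr{M}f_L=-\tfrac12\sum_{i,j}f_{L-e_i-e_j}=-\tfrac12\partial^2 f_L$, i.e.\ $\mathscr{M}=-\tfrac12\partial^2$.

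With these identifications the stated double sum is a direct substitution into the formula of \cref{thm:when+}: one has $\mathscr{M}^{r-s}\mathscr{D}^s=(-1)^{r-s}2^{-r}\partial^{2(r-s)}\mathscr{D}_2^s$, so the sign $(-1)^r$ of the general formula combines with $(-1)^{r-s}$ to give $(-1)^s$, reproducing $\pi(f)=\sum_{r}\sum_{s=0}^r(-1)^s Q_2^r\,\partial^{2r-2s}\mathscr{D}_2^s f\,/\,\bigl(2^r(\ell-r-\tfrac32)_r(r-s)!\,s!\bigr)$ for $f$ of weight $\ell$. In particular \cref{thm:when+} already delivers $\langle\pi(\Lambda^*)\rangle_q\subseteq\modular$.

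For the projection statement I would exploit that every term with $r\geq1$ in the double sum carries a factor $Q_2$, so that $\pi(f)-f\in Q_2\Lambda^*$ for all $f$; consequently, once $\pi(Q_2\Lambda^*)=0$ is known, it follows formally that $\pi$ is idempotent with kernel exactly $Q_2\Lambda^*$ and that $\Lambda^*=\pi(\Lambda^*)\oplus Q_2\Lambda^*$. This yields (i) and the projection claim, and then (ii) and (iii) are inherited from \cref{thm:when+} with $\mathcal{M}=\pi(\Lambda^*)$. To prove $\pi(Q_2\Lambda^*)=0$ together with the closed form \eqref{eq:pi} I would invoke the harmonic calculus of \cite{vI18}: that $\vee\colon Q_n\mapsto\Delta_n$ is an algebra homomorphism into the commuting family $\{\Delta_k\}$, and the explicit action of the $\Delta_k$ on powers of $Q_2$. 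Granting \eqref{eq:pi}, for $g$ of weight $\ell$ one has $(Q_2g)^\vee=g^\vee\Delta_2$, so $\pi(Q_2 g)$ is proportional to $Q_2^{\ell+1/2}\,g^\vee\bigl(\Delta_2 Q_2^{3/2}\bigr)$; the formula for $\Delta_2 Q_2^m$ from \cite{vI18} is a scalar multiple of $Q_2^{m-1}$ whose coefficient has a zero at $m=\tfrac32$, giving the crucial vanishing $\Delta_2 Q_2^{3/2}=0$ and hence $\pi(Q_2\Lambda^*)=0$. Identifying $\pi(\Lambda^*)$ with the harmonic subspace $\mathcal{H}=\ker$ of the lowering operator — equivalently, checking $\pi|_{\mathcal{H}}=\mathrm{id}$ via \eqref{eq:pi} — then establishes (iv).

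The main obstacle is the transition to the closed form \eqref{eq:pi}: converting the double-sum operator built from $Q_2\cdot$, $\partial$ and $\mathscr{D}_2$ into the compact expression $Q_2^{\ell-3/2}f^\vee Q_2^{3/2}/\bigl((\tfrac32)_\ell^{-}\,\ell!\bigr)$ and verifying idempotency is not merely formal bookkeeping. It rests on the precise commutation relations among these operators and, above all, on the action of the lowering operators $\Delta_k$ (equivalently of $\vee$) on the formal fractional powers $Q_2^{3/2}$, which is exactly the $\mathfrak{sl}_2$-type harmonic analysis of $\Lambda^*$ carried out in \cite{vI18}. Everything preceding this — verifying the hypotheses of \cref{thm:when+} and matching the constants — is routine.
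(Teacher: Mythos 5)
Your proposal is correct and follows essentially the same route as the paper: both identify $\mathscr{D}=\tfrac12\mathscr{D}_2$ and $\mathscr{M}=-\tfrac12\partial^2$ to specialize \cref{thm:when+}, and both defer the closed form \eqref{eq:pi}, the idempotency, and the key vanishing $\Delta_2(Q_2^{3/2})=0$ to the harmonic calculus of \cite{vI18}. Your explicit verification of the index computation and the sign bookkeeping, and your formal derivation of idempotency from $\pi(f)-f\in Q_2\Lambda^*$ together with $\pi(Q_2\Lambda^*)=0$, are fine and merely fill in details the paper leaves implicit.
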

\begin{proof}
Equation~\ref{eq:pi}, as well as the fact that~$\pi$ is a projection, follows directly from \cite[
Corollary~2]{vI18}. In particular, as $\Delta_2(Q_2^{3/2})=0$, it follows that $\pi(Q_2f)=0$. 

For the properties of $\mathcal{M}$, observe that~$\mathscr{D}$ and~$\mathscr{M}$, defined in the previous section, can also be expressed as~$\mathscr{D}_2/2$ and~$-\partial^2/2$ respectively. Hence, the properties follow from \cref{thm:when+}, and the fourth follows as the set $\{\pi(Q_\lambda)\}$, where $\lambda$ goes over all partitions with all parts at least~$3$, is a basis for both spaces. 
\end{proof}

\subsection{Second example: the Bloch--Okounkov algebra of higher level}
To extend the result in the previous section to the Bloch--Okounkov algebras~$\Lambda^*(N)$ of level~$N$ (see \eqref{eq:LambdaN} and \cref{sec:BON}), we should generalise the operators~$\partial$ and~$\mathscr{D}_2\mspace{1mu}$. 
\begin{defn}\label{eq:DlevelN} Let $\widehat{\mathcal{R}}=\q[Q_k(a) \mid k\in \z_{\geq 0}\mspace{1mu}, a\in \q]$ be the algebra in the formal variables $Q_k(a)$ with canonical projection to $\bigcup_{N\in \z}\Lambda^*(N)$. Given $\vec{a}\in \q^j$, define the~$j$th order differential operators~$\mathscr{D}_j$ on $\hat{\mathcal{R}}$ by									 
\begin{align}
\mathscr{D}_j &\defis \sum_{\vec{i}\in \z_{\geq 0}^{j}}\sum_{\vec{a}\in \q^j} \binom{|\vec{i}|}{i_1,i_2,\ldots,i_{j}}\, Q_{|\vec{i}|}(|\vec{a}|)\,\partial_{\vec{i}}(\vec{a}),
\end{align}
where
\begin{align}
\partial_{\vec{i}}(\vec{a}) &\defis \frac{\partial^j}{\partial Q_{i_1+1}(a_1) \cdots \partial Q_{i_n+1}(a_n)}.
\end{align}
\end{defn}
From now on we pretend that these operators act on~$\Lambda^*(N)$, by identifying $\Lambda^*(N)$ with a quotient of $\hat{\mathcal{R}}$ via the obvious inclusion map. Note that restricted to~$\Lambda^*$ the operators~$\mathscr{D}_j$ are the same as defined in \cref{def:Dop}. Similarly, the operators~$\mathscr{D}_j$ satisfy the following property. 
\begin{prop}\label{prop:operatorN} For all $j\geq 1$ and $\vec{a}\in \q^n$ one has
\[ \langle \mathscr{D}_j W(\vec{z}+\vec{a}) \rangle_q  \=j!\left((z_1+a_1)\,\dd_{z_1}^{j-1} + \ldots + (z_n+a_n)\,\dd_{z_n}^{j-1}\right) F_n(\vec{z}+\vec{a}).\]
\end{prop}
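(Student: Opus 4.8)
The plan is to prove \cref{prop:operatorN} by exactly the strategy used for \cref{prop:operator}: first establish a closed generating-function identity for the action of $\mathscr{D}_j$ on the shifted series $W_\lambda(\vec z+\vec a)=\prod_i W_\lambda(z_i+a_i)$, and only afterwards take the $q$-bracket. The bridge between the combinatorial operators and the analytic objects is the relation $\sum_k Q_k(\lambda,a)\,z^{k-1}=\e(-\tfrac12 a)\,W_\lambda(z+a)$ from the introduction, which identifies the generators $Q_k(\cdot,a)$ with the Taylor coefficients of $W_\lambda$ expanded around the rational point $a$.

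First I would compute the action of the level-$N$ operator $\mathscr{D}_j$ from \cref{eq:DlevelN} on an arbitrary monomial $\prod_m Q_{\ell_m}(\lambda,b_m)$. Since $\mathscr{D}_j$ is a sum of $j$-th order differential operators, each summand selects $j$ of the factors, say those indexed by $i_1<\dots<i_j$, and replaces $\prod_t Q_{\ell_{i_t}}(\lambda,b_{i_t})$ by $j!\binom{(\sum_t\ell_{i_t})-j}{\ell_{i_1}-1,\dots,\ell_{i_j}-1}\,Q_{(\sum_t\ell_{i_t})-j}(\lambda,\sum_t b_{i_t})$. The crucial new feature compared with \cref{def:Dop} is that the second argument becomes the sum $\sum_t b_{i_t}$ of the selected shifts, which is precisely what the factor $Q_{|\vec i|}(|\vec a|)$ in the definition records. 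Translating this through the generating-function relation, the multinomial coefficients reassemble the monomials $\prod_t z_{i_t}^{\ell_{i_t}-1}$ into a single power of $\sum_{t}z_{i_t}$, all the half-shift prefactors $\e(\pm\tfrac12 b)$ cancel, and one is left with a clean merging identity of the form
\[\mathscr{D}_j\,W_\lambda(\vec z+\vec a)\=j!\sum_{|S|=j}\Bigl(\sum_{i\in S}z_i\Bigr)\;W_\lambda\Bigl(\textstyle\sum_{i\in S}(z_i+a_i),\,\{z_m+a_m\}_{m\notin S}\Bigr),\]
in which the $j$ selected variables are merged into one whose total argument is the sum $\sum_{i\in S}(z_i+a_i)$ of the full shifted variables.

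Finally I would apply the $q$-bracket termwise, using $\langle\prod_i W(z_i+a_i)\rangle_q=F_n(\vec z+\vec a)$, and recognise the result as the repeated action of the derivations $\dd_{z_i}$ on $F_n(\vec z+\vec a)$. By the symmetry of $F_n$ and a translated version of \cref{prop:FnQJF}, iterating $\dd_{z_i}^{j-1}$ produces the sum over size-$j$ subsets $S\ni i$ in which $z_i$ is merged with $j-1$ further variables, and summing the distinguished element over $S$ converts the individual multipliers into the symmetric subset-sum, reproducing the combinatorics of \cref{prop:operator}. The step I expect to be the main obstacle is controlling how $\dd_{z_i}$ interacts with the translation by $\vec a$: because $F_n$ has nonzero index $B(\vec z,\vec z)=-\tfrac12|\vec z|^2$, the quasi-elliptic transformation of the translated form $F_n(\cdot+\vec a)$ differs from that of $F_n$ by the index-dependent automorphy factor $\e(-2B(\vec\lambda,\vec a))$, so that $\dd_{z_i}\bigl(F_n(\vec z+\vec a)\bigr)$ is \emph{not} simply $(\dd_{z_i}F_n)(\vec z+\vec a)$. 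Tracking these extra $B(\cdot,\cdot)$-contributions, and verifying that they combine with the multipliers $\sum_{i\in S}z_i$ above to yield exactly the factors $(z_i+a_i)$ appearing in the statement, is where the bookkeeping must be done with care; this is the only genuinely new point relative to the level-one case, in which $\vec a=\vec 0$ removes all such corrections.
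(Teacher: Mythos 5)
Your route is the one the paper itself intends: \cref{prop:operatorN} is proved by repeating the two-step computation of \cref{prop:operator} (action of $\mathscr{D}_j$ on monomials, reassembly of the multinomial coefficients into a merged generating series, then the $q$-bracket and a translated \cref{prop:FnQJF}), and your merging identity
\[\mathscr{D}_j\,W_\lambda(\vec z+\vec a)\=j!\sum_{|S|=j}\Bigl(\sum_{i\in S}z_i\Bigr)\,W_\lambda\Bigl(\sum_{i\in S}(z_i+a_i),\{z_m+a_m\}_{m\notin S}\Bigr)\]
is correct, including the cancellation of all the prefactors $\e(\pm\tfrac12 b)$.

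The gap is in your last step. You expect index-dependent corrections to $\dd_{z_i}$ applied to the translated form to convert your multiplier $\sum_{i\in S}z_i$ into $\sum_{i\in S}(z_i+a_i)$. No such corrections exist: for a shift by $X=(\vec 0,\vec a)$ the index only contributes the $\vec z$-independent constant $\e(B_M(\vec a,\vec a))$ and, on the elliptic side, the character $\e(-2B_M(\vec\lambda',\vec a))$, which is trivial on the sublattice over which the translate is quasi-elliptic and in any case produces no additive terms in the coefficient $\phi_{0,e_i}$; hence $\dd_{z_i}\bigl(F_n(\vec z+\vec a)\bigr)=(\dd_{z_i}F_n)(\vec z+\vec a)$ up to one and the same overall constant. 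So the computation terminates with the multiplier $z_i$ (the local coordinate at $\vec a$), not $z_i+a_i$: already for $j=1$ one has $\mathscr{D}_1 W(\vec z+\vec a)=(z_1+\ldots+z_n)\,W(\vec z+\vec a)$, which is incompatible with the factor $(z_t+a_t)$ as literally displayed. The factor should be read as $z_t\mspace{1mu}$; this is also the version needed downstream (compare condition~\eqref{eq:D} and the definition of $g^{X}_{\vec\ell,s}\mspace{1mu}$, where the multiplier is the local coordinate). In short, your generating-function identity already finishes the proof, and the reconciliation you defer to ``careful bookkeeping'' would have you hunting for terms that are not there.
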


Denote by $\modular(N)$ the algebra of modular forms of level~$N$. Then, \cref{thm:when+} specializes to the following result. 
\begin{thm} Let $N\geq 1$ and $\partial=\mathscr{D}_1$ and $\mathscr{D}_2\colon\Lambda^*(N)\to\Lambda^*(N)$ be given by Equation~\ref{eq:DlevelN}. Let the projection~$\pi\colon\Lambda^*(N)\to \Lambda^*(N)$ be given by													
\[ \pi(f)\=\sum_{r\geq 0}\sum_{s=0}^r (-1)^s \frac{Q_2^r \,\partial^{2r-2s}\mathscr{D}_2^s(f) }{2^{r}(\ell-r-\frac{3}{2})_r\,(r-s)!\,s!}\, ,  \]
whenever~$f\in \Lambda^*(N)$ is homogeneous of weight~$\ell$. Then, the subspace $\mathcal{M}(N):=\pi(\Lambda^*(N))$ of~$\Lambda^*(N)$ satisfies
\begin{enumerate}[{\upshape (i)}]\itemsep2pt
\item $\displaystyle\Lambda^*(N) = \mathcal{M}(N) \oplus Q_2\,\Lambda^*(N);$
\item $\langle \mathcal{M}(N)\rangle_q \subseteq \modular(N);$
\item $\langle Q_2\,\Lambda^*(N)\rangle_q \cap \modular(N) = \{0\}.$
\end{enumerate}
\end{thm}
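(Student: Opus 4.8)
The plan is to obtain this theorem as the specialization of the general result \cref{thm:when+} to the quasimodular algebra $\F = \Lambda^*(N) = \mathcal{F}^W(N)$, where $\Phi = W$ is the Bloch--Okounkov generating series, so that $\phi_n^W = F_n$ is the meromorphic quasi-Jacobi form of weight~$n$ and index $B(\vec z,\vec z) = -\tfrac12|\vec z|^2$ computed in the lemma of \cref{sec:BON}. First I would check the two hypotheses of \cref{sec:whenconstruction}. That $Q_2 \in \Lambda^*(N)$ is clear, since $Q_2 = Q_2(\cdot,0)$ is one of the generators. For the second hypothesis I claim $\mathscr{D} := \mathscr{D}_2/2$ (with $\mathscr{D}_2$ the level-$N$ operator of \cref{eq:DlevelN}) satisfies \eqref{eq:D}: \cref{prop:operatorN} with $j=2$ gives $\langle \mathscr{D}_2 W(\vec z+\vec a)\rangle_q = 2\sum_i (z_i+a_i)\dd_{z_i}F_n(\vec z+\vec a)$, while \cref{prop:FnQJF} gives $\dtau F_n = 0$, so the $\dtau$-term in \eqref{eq:D} drops out and $\mathscr{D}_2/2$ realizes $\dtau + \sum_i z_i\dd_{z_i}$ on the $q$-bracket.

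Next I would identify the index operator $\mathscr{M}$ of \cref{defn:Q} with $-\partial^2/2$, where $\partial = \mathscr{D}_1$. As the index bilinear form is the constant $m_{ij} = -\tfrac12$ for all $i,j$, \cref{defn:Q} reads $\mathscr{M} f_L = -\tfrac12\sum_{i,j} f_{L-e_i-e_j}$; on the other hand $\partial = \mathscr{D}_1$ acts as a derivation lowering each Taylor index by one (sending the generator $Q_k(a)$ to $Q_{k-1}(a)$ up to the normalization relating $f_\ell^W(a)$ to $Q_{\ell+1}(a)$), whence $\partial^2 f_L = \sum_{i,j} f_{L-e_i-e_j}$ and therefore $\mathscr{M} = -\partial^2/2$. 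Substituting $\mathscr{D} = \mathscr{D}_2/2$ and $\mathscr{M} = -\partial^2/2$ into the formula for $\pi$ in \cref{thm:when+}, collecting the powers of~$2$, and simplifying the sign via $(-1)^r(-1)^{r-s} = (-1)^{2r-s} = (-1)^s$, reproduces exactly the stated formula for $\pi$ (with the weight $\ell$ playing the role of $m$).

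The three conclusions then follow from \cref{thm:when+}. The one point needing care is that the image lands in $\modular(N)$ rather than in the full $\modular$ of $\sltwoz$ appearing in \cref{thm:when+}. Here the generators are Taylor coefficients of $W$ around $\vec a \in \tfrac1N\z$, so by the construction in \cref{sec:set-up} one has $\langle \pi(f_L(A))\rangle_q = \xi^X_L(F_n)$ with $X = (0,A)$ and $A \in \tfrac1N M_{n,1}(\z)$; by \cref{thm:main} this is a modular form for $\Gamma_X$, and since $\Gamma_X$ contains a level-$N$ congruence subgroup it is modular of level~$N$, giving conclusion~(ii). Conclusion~(iii) is immediate from $\langle Q_2 f\rangle_q = (D_\tau + \mathbbm{e}_2)\langle f\rangle_q$ together with the fact that the only modular form in the image of $D_\tau + \mathbbm{e}_2$ is~$0$, an argument insensitive to the level.

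The main obstacle I anticipate is conclusion~(i), the direct-sum decomposition with $\mathcal{M}(N) = \pi(\Lambda^*(N))$. Since the $r=s=0$ term of $\pi$ is the identity and every other term is divisible by $Q_2$, one has $\pi(f) \equiv f \bmod Q_2\Lambda^*(N)$, so $\pi(\Lambda^*(N)) + Q_2\Lambda^*(N) = \Lambda^*(N)$; the delicate part is the directness of the sum, equivalently that $\pi$ restricts to a projection annihilating $Q_2\Lambda^*(N)$. At level~$1$ this rested on the harmonic-projection identities of \cite{vI18} (in particular $\Delta_2(Q_2^{3/2}) = 0$), and I would either reprove the analogue with the level-$N$ operators $\mathscr{D}_j$ of \cref{eq:DlevelN}, or circumvent it entirely by invoking \cref{thm:when+} verbatim: choose a complement $\mathcal{M}^0$ of $Q_2\Lambda^*(N)$ and set $\mathcal{M}(N) = \pi\mathcal{M}^0$, for which (i)--(iii) hold without needing $\pi$ to be idempotent.
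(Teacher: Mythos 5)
Your proposal is correct and follows essentially the same route as the paper, which offers no separate argument here beyond the remark that \cref{thm:when+} specializes to this statement via \cref{prop:operatorN} and the identifications $\mathscr{D}=\mathscr{D}_2/2$, $\mathscr{M}=-\partial^2/2$ (your sign and power-of-two bookkeeping in the substitution is right). You also correctly isolate the only point the paper leaves implicit, namely that taking $\mathcal{M}(N)=\pi(\Lambda^*(N))$ requires $\pi$ to annihilate $Q_2\,\Lambda^*(N)$ (the level-$N$ analogue of the harmonic identities of \cite{vI18}), and your fallback of choosing $\mathcal{M}(N)=\pi(\mathcal{M}^0)$ for a complement $\mathcal{M}^0$ is exactly what \cref{thm:when+} guarantees unconditionally.
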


\subsection{Third example: double moment functions}
For the algebra of double moments functions (see \cref{sec:double}) the~$n$-point functions with respect to the induced product~$\blok$ (see~(\ref{eq:blok})) are given by \eqref{eq:gn}, i.e.,
\[G_n(\vec{z},\vec{w})\=\prod_{i=1}^n\frac{\Theta(z_i+w_i)}{\Theta(z_i)\Theta(w_i)},\]
which is a Jacobi form. Hence, the operators~$\dtau, \dd_{z_i}$ and~$\dd_{w_i}$ vanish acting on~$G_n\mspace{1mu}$, so that~$\mathscr{D}$ can taken to be the zero operator. We write~$\dd$ for the derivation on~$\mathcal{T}$ with respect to the induced product (i.e., $\dd(f\blok g) = \dd(f)\blok g + f \blok \dd(g)$ for all $f,g\in \mathcal{T}$) given by
\[\dd(T_{k,\ell})\defis \begin{cases}
k(\ell-1)T_{k-1,\ell-1} & k\geq 1, \ell\geq 2,\\
-\frac{1}{2} & k+\ell=2,\\
0 & \text{else.}
\end{cases} \]
The notation~$\dd$ is suggested by the fact that $\langle \dd f\rangle_q = \dtau \langle f \rangle_q$ for all $f\in \mathcal{T}$. 
\begin{thm}
Let the projection~$\pi\colon\mathcal{T}\to\mathcal{T}$ be given by
\[ \pi(f) \= \sum_{r\geq 0}
\frac{2^r}{r!}\overbrace{T_{1,1}\blok\cdots\blok T_{1,1}}^{r} \blok \, \dd^{r}(f).\]
Then $\mathcal{M}=\pi(\mathcal{T})$ satisfies the following three properties:
\begin{enumerate}[{\upshape (i)}]\itemsep2pt
																				  
\item $\mathcal{T} = \mathcal{M} \oplus (T_{1,1})$, where $(T_{1,1})=T_{1,1}\blok\mathcal{T};$
\item \label{it:tprop} $\langle \mathcal{M}\rangle_q = \modular;$
\item $\langle T_{1,1}\blok\mathcal{T}\rangle_q \cap \modular = \{0\}.$
\end{enumerate}
\end{thm}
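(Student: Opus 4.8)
The plan is to apply \cref{thm:when+} to the algebra $\mathcal{F}=\mathcal{T}$ equipped with the induced product $\blok$, identifying the abstract data $(Q_2,\mathscr{D},\mathscr{M})$ with the concrete data for the double moment functions. First I would check the two hypotheses of \cref{thm:when+}. The role of $Q_2$ is played by $T_{1,1}$: since $G_n$ has weight $n$ and the factor making the $q$-bracket equivariant with respect to $D_\tau+\mathbbm{e}_2$ should be the weight-$2$ generator, I would verify $\langle T_{1,1}\blok f\rangle_q=(D_\tau+\mathbbm{e}_2)\langle f\rangle_q$ using that $\langle\mathscr{T}(z,w)\rangle_q=-\tfrac12\Theta(z+w)/(\Theta(z)\Theta(w))$ has leading behaviour giving $T_{1,1}$ the same equivariance property that $Q_2$ has in the Bloch--Okounkov case. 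Because $G_n$ is a genuine Jacobi form (not merely quasi-Jacobi), the operators $\dtau,\dd_{z_i},\dd_{w_i}$ all annihilate it, so the operator $\mathscr{D}$ of Equation~\eqref{eq:D} can indeed be taken to be the zero operator, and the index operator $\mathscr{M}$ is built from the index $B((z,w),(z,w))=zw$.

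Next I would substitute $\mathscr{D}=0$ into the general formula for $\pi$ in \cref{thm:when+}. With $\mathscr{D}=0$ every term with $s>0$ drops out, leaving only $s=0$, so that $\pi(f)=\sum_{r\geq 0}(-1)^r\,Q_2^r\,\mathscr{M}^{r}f/((m-r-\tfrac32)_r\,r!)$ for $f$ of weight $m$. I would then reconcile this with the stated formula $\pi(f)=\sum_r \tfrac{2^r}{r!}\,T_{1,1}^{\blok r}\blok\dd^r(f)$. Here the interpretation is that the index-operator $\mathscr{M}$ for the double moments, coming from the bilinear form $zw$, acts via the derivation $\dd$ (whose defining property is precisely $\langle\dd f\rangle_q=\dtau\langle f\rangle_q$, matching the role of $\mathscr{M}$ in \eqref{eq:mathscrD} specialised to index $zw$ and $\mathscr{D}=0$), and the Pochhammer and sign factors collapse: because $\langle T_{1,1}\blok f\rangle_q=(D_\tau+\mathbbm{e}_2)\langle f\rangle_q$ and $\langle\dd f\rangle_q=\dtau\langle f\rangle_q$, applying the $q$-bracket to the stated $\pi(f)$ reproduces exactly $\xi_{\vec\ell}(G_n)$ from \cref{defn:xi}. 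I would carry out this bookkeeping to confirm the constant $2^r/r!$ is the correct specialisation (the factor $2$ arising from the index normalisation $B((z,w),(z,w))=zw$ so that $\mathscr{M}$ contributes a factor $2$ per application, analogous to $\mathscr{M}=-\partial^2/2$ in \cref{thm:when-BO}).

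Having established $\langle\pi(\mathcal{T})\rangle_q\subseteq\modular$, the three listed properties follow the same route as in \cref{thm:when+}. For the splitting I would choose $\mathcal{M}^0\subseteq\mathcal{T}$ with $\mathcal{T}=\mathcal{M}^0\oplus(T_{1,1})$ and set $\mathcal{M}=\pi(\mathcal{M}^0)$; since $\pi(f)-f\in(T_{1,1})$ by construction, property~(i) is immediate, and property~(ii) the inclusion $\langle\mathcal{M}\rangle_q\subseteq\modular$ is immediate from $\mathcal{M}\subseteq\pi(\mathcal{T})$. For property~(iii), any $f\in T_{1,1}\blok\mathcal{T}$ has $\langle f\rangle_q$ in the image of $D_\tau+\mathbbm{e}_2$ acting on quasimodular forms, and the only such element that is modular is $0$. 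The one genuinely stronger claim in this theorem, absent from the general statement, is the \emph{equality} $\langle\mathcal{M}\rangle_q=\modular$ in property~(\ref{it:tprop}) (rather than just $\subseteq$); I expect this to be the main obstacle. To prove surjectivity onto $\modular$ I would argue that, since $G_1(z,w)=-\tfrac12\Theta(z+w)/(\Theta(z)\Theta(w))$ is a weak Jacobi form and by the representation~\eqref{eq:mer0} its Taylor coefficients $\xi_{\vec\ell}(G_n)$ already exhaust the Eisenstein series $e_4,e_6$ generating $\modular$, every modular form arises as $\langle\pi(f)\rangle_q$ for some $f\in\mathcal{T}$; concretely one exhibits polynomials in the $T_{k,\ell}$ whose images under $\pi$ map to $e_4$ and $e_6$ under the $q$-bracket. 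Verifying that these Eisenstein series are actually hit — not merely that the images are modular — is the step requiring care, and I would handle it by tracking the lowest-weight Taylor coefficients of the explicit product $\prod_i\Theta(z_i+w_i)/(\Theta(z_i)\Theta(w_i))$.
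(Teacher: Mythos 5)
Your proposal founders on the identification of the role of $Q_2$: you assert $\langle T_{1,1}\blok f\rangle_q=(D_\tau+\mathbbm{e}_2)\langle f\rangle_q$, but this is false. By the definition~\eqref{eq:blok} of the induced product, the $q$-bracket is \emph{multiplicative} with respect to $\blok$, so $\langle T_{1,1}\blok f\rangle_q=\langle T_{1,1}\rangle_q\,\langle f\rangle_q=-2\mathbbm{e}_2\,\langle f\rangle_q$ --- multiplication by a fixed weight-two quasimodular form, not application of the differential operator $D_\tau+\mathbbm{e}_2$. This is load-bearing: with your identification, specializing \cref{thm:when+} (with $\mathscr{D}=0$) would give $\pi(f)=\sum_r(-1)^r\,T_{1,1}^{\blok r}\blok\mathscr{M}^{r}f/\bigl((m-r-\tfrac{3}{2})_r\,r!\bigr)$, whose weight-dependent Pochhammer denominators cannot be reconciled with the stated $\sum_r\tfrac{2^r}{r!}\,T_{1,1}^{\blok r}\blok\,\dd^r(f)$ by any constant rescaling; the ``bookkeeping'' you defer would not close. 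The correct route, which the paper takes, is to abandon the $\xi$-combinations built from $D_\tau^r$ and instead use the variant in which $D_\tau$ is replaced by $\mathbbm{e}_2$ (see the remark following \cref{prop:equiv}): the combinations $\sum_{r}(-1)^r\sum_{s\le r}\mathbbm{e}_2^r\,g^{r-s}_{\vec{\ell},s}(\phi)/(r-s)!$ are modular exactly when $\phi$ is a quasi-Jacobi form, and these carry no Pochhammer factors. Since $T_{1,1}\blok$ realizes multiplication by $-2\mathbbm{e}_2$ and $\dd$ realizes the index operator $\mathscr{M}$ (that part of your identification is correct), this variant pulls back under the $q$-bracket to the stated $\pi$. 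Your justification of (iii) inherits the same error --- $\langle T_{1,1}\blok g\rangle_q$ lies in $\mathbbm{e}_2\widetilde{M}$, not in the image of $D_\tau+\mathbbm{e}_2$ --- though the conclusion survives because a nonzero element of $\mathbbm{e}_2\widetilde{M}$ has positive depth and is therefore never modular.

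For the equality $\langle\mathcal{M}\rangle_q=\modular$ in (ii) you propose to exhibit explicit preimages of $e_4$ and $e_6$ by expanding $\prod_i\Theta(z_i+w_i)/(\Theta(z_i)\Theta(w_i))$; you rightly flag this as the delicate step but do not carry it out, and on its own it would also require observing that $\langle\mathcal{M}\rangle_q$ is closed under multiplication before it yields all of $\modular$. The paper sidesteps this entirely: by \cite[Theorem~3.4.1]{vI20} one has $\langle\mathcal{T}\rangle_q=\widetilde{M}$, so any $h\in\modular$ can be written as $\langle f_0\rangle_q+\langle T_{1,1}\blok g\rangle_q$ with $f_0\in\mathcal{M}$, and property~(iii) then forces $h=\langle f_0\rangle_q$.
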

\begin{proof}
The statement follows along the same lines as \cref{thm:when+}, by making the following observations:
\begin{itemize}\itemsep0pt
\item Analogous to~$\xi_{\vec{\ell}}(\phi)$, the functions
\[\sum_{r\leq |\vec{\ell}|}(-1)^r\sum_{s\leq r}\frac{\mathbbm{e}_2^r\,{g}_{\vec{\ell},s}^{r-s}(\phi)}{(r-s)!}\]
are modular forms exactly if~$\phi$ is a quasi-Jacobi form;
\item $\langle T_{1,1}\blok f\rangle_q = -2\mathbbm{e}_2 \langle f \rangle_q$ for all $f\in \c^\partitions$;
\item The operator~$\dd$ coincides with the operator~$\mathscr{M}$;
\item By \cite[Theorem~3.4.1]{vI20} we have that $\langle \mathcal{T}\rangle_q = \widetilde{M}$, from which it follows that equality holds in (\ref{it:tprop}). 
\qedhere
\end{itemize}
\end{proof}
\begin{remark}
In fact, for all $f,g\in\mathcal{T}$ one has 
\[\pi(f\blok g)=\pi(f)\blok\pi(g).\]
 Hence,~$\langle \pi\mathcal{T}\rangle_q$ is uniquely determined by $\langle \pi(T_{2,0})\rangle_q = \langle \pi(T_{1,1})\rangle_q=0$ and 
\[\langle \pi(T_{k,l}) \rangle_q \= \begin{cases} \vartheta^{k-1}G_{l-k+2} & l\geq k \\ \vartheta^{l} G_{k-l} & k\geq l+2 \end{cases}\]
for $T_{k,l}\in \mathcal{T}$ with $k+l\geq 4$, where~$\vartheta:=D_\tau  - \mathbbm{e}_2 W$ denotes the Serre derivative on the space of (quasi)modular forms (recall $W$ is the operator multiplying a quasimodular form by its weight) and $G_k=\frac{(k-1)!}{2(2\pi\ii)^k} e_k\mspace{1mu}$. For any modular form~$f$, the Serre derivative $\vartheta f$ is modular as well. 
In particular, the Serre derivatives of Eisenstein series appearing on the right of this equation are indeed modular forms. Moreover, the case distinction according to the sign of $l+1-k$, should be compared to the Taylor coefficients of the Jacobi form~$G_1(z,w)$ in \cite{Zag91}. In fact, they are very similar (but here in level~$1$ and there in level~$2$, and here with Serre derivatives and there with usual derivatives) to the ones that appeared in \cite{KZ95} to prove the original assertion of Dijkgraaf from which the whole Bloch--Okounkov story arose. 
\end{remark}

\small%

\end{document}